\tikzset{>=stealth',
        cvertex/.style={circle,draw=black,inner sep=1pt,outer sep=3pt},
        vertex/.style={circle,fill=black,inner sep=1pt,outer sep=3pt},
        star/.style={circle,fill=yellow,inner sep=0.75pt,outer sep=0.75pt},
        tvertex/.style={inner sep=1pt,font=\scriptsize},
        gap/.style={inner sep=0.5pt,fill=white}}
\author{Martin Kalck} 
\address{School of Mathematics,
University of Edinburgh,
JCMB,
Peter Guthrie Tait Road, 
Edinburgh,
EH9 3FD.}
\email{m.kalck@ed.ac.uk}
\urladdr{}
\author{Joseph Karmazyn} 
\address{School of Mathematics and Statistics,
University of Sheffield,
Hicks Building,
Hounsfield Road,
Sheffield,
S3 7RH.}
\email{j.h.karmazyn@sheffield.ac.uk}
\urladdr{http://www.jhkarmazyn.staff.shef.ac.uk/}
\date{\today}
\subjclass[2010]{ 14J17, 18E30, 16G20.}
\title[Noncommutative Kn\"{o}rrer equivalences]{Noncommutative Kn\"{o}rrer type equivalences via noncommutative resolutions of singularities.}
\theoremstyle{definition} 
\newtheorem{Definition}{Definition}[section]
\newtheorem{Example}[Definition]{Example}
\newtheorem{Conjecture}[Definition]{Conjecture}
\newtheorem{Remark}[Definition]{Remark}
\newtheorem*{pQuestion}{Question}
\theoremstyle{plain} 
\newtheorem{Lemma}[Definition]{Lemma}
\newtheorem{Theorem}[Definition]{Theorem}
\newtheorem{Proposition}[Definition]{Proposition}
\newtheorem{Corollary}[Definition]{Corollary}
\newtheorem*{pTheorem}{Theorem}
\newtheorem*{pLemma}{Lemma}
\newtheorem*{pProposition}{Proposition}
\newtheorem*{pCorollary}{Corollary}
\DeclareMathAlphabet{\mathbbm}{U}{bbm}{m}{n}
\newcommand{\Hom}{\textnormal{Hom}}
\newcommand{\Perf}{\textnormal{Perf}}
\newcommand{\Ext}{\textnormal{Ext}}
\newcommand{\Coh}{\textnormal{Coh}}
\newcommand{\QCoh}{\textnormal{QCoh}}
\newcommand{\op}{\textnormal{op}}
\newcommand{\rk}{\textnormal{rk}}
\newcommand{\End}{\textnormal{End}}
\newcommand{\CH}{\textnormal{H}}
\newcommand{\SL}{\textsf{SL}}
\newcommand{\GL}{\mathsf{GL}}
\newcommand{\Spec}{\textnormal{Spec}}
\def\mod{\mathop{\textnormal{mod}}\nolimits}
\def\Mod{\mathop{\textnormal{Mod}}\nolimits}
\newcommand{\E}{\mathcal{E}}
\newcommand{\Chern}{\textnormal{c}_1}
\newcommand{\Per}{\textnormal{Per}}
\let\oldtocsection=\tocsection
\let\oldtocsubsection=\tocsubsection
\let\oldtocsubsubsection=\tocsubsubsection
\renewcommand{\tocsection}[2]{\hspace{0em}\oldtocsection{#1}{#2}}
\renewcommand{\tocsubsection}[2]{\hspace{1em}\oldtocsubsection{#1}{#2}}
\renewcommand{\tocsubsubsection}[2]{\hspace{2em}\oldtocsubsubsection{#1}{#2}}
\begin{document}

\begin{abstract}
We construct Kn\"{o}rrer type equivalences outside of the hypersurface case; namely, between singularity categories of cyclic quotient surface singularities and certain finite dimensional local algebras. This generalises Kn{\"o}rrer's equivalence for singularities of Dynkin type A (between Krull dimensions $2$ and $0$) and yields many new equivalences between singularity categories of finite dimensional algebras.

Our construction uses noncommutative resolutions of singularities, relative singularity categories, and an idea of Hille \& Ploog yielding strongly quasi-hereditary algebras which we describe explicitly by building on Wemyss's work on reconstruction algebras. Moreover, K-theory gives obstructions to generalisations of our main result.

\end{abstract}
\maketitle
\begin{comment}
\tableofcontents
\addcontentsline{toc}{chapter}{Contents}
\end{comment}

\section{Introduction}
\noindent
The singularity category of a  Noetherian ring was introduced by Buchweitz \cite{BuchweitzMCM} to provide a general framework for Tate-cohomology. Orlov \cite{OrlovTriangulatedCatSing} rediscovered a global version motivated by string theory and homological mirror symmetry, which has recently attracted a lot of interest. The singularity category is defined as the Verdier quotient 
\[
D_{sg}(A) := D^b(A\text{-} \mod)/\textnormal{Perf}(A)
\]
where $\Perf(A)$ denotes the subcategory of perfect complexes.
For commutative hypersurfaces the singularity category recovers the homotopy category of matrix factorisations \cite{BuchweitzMCM}. Matrix factorisations appeared in Dirac's seminal description of the electron \cite{Dirac}, and more recently gave rise to new knot invariants \cite{KhovanovRozansky}, occured in string theory \cite{KapustinLi}, and have been used to describe derived categories of Calabi-Yau hypersurfaces \cite{OrlovGraded} -- see Murfet \cite{MurfetMSRIspring2013} for a nice survey.

Two rings with triangle equivalent singularity categories are called \emph{singular equivalent}. In general, it is a difficult problem to construct singular equivalent rings. 

Kn{\"o}rrer's periodicity is a fundamental phenomenon famously producing singular equivalences between the hypersurfaces $S/(f)$ and $S[[x,y]]/(f+xy)$ for all non-zero polynomials  $f \in S:=\mathbb{C}[[z_0, \ldots, z_d]]$,  \cite{KnorrerCMmodules}. 
A well known and widely used special case shows that 
\[
K = \frac{\mathbb{C}[z]}{(z^r)}  \quad \text{and} \quad R= \frac{\mathbb{C}[[x,y,z]]}{(z^r+xy)}
\]
are singular equivalent. Here, $K$ is a finite dimensional algebra and $R$ is a Gorenstein cyclic quotient surface singularity. It is natural to ask whether there is a generalisation of these Kn\"{o}rrer equivalences to all cyclic quotient surface singularities.

\begin{pQuestion}
Let $R$ be a general cyclic surface quotient singularity. Does a singularly equivalent  finite dimensional algebra $K$ exist? Can $K$ be described explictly?
\end{pQuestion}

In this paper, we give a positive answer to these questions by constructing and explictly describing such finite dimensional algebras, which we call \emph{Kn\"{o}rrer invariant algebras}. In general these are noncommutative algebras, and we also show that their representation theory encodes many geometric aspects of the minimal resolution of $\Spec \, R$. 

Whilst several generalisations and new interpretations of Kn{\"o}rrer's result have been obtained in the hypersurface case, see e.g. \cite{OrlovTriangulatedCatSing, MichaelBrown, Shipman}, our results provide the first evidence that Kn\"{o}rrer type equivalences are not just a hypersurface phenomenon.

\subsection*{Main results and strategy of proof}

We consider the invariant algebras $R_{r,a}:=\mathbb{C}[[x,y]]^{\frac{1}{r}(1,a)}$ where 
\[
\frac{1}{r}(1,a):= \left\langle \left( \begin{array}{ c c} \varepsilon_r & 0 \\ 0 & \varepsilon_r^a \end{array} \right) \right\rangle < \GL_2(\mathbb{C})
\]
for $r$ and $a$ coprime integers such that $0 < a <r$ and $\varepsilon_r$ a primitive $r^{th}$ root of unity. The corresponding singularity $\Spec \, R_{r,a}$ is a cyclic quotient surface singularity, and it is a hypersurface if and only if $a=r-1$. In all other cases, the singularities $R_{r,a}$ are rational but not Gorenstein.

For each invariant algebra $R_{r,a}$, we construct a finite dimensional algebra $K_{r,a}$ and an equivalence of singularity categories. Rather than attempting to do this directly, the key idea of this paper is to work with noncommutative resolutions of the singularities. The singular equivalence is then induced by an equivalence of relative singularity categories. 
Here, the \emph{relative singularity category} associated to a noncommutative resolution $\Lambda$ of a singularity $K$ is the triangulated quotient category (see Section \ref{Singularity categories} for details) 
\[
\Delta_K(\Lambda):= \frac{D^b( \Lambda \text{-}\mod)}{\textnormal{Perf}(K)}.
\] 

The noncommutative resolutions $A_{r,a}$ of $R_{r,a}$ and $\Lambda_{r,a}$ of $K_{r,a}$ in the following key result are described in more detail immediately below the Corollary.

\begin{pTheorem}[{Theorem \ref{Equivalence of Relative Singularity Categories Theorem}}] There is an equivalence of relative singularity categories
\[
\Delta_{K_{r,a}}(\Lambda_{r,a}):=\frac{D^b(\Lambda_{r,a})}{\Perf(K_{r, a})}\cong \frac{D^b(A_{r,a})}{\Perf(R_{r, a})} =:\Delta_{R_{r,a}}(A_{r,a}).
\]
\end{pTheorem}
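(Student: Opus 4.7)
The plan is to reduce both relative singularity categories to a common piece of intrinsic data determined by the exceptional part of each noncommutative resolution. First, realize both algebras as endomorphism algebras over their base rings: by construction $A_{r,a} \cong \End_{R_{r,a}}(R_{r,a} \oplus N)$ (the reconstruction algebra, following Wemyss) and $\Lambda_{r,a} \cong \End_{K_{r,a}}(K_{r,a} \oplus N')$ (via the Hille--Ploog construction), with distinguished idempotents $e_R$ and $e_K$ projecting to the $R_{r,a}$- and $K_{r,a}$-summands. In each case the projective $A_{r,a} e_R$ has endomorphism ring $R_{r,a}$, so tensoring identifies $\Perf(R_{r,a})$ with $\thick_{D^b(A_{r,a})}(A_{r,a} e_R)$, and symmetrically on the other side. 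Consequently
\[
\Delta_{R_{r,a}}(A_{r,a}) \simeq \frac{D^b(A_{r,a})}{\thick(A_{r,a} e_R)}, \qquad \Delta_{K_{r,a}}(\Lambda_{r,a}) \simeq \frac{D^b(\Lambda_{r,a})}{\thick(\Lambda_{r,a} e_K)}.
\]

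Second, in each of these Verdier quotients the images of the simple modules $S_i$ at the non-distinguished vertices form a set of compact generators (after passing to a natural dg enhancement). By Keller's theorem identifying triangulated hulls with $\Perf$ of dg endomorphism algebras, each quotient becomes triangle equivalent to the perfect derived category of a dg Ext-algebra:
\[
\Delta_{R_{r,a}}(A_{r,a}) \simeq \Perf\bigl(\RHom_{A_{r,a}}(S, S)\bigr), \qquad S := \bigoplus_{i \neq e_R} S_i,
\]
and analogously for $\Lambda_{r,a}$ with $S' := \bigoplus_{i \neq e_K} S'_i$. The desired equivalence then reduces to producing a quasi-isomorphism of dg algebras $\RHom_{A_{r,a}}(S,S) \simeq \RHom_{\Lambda_{r,a}}(S',S')$.

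The heart of the proof is this comparison. The Hille--Ploog procedure is designed so that, passing from $A_{r,a}$ to $\Lambda_{r,a}$, one modifies only the distinguished summand while preserving arrows, relations, and local projective resolutions at the non-distinguished vertices. Using Wemyss's explicit quiver-with-relations description of the reconstruction algebra and the corresponding explicit presentation of $\Lambda_{r,a}$, one checks that the Ext-quivers coincide, with matching composition products.

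The main obstacle is promoting this to an equivalence at the level of full dg (or $A_\infty$) algebras: matching the classical Ext-algebras is a combinatorial check on the minimal projective resolutions of the exceptional simples, but controlling all higher compositions needs a further input. Here the strongly quasi-hereditary structure of $\Lambda_{r,a}$ produced by the Hille--Ploog construction is the essential tool, since the resulting standard/costandard filtrations supply a rigid framework in which the higher $A_\infty$-operations on the non-distinguished simples are computable and can be compared term-by-term with those on the $A_{r,a}$-side, yielding the required quasi-isomorphism and hence the claimed equivalence of relative singularity categories.
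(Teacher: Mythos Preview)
Your approach differs substantially from the paper's, and the decisive step contains a genuine gap.

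The paper does not compare dg Ext-algebras abstractly. It constructs an explicit geometric functor $F \colon D(X_{r,a}) \to D(V_{r,a})$, namely $F = v^*(-) \otimes \mathcal{O}_{V_{r,a}}(-D_n)$, where $v \colon V_{r,a} \hookrightarrow X_{r,a}$ is the flat affine inclusion of the complete local model into the projective surface. A direct calculation shows $F(\Lambda_0) \cong P_0^{\oplus \lambda_0}$, so $F$ descends to the Verdier quotients. The key input is that $F$ restricts to an equivalence $D_C(X_{r,a}) \xrightarrow{\sim} D_C(V_{r,a})$ on complexes supported on the exceptional curve $C$; this is essentially formal, since $V_{r,a}$ contains the formal neighbourhood of $C$. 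Because the non-distinguished simples $\sigma_1,\dots,\sigma_n$ are sheaves supported on $C$, full faithfulness on the relevant subcategory is immediate, and a recollement argument upgrades this to the desired equivalence. In particular, the paper never needs to compute or compare higher $A_\infty$-operations: the functor $F$ \emph{is} the comparison.

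In your plan, everything hinges on the quasi-isomorphism $\RHom_{A_{r,a}}(S,S) \simeq \RHom_{\Lambda_{r,a}}(S',S')$, and you have not supplied a mechanism to establish it. Matching the graded Ext-groups is a tractable combinatorial check, but your proposal for the higher structure---that the strongly quasi-hereditary structure of $\Lambda_{r,a}$ renders the $A_\infty$-operations ``computable and comparable term-by-term''---does not go through as stated: $A_{r,a}$ is infinite-dimensional and not quasi-hereditary, so there is no parallel rigid framework on that side to compare against. Absent an explicit functor linking the two module categories, one would need an independent formality or intrinsic-formality argument for these dg algebras, and none is indicated. A secondary logical issue: you take $\Lambda_{r,a} \cong \End_{K_{r,a}}(K_{r,a} \oplus N')$ as input, but in the paper this is a \emph{consequence} of the explicit quiver presentation of $\Lambda_{r,a}$, established only after the main equivalence; $K_{r,a}$ is defined as $e_0 \Lambda_{r,a} e_0$ from the outset.
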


This equivalence descends to an equivalence of singularity categories, which are explicit Verdier quotients of the relative singularity categories, see Section \ref{Singularity categories} and \cite{ KalckYang, TV}.

\begin{pCorollary}[Theorem \ref{SC for KI}]
There is an equivalence of singularity categories
\[
D_{sg}(K_{r,a}):=\frac{D^b(K_{r,a})}{\Perf(K_{r, a})} \cong \frac{D^b(R_{r,a})}{\Perf(R_{r, a})} =:D_{sg}(R_{r,a}).
\]
\end{pCorollary}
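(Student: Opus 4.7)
The plan is to realise both $D_{sg}(K_{r,a})$ and $D_{sg}(R_{r,a})$ as canonical Verdier quotients of the corresponding relative singularity categories, and then to transport the quotient structure across the equivalence $\Phi$ supplied by the Theorem.

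For the geometric side, the work of Kalck--Yang (together with Thanhoffer de V\"olcsey--Van den Bergh) provides an explicit thick subcategory $\mathcal{S}_R \subset \Delta_{R_{r,a}}(A_{r,a})$, generated by the simple $A_{r,a}$-modules at the vertices of its Gabriel quiver that do \emph{not} correspond to the free summand $R_{r,a}$, together with an induced equivalence
\[
D_{sg}(R_{r,a}) \cong \Delta_{R_{r,a}}(A_{r,a}) / \mathcal{S}_R.
\]
An entirely analogous construction on the finite-dimensional side yields a thick subcategory $\mathcal{S}_K \subset \Delta_{K_{r,a}}(\Lambda_{r,a})$ with $D_{sg}(K_{r,a}) \cong \Delta_{K_{r,a}}(\Lambda_{r,a}) / \mathcal{S}_K$. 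Both identifications are essentially formal consequences of the fact that $A_{r,a}$ and $\Lambda_{r,a}$ have finite global dimension, so it suffices to check that $\Phi(\mathcal{S}_K) = \mathcal{S}_R$ and then pass to Verdier quotients to deduce the Corollary.

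To verify this compatibility, I would unpack the construction of $\Phi$: it arises from a tilting equivalence built via the Hille--Ploog idea applied to Wemyss's reconstruction algebra. This construction comes with an explicit bijection on vertex sets matching the distinguished ``structure'' vertex on one side with the free summand vertex on the other, so one expects it to pair up the remaining ``non-structure'' simples and hence carry $\mathcal{S}_K$ onto $\mathcal{S}_R$. The main obstacle is making this last step rigorous: a priori $\Phi$ need not send simples to simples, so one must read off from the explicit tilting object that the simples at the non-structure vertices of $\Lambda_{r,a}$ are indeed mapped to objects generating the same thick subcategory as the non-structure simples of $A_{r,a}$. Once that labelling is pinned down, the induced equivalence on Verdier quotients is automatic.
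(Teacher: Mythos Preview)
Your proposal is correct and follows essentially the same route as the paper: realise each singularity category as the Verdier quotient of the corresponding relative singularity category by the thick subcategory generated by the non-structure simples (this is the paper's Lemma~\ref{Quotient from relative to singularity category Lemma}, citing \cite{KalckYang,TV}), and then check that the equivalence of relative singularity categories matches these subcategories. The ``main obstacle'' you flag is in fact already resolved in the very Theorem the Corollary cites: Theorem~\ref{Equivalence of Relative Singularity Categories Theorem} explicitly records $F_A(\sigma_i)=s_i$ for $1\le i\le n$, which is established earlier by a direct geometric computation (Lemma~\ref{Functor Calculation Lemma}) rather than by reading it off an abstract tilting object, so no further work is needed to match $\mathcal{S}_K$ with $\mathcal{S}_R$.
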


Let us explain our strategy in more detail - in particular, the involved noncommutative resolutions and the construction of $K_{r,a}$. On the one hand, the surface singularity $\Spec \, R_{r,a}$ admits a minimal resolution, which has a tilting bundle by work of Van den Bergh \cite{3Dflops} which builds on work of Wunram \cite{WunramReflexiveModules}. Its endomorphism algebra $A_{r, a}$ is called reconstruction algebra and was explicitly described by Wemyss \cite{RCAA}, see Sections \ref{Sec:NCRes} and \ref{Reconstruction algebras}. This is a natural choice for a noncommutative resolution of $R_{r, a}$. 

On the other hand, it is one of our key insights that the algebras $\Lambda_{r, a}$ defined by Hille and Ploog \cite{HillePloog} are finite dimensional analogues of type A reconstruction algebras. In order to give an idea of the construction of $\Lambda_{r, a}$, recall that the exceptional divisor of the minimal resolution $V_{r, a}$ of $\Spec \, R_{r,a}$ is a chain of smooth rational curves $C_1, \ldots, C_n$.  Such a chain can also be considered in a smooth rational projective surface $X_{r, a}$. The algebra $\Lambda_{r, a}$ is constructed by iterated universal extensions from a certain exceptional collection of line bundles on $X_{r,a}$ associated to this chain $C_1, \ldots, C_n$, cf. \cite{HillePloog} and Section \ref{Subcategory}. We then define the Kn{\"o}rrer invariant algebra as a corner algebra $K_{r, a}:=e \Lambda_{r, a} e$ for a certain idempotent $e \in \Lambda_{r, a}$ and show that $\Lambda_{r, a}$ is a noncommutative resolution of  $K_{r, a}$.

Summing up, the singularity $R_{r,a}$ is starting data in our construction, whereas the Kn{\"o}rrer invariant algebra $K_{r,a}$ is only produced a posteriori from the algebra $\Lambda_{r,a}$. 
From a geometric perspective the construction of $\Lambda_{r, a}$ captures similar information about the minimal resolution of the singularity as the reconstruction algebra $A_{r,a}$. The equivalence of relative singularity categories in our Theorem above makes this precise.

\

More generally, the techniques developed in this paper allow us to identify the (relative) singularity categories of  partial resolutions of $\Spec \,R_{r,a}$ with (relative) singularity  categories of $e\Lambda_{r, a}e$ for an explicit idempotent $e$, see Corollary \ref{C:GeneralizedEquivofRelSingCats} and Theorem \ref{SC for KI generalized}. As a consequence, we obtain many new and non-trivial  equivalences between singularity categories of finite dimensional algebras which might be of independent interest and appear difficult to produce via other methods, see Corollary \ref{cor:idempotent singularity equivalences}.

\subsection*{Descriptions of the algebras $K_{r,a}$ and $\Lambda_{r,a}$} 
The algebras $\Lambda_{r,a}$ arise from a natural geometric category. Hille \& Ploog \cite{HillePloog} show that  $\Lambda_{r,r-1}$ is the Auslander algebra of $k[x]/(x^r)$.
We give the first explicit description of all other $\Lambda_{r,a}$ in terms of quivers with relations (Proposition \ref{Lambda Presentation Proposition}). 
Using this, we show that
 $\Lambda_{r,a}$ is a noncommutative resolution of $K_{r, a}$ (Theorem \ref{Endomorphism Algebra Isomorphism Theorem}). This generalises \cite{HillePloog} and yields an explicit description of $K_{r, a}$.

\begin{pLemma}[Lemma \ref{KI Presentation Lemma}]
There is an algebra isomorphism 
\[
K_{r,a} \cong \frac{\mathbb{C}\langle z_1 \dots, z_l \rangle}{I}
\]
where I is the two sided ideal generated by 
\[ z_i z_j \text{ if $i<j$} \quad \text{ and } \quad
z_i \left(  z_i^{\beta_i-2} \right) \left(  z_{i+1}^{\beta_{i+1}-2} \right)  \dots {\left(  z_{j-1}^{\beta_{j-1}-2} \right) }\left(  z_j^{\beta_j-2} \right) z_j \text{ for $j \le i$}
\]
where $l$ and the $\beta_i$ are defined by the Hirzebruch-Jung continued fraction expansion $r/(r-a)=[\beta_1,\dots, \beta_l]$.
\end{pLemma}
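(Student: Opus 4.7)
The strategy is to extract $K_{r,a}$ directly from the quiver-with-relations presentation of $\Lambda_{r,a}$ supplied by Proposition \ref{Lambda Presentation Proposition}. By Theorem \ref{Endomorphism Algebra Isomorphism Theorem}, the idempotent $e$ is a sum $e=e_{v_1}+\dots+e_{v_l}$ of primitive idempotents attached to a distinguished set of $l$ vertices of the quiver of $\Lambda_{r,a}$, one for each Hirzebruch--Jung block. The corner algebra $e\Lambda_{r,a}e$ is presented by paths with source and target in $\{v_1,\dots,v_l\}$, taken modulo the induced relations, so the whole task reduces to a combinatorial analysis inside the quiver of $\Lambda_{r,a}$.

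For each vertex $v_i$ the plan is to single out a shortest non-zero path $z_i\colon v_i\to v_i$: in the quiver produced by the Hille--Ploog universal extensions, this is a two-arrow loop at $v_i$ running into the nearest intermediate vertex of the block associated with the exceptional curve $C_i$ and back. A bookkeeping argument on the arrow set of $\Lambda_{r,a}$ then shows that every path in the sub-path-algebra on $v_1,\dots,v_l$ can be rewritten as a monomial in the $z_i$, yielding a surjection
\[
\mathbb{C}\langle z_1,\dots,z_l\rangle \twoheadrightarrow e\Lambda_{r,a}e = K_{r,a}.
\]

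The two families of relations then need to be matched against the defining relations of $\Lambda_{r,a}$. The vanishing $z_iz_j=0$ for $i<j$ reflects the fact that in $\Lambda_{r,a}$ any composition at $v_i$ that enters the block associated to $v_j$ with $j>i$ is forced to factor through a commutation or boundary relation and therefore dies in the corner algebra. The longer chain relations, in which the exponent $\beta_k-2$ appears at each intermediate index, originate from the Hirzebruch--Jung data present in the Hille--Ploog construction: the universal extensions introduce $\beta_k-1$ intermediate summands along $C_k$, so a path looping $\beta_k-2$ times around $v_k$ whilst traversing the chain between $v_i$ and $v_j$ hits the terminating relation at the endpoint and vanishes. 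In the case $i=j$ this specialises to $z_i^{\beta_i}=0$.

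The main obstacle is completeness of the list of relations: a priori, further identities could survive in $e\Lambda_{r,a}e$. The plan is to exhibit a spanning set of ``descending'' monomials $z_{i_1}^{m_1}z_{i_2}^{m_2}\cdots z_{i_s}^{m_s}$ with $i_1>i_2>\cdots>i_s$ and with each exponent $m_k$ bounded by the relation in which it appears, and then to compare the cardinality of this set with $\dim_{\mathbb{C}}e\Lambda_{r,a}e$ computed vertex-by-vertex from the quiver presentation of $\Lambda_{r,a}$. Matching the two counts forces the presentation to be exact and concludes the argument.
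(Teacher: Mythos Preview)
Your proposal rests on a misreading of the definition of $K_{r,a}$. In the paper, $K_{r,a}:=e_0\Lambda_{r,a}e_0$ is the corner algebra at a \emph{single} primitive idempotent, the one attached to vertex $0$ of the quiver of $\Lambda_{r,a}$; see Definition \ref{KIdefinition}. Consequently $K_{r,a}$ is a local algebra. The $l$ generators $z_1,\dots,z_l$ are not loops at $l$ different vertices but are all closed paths at vertex $0$: under the map $\phi$ in the proof of Theorem \ref{Endomorphism Algebra Isomorphism Theorem}, the loop $C_0^{t(j)}k_j$ (go out along the $c$-arrows to vertex $t(j)$, return along $k_j$) corresponds to multiplication by $z_j$. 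Your picture of $e=e_{v_1}+\cdots+e_{v_l}$ with $z_i$ a short loop at $v_i$ therefore does not describe $K_{r,a}$, and the subsequent heuristics about ``$\beta_k-1$ intermediate summands along $C_k$'' conflate the $\alpha$-data (self-intersections, which govern the Hille--Ploog extensions and the quiver of $\Lambda_{r,a}$) with the dual $\beta$-data appearing in the presentation of $K_{r,a}$.

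There is also a logical circularity: you invoke Theorem \ref{Endomorphism Algebra Isomorphism Theorem} to identify the idempotent, but in the paper that theorem is precisely what is used to \emph{prove} Lemma \ref{KI Presentation Lemma}. The paper's route runs in the opposite direction to yours. One first \emph{defines} the candidate algebra $\kappa_{r,a}$ by the stated presentation, classifies its indecomposable monomial left ideals $I_0,\dots,I_n$ (Proposition \ref{Prop:Number of Ideals} and Proposition \ref{Monomial ideals Proposition}), and then constructs an explicit algebra homomorphism $\phi\colon\Lambda_{r,a}\to\End_{\kappa_{r,a}}(\bigoplus_{i=0}^n I_i)$ on generators. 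Bijectivity of $\phi$ is established by an induction comparing $\dim e_i\Lambda_{r,a}e_j$ (computed in Lemma \ref{Module Dimensions Lemma}) with $\dim\Hom_{\kappa_{r,a}}(I_i,I_j)$. The desired isomorphism $K_{r,a}\cong\kappa_{r,a}$ then drops out by taking the $(0,0)$-corner, since $I_0\cong\kappa_{r,a}$ as a left module. If you wish to pursue a direct quiver computation of $e_0\Lambda_{r,a}e_0$ instead, you would need to enumerate closed paths at vertex $0$ modulo the relations of Proposition \ref{Lambda Presentation Proposition} and match the resulting dimension with the $r$ computed for $\kappa_{r,a}$ in Lemma \ref{Kappa Properties Lemma}; this is feasible but is not what you have sketched.
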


This presentation is similar to Riemenschneider's description of $R_{r,a}$, which is recalled in Section \ref{Sec:R presentation}. The algebra $K_{r, a}$ is only commutative in the extreme cases $a=r-1$ and $a=1$, and the first example of a noncommutative Kn\"{o}rrer invariant algebra is
\[
K_{5,2} \cong \frac{\mathbb{C}\langle z_1,z_2 \rangle }{(z_1^2, z_2^3, z_1z_2,z_2^2z_1). }
\]

The algebra $\Lambda_{r,a}$ has global dimension two, see Proposition \ref{Proposition HillePloog Abelian Category}. In the process of calculating the presentation of $K_{r,a}$ we show that $\Lambda_{r,a}$ is a noncommutative resolution of the noncommutative singularity $K_{r,a}$ in the following sense.
\begin{pTheorem}[Theorem \ref{Endomorphism Algebra Isomorphism Theorem}]
There is a $\mathbb{C}$-algebra isomorphism \[
\End_{K_{r,a}} \left( \bigoplus I_i \right) \cong \Lambda_{r,a}\]
where the sum is over all isomorphism classes of indecomposable left ideals $I_i$ of $K_{r,a}$.
\end{pTheorem}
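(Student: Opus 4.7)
The plan is to realise $\Lambda_{r,a}$ as the $K_{r,a}$-endomorphism algebra of an explicitly described module, and to identify that module with the sum of all indecomposable left ideals of $K_{r,a}$. This may be viewed as an Auslander-type theorem, with the Hille--Ploog result that $\Lambda_{r,r-1}$ is the Auslander algebra of $\mathbb{C}[z]/(z^r)$ serving as base case. Write $\Lambda := \Lambda_{r,a}$, $K := K_{r,a}$, and decompose $1_\Lambda = e_0 + e_1 + \cdots + e_n$ into primitive orthogonal idempotents with $e_0 = e$ the idempotent satisfying $e\Lambda e = K$. The bimodule $e\Lambda$ decomposes as a left $K$-module as
\[
e\Lambda \;=\; \bigoplus_{i=0}^{n} e\Lambda e_i,
\]
and the candidate algebra isomorphism is the natural map $\phi \colon \Lambda \to \End_K(e\Lambda)$ given by right multiplication.

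The first main step is to identify each summand $e\Lambda e_i$ with an indecomposable left ideal of $K$. For each $i$ pick a distinguished element $\xi_i \in e_i \Lambda e$, explicitly given as a monomial in the arrows of the quiver of $\Lambda$ from Proposition \ref{Lambda Presentation Proposition}. Right multiplication by $\xi_i$ gives a $K$-linear map $e\Lambda e_i \to e\Lambda e = K$ whose image is a left ideal $I_i$ of $K$. Using the monomial basis of $K$ afforded by Lemma \ref{KI Presentation Lemma}, one verifies that this map is injective, that $I_i$ is indecomposable, that distinct $i$ yield non-isomorphic $I_i$, and that these ideals exhaust all isomorphism classes of indecomposable left ideals of $K$.

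The second main step is to establish that $\phi$ is an algebra isomorphism. For injectivity, the key observation is that $e$ is a full idempotent in $\Lambda$, that is $\Lambda e \Lambda = \Lambda$, which is visible from the quiver of $\Lambda$ because every primitive idempotent $e_i$ is linked to $e_0 = e$ by a composition of arrows coming from the universal extensions in the Hille--Ploog construction. Surjectivity reduces to a dimension count: one computes $\sum_{i,j} \dim_{\mathbb{C}} \Hom_K(I_i, I_j)$ directly from the monomial basis of $K$ and compares it with $\dim_{\mathbb{C}} \Lambda = \sum_{i,j} \dim_{\mathbb{C}} e_i \Lambda e_j$, which is controlled by Proposition \ref{Lambda Presentation Proposition}. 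Both quantities are governed by the combinatorics of the Hirzebruch--Jung continued fraction $[\beta_1, \ldots, \beta_l]$, and a termwise match completes the proof.

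The principal obstacle is the enumeration in the first step of indecomposable left ideals of the noncommutative algebra $K_{r,a}$. Because the relations in Lemma \ref{KI Presentation Lemma} are subtle, distinct monomial generators may produce isomorphic left ideals, while naive principal ideals may decompose, so a careful bookkeeping via the monomial basis is essential. A secondary difficulty is verifying injectivity of each $K$-linear map $e\Lambda e_i \to K$ determined by right multiplication by $\xi_i$; the required non-vanishing of products $x\xi_i$ for nonzero $x \in e\Lambda e_i$ is guaranteed structurally by the iterated extension construction of $\Lambda$, but has to be traced through the explicit presentation on a case-by-case basis driven by the continued fraction data.
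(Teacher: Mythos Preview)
Your overall architecture---build $\phi\colon \Lambda \to \End_K(e\Lambda)$ by right multiplication, identify the summands $e\Lambda e_i$ with indecomposable left ideals, and finish by a dimension count---is reasonable and close in spirit to what the paper does. However, there are two genuine problems.

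\medskip

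\textbf{The injectivity argument is wrong.} You claim that $e=e_0$ is a full idempotent, i.e.\ $\Lambda e\Lambda=\Lambda$, and that this is ``visible from the quiver because every primitive idempotent $e_i$ is linked to $e_0$ by a composition of arrows''. Connectivity of the quiver gives only $e_0\Lambda e_i\neq 0$ and $e_i\Lambda e_0\neq 0$; it does \emph{not} imply $e_i\in \Lambda e_0\Lambda$. In fact $\Lambda/\Lambda e_0\Lambda$ is the algebra obtained by deleting vertex $0$ and all incident arrows, which for $n\geq 1$ is a nonzero quotient supported on vertices $1,\ldots,n$ (compare Lemma~\ref{LessCurvesLemma} for the analogous computation with a different idempotent). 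So $\Lambda e_0\Lambda\neq\Lambda$, and your stated reason for injectivity fails. The map $\phi$ \emph{is} injective, but this requires a different argument: one must show that $e_0\Lambda$ is faithful as a right $\Lambda$-module, equivalently that $e_0\Lambda x\neq 0$ for every nonzero $x\in\Lambda$, which is a nontrivial statement about the socle structure and is not a formal consequence of quiver connectivity.

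\medskip

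\textbf{Circularity with the presentation of $K$.} You repeatedly invoke ``the monomial basis of $K$ afforded by Lemma~\ref{KI Presentation Lemma}''. In the paper's logical order that presentation (Corollary~\ref{KI Presentation Lemma}, stating $K_{r,a}\cong\kappa_{r,a}$) is \emph{deduced from} the theorem you are trying to prove, by applying $e_0(-)e_0$ to the isomorphism $\Lambda_{r,a}\cong\End_{\kappa_{r,a}}(\bigoplus I_i)$. The paper avoids circularity by introducing the auxiliary algebra $\kappa_{r,a}$ with an explicit monomial presentation (Definition~\ref{kappa Definition}), classifying its indecomposable monomial ideals directly (Proposition~\ref{Prop:Number of Ideals}), constructing an explicit map $\Lambda_{r,a}\to\End_{\kappa_{r,a}}(\bigoplus M_i)$ on quiver generators, and proving bijectivity of each block $\phi_{i,j}\colon e_i\Lambda e_j\to\Hom_{\kappa_{r,a}}(M_i,M_j)$ by an induction that interleaves the dimension formulas of Lemma~\ref{Module Dimensions Lemma} and Proposition~\ref{Monomial ideals Proposition}. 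Only afterwards is $K_{r,a}\cong\kappa_{r,a}$ read off. If you want to run your argument directly with $K=e_0\Lambda e_0$, you must either first prove the presentation of $K$ independently from Proposition~\ref{Lambda Presentation Proposition} (which is possible but is itself a substantial computation), or else rephrase the ideal classification and the dimension count entirely in terms of the known presentation of $\Lambda$ without ever appealing to a monomial basis of $K$.
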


The isomorphism classes of nontrivial indecomposable left ideals $I_i$ of $K_{r,a}$ are in bijection with the curves $C_i$ making up the exceptional divisor in the minimal resolution of $R_{r,a}$, see Theorem \ref{Endomorphism Algebra Isomorphism Theorem}. The Kn\"{o}rrer invariant algebra $K_{r,a}$ captures further aspects of the geometry of the cyclic quotient surface singularity $\Spec \, R_{r,a}$. 

\begin{pProposition}[Proposition \ref{Properties of KI Algebras Proposition}]
Consider the Kn\"{o}rrer invariant algebra $K_{r,a}$.
\begin{enumerate}
\item The dimension of $K_{r,a}$ is $r$, the order of the cyclic group $\frac{1}{r}(1,a)$.
\item The proper monomial left ideal of $K_{r,a}$ of largest $\mathbb{C}$-dimension  has dimension $a$.
\item The minimal number of generators of $K_{r,a}$ is 2 less than the embedding dimension of $\Spec \, R_{r,a}$.
\item The highest degree of a nonzero monomial equals the number of exceptional curves in the minimal resolution of $\Spec \, R_{r,a}$. 
\end{enumerate}
\end{pProposition}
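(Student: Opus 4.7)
All four assertions flow from the explicit presentation of $K_{r,a}$ given in Lemma \ref{KI Presentation Lemma}. The plan is to first set up a canonical monomial basis: using $z_iz_j=0$ for $i<j$, every element of $K_{r,a}$ is a $\mathbb{C}$-linear combination of $1$ and strictly decreasing products $z_{i_1}^{c_1}z_{i_2}^{c_2}\cdots z_{i_k}^{c_k}$ with $i_1>i_2>\cdots>i_k$ and $c_s\ge 1$, and the second family of relations determines precisely which of these monomials survive as nonzero. Part (3) is then immediate: the presentation contains no linear relations, so the images of $z_1,\ldots,z_l$ form a basis of $\mathfrak{m}/\mathfrak{m}^2$ and the minimal number of algebra generators of $K_{r,a}$ equals $l$; Riemenschneider's description of the invariant ring recalled in Section \ref{Sec:R presentation} gives the embedding dimension of $\Spec R_{r,a}$ as $l+2$.

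For (1) and the value asserted in (2), I would induct on $l$. The key observation is that $K_{r,a}/(z_1)\cong K_{r',a'}$ where $r'/(r'-a')=[\beta_2,\ldots,\beta_l]$: the quotient is spanned by the canonical monomials not involving $z_1$, and the restriction of the relations to the subalgebra generated by $z_2,\ldots,z_l$ is exactly the defining set for $K_{r',a'}$ after the relabelling $z'_i=z_{i+1}$. The standard continued fraction identity underlying this manipulation also yields $r'=r-a$, so by induction $\dim K_{r,a}/(z_1)=r-a$ and hence $\dim(z_1)=a$, while simultaneously $\dim K_{r,a}=\dim(z_1)+\dim K_{r,a}/(z_1)=r$, proving (1). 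To finish (2) I would show that every principal left ideal generated by a nonzero monomial $m$ has dimension at most $a$, since $\dim(m)$ equals the number of nonzero canonical monomials ending with $m$; an analogous quotient argument inside the subalgebra generated by $z_i,\ldots,z_l$ shows this count is maximised (with value $a$) by taking $m=z_1$.

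For (4), a natural candidate for a maximum-degree nonzero monomial is obtained by taking the strongest second-family relation $z_l^{\beta_l-1}z_{l-1}^{\beta_{l-1}-2}\cdots z_1^{\beta_1-1}=0$ and reducing one exponent by one, producing a monomial of total degree $\sum_{i=1}^{l}\beta_i-2l+1$. A direct computation with the Riemenschneider staircase, whose rows have widths $\beta_i-1$ with the standard shifted overlap dictated by the $\beta_i$'s, shows this integer equals the number of columns of the staircase, namely $n$. Thus (4) reduces to two checks: that the displayed candidate is indeed nonzero (confirmed by inspecting that no generating relation appears as a subword), and the complementary upper bound that every monomial of total degree strictly greater than $n$ contains some generating relation as a subword. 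This upper bound is the main technical obstacle; I would establish it by reading a strictly decreasing monomial against the Riemenschneider staircase and arguing that the total degree measures exactly how far such a monomial can reach before one of the rows $z_i^{\beta_i-1}\cdots z_j^{\beta_j-1}$ is forced to appear.
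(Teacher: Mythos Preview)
Your treatment of (3) matches the paper's, and your outline for (4) is close in spirit to the paper's argument in Lemma~\ref{Kappa Properties Lemma}(2), though the paper gives a sharper combinatorial bound: after writing a maximal-degree nonzero monomial as $z_l^{b_l}\cdots z_1^{b_1}$ and increasing any $b_i < \beta_i-3$, one has $b_i \in \{\beta_i-3,\beta_i-2,\beta_i-1\}$; letting $r_1$ and $r_3$ count the indices with $b_i=\beta_i-1$ and $b_i=\beta_i-3$ respectively, avoiding every relation forces $r_3 \ge r_1 - 1$, whence the degree $\sum(\beta_i-2) + r_1 - r_3 \le \sum(\beta_i-2)+1 = n$. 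This replaces your staircase reading with a two-line count.

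There is, however, a genuine circularity in your induction for (1) and (2). From $K_{r,a}/(z_1) \cong K^{[\beta_2,\ldots,\beta_l]}$ and the continued-fraction identity you correctly obtain $\dim K_{r,a}/(z_1) = r-a$, but the step ``hence $\dim(z_1) = a$'' presupposes $\dim K_{r,a} = r$, which is exactly what you are trying to prove. A single quotient by $(z_1)$ only yields the relation $\dim K_{r,a} = \dim(z_1) + (r-a)$ with $\dim(z_1)$ still undetermined. The paper closes this gap by establishing the \emph{three-term} recursion
\[
\dim \kappa^{[\beta_1,\ldots,\beta_l]} = \beta_1\dim\kappa^{[\beta_2,\ldots,\beta_l]} - \dim\kappa^{[\beta_3,\ldots,\beta_l]}
\]
(Lemma~\ref{Kappa Properties Lemma}(3)), obtained by stratifying monomials according to the exponent of $z_1$ and counting which of those with $b_1 = 0$ are annihilated by $z_1^{\beta_1-1}$; this recursion is precisely the one defining the Hirzebruch--Jung numerators and gives $\dim K_{r,a}=r$ directly. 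Part (2) is then handled separately via Proposition~\ref{Monomial ideals Proposition}, which identifies each indecomposable monomial left ideal $(m_i)$ with $\kappa_{[\alpha_{i+1},\ldots,\alpha_n]}$ of dimension $\lambda_i$, and $\lambda_1 = a$ because $\lambda_0/\lambda_1 = [\alpha_1,\ldots,\alpha_n] = r/a$. Your quotient idea could be salvaged by also identifying the left ideal $(z_1)$ with a smaller $\kappa^{[\gamma]}$ and running a stronger simultaneous induction, but as written the loop does not close.
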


A more detailed analysis shows that the Euclidean algorithm for the pair $(r, a)$ is encoded in the ideal structure of $K_{r,a}$ and conversely one can use the Euclidean algorithm to build $K_{r, a}$ recursively. This will be explained in future work.

 \subsection*{Further results and related work}
We survey a selection of related results in the literature and consequences of our constructions which might be of independent interest.

The singularity category of the following Kn{\"o}rrer invariant algebras
\[
K_{r,1}= \mathbb{C}\langle z_1, \dots z_{r-1} \rangle/(z_1, \dots, z_{r-1})^2
\] has appeared previously in a range of incarnations including categories appearing in Lie-theory, noncommutative projective geometry, and Leavitt path algebras.

\begin{Theorem}
Let $r \geq 1$. The following categories are triangle equivalent:
\begin{itemize}
\item[(a)] $D_{sg}(R_{r, 1})$;
\item[(b)] $D_{sg}(K_{r, 1})$;
\item[(c)] $\mathsf{qgr} \ \mathbb{C}\langle x_1, \ldots, x_{r-1} \rangle$ with $\deg x_i =1$ and degree shift functor, see \cite{SPSmithCategoryequivalences2012};
\item[(d)] $U\left(\mathfrak{sl}\left((r-1)^\infty\right)\right)/I-\mathrm{fpmod}$, see \cite{HennigSierra}. 
\item[(e)] $L(Q_{r-1})$-$\mathsf{grproj}$, where $L(Q_{r-1})$ denotes the Leavitt path algebra of a particular quiver $Q_{r-1}$ with degree shift funtor, see e.g. \cite{ChenYang};
\end{itemize}
\end{Theorem}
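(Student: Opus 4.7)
The plan is to establish (a)$\cong$(b) from the main result of this paper, and then to identify each of (c), (d), (e) with $D_{sg}(K_{r,1})$ by invoking each cited reference in turn.

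First, (a)$\cong$(b) is simply the specialisation of Theorem~\ref{SC for KI} to $(r,a)=(r,1)$, so no additional argument is required for this step. The common bridge between all five categories is the explicit presentation
\[
K_{r,1}\cong \mathbb{C}\langle z_1,\ldots,z_{r-1}\rangle/(z_1,\ldots,z_{r-1})^2
\]
from Lemma~\ref{KI Presentation Lemma}, that is, the finite-dimensional radical-square-zero algebra on $r-1$ generators, whose basic Morita representative is the path algebra of the $(r-1)$-loop rose quiver modulo the square of its arrow ideal.

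For (c), I would invoke Smith's equivalence from \cite{SPSmithCategoryequivalences2012}, which relates $\mathsf{qgr}$ of the free algebra on $r-1$ degree-one generators to the graded singularity category of a radical-square-zero algebra attached to an $(r-1)$-loop rose quiver; this matches $K_{r,1}$ up to Morita equivalence, identifying (c) with $D_{sg}(K_{r,1})$. For (e), the result of \cite{ChenYang} expresses the graded projective category of the Leavitt path algebra $L(Q_{r-1})$ as the singularity category of the path algebra of $Q_{r-1}$ modulo its radical squared, again reproducing $K_{r,1}$. For (d), the description of $U(\mathfrak{sl}((r-1)^\infty))/I$-$\mathrm{fpmod}$ in \cite{HennigSierra} reduces to the same category via their identification with a graded module category over a Leavitt-type algebra; alternatively one can invoke a direct equivalence with one of (c) or (e) already present in the literature.

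The principal obstacle is reconciling the distinct conventions across the four cited references: each uses its own quiver, grading, and ideal conventions, and the real technical content of the argument is the careful bookkeeping needed to verify that in every construction the ambient finite-dimensional algebra is Morita equivalent to $K_{r,1}$ as a $\mathbb{Z}$-graded (or ungraded) algebra. Once these identifications are in place, chaining them through $D_{sg}(K_{r,1})\cong D_{sg}(R_{r,1})$ yields the triangulated equivalence of all five categories.
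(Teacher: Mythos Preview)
Your proposal is correct and follows essentially the same route as the paper: the equivalence (a)$\cong$(b) is the specialisation of Theorem~\ref{SC for KI}, and the remaining equivalences are obtained by citing the same references through the identification of $K_{r,1}$ with the radical-square-zero algebra on $r-1$ generators. The only minor difference is that the paper is slightly more specific about the chain for (d): it goes (c)$\cong$(d) directly via \cite[Theorem~1.2]{HennigSierra} (passing to finitely presented objects using \cite[Proposition~1.4]{SPSmithCategoryequivalences2012}), rather than routing through a Leavitt-type description, but this matches the alternative you mention.
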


Whilst the equivalence between (a) and (b) is a consequence of the results of this paper, the authors in fact learned of this particular equivalence from Dong Yang, who obtained it using explicit dg algebra techniques. This was the motivating example for this paper.

For the equivalences between (b) and (c)  and the definition of $\mathsf{qgr}$-construction for non-noetherian algebras  see \cite{SPSmithCategoryequivalences2012} - in particular, Theorem 7.2.  The equivalence between the categories in (c) and (d)  follows from \cite[Theorem 1.2.]{HennigSierra} by passing to the subcategories of finitely presented objects, cf. \cite[Proposition 1.4.]{SPSmithCategoryequivalences2012}. The equivalence between the categories in (b) and (e) and the definition of the quiver $Q_{r-1}$ and the Leavitt path algebra of a quiver can be found in \cite[Theorem 6.1.]{ChenYang}, which builds on \cite{SPSmithCategoryequivalences2012} and \cite{XWChenSingCatofRadicalsquare0alg}. 

The singularities $R_{r,a}$ generalise the hypersurface singularities $R_{r,1}$ appearing in (a), and the results of this paper produce finite dimensional algebras $K_{r,a}$ and equivalences that generalise the equivalence between (a) and (b); it is an obvious question whether there are  generalisations of the objects and equivalences occurring in (c), (d), and (e).

\medskip

It is an obvious question whether our results can be generalised to  produce  Kn\"{o}rrer type equivalences for further singularities. For nonabelian quotient surface singularities, 
we conjecture that there are no straightforward generalisations and our main result is optimal in the following sense.

\begin{Conjecture} \label{conjecture} If $G < \GL(2, \mathbb{C})$ is a nonabelian finite subgroup and $R=\mathbb{C}\llbracket x, y \rrbracket^G$, then there does not exist a finite dimensional local $\mathbb{C}$-algebra $S$ such that
\[
D_{sg}(R) \cong D_{sg}(S).
\]
\end{Conjecture}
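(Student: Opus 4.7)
The plan is to use a $K$-theoretic obstruction, consistent with the abstract's remark that ``$K$-theory gives obstructions to generalisations of our main result''. The key observation is that $K_0(D_{sg}(S))$ is always finite cyclic for $S$ a finite dimensional local $\mathbb{C}$-algebra, so any obstruction ultimately exploits this cyclicity constraint.

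First I would establish the universal fact on the finite dimensional side. For $S$ finite dimensional local with residue field $\mathbb{C}$, the localisation sequence
\[
K_0(\Perf S) \longrightarrow K_0(D^b(S\text{-}\mod)) \longrightarrow K_0(D_{sg}(S)) \longrightarrow 0
\]
has outer groups $\mathbb{Z}\cdot[S]$ and $\mathbb{Z}\cdot[\mathbb{C}]$ (with $\mathbb{C}$ the unique simple), and the connecting map is multiplication by $\dim_{\mathbb{C}}S$ via the composition series. Hence $K_0(D_{sg}(S)) \cong \mathbb{Z}/(\dim_{\mathbb{C}}S)\mathbb{Z}$, a cyclic group whose order determines $\dim_{\mathbb{C}}S$.

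Next I would compute $K_0(D_{sg}(R))$ for $R=\mathbb{C}\llbracket x,y\rrbracket^G$. In the Gorenstein case $G \subset \SL(2,\mathbb{C})$ Buchweitz's equivalence $D_{sg}(R)\simeq\underline{\textnormal{CM}}(R)$ together with the McKay correspondence yields the well-known identification of $K_0$ with the centre $P/Q$ of the simply-connected Lie group of the corresponding Dynkin type. In general (including non-Gorenstein nonabelian $G\subset\GL(2,\mathbb{C})$) one uses the noncommutative resolution $A=\End_R(R\oplus M_1\oplus\cdots\oplus M_n)$ coming from the exceptional curves of the minimal resolution together with the relative singularity category $\Delta_R(A)$ developed earlier in the paper, and reads off $K_0(D_{sg}(R))$ as an explicit cokernel governed by the Cartan matrix of the dual graph. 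For many nonabelian $G$ -- notably the Gorenstein type $D_{2n}$ case where $P/Q=(\mathbb{Z}/2)^2$, and generically in the non-Gorenstein nonabelian cases where the branched dual graph produces a non-cyclic cokernel -- this directly contradicts the cyclicity established above.

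The main obstacle is the remaining cyclic-$K_0$ cases: the Gorenstein types $D_{2n+1}, E_6, E_7, E_8$ yield $\mathbb{Z}/4, \mathbb{Z}/3, \mathbb{Z}/2, 0$ respectively, all cyclic. Here $K_0$ only pins down $\dim_{\mathbb{C}}S$, so one must supplement with a finer invariant of the triangulated structure to exclude the finitely many candidate local algebras of that dimension -- for instance Hochschild cohomology, the $1$-Calabi--Yau structure given by the Serre functor, or the combinatorics of the Auslander--Reiten quiver (which for nonabelian $G$ acquires a branch vertex from the shape of the minimal resolution, a feature that the AR-quiver of $D_{sg}(S)$ for a finite dimensional local $S$ of finite representation type appears never to exhibit). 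Producing such an obstruction uniformly across all remaining nonabelian $G$ is precisely the reason this statement is posed as a conjecture rather than proved here.
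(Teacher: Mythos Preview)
Your $K$-theoretic strategy matches the paper's approach to providing evidence for this conjecture (which, being a conjecture, is not proved in full in the paper either). Your computation $K_0(D_{sg}(S)) \cong \mathbb{Z}/(\dim_{\mathbb{C}} S)\mathbb{Z}$ is exactly Proposition~\ref{P:GrothendieckLocal}. On the geometric side the paper takes a slightly different and shorter route than your Cartan-matrix cokernel via $\Delta_R(A)$: it invokes the identification $K_0(D_{sg}(R)) \cong Cl(R)$ for two-dimensional normal complete local domains (Theorem~\ref{GrothendieckClassgroup}) together with Brieskorn's $Cl(R) \cong G/[G,G]$ for quotient singularities. This immediately disposes of any $G$ with non-cyclic abelianisation, in particular the non-Gorenstein family $D_{n,2m}$ where $G/[G,G] \cong \mathbb{Z}/2 \times \mathbb{Z}/2(n-2m)$.

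Where your sketch stops short, the paper actually goes further: it \emph{does} settle all Gorenstein types $D$ and $E$, including the cyclic-$K_0$ cases $D_{2n+1}, E_6, E_7, E_8$ that you leave open. The supplementary invariants required are far more elementary than Hochschild cohomology or AR-quiver shape. Since $|Cl(R)| \le 4$ in all these cases, the paper simply enumerates every local $\mathbb{C}$-algebra $S$ of dimension $\le 4$ and checks that $D_{sg}(S)$ either has the wrong (finite) number of indecomposables or fails to be Hom-finite, whereas the ADE singularity categories are Hom-finite with a specific number of indecomposables. What genuinely remains open---and justifies the conjectural status---are the non-Gorenstein nonabelian $G$ with cyclic abelianisation (the remaining type $D$ and the tetrahedral/octahedral/icosahedral families), not the Gorenstein $D,E$ cases.
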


We provide evidence for this conjecture in Section \ref{Sec: Obstructions}: an analysis of the Grothendieck groups of $D_{sg}(R)$ and $D_{sg}(S)$ yields obstructions to singular equivalences and shows that the conjecture is true for all finite subgroups $G < \SL(2,\mathbb{C})$ and many non-Gorenstein singularities of type $D$.

 \medskip

The results of this paper also expand on aspects of Hille \& Ploog's paper \cite{HillePloog}. The construction of the algebra $\Lambda_{r,a}$ in this paper is taken directly from \cite{HillePloog}, and we elaborate on Hille and Ploog's construction by providing an explicit presentation of $\Lambda_{r,a}$ in Proposition \ref{Lambda Presentation Proposition} and proving that $\Lambda_{r,a}$ is a noncommutative resolution of the Kn\"{o}rrer invariant algebra $K_{r,a}$ in Theorem \ref{Endomorphism Algebra Isomorphism Theorem}.

Moreover, the description of $\Lambda_{r,a}$ as a noncommutative resolution of $K_{r,a}$ allows us to understand a further aspect of Hille \& Ploog's work: the construction of $\Lambda_{r,a}$ depends on a choice of direction for the curves and it is natural to ask how the two choices are related. We address this question in further work, \cite{KalckKarmazynRingeldual}, where we show that the quasi-hereditary structure on the algebras is unique and the algebras associated to the two orientations of the curves are related by Ringel duality.

\subsection*{Structure of Paper}
Section 2 recalls Hille and Ploog's construction of the algebras $\Lambda_{r,a}$ and also Wemyss's reconstruction algebras $A_{r,a}$, which are endomorphism algebras of Van den Bergh's tilting bundles in the special case of cyclic quotient surface singularities. The definitions of singularity categories and relative singularity categories and relevant related results are recapped in Section 3.
The main results of this paper are proved in Section 4, where a functor is constructed and shown to induce the desired equivalences of (relative) singularity categories. Evidence supporting Conjecture \ref{conjecture}, stating that our main results cannot be naively generalised to nonabelian quotient surface singularities, is presented in Section 5. Finally, in Section 6 we provide explicit descriptions of the algebras $\Lambda_{r,a}$ and $K_{r,a}$ in terms of quivers and relations.

\subsection*{Notational preliminaries}

For $X$ a quasi-compact and separated scheme we let $D(X)=D(\QCoh \, X)$ denote the unbounded derived category of quasicoherent sheaves on $X$ and $D^b(X)=D^b(\Coh \, X)$ denote the bounded derived category of coherent sheaves. Then $D(X)$ is closed under small direct sums \cite[Example 1.3]{Neeman} and $D(X)$ is compactly generated with compact objects the perfect complexes \cite[Proposition 2.5]{Neeman} which we denote by $\Perf(X)$.  We use similar notation for left modules $A$-$\Mod$ and finitely generated left modules $A$-$\mod$ over a Noetherian $\mathbb{C}$-algebra $A$. 

We  will use the composition rule for functions that that $f$ followed by $g$ is denoted $fg$, and this will also be our composition rule for arrows in a quiver.

\subsection*{Acknowledgments}

The authors thank Michael Wemyss, Agnieszka Bodzenta, Xiao-Wu Chen, Osamu Iyama, Alastair King, Dmitri Orlov, and Michel Van den Bergh for useful discussions and input. They also particularly thank Lutz Hille, David Ploog and Dong Yang for access to unpublished work that inspired this paper.

The first author was supported by EPSRC grant EP/L017962/1. The second author was supported at the beginning of this work on EPSRC grant EP/I004130/2 and afterwards on the EPSRC grant EP/M017516/1.

\section{Resolutions of singularities} \label{Geometry of curves} This section recalls geometric and noncommutative resolutions of cyclic surface quotient singularities $R_{r,a}$ that are later used to build equivalences between the singularity categories of $R_{r,a}$ and a finite dimensional algebra $K_{r,a}$. 

\subsection{Geometric setup}

We are interested in cyclic surface quotient singularities $R_{r,a}:= \mathbb{C}[[x,y]]^{\frac{1}{r}(1,a)}$, and we first consider how such a singularity may occur as an isolated singular point of a projective surface. 

Take $X$ a smooth projective surface such that $\CH^i(\mathcal{O}_X)=0$ for $i >1$ that contains a type $A_n$ configuration of rational curves, $C:=\cup \, C_i \subset X$ such that $C_i \cong \mathbb{P}^1$ with self-intersection numbers $C_i \cdot C_i := -\alpha_i  \le -2$
that can be contracted to a point, and let $\pi:X \rightarrow Y$ denote the this contraction.

\[
\begin{tikzpicture}   
\node (X) at (-5,0) {$X$};
\node (Y) at (-2,-2) {$Y$};

\draw (-2.6,-0.3) to [bend left=45] node[pos=0.48, above] {$C_1$} (-1.2,-0.3);
\draw (-1.8,-0.3) to [bend left=45] node[pos=0.48, above] {$C_2$} (-0.4,-0.3);
\draw (-1,-0.3) to [bend left=25] node[pos=0.48, right] {} (-0.2,0);
\node (D) at (0,-0.2) {$\dots$};

\draw (2.6,-0.3) to [bend right=45] node[pos=0.48, above] {$C_n$} (1.2,-0.3);
\draw (1.8,-0.3) to [bend right=45] node[pos=0.48, above] {$C_{n-1}$} (0.4,-0.3);
\draw (1,-0.3) to [bend right=25] node[pos=0.48, right] {} (0.2,0);

\draw[black] (0,0) ellipse (4 and 1);
\draw[->] (0,-1.1) -- (0,-1.5) node[pos=0.48, right] {$\pi$};
\draw[black] (0,-2) ellipse (0.9 and 0.4);
\filldraw [black] (0,-2) circle (1pt);
\end{tikzpicture}
\]
The morphism $\pi:X \rightarrow Y$ is a minimal resolution of singularities and $Y$ has a unique singular point with germ $R_{r,a}=\mathbb{C}[[x,y]]^{\frac{1}{r}(1,a)}$ where $0<a<r$ are coprime integers that can be calculated from the Hirzebruch-Jung continued fraction 
\[
\frac{r}{a} = \alpha_1 - \cfrac{1}{\alpha_{2}
          -\cfrac{1}{\dots - \cfrac{1}{\alpha_n} } } =\left[ \alpha_1, \dots, \alpha_n \right],
\]
see \cite[Section 3]{RiemenschneiderNach} or \cite{Hirzebruch1953,Jung1908}. 

We will use either the notation $X_{r,a}$ or $X_{[\alpha_1, \dots, \alpha_n]}$ for the variety $X$ to emphasize the additional data of a chosen type $A$ configuration as above.

\begin{Remark}\label{R:taut}
Cyclic quotient surface singularities are \emph{taut} in the sense of Laufer \cite{Laufer}: if $\Spec \, R$ is the germ of an isolated surface singularity whose minimal resolution also has a type $A$ configuration of curves $E:=\cup E_i$ as the exceptional divisor with $E_i \cong \mathbb{P}^1$ and $E_i \cdot E_i \cong -\alpha_i$ then $R \cong R_{r,a}$ where $r/a= [ \alpha_1, \dots, \alpha_n]$.
\end{Remark}

We can also consider the singularity as the complete local affine scheme $\Spec \, R_{r,a}$. Let $U_{r,a}$ denote a complete, local, affine, neighbourhood of the singular point in $Y_{r,a}$. Then $U_{r,a} \cong \Spec \,  R_{r,a}$. Let $u:U_{r,a} \rightarrow Y_{r,a}$ denote the inclusion of this affine scheme, and  define $V_{r,a} \xrightarrow{p} U_{r,a}$ to be the pullback of the minimal resolution $\pi: X \rightarrow Y$. Then the pullback $p:V_{r,a} \rightarrow U_{r,a}$ is the minimal resolution of the singularity $\Spec \, R_{r,a}$ as a scheme and there is an affine inclusion $v:V_{r,a} \rightarrow X_{r,a}$. In particular, the morphisms $u$ and $v$ are flat and affine and the closed immersion of the curves $C:= \cup \, C_i$ into $X_{r,a}$ factors through $v:V_{r,a} \rightarrow X_{r,a}$.

\[
\begin{tikzpicture}

\draw (4.4,1.7) to [bend left=45] node[pos=0.48, above] {} (5.8,1.7);
\draw (5.2,1.7) to [bend left=45] node[pos=0.48, above] {} (6.6,1.7);
\draw (6,1.7) to [bend left=25] node[pos=0.48, right] {} (6.8,2);
\node (D) at (7,1.8) {$\dots$};

\draw (9.6,1.7) to [bend right=45] node[pos=0.48, above] {} (8.2,1.7);
\draw (8.8,1.7) to [bend right=45] node[pos=0.48, above] {} (7.4,1.7);
\draw (8,1.7) to [bend right=25] node[pos=0.48, right] {} (7.2,2);

\draw (-2.6,-0.3) to [bend left=45] node[pos=0.48, above] {} (-1.2,-0.3);
\draw (-1.8,-0.3) to [bend left=45] node[pos=0.48, above] {} (-0.4,-0.3);
\draw (-1,-0.3) to [bend left=25] node[pos=0.48, right] {} (-0.2,0);
\node (D) at (0,-0.2) {$\dots$};

\draw (2.6,-0.3) to [bend right=45] node[pos=0.48, above] {} (1.2,-0.3);
\draw (1.8,-0.3) to [bend right=45] node[pos=0.48, above] {} (0.4,-0.3);
\draw (1,-0.3) to [bend right=25] node[pos=0.48, right] {} (0.2,0);

\draw (4.4,-0.3) to [bend left=45] node[pos=0.48, above] {} (5.8,-0.3);
\draw (5.2,-0.3) to [bend left=45] node[pos=0.48, above] {} (6.6,-0.3);
\draw (6,-0.3) to [bend left=25] node[pos=0.48, right] {} (6.8,0);
\node (D) at (7,-0.2) {$\dots$};

\draw (9.6,-0.3) to [bend right=45] node[pos=0.48, above] {} (8.2,-0.3);
\draw (8.8,-0.3) to [bend right=45] node[pos=0.48, above] {} (7.4,-0.3);
\draw (8,-0.3) to [bend right=25] node[pos=0.48, right] {} (7.2,0);

\draw[black] (0,-0.2) ellipse (3.3 and 1);
\draw[->] (0,-1.3) -- (0,-1.7) node[pos=0.48, right] {$\pi$};
\draw[black] (0,-2) ellipse (0.9 and 0.4);
\filldraw [black] (0,-2) circle (1pt);
\draw [red, thick, dotted] (0,-2) circle (4pt);

\draw[red, thick, dotted] (0,-0.2) ellipse (2.9 and 0.6);

\draw[->] (7,-1) -- (7,-1.7) node[pos=0.48, right] {$p$};
\draw [red, thick, dotted] (7,-2) circle (4pt);

\draw[red, thick, dotted] (7,-0.2) ellipse (2.9 and 0.6);
\filldraw [black] (7,-2) circle (1pt);

\draw [left hook->] (6.8,-2) to node[above]  {\scriptsize{$u$}} (0.3,-2);
\draw [left hook->] (4,-0.2) to node[above]  {\scriptsize{$v$}} (3,-0.2);

\draw [left hook->] (6.8,1.3) to node[above]  {} (6.8,0.5);

\draw [left hook->] (6,1.3) to node[above]  {} (3,0.5);

\node (C1) at (-3,-2)  {$Y_{r,a}$};
\node (C2) at (9,-2)  {$U_{r,a} = \Spec \, R_{r,a}$};
\node (C3) at (-3,1)  {$X_{r,a}$};
\node (C4) at (9,0.8)  {$V_{r,a}$};

\node (C5) at (9,2.5)  {$C$};

\end{tikzpicture}
\]

Having introduced the singularity and resolution we now introduce two noncommutative resolutions; one of $R_{r,a}$ and one of, the yet undefined, $K_{r,a}$.

\subsection{Noncommutative resolution of $R_{r,a}$} \label{Sec:NCRes} The projective morphism $p:V_{r,a} \rightarrow U_{r,a} = \Spec \, R_{r,a}$ contracts a collection of curves $C_i$ for $1 \le i \le n$ to a point. It has one dimensional fibres and it follows from $H^1(\mathcal{O}_{X_{r,a}})=0$ for $i>1$ that  $\mathbf{R}p_* \mathcal{O}_{V_{r,a}}\cong \mathcal{O}_{U_{r,a}}$. A corresponding noncommutative resolution was constructed for such morphisms by Van den Bergh \cite{3Dflops}, and we apply these results in the particular case of $p:V_{r,a} \rightarrow U_{r,a}$ considered in this paper. We start by recalling the structure of line bundles on $V_{r,a}$.

\begin{Proposition}[{See \cite{WunramReflexiveModules} or\cite[Section 3.4 and Lemma 3.5.1]{3Dflops}}] \label{Line Bundle Proposition}
For the projective morphism $p:V_{r,a} \rightarrow U_{r,a} = \Spec \, R_{r,a}$ contracting curves $C_i$:
\begin{enumerate}
\item There exist divisors $D_i \subset V_{r,a}$ such that $D_i \cap C_j = \left\{ 
\begin{array}{cc}
pt & \text{ if $i=j$},\\
\varnothing & \text{ otherwise}
\end{array}
\right.$.
\item A line bundle $\mathcal{L}$ on $V_{r,a}$ is isomorphic to $\mathcal{O}_{V_{r,a}}(\sum a_i D_i)$ where $a_i = \deg \mathcal{L} \, |_{C_i}$. In particular, this map is an isomorphism between the Picard group of $V_{r,a}$ and  $\mathbb{Z}^n$. 
\end{enumerate}
\end{Proposition}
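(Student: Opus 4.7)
The plan is to first reduce part (2) to part (1) by studying the intersection-pairing homomorphism, and then to construct the divisors $D_i$ directly using the toric structure of $V_{r,a}$ (equivalently, Wunram's correspondence).

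For part (2), I would show that the intersection-pairing homomorphism
\[
\varphi : \mathrm{Pic}(V_{r,a}) \to \mathbb{Z}^n, \qquad \mathcal{L} \mapsto (\deg \mathcal{L}|_{C_i})_{i=1}^n
\]
is an isomorphism. \emph{Injectivity}: since $R_{r,a}$ is a rational singularity, $\mathbf{R}p_* \mathcal{O}_{V_{r,a}} = \mathcal{O}_{U_{r,a}}$; combined with $\CH^1(U_{r,a}, \mathcal{O}) = 0$ (as $U_{r,a}$ is affine), the Leray spectral sequence forces $\CH^1(V_{r,a}, \mathcal{O}) = 0$, so line bundles are determined by their first Chern class. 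A line bundle of degree zero on every $C_i$ is numerically trivial on the exceptional fibre, hence pulled back along $p$ from some $\mathcal{L}' \in \mathrm{Pic}(U_{r,a})$; but $\mathrm{Pic}(U_{r,a}) = 0$ since $R_{r,a}$ is complete local, so $\mathcal{L}$ is trivial. Granting (1), \emph{surjectivity} and the explicit formula are then simultaneous: for any $\mathcal{L}$ with $a_i := \deg \mathcal{L}|_{C_i}$, the line bundle $\mathcal{L} \otimes \mathcal{O}_{V_{r,a}}(-\sum a_i D_i)$ lies in $\ker \varphi$ and so is trivial by the preceding argument, giving $\mathcal{L} \cong \mathcal{O}_{V_{r,a}}(\sum a_i D_i)$.

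For part (1), I would exploit the toric structure: $R_{r,a}$ is toric and $V_{r,a}$ is the toric variety whose fan in $\mathbb{Z}^2$ has rays $v_0, v_1, \dots, v_n, v_{n+1}$, where $v_0, v_{n+1}$ bound the original singular cone, the rays $v_1, \dots, v_n$ are added by Hirzebruch--Jung subdivision, and the torus-invariant divisor $D_{v_i}$ ($1 \le i \le n$) is precisely $C_i$. The ``outer'' torus-invariant divisors $D_{v_0}$ and $D_{v_{n+1}}$ meet only $C_1$ and $C_n$ respectively, each transversally at a single torus-fixed point. Using the two lattice relations among $\{D_{v_j}\}_{j=0}^{n+1}$ coming from characters of the torus, one can explicitly write integer combinations $D_i = \sum b_{ij} D_{v_j}$ (well defined up to principal divisors) realising $D_i \cdot C_j = \delta_{ij}$. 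Alternatively, one invokes Wunram's theorem: for each exceptional component $C_i$ of the minimal resolution of a rational surface singularity there is a rank-one special reflexive sheaf $\mathcal{M}_i$ on $V_{r,a}$ whose first Chern class pairs as $\delta_{ij}$ with $C_j$, and $D_i$ can be taken as any divisor representative of $c_1(\mathcal{M}_i)$.

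The main obstacle is the integrality in part (1). Since the intersection matrix $(C_i \cdot C_j)$ is negative definite, and hence invertible over $\mathbb{Q}$, $\mathbb{Q}$-divisors supported on the exceptional locus with the prescribed pairing are immediate. Producing actual Cartier divisors with integer intersection numbers $\delta_{ij}$ is more delicate, and is essentially the content of the proposition: this is where either the explicit toric description or the Wunram correspondence does the real work.
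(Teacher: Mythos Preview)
The paper does not supply its own proof of this proposition: it is stated with attribution to Wunram and to Van den Bergh's \cite{3Dflops}, and used as a black box thereafter. So there is no paper-proof to compare against; your proposal is an independent argument.

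Your overall strategy is sound and is essentially how the cited references proceed. The toric construction in part (1) is correct and is the cleanest route for cyclic quotient singularities; the alternative via Wunram's special reflexive modules is also valid. Your reduction of (2) to (1) is the right shape.

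One step deserves more care. In your injectivity argument you write that a line bundle of degree zero on every $C_i$ is ``numerically trivial on the exceptional fibre, hence pulled back along $p$ from some $\mathcal{L}' \in \mathrm{Pic}(U_{r,a})$''. This ``hence'' is not automatic and, as stated, is close to what you are trying to prove. The standard argument does not go via pullback from $U_{r,a}$; rather, one uses that $R_{r,a}$ is complete local so that $\mathrm{Pic}(V_{r,a})$ agrees with the Picard group of the formal completion along $C$, and then shows $\mathrm{Pic}(\widehat{V}) \cong \mathrm{Pic}(C)$ because the obstruction groups $H^1(C, I^m/I^{m+1})$ vanish (this is where rationality, i.e.\ $R^1 p_* \mathcal{O}_V = 0$, is used). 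Since $C$ is a chain of $\mathbb{P}^1$'s, $\mathrm{Pic}(C) \cong \mathbb{Z}^n$ via the degree map, which gives injectivity directly. Your invocation of $H^1(V_{r,a},\mathcal{O})=0$ is the right ingredient, but the conclusion should be routed through the formal neighbourhood rather than through a pullback statement.
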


In such a situation recall the abelian category $\mathcal{A}_{r,a}:={^{0}\Per(V_{r,a}/U_{r,a})}$.
\begin{Definition}[{See \cite[Section 3.1]{3Dflops}}] \label{Perverse Definition} Define $\mathfrak{C}$ to be the abelian subcategory of $\Coh \, V_{r,a}$ consisting of $\mathcal{F} \in \Coh\, V$ such that $\mathbf{R}p_*\mathcal{F} \cong 0$. The abelian category $\mathcal{A}_{r,a}$ is defined to be the heart of a $t$-structure on $D^b(V_{r,a})$ that contains the objects $\E \in D^b(V_{r,a})$ satisfying the following conditions:
\begin{enumerate}
\item The only non-vanishing cohomology of $\E$ lies in degrees $-1$ and $0$.
\item $p_*\mathcal{H}^{-1}(\E)=0$ and  $\mathbf{R}^1 p_* \mathcal{H}^0(\E)=0$, where $\mathcal{H}^j(-)$ denotes the $j^{th}$ cohomology\mbox{ sheaf.}
\item $\Hom_{V_{r,a}}(C,\mathcal{H}^{-1}(\E))=0$ for all $C \in \mathfrak{C}$.
\end{enumerate}
\end{Definition}
Applying this result to a cyclic surface quotient singularity $R_{r,a}$ provides a noncommutative resolution of $R_{r,a}$.

\begin{Theorem}[{\cite[Section 3.5]{3Dflops}}] \label{Affine Abelian Theorem}
The abelian category $\mathcal{A}_{r,a}:={^{0}}\Per(V_{r,a}/R_{r,a})$ has a projective generator, $n+1$ simple objects, and $n+1$ indecomposable projective objects.
\begin{enumerate}
\item The simple objects are $s_i := \mathcal{O}_{C_i}(-1)$ for $1 \le i \le n$ and $s_{0}= \omega_C[1]$. 
\item The indecomposable projective objects are $P_{0}:= \mathcal{O}$ and $P_i:= \mathcal{O}(-D_i)$ for \mbox{$1 \le i \le n$.}
\item Any projective object in $\mathcal{A}_{r,a}$ is a direct sum of the $P_i$ and so is uniquely determined by its rank and first Chern class.
\end{enumerate}
The basic projective generator $T=\bigoplus_{i=0}^n P_i$ induces an equivalence of abelian categories
\begin{center}
\[
\begin{tikzpicture} [bend angle=15, looseness=1]
\node (C1) at (-1,0)  {$\mathcal{A}_{r,a}$};
\node (C2) at (3,0)  {$A_{r,a}-\mod$};
\node (C) at (1,0)  {$\cong$};
\draw [->,bend left] (C1) to node[above]  {$\scriptstyle{\Hom_{D(V_{r,a})}(T,-)}$} (C2);
\draw [->,bend left] (C2) to node[below]  {\scriptsize{$ T \otimes_{A_{r,a}}(-)$}} (C1);
\end{tikzpicture}
\]
\end{center}
where $A_{r,a}:= \End_{V_{r,a}}(T)$. This induces an triangle equivalence $D^b(A_{r,a}) \cong D^b(V_{r,a})$.
\end{Theorem}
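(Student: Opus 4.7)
The plan is to invoke Van den Bergh's general framework from \cite{3Dflops} for the projective birational morphism $p : V_{r,a} \to U_{r,a}$, and then verify the explicit identifications by direct computation using the chain of rational curves. Since $p$ has one-dimensional fibres and $\mathbf{R}p_* \mathcal{O}_{V_{r,a}} \cong \mathcal{O}_{U_{r,a}}$, the general theory produces a finite-length abelian category $\mathcal{A}_{r,a}$ with a projective generator whose simple objects are in bijection with the irreducible components $C_i$ together with one extra simple associated to the whole exceptional fibre, giving $n+1$ simples in total.

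First I would identify the simples with the claimed objects. For $i \geq 1$ the sheaf $s_i = \mathcal{O}_{C_i}(-1)$ lies in $\mathfrak{C}$ since $\mathbf{R}p_* \mathcal{O}_{\mathbb{P}^1}(-1) = 0$, and it satisfies the conditions of Definition~\ref{Perverse Definition} trivially in degree $0$. For $s_0 = \omega_C[1]$ one uses relative Serre/Grothendieck duality for $p$ to compute $p_* \omega_C = 0$ and $\mathbf{R}^1 p_* \omega_C$ supported at the singular point, so the shift places $s_0$ in the perverse heart. Simplicity in $\mathcal{A}_{r,a}$ follows from irreducibility of the components together with the orthogonality property established next.

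Next I would identify the projectives. By Proposition~\ref{Line Bundle Proposition} the divisors $D_i$ satisfy $D_i \cdot C_j = \delta_{ij}$, so a direct computation gives $\Hom_{V_{r,a}}(\mathcal{O}(-D_i), \mathcal{O}_{C_j}(-1)) = H^0(C_j, \mathcal{O}_{C_j}(\delta_{ij} - 1))$, which is $\mathbb{C}$ if $i=j$ and $0$ otherwise. For $P_0 = \mathcal{O}$ the analogous computation yields $\Hom(\mathcal{O}, s_j)=0$ for $j \geq 1$ using $H^0(\mathbb{P}^1, \mathcal{O}(-1))=0$, and the pairing with $s_0$ is handled via adjunction and duality to give $\Hom(P_i, s_0) = \delta_{i0}\mathbb{C}$. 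Projectivity of each $P_i$ in $\mathcal{A}_{r,a}$ then follows from showing $\Ext^{>0}_{\mathcal{A}_{r,a}}(P_i, s_j) = 0$, which reduces to sheaf-cohomology calculations on $V_{r,a}$.

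The main technical step, and the hard part of the proof, is verifying that $T = \bigoplus_{i=0}^{n} P_i$ is a tilting object, namely that $\Ext^k_{V_{r,a}}(T, T) = 0$ for $k > 0$. Since each summand is a line bundle and $\CH^i(\mathcal{O}_X) = 0$ for $i > 1$ by hypothesis, the relevant calculations reduce to vanishing of cohomology of the line bundles $\mathcal{O}(D_j - D_i)$ twisted along the exceptional locus; this is controlled by the negative-definiteness of the intersection matrix of the $C_k$ together with the numerical relations $D_i \cdot C_k = \delta_{ik}$ and the chain structure of the configuration. Once tilting is established, Gabriel--Mitchell applied to the compact projective generator $T$ gives the equivalence $\mathcal{A}_{r,a} \cong A_{r,a}\text{-}\mod$, and the standard tilting argument promotes this to the derived equivalence $D^b(V_{r,a}) \cong D^b(A_{r,a})$. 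The uniqueness of the decomposition of an arbitrary projective as a sum of the $P_i$ follows from Krull--Schmidt, and Proposition~\ref{Line Bundle Proposition}(2) shows the multiplicities $n_i$ for $i \geq 1$ are recovered from the first Chern class via the linear independence of the $D_i$ in the Picard group, while $n_0$ is then determined by the total rank.
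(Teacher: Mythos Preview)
The paper does not prove this theorem at all: it is stated as a citation of Van den Bergh \cite[Section 3.5]{3Dflops}, applied verbatim to the morphism $p\colon V_{r,a}\to U_{r,a}$, with no argument beyond the attribution. So there is nothing in the paper to compare your proposal against; you have supplied substantially more than the authors do.

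Your sketch is broadly correct as an outline of why Van den Bergh's result holds in this case, but one point is worth flagging. You describe the tilting property of $T$ as ``the hard part'' to be verified by direct Ext-vanishing computations on the line bundles $\mathcal{O}(D_j-D_i)$. In Van den Bergh's actual argument the logic runs differently: he constructs the projective generator as $\mathcal{O}_V \oplus \mathcal{M}$ where $\mathcal{M}$ arises from a universal extension designed to kill $H^1$, and the vanishing of higher self-Ext is a consequence of this construction together with the fibre-dimension bound, not an input to be checked case by case. Your direct approach would work here because the $P_i$ happen to be line bundles and the combinatorics of the $A_n$ chain are tractable, but it is not how the cited reference proceeds, and it would not generalise as cleanly. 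The identifications of the simples and projectives you give are the standard ones and match what Van den Bergh writes down.
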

\begin{Remark}
To ease notation we will also let $s_i$ and $P_i$ denote the corresponding simple and projective objects in $A_{r,a}$-$\mod$ under this equivalence of abelian categories.
\end{Remark}

Such a situation, the minimal resolution of a cyclic quotient surface singularity, occurs in the $\GL_2(\mathbb{C})$ McKay correspondence \cite{WemyssGL2}, and in this situation the algebra $A_{r,a}^{\op} \cong \End_{V_{r,a}}(T^{\vee})$ has been explicitly identified as the reconstruction algebra of type $A$ in \cite{RCAA}. We recall a presentation of the reconstruction algebra of type $A$ in Section \ref{Reconstruction algebras}.

\subsection{A triangulated category of Hille and Ploog}\label{Subcategory} Having recalled a noncommutative resolution of $R_{r,a}$ we now consider a different triangulated category associated to the resolution $\pi: X_{r,a} \rightarrow Y_{r,a}$ introduced by Hille and Ploog \cite{HillePloog}. This is a full, thick subcategory of $D^b(X_{r,a})$, and Hille and Ploog have shown, using universal extension techniques, that it has an internal structure: it is the derived category of finitely generated modules over a particular quasi-hereditary, finite dimensional algebra.

\begin{Definition}
For $0 \le i  \le n$ define the line bundles
\[
\mathcal{L}_i:= \mathcal{O}(-C_{i+1} \dots -C_{n}),
\]
and the full triangulated subcategory closed under summands
\[
D_{r,a}:= \langle \mathcal{L}_{0}, \dots, \mathcal{L}_n \rangle \subset D^b(X_{r,a}).
\]
\end{Definition}

\begin{Remark}
These definitions depend on a choice of orientation of the type $A_n$ configuration of curves. In particular, labelling the curves with the opposite orientation, $C_n, \dots, C_1$, produces the subcategory $D_{r,a^{-1}}$ where $a^{-1}$ is the inverse of $a$ modulo $r$. This is consistent with the fact that if $r/a=[\alpha_1, \dots, \alpha_n]$ then $r/{a^{-1}}=[\alpha_n, \dots, \alpha_1]$ (see \cite[Section 3.4. (17)]{Hirzebruch1953}).
\end{Remark}

As $D_{r,a}$ is a full subcategory it comes equipped with a fully faithful inclusion $D_{r,a} \subset D^b(X_{r,a})$. Moreover,  Hille and Ploog have shown that the intersection of this subcategory with $\Coh \, X_{r,a}$ is itself an abelian category and is equivalent to the abelian category of finitely generated modules over a finite dimensional algebra. Interpreting their results yields the following description of the category.

\begin{Proposition}[Hille and Ploog \cite{HillePloog}] \label{Proposition HillePloog Abelian Category} The intersection $D_{r,a} \cap \Coh(X_{r,a}) $ is an abelian category with a projective generator. It is (strongly) quasihereditary of global dimension 2, and it has $n+1$ simple objects, $n+1$ standard objects, and $n+1$ indecomposable projective objects. 
\begin{enumerate}
\item
The $n+1$ simple objects $\sigma_0, \dots,\sigma_{n}$ are defined by
 \begin{align*} 
&\sigma_{0}:= \mathcal{O}_X(-C_1 - \dots -C_n)), \\
&\sigma_i := \mathcal{O}_{C_i}(-1), \quad \text{ and } \\
&\sigma_n:=\mathcal{O}_{C_n}.
\end{align*}
\item
The $n+1$ standard objects $\mathcal{L}_0, \dots, \mathcal{L}_{n}$ are defined by $\mathcal{L}_i:= \mathcal{O}_X(-C_{i+1} \dots - C_{n})$. They are related to the simple modules by the following set of short exact sequences.
\begin{equation*}
\begin{array}{ccccccccc}
 0 &\rightarrow & \mathcal{L}_{n-1}& \rightarrow&  \mathcal{L}_n & \rightarrow&    \sigma_n&  \rightarrow&  0 \\
& \vdots &&&&&& \vdots & \\
 0& \rightarrow & \mathcal{L}_{i-1}&  \rightarrow & \mathcal{L}_i & \rightarrow &   \sigma_i  &\rightarrow&  0 \\
& \vdots &&&&&& \vdots & \\
 0 &\rightarrow &\mathcal{L}_{0} & \rightarrow&  \mathcal{L}_1 & \rightarrow&   \sigma_1 & \rightarrow&  0 \\
 0& \rightarrow  &0  &\rightarrow & \mathcal{L}_{0}&  \rightarrow  &  \sigma_{0} & \rightarrow&  0 \\
\end{array}
\end{equation*}
\item
The $n+1$ indecomposable projective objects $\Lambda_0, \dots, \Lambda_{n}$ can be defined as universal extensions of the $\mathcal{L}_i$. They are related to the standard modules by the following set of short exact sequences.
\begin{equation*}
\begin{array}{ccccccccc}
 0 & \rightarrow& 0 & \rightarrow& \Lambda_n  &\rightarrow &   \mathcal{L}_n &  \rightarrow & 0 \\
 0&  \rightarrow & \Lambda_n^{\oplus \alpha_n-2} \oplus \Lambda_n & \rightarrow  &\Lambda_{n-1}  &\rightarrow &  \mathcal{L}_{n-1} & \rightarrow   & 0 \\
& \vdots &&&&&& \vdots & \\
 0 & \rightarrow & \bigoplus_{j=i}^n \Lambda_j^{\oplus \alpha_j-2} \oplus \Lambda_{i}  & \rightarrow &\Lambda_{i-1} & \rightarrow  & \mathcal{L}_{i-1} & \rightarrow & 0 \\
& \vdots &&&&&& \vdots & \\
 0 & \rightarrow & \bigoplus_{j=1}^n \Lambda_j^{\oplus \alpha_j-2} \oplus \Lambda_1 & \rightarrow & \Lambda_{0} & \rightarrow  & \mathcal{L}_{0} & \rightarrow  & 0  \\
\end{array}
\end{equation*}
\end{enumerate}
The basic projective generator $\Lambda:=\bigoplus_{i=0}^{n} \Lambda_{i}$ induces an equivalence of abelian categories
\begin{center}
\[
\begin{tikzpicture} [bend angle=15, looseness=1]
\node (C1) at (0,0)  {$D_{r,a} \cap \Coh \, X_{r,a}$};
\node (C2) at (4,0)  {$\Lambda_{r,a}-\mod$};
\node (C) at (2,0)  {$\cong$};
\draw [->,bend left] (C1) to node[above]  {$\scriptstyle{\Hom_{X_{r,a}}(\Lambda,-)}$} (C2);
\draw [->,bend left] (C2) to node[below]  {\scriptsize{$ \Lambda \otimes_{\Lambda_{r,a}}(-)$}} (C1);
\end{tikzpicture}
\]
\end{center}
where we define the algebra $\Lambda_{r,a}:=\End_{X_{r,a}}(\Lambda)$. This induces an equivalence of triangulated categories $D_{r,a} \cong D^b(\Lambda)$.
\end{Proposition}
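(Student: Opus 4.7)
\emph{Proof proposal.} The plan is to construct a tilting object $\Lambda = \bigoplus_{i=0}^n \Lambda_i$ in $D_{r,a}$ by iterated universal extensions starting from the line bundles $\mathcal{L}_i$, and then deduce the abelian and quasi-hereditary statements from standard tilting theory. First, the short exact sequences of part (2) come directly from tensoring the sequence $0 \to \mathcal{O}_X(-C_i) \to \mathcal{O}_X \to \mathcal{O}_{C_i} \to 0$ by $\mathcal{L}_i$ once one identifies $\mathcal{L}_i|_{C_i}$ using the intersection numbers $C_j \cdot C_k$ (equal to $1$ when $|j-k|=1$, $-\alpha_j$ when $j=k$, and $0$ otherwise): this gives $\mathcal{L}_i|_{C_i}$ of degree $-1$ for $1 \leq i < n$ and degree $0$ for $i = n$, while the case $i=0$ is tautological. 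That $(\mathcal{L}_0, \dots, \mathcal{L}_n)$ is a full strong exceptional collection in $D_{r,a}$ then reduces to the vanishing of the cohomology of the line bundles $\mathcal{L}_j^{-1} \otimes \mathcal{L}_i = \mathcal{O}_X(-C_{i+1} - \dots - C_j)$ for $j \geq i$, which follows by restriction to the $C_k$, the identity $H^k(\mathbb{P}^1, \mathcal{O}(d)) = 0$ for $d \geq -1$, and the assumed vanishings $H^{>0}(\mathcal{O}_X) = 0$ on the rational surface $X_{r,a}$.

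Next I would define the projectives inductively by $\Lambda_n := \mathcal{L}_n$ and, descending on $i$, take $\Lambda_{i-1}$ to be the universal extension
\[
0 \to \bigoplus_{j \geq i} \Lambda_j^{\oplus \dim \Ext^1(\mathcal{L}_{i-1}, \Lambda_j)} \to \Lambda_{i-1} \to \mathcal{L}_{i-1} \to 0.
\]
Part (3) then reduces to establishing the dimensions $\dim \Ext^1(\mathcal{L}_{i-1}, \Lambda_j) = \alpha_j - 2$ for $j > i$ and $\alpha_i - 1$ for $j = i$. The base identity $\Ext^1(\mathcal{L}_{i-1}, \mathcal{L}_i) = H^1(X_{r,a}, \mathcal{O}_X(C_i)) = \mathbb{C}^{\alpha_i - 1}$ follows from the sequence $0 \to \mathcal{O}_X \to \mathcal{O}_X(C_i) \to \mathcal{O}_{C_i}(-\alpha_i) \to 0$ together with $H^1(\mathbb{P}^1, \mathcal{O}(-\alpha_i)) = \mathbb{C}^{\alpha_i - 1}$ and the rationality of $X_{r,a}$. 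The remaining multiplicities are obtained inductively by applying $\Hom_X(\mathcal{L}_{i-1}, -)$ to the already constructed sequences for the $\Lambda_j$ and using the sequences of part (2) to identify each term.

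The construction guarantees $\Ext^{>0}(\Lambda, \Lambda) = 0$, so $\Lambda$ is a tilting object of $D_{r,a}$ and yields a derived equivalence $D_{r,a} \simeq D^b(\Lambda_{r,a}\text{-}\mod)$; restricting to the standard hearts produces the claimed abelian equivalence. The quasi-hereditary order and the standard filtrations of the $\Lambda_i$ are read off from the exceptional collection together with the sequences in (3), and the global dimension bound of $2$ follows because the sequences in (2) and (3) splice into length-two projective resolutions of all simples. The main obstacle will be the inductive Ext calculation producing the multiplicities $\alpha_j - 2$: one must carefully track how the copies of $\Lambda_j$ introduced at earlier steps contribute to $\Ext^1(\mathcal{L}_{i-1}, \Lambda_j)$ at later stages and verify that the extra contributions cancel, leaving only the combinatorial intersection data on $X_{r,a}$.
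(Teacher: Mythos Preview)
Your overall strategy---verify the divisor sequences in (2), check the exceptional-collection property, build $\Lambda_i$ by iterated universal extensions, and deduce tilting and the quasi-hereditary structure---is correct and is essentially the Hille--Ploog construction that the paper cites for everything except part (3).

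The one place where your approach diverges from the paper is the computation of the multiplicities in (3), and here the paper's route is substantially cleaner than yours. You propose to compute $\dim\Ext^1_X(\mathcal{L}_{i-1},\Lambda_j)$ inductively in $j$, and you correctly flag the bookkeeping of how earlier copies of $\Lambda_k$ feed into later Ext groups as the main obstacle. The paper avoids this entirely: once one knows (from strong quasi-heredity) that the kernel $K_i$ of $\Lambda_i\twoheadrightarrow\mathcal{L}_i$ is projective, its decomposition $K_i\cong\bigoplus_j\Lambda_j^{\oplus c_{i,j}}$ is determined by $c_{i,j}=\dim\Hom_X(K_i,\sigma_j)$, and the long exact sequence for $\Hom_X(-,\sigma_j)$ together with $\Hom_X(\Lambda_i,\sigma_j)=\Hom_X(\mathcal{L}_i,\sigma_j)=\delta_{i,j}$ gives $c_{i,j}=\dim\Ext^1_X(\mathcal{L}_i,\sigma_j)$. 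This last quantity is a single adjunction-to-$\mathbb{P}^1$ computation (restrict $\mathcal{L}_i$ to $C_j$ and read off the degree), with no induction and no interaction between different $\Lambda_j$'s. So the ``cancellation'' you anticipate having to verify simply does not arise if you compute against the simples $\sigma_j$ rather than against the projectives $\Lambda_j$.
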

\begin{proof}
The existence of a projective generator, the exact equivalence of abelian categories, and the fact that the category $D_{r,a} \cap \Coh(X_{r,a}) $ has global dimension 2 follows from \cite[Theorem 2.5]{HillePloog}. The projective generator is produced by taking iterated universal extensions of the standard modules. Since the sequence of standard objects has only non-zero $\Ext$-groups in degrees $0$ and $1$ this implies that the category is strongly quasi-hereditary; i.e. the standard modules have projective dimension $\le 1$. 

The simple objects, the standard objects, and the relationship between them are also explicitly stated in \cite[Theorem 2.5]{HillePloog}.  It remains to explain part (3).

The indecomposable projective objects $\Lambda_i$ correspond to  indecomposable summands of the projective generator and satisfy $\dim \Hom_X(\Lambda_i,\sigma_j)=\delta_{i,j}$. In a quasi-hereditary category there are surjections $\Lambda_i \rightarrow \mathcal{L}_i$ with kernel $K_i$:
\[
0 \rightarrow K_i \rightarrow \Lambda_i \rightarrow \mathcal{L}_i \rightarrow 0.
\] In a strongly quasi-hereditary category the standard modules have global dimension 1 so the kernel $K_i$ is projective and splits into a sum of indecomposable projective objects $\bigoplus_{j=0}^n \Lambda_j^{\oplus c_{i,j}}$ where $c_{i,j}= \dim \Hom_X(K_i,\sigma_j)$. As $\dim \Hom_X(\Lambda_i,\sigma_j)=\dim \Hom_X(\mathcal{L}_i,\sigma_j)=\delta_{i,j}$, we get $
c_{i,j}= \dim \Ext^1_X(\mathcal{L}_i,\sigma_j)
$. This can be computed
using a long exact sequence:
\[
\dim \Ext^1_X(\mathcal{L}_i,\sigma_j)= \left\{ \begin{array}{c c}
\alpha_j-1 & \text{ if $j=i+1$} \\
\alpha_j-2 & \text{ if $j>i+1$}  \\
0 & \text{ if $j<i+1$} 
\end{array}
\right.
\]
See the proof of Proposition \ref{Ext Quiver Calculation Lemma} for the explicit calculation.
\end{proof}

In particular, $\Lambda_{r,a}$ is basic, finite dimensional, and quasihereditary. As there is an exact equivalence between $\Lambda_{r,a}$-$\mod$ and $D_{r,a} \cap \Coh(X_{r,a}) $ we will abuse notation by identifying the simple, standard, and projective objects in either category.

The two noncommutative algebras  $\Lambda_{r,a}$ and $A_{r,a}$ are related by the pullback functor
\[
\begin{tikzpicture} [bend angle=15, looseness=1]
\node (C2) at (3,0)  {$D_{r,a}\subset D^b(X_{r,a})$};
\node (C1) at (-1,0)  {$D^b(\Lambda_{r,a})$};
\node (C) at (1,0)  {$\cong$};
\draw [->,bend left] (C1) to node[above]  {$\scriptstyle{\mathbf{R}\Hom_{X_{r,a}}(\Lambda,-)}$} (C2);
\draw [->,bend left] (C2) to node[below]  {\scriptsize{$ \Lambda \otimes^{\mathbf{L}}_{\Lambda_{r,a}}(-)$}} (C1);

\node (A1) at (6,0)  {$D^b(V_{r,a})$};
\node (A2) at (9,0)  {$D^b(A_{r,a})$};
\node (C) at (7.5,0)  {$\cong$};
\draw [->,bend left] (A1) to node[above]  {$\scriptstyle{\mathbf{R}\Hom_{V_{r,a}}(T,-)}$} (A2);
\draw [->,bend left] (A2) to node[below]  {\scriptsize{$ T \otimes^{\mathbf{L}}_{A}(-)$}} (A1);

\draw [->, bend left=0] (C2) to node[above]  {$v^*$} (A1);

\end{tikzpicture}
\]
and we will investigate the relationship between $D^b(\Lambda_{r,a})\cong D_{r,a}$ and $D^b(A_{r,a}) \cong D^b(V_{r,a})$ in Section \ref{Relationship}. In order to make the relationship precise we must work with relative singularity categories, and so we recall the relevant definitions and results in the next section.

\section{Singularity categories} \label{Singularity categories}

In this section we recall the definitions of singularity categories and relative singularity categories, and we recap several results that will be vital later on.

\subsection{Notation}
For a triangulated $\mathbb{C}$-linear category $\mathfrak{C}$ we let $\mathfrak{C}^{\omega}$ denote the idempotent completion of the category, which is naturally triangulated \cite{BalmerSchlichting}, and $\mathfrak{C}^{C}$ denote the subcategory of compact objects. A full subcategory is \emph{thick} if it is triangulated and closed under direct summands. If $\mathfrak{C}$ also contains small direct sums then a full subcategory is \emph{localising} if it is triangulated and also closed under all small direct sums, and a localising subcategory is necessarily closed under direct summands and so thick \cite[Proposition 1.6.8]{NeemanTriangulatedCategories}. For a subset $S \subset \mathfrak{C}$ we will let $\langle S \rangle$ denote the smallest thick subcategory of $\mathfrak{C}$ containing $S$ and $ \langle S \rangle^{\oplus}$ denote the smallest localising subcategory of $\mathfrak{C}$ containing $S$.

In this section we assume that $A$ is a Noetherian $\mathbb{C}$-algebra.

\subsection{(Relative) singularity categories} \label{Singularity and relative singularity categories}
The following definition is due to Buchweitz \cite{BuchweitzMCM} and Orlov \cite{OrlovTriangulatedCatSing}.
\begin{Definition}
The \emph{singularity category} of $A$ is the triangulated quotient category
\[
D_{sg}(A):= \frac{D^b(A)}{\Perf(A)}
\]
where $\Perf(A)$ denotes the subcategory of perfect complexes.
\end{Definition}

We also recall the notion of a relative singularity category.

\begin{Definition}\label{D:Singularity Cat} Let $A$ be a Noetherian $\mathbb{C}$-algebra, $e \in A$ an idempotent, and $e A e$ the algebra defined by this idempotent. The relative singularity category of $A$ with respect to $e A e$ is defined to be
\[
\Delta_{eAe}(A):=\frac{D^b(A)}{\langle A e \rangle} \cong \frac{D^b(A)}{\Perf(eAe)};
\]
the idempotent induces an inclusion of triangulated categories $\Perf(eAe) \subseteq D^b(A)$ with image $\langle A e \rangle$.

A common application is when $R$ is a commutative, Noetherian $\mathbb{C}$-algebra, $M:=R\oplus M'$ is a finitely generated $R$-module, $A:=\End_R(R \oplus M)$, and $e \in A$ is the idempotent corresponding to the projection onto the direct summand $R$. In this situation $e A e \cong R$ and we denote the relative singularity category by $\Delta_{R}(A)$.

\end{Definition}

\subsection{Recollements generated by idempotents} \label{Recollemonts generated by idempotents}
Recall that a recollement of triangulated categories $\mathcal{T}, \mathcal{T}'$, and $\mathcal{T}''$ is a collection of functors 
\[
\begin{tikzpicture}
\node (C1) at (0,0)  {$\mathcal{T}''$};
\node (C2) at (4,0)  {$\mathcal{T}$};
\node (C3) at (8,0)  {$\mathcal{T}'$};
\draw [->, bend right=0] (C1) to node[gap]  {$i_*=i_!$} (C2);
\draw [->, bend right=25] (C2) to node[gap]  {$i^*$} (C1);
\draw [->, bend left=25] (C2) to node[gap]  {$i^!$} (C1);

\draw [->, bend right=0] (C2) to node[gap]  {$j^!=j^*$} (C3);
\draw [->, bend right=25] (C3) to node[gap]  {$j_!$} (C2);
\draw [->, bend left=25] (C3) to node[gap]  {$j_*$} (C2);

\end{tikzpicture}
\]
such that 
\begin{enumerate}
\item The functors $(i^*, i_*=i_!,i^!)$ and $(j_!,j^!=j^*,j_*)$ are adjoint triples.
\item The functors $j_!, i_*=i_!, j_*$ are fully faithful.
\item The composition $j^*i_*$ equals zero.
\item For every object $t \in \mathcal{T}$ there exist two distinguished triangles
\[
i_!i^! t \rightarrow t \rightarrow j_*j^* t \rightarrow i_!i^! t[1]
\]
and
\[
j_!j^! t \rightarrow t \rightarrow i_*i^* t \rightarrow j_!j^! t [1]
\]
induced from the unit and counit morphisms. 
\end{enumerate}

Consider an algebra $A$ with an idempotent $e$.  The idempotent produces a recollement 
\[
\begin{tikzpicture}
\node (C1) at (0,0)  {$D_{A/AeA}(A)$};
\node (C2) at (4,0)  {$D(A)$};
\node (C3) at (8,0)  {$D(eAe)$};
\draw [->, bend right=0] (C1) to node[gap]  {$i_*=i_!$} (C2);
\draw [->, bend right=25] (C2) to node[gap]  {$i^*$} (C1);
\draw [->, bend left=25] (C2) to node[gap]  {$i^!$} (C1);

\draw [->, bend right=0] (C2) to node[gap]  {$j^!=j^*$} (C3);
\draw [->, bend right=25] (C3) to node[gap]  {$j_!$} (C2);
\draw [->, bend left=25] (C3) to node[gap]  {$j_*$} (C2);

\end{tikzpicture}
\] induced by deriving the exact functor $j^*=\Hom_A(Ae,-) \cong eA \otimes_A  (-): A$-$\Mod \rightarrow eAe$-$\Mod$ with left adjoint $Ae \otimes_{eAe}(-)$ and right adjoint $\Hom_{eAe}(eA,-)$. The kernel of $j^*$ is $D_{A/AeA}(A) \subset D(A)$; the full subcategory of objects with cohomology in $A/AeA$-$\Mod$.

\begin{Proposition}[{\cite[Remark 2.14 ]{KalckYang}}] \label{Second Isomorphism Theorem Proposition}
With assumptions as above, the functor $i^*$ induces an equivalence of triangulated categories
\[
\frac{D(A)}{\langle Ae \rangle^{\oplus}} \xrightarrow{\cong} D_{A/AeA}(A)
\]
that restricts to an equivalence
\[
\Delta_{eAe}(A)^\omega \xrightarrow{\cong} \left( D_{A/AeA}(A)\right)^C.
\]
\end{Proposition}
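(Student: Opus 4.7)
The plan is to deduce both equivalences from the abstract structure of a recollement together with Neeman's theorem on compact objects in Verdier quotients.

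For the first equivalence, the starting observation is that $i^*$ annihilates the essential image of $j_!$: by adjunction,
\[
\Hom(i^*j_!X,Y)\cong\Hom(j_!X,i_*Y)\cong\Hom(X,j^*i_*Y)=0,
\]
where the last equality uses the defining recollement identity $j^*i_*=0$. Consequently $i^*$ factors through the Verdier quotient $D(A)/\mathrm{Im}(j_!)$, and using the canonical triangle $j_!j^*t\to t\to i_*i^*t\to$ together with $i^*i_*\cong\mathrm{id}$, one checks that the induced functor on the quotient is essentially surjective and fully faithful, yielding an equivalence $D(A)/\mathrm{Im}(j_!)\xrightarrow{\sim}D_{A/AeA}(A)$ with quasi-inverse induced by $i_*$.

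Next I would identify $\mathrm{Im}(j_!)$ with the localising subcategory $\langle Ae\rangle^\oplus\subset D(A)$. Since $j_!\cong Ae\otimes^{\mathbf{L}}_{eAe}(-)$ is a left adjoint, it preserves coproducts and triangles, so its essential image is a localising subcategory of $D(A)$ containing $j_!(eAe)\cong Ae$; this gives $\langle Ae\rangle^\oplus\subseteq\mathrm{Im}(j_!)$. Conversely, $eAe$ generates $D(eAe)$ as a localising subcategory, and since $j_!$ commutes with coproducts, triangles and shifts, the reverse inclusion follows. Combining with the previous paragraph gives the first claimed equivalence.

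For the restriction to compact objects I would invoke Neeman's localisation theorem: if $\mathcal{T}$ is compactly generated and $\mathcal{S}\subset\mathcal{T}$ is a localising subcategory compactly generated in $\mathcal{T}$ by a set of compact objects of $\mathcal{T}$, then the quotient $\mathcal{T}/\mathcal{S}$ is again compactly generated and $(\mathcal{T}/\mathcal{S})^C$ is the idempotent completion of $\mathcal{T}^C/(\mathcal{S}\cap\mathcal{T}^C)$. In our situation, $\mathcal{T}=D(A)$ is compactly generated with compact objects $\Perf(A)$, and $\mathcal{S}=\langle Ae\rangle^\oplus$ is compactly generated by the single compact object $Ae$, so $\mathcal{S}\cap\Perf(A)=\langle Ae\rangle$. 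The theorem then gives $(D_{A/AeA}(A))^C\cong(\Perf(A)/\langle Ae\rangle)^\omega$. Under the identification $\Perf(A)=D^b(A)$, valid for the finite global dimension algebras to which we apply the result, this is exactly $\Delta_{eAe}(A)^\omega$. The main obstacle is this final identification of compacts: one must justify that passing between $\Perf(A)$ and $D^b(A)$ does not affect the idempotent completion modulo $\langle Ae\rangle$, which is automatic when $A$ has finite global dimension but requires additional care in the fully general Noetherian setting carried out in the cited Kalck--Yang remark.
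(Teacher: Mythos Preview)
The paper does not supply its own proof of this proposition; it is stated with a citation to \cite[Remark 2.14]{KalckYang} and used as a black box. So there is no argument in the paper to compare your proposal against directly.

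That said, your sketch is the standard route and is sound for the first equivalence: identifying $\ker(i^*)=\mathrm{Im}(j_!)=\langle Ae\rangle^\oplus$ via the triangle $j_!j^*t\to t\to i_*i^*t$ and the vanishing $i^*j_!=0$, then using that $i^*$ has the fully faithful right adjoint $i_*$ to conclude it induces an equivalence on the Verdier quotient. For the second equivalence, your appeal to Neeman's theorem is correct but only yields $\bigl(\Perf(A)/\langle Ae\rangle\bigr)^\omega$ on the left, not $\bigl(D^b(A)/\langle Ae\rangle\bigr)^\omega$. You are right that these coincide when $A$ has finite global dimension, which is the case for the algebras $\Lambda_{r,a}$ and $A_{r,a}$ actually used later in the paper, so your argument suffices for all applications here. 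Your honest flag that the general Noetherian case needs the additional input from \cite{KalckYang} is appropriate; the point there is to control $D^b(A)$ rather than $\Perf(A)$ inside the quotient, and that is genuinely where the extra work lies.
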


The relative singularity category captures the information in the singularity category that is preserved in a  (partial) resolution. In particular, the singularity category $D_{sg}(e A e)$ can be recovered as a Verdier quotient of the relative singularity category.

\begin{Lemma}\label{Quotient from relative to singularity category Lemma}
In the notation above, assume that $A/AeA$ is finite dimensional. 
Let $S \subset \Delta_{eAe}(A)$ be the thick subcategory generated by all simple $A/AeA$-modules. Then the Verdier quotient induces an equivalence of triangulated categories
\[
 \frac{\Delta_{e \Lambda e}(A)}{S} \cong D_{sg}(e \Lambda e),
\]
see \cite[Propositions 3.3 and 6.13]{KalckYang} or \cite{TV}. 
\end{Lemma}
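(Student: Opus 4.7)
The plan is to exhibit an explicit triangulated functor
\[
F\colon \Delta_{eAe}(A)/S \longrightarrow D_{sg}(eAe)
\]
and prove it is an equivalence, using the recollement of the preceding subsection. The restriction functor $j^*=eA\otimes^{\mathbf{L}}_A(-)\colon D^b(A)\to D^b(eAe)$ composed with the Verdier projection $q\colon D^b(eAe)\to D_{sg}(eAe)$ kills both $Ae$ (since $j^*(Ae)=eAe\in \Perf(eAe)$) and every simple $A/AeA$-module $U$ (since $eA\otimes_A U=0$). By the universal property of Verdier quotients, together with the ``second isomorphism theorem'' identifying $(D^b(A)/\langle Ae\rangle)/S$ with $D^b(A)$ modulo the thick subcategory generated by $Ae$ and the simple $A/AeA$-modules, the composite $q\circ j^*$ descends to the desired functor $F$.

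For essential surjectivity, each object of $D_{sg}(eAe)$ lifts to some $M\in D^b(eAe)$, and I would produce a preimage via the right adjoint $j_*$ of the recollement. Since $j_*$ does not in general preserve boundedness, the trick is to replace $j_*M$ by a suitable truncation $\tau^{\le n}j_*M$: the hypothesis that $A/AeA$ is finite-dimensional guarantees that the truncation error has cohomology of finite length over $A/AeA$, hence lies in the thick subcategory of $D^b(A)$ generated by simple $A/AeA$-modules. This subcategory maps to $S$ in $\Delta_{eAe}(A)$ and thus vanishes after quotienting by $S$, while applying $j^*$ to the truncation recovers $[M]$ in $D_{sg}(eAe)$.

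For full faithfulness I would use the recollement triangle $j_!j^*X\to X\to i_*i^*X\to$ for $X,Y\in D^b(A)$ to split the computation of $\Hom$ in $\Delta_{eAe}(A)/S$ into a $j^*$-part controlled by the adjunction $j_!\dashv j^*$ (matching $\Hom_{D_{sg}(eAe)}(j^*X,j^*Y)$) and a part supported on $A/AeA$-modules that dies after quotienting by $S$. The main obstacle is the essential surjectivity step: the need to control bounded representatives of $j_*M$ is delicate, and it is precisely this step that forces the finite-dimensionality hypothesis on $A/AeA$ — without it, infinite-length modules appearing in the truncation error would not be annihilated by passing to the quotient by $S$.
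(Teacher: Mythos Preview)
The paper does not prove this lemma itself; the statement is accompanied by a citation to \cite[Propositions 3.3 and 6.13]{KalckYang} and \cite{TV} and is used as a known input. Your strategy---descend $q\circ j^*$ through the two Verdier quotients and verify it is an equivalence---is the right shape and is in the spirit of those references.

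There is, however, a genuine gap in your essential surjectivity step. You propose lifting $M\in D^b(eAe)$ by the truncation $\tau^{\le n}j_*M$, but for this to lie in $D^b(A\text{-}\mod)$ you need each $H^i(j_*M)\cong\Ext^i_{eAe}(eA,M)$ to be a finitely generated $A$-module, and this does not follow from Noetherianity of $A$ and finite-dimensionality of $A/AeA$ alone; your further claim that the truncation error has finite-length cohomology is likewise unjustified, and in any case the cone $\tau^{>n}j_*M$ is typically unbounded, so it makes no sense to place it in a thick subcategory of $D^b(A)$. The same boundedness problem recurs in your full-faithfulness argument, since $j_!j^*X=Ae\otimes^{\mathbf L}_{eAe}eX$ need not be bounded either. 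The cleaner route, closer to what the cited references actually do, bypasses $j_!$ and $j_*$ at the bounded level: the exact functor $j^*=e(-)$ exhibits $eAe\text{-}\mod$ as the Serre quotient of $A\text{-}\mod$ by the $e$-torsion modules, so a standard localisation result gives an equivalence $D^b(A)/D^b_{A/AeA}(A)\cong D^b(eAe)$; since $A/AeA$ is finite dimensional, $D^b_{A/AeA}(A)$ is generated by the simple $A/AeA$-modules, and under this equivalence $\langle Ae\rangle$ maps onto $\Perf(eAe)$, so a second Verdier quotient yields the result directly with no boundedness issues.
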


\subsection{Idempotent completeness of (relative) singularity categories}

Motivated by Proposition \ref{Second Isomorphism Theorem Proposition} we show that the categories we are interested in are idempotent complete. The following result can be proved along the lines of \cite[Proposition 2.69]{Kalckthesis}. 

\begin{Proposition}
Let $A$ be a Noetherian $\mathbb{C}$-algebra and let $e \in A$ be an idempotent. Then the relative singularity category $\Delta_{eAe}(A)$ is idempotent complete if and only if the singularity category $D_{sg}(eAe)$ is idempotent complete.  
\end{Proposition}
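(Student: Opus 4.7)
The strategy is to realise $D_{sg}(eAe)$ as a Verdier quotient of $\Delta_{eAe}(A)$ by a well-behaved thick subcategory, and to translate the idempotent completeness question into the coincidence of both categories with a common idempotent completion, using the recollement of Section~3.3.

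The first step is to produce, from the exact restriction $j^* = e(-)\colon D^b(A\text{-}\mod) \to D^b(eAe\text{-}\mod)$, an induced triangulated functor
\[
\Phi\colon \Delta_{eAe}(A) \longrightarrow D_{sg}(eAe).
\]
This is well defined because $j^*$ sends $Ae$ to $eAe$ and hence maps $\langle Ae \rangle \cong \Perf(eAe) \subset D^b(A)$ into $\Perf(eAe) \subset D^b(eAe)$. Using the adjunction $j_! \dashv j^*$, the identity $j^*(Ae \otimes_{eAe} M) = M$ shows that $\Phi$ is essentially surjective on objects, and in the finite-dimensional setting of Lemma~\ref{Quotient from relative to singularity category Lemma} one already knows $\Phi$ realises $D_{sg}(eAe)$ as the Verdier quotient of $\Delta_{eAe}(A)$ by the thick subcategory $\mathcal{S}$ generated by simple $A/AeA$-modules.

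The second step is to show that $\Phi$ becomes an equivalence after idempotent completion, i.e.\ $\Phi^\omega\colon \Delta_{eAe}(A)^\omega \xrightarrow{\cong} D_{sg}(eAe)^\omega$. Proposition~\ref{Second Isomorphism Theorem Proposition} identifies the source with the compact objects of $D_{A/AeA}(A)$ inside $D(A)$. A parallel identification for the target follows from applying the same recollement-theoretic analysis to $D(eAe)$ and tracking how the compact objects of the two sides correspond under $j^*$; essentially, both completions are controlled by the same compact-object data in the ambient recollement. Granted this equivalence, the proposition is immediate: idempotent completeness of either side is equivalent to the canonical inclusion into its $\omega$-completion being essentially surjective, which under $\Phi^\omega$ is the same condition on both sides.

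The main obstacle is establishing the equivalence $\Phi^\omega$ rigorously, and in particular matching the idempotent completions $\Delta_{eAe}(A)^\omega$ and $D_{sg}(eAe)^\omega$ inside a single recollement framework; the point is delicate because Verdier quotients of idempotent complete categories are not in general idempotent complete, and one must use the specific structure of $\mathcal{S}$ (generated by simple modules of finite length) together with the adjoint triple $(j_!, j^*, j_*)$ at the unbounded level to lift idempotents. This is the role of the argument of \cite[Proposition~2.69]{Kalckthesis}, which carries out the lifting carefully using the Balmer--Schlichting machinery \cite{BalmerSchlichting} to pass between the bounded derived categories and their idempotent completions.
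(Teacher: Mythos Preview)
Your central claim in Step~2 is false, and the whole argument collapses with it. You assert that the induced functor $\Phi^\omega\colon \Delta_{eAe}(A)^\omega \to D_{sg}(eAe)^\omega$ is an equivalence, but these two categories are genuinely different even after idempotent completion. By Lemma~\ref{Quotient from relative to singularity category Lemma} the singularity category $D_{sg}(eAe)$ is a \emph{further} Verdier quotient of $\Delta_{eAe}(A)$, with nontrivial kernel the thick subcategory $S$ generated by the simple $A/AeA$-modules. Concretely, take $A$ to be the Auslander algebra of $\mathbb{C}[x]/(x^2)$ and $e$ the idempotent with $eAe\cong \mathbb{C}[x]/(x^2)$: the simple $\sigma_1$ is a nonzero object of $\Delta_{eAe}(A)$, yet $j^*\sigma_1=e\sigma_1=0$, so $\Phi(\sigma_1)=0$. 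Thus $\Phi$ kills nonzero objects and cannot become an equivalence upon idempotent completion. Your attempted justification, that ``both completions are controlled by the same compact-object data in the ambient recollement'', is not a proof: Proposition~\ref{Second Isomorphism Theorem Proposition} identifies $\Delta_{eAe}(A)^\omega$ with $(D_{A/AeA}(A))^C$, but there is no analogous recollement for $D(eAe)$ producing $D_{sg}(eAe)^\omega$ as a compact-object subcategory in the way you suggest.

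The actual argument (cf.\ \cite[Proposition~2.69]{Kalckthesis}) does not attempt to identify the two categories. Instead it exploits that both are Verdier quotients of idempotent complete categories, namely $D^b(A)$ and $D^b(eAe)$, by the \emph{same} thick subcategory $\Perf(eAe)$. One then uses Thomason's classification of dense triangulated subcategories together with Schlichting's long exact sequence in (negative) K-theory to show that the obstruction to idempotent completeness of each quotient is governed by the same connecting map out of $K_0$ into $K_{-1}(\Perf(eAe))$, and hence one vanishes if and only if the other does. The point is not that $\Phi$ becomes an equivalence, but that the ``defect'' of idempotent completeness is the same invariant of $\Perf(eAe)$ in both situations.
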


We recall a special case of Orlov's  \cite[Remark 3.6]{Orlovidempotent}.

\begin{Proposition}
Let $G \in \mathsf{GL}(2, \mathbb{C})$ be a finite subgroup and let $R=\mathbb{C}[[x, y]]^G$ be the  corresponding two dimensional quotient singularity.  Then
$D_{sg}(R)$ is idempotent complete. 
\end{Proposition}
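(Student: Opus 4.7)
The plan is to apply Orlov's general criterion \cite[Remark 3.6]{Orlovidempotent}, which gives idempotent completeness of $D_{sg}$ for a complete Noetherian local $\mathbb{C}$-algebra with at worst an isolated singularity. The only substantive task is to verify the isolated singularity hypothesis for $R = \mathbb{C}[[x,y]]^G$; that $R$ is complete local Noetherian is routine, since $R \hookrightarrow \mathbb{C}[[x,y]]$ is module-finite as an invariant ring for a finite group action (so Eakin--Nagata gives Noetherianity, and $R$ inherits completeness with respect to the contracted maximal ideal).

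To obtain the isolated singularity property, I would first reduce to the case where $G$ is \emph{small}, i.e., contains no pseudo-reflections. Let $N \triangleleft G$ be the normal subgroup generated by the pseudo-reflections in $G$. By Chevalley--Shephard--Todd, $\mathbb{C}[[x,y]]^N \cong \mathbb{C}[[u,v]]$ is itself a formal power series ring in two variables, and therefore
$$R \cong \mathbb{C}[[u,v]]^{G/N}$$
where the induced action of $G/N$ on $\mathbb{C}[[u,v]]$ is faithful and contains no pseudo-reflections. Hence we may assume from the outset that $G$ itself is a small subgroup of $\mathsf{GL}(2,\mathbb{C})$.

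For small $G$ the action on $\mathbb{C}^2 \setminus \{0\}$ is free: a nontrivial element of $G$ fixing a nonzero vector would have $1$ as an eigenvalue with a genuine eigenvector, forcing it to be a pseudo-reflection. Thus $\Spec R$ is smooth away from the closed point and has an isolated singularity there. Applying Orlov's theorem then yields idempotent completeness of $D_{sg}(R)$. The only point requiring any thought is the Chevalley--Shephard--Todd reduction; once past that, the statement is a direct invocation of the cited result.
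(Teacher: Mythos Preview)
Your proposal is correct and follows the same approach as the paper: both invoke Orlov's criterion \cite[Remark 3.6]{Orlovidempotent}. The paper simply states this as ``a special case of Orlov's result'' without further justification, whereas you spell out why the hypotheses (complete local Noetherian, isolated singularity) are met, including the Chevalley--Shephard--Todd reduction to small $G$; this extra detail is sound but not required by the paper's treatment.
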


The following result is due to Chen \cite[Corollary 2.4]{XWChenSingCatofRadicalsquare0alg}.

\begin{Proposition} \label{P:DsgFDidempcomplete}
Let $K$ be a finite dimensional algebra. Then $D_{sg}(K)$ is idempotent complete.   
\end{Proposition}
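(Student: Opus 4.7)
The strategy is to embed $D_{sg}(K)$ as the subcategory of compact objects of a compactly generated triangulated category; such subcategories of compact objects are automatically closed under direct summands by a classical result of Neeman, which gives idempotent completeness for free.

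Concretely, I would work with Krause's stable derived category $\mathbf{S}(K):=K_{ac}(\textnormal{Inj}\,K)$, the homotopy category of acyclic complexes of injective $K$-modules. Krause's recollement
\[
\mathbf{S}(K)\hookrightarrow K(\textnormal{Inj}\,K)\twoheadrightarrow D(K\text{-}\Mod),
\]
combined with the observation that a finite dimensional algebra $K$ admits a dualising complex (namely $DK = \Hom_{\mathbb{C}}(K, \mathbb{C})$ concentrated in degree zero), ensures that $\mathbf{S}(K)$ is compactly generated and identifies $D_{sg}(K)$ with the subcategory of compact objects $\mathbf{S}(K)^{c}$. Granted this identification, the conclusion is immediate from Neeman's theorem: a retract of a compact object in a compactly generated triangulated category is again compact, so $\mathbf{S}(K)^{c}\simeq D_{sg}(K)$ is idempotent complete.

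The main obstacle is establishing the equivalence $D_{sg}(K)\simeq\mathbf{S}(K)^{c}$ \emph{on the nose}, rather than merely fully faithfully into an idempotent completion on the right-hand side; the dualising bimodule $DK$ is precisely what makes this go through in the finite dimensional setting, distinguishing the argument from the general Noetherian case. A more elementary alternative, avoiding Krause's machinery, would be to argue directly in the Verdier quotient: represent an idempotent $\bar{e}\in\End_{D_{sg}(K)}(X)$ by a roof $X\xleftarrow{s}Z\xrightarrow{f}X$ in $D^{b}(K\text{-}\mod)$ with $\textnormal{cone}(s)\in\Perf(K)$; exploit the Krull--Schmidt property of the Hom-finite category $D^{b}(K\text{-}\mod)$ to split idempotents in the endomorphism rings there; and verify that the resulting decomposition descends to a splitting modulo $\Perf(K)$. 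The delicate point in this second approach is to control the perfect summands introduced by the roof calculus so that the split idempotent lift lies in the relevant endomorphism algebra.
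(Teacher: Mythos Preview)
The paper does not give its own proof here; it attributes the result to Chen \cite[Corollary 2.4]{XWChenSingCatofRadicalsquare0alg}. Chen's argument is the elementary one, close in spirit to your second approach but carried out via syzygies in the stable module category rather than roofs in $D^b(K\text{-}\mod)$. Every object of $D_{sg}(K)$ is isomorphic to a shift of a module, and for modules $M,N$ one has $\Hom_{D_{sg}(K)}(M,N)\cong\varinjlim_{n}\underline{\Hom}_K(\Omega^nM,\Omega^nN)$. An idempotent $\bar e\in\End_{D_{sg}(K)}(M)$ is thus represented by some $f\in\underline{\End}_K(\Omega^nM)$ with $\Omega^m(f^2-f)=0$ for $m\gg0$; then $g:=\Omega^mf$ is a genuine idempotent in $\underline{\End}_K(\Omega^{n+m}M)$, which splits because $K\text{-}\underline{\mod}$ is Krull--Schmidt, and the splitting descends. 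Your roof-based sketch does not work as written: the lift of $\bar e$ to $D^b(K\text{-}\mod)$ is not itself idempotent, so there is nothing to split upstairs; passing to high syzygies is precisely the device that converts ``idempotent modulo $\Perf$'' into an honest idempotent in a Krull--Schmidt category.

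Your first approach via $\mathbf{S}(K)=K_{ac}(\textnormal{Inj}\,K)$ is circular as stated. Krause's theorem identifies $\mathbf{S}(K)^c$ with the \emph{idempotent completion} of $D_{sg}(K)$, not with $D_{sg}(K)$ itself; the dualising bimodule $DK$ buys you compact generation of $\mathbf{S}(K)$, but that alone does not upgrade the dense embedding $D_{sg}(K)\hookrightarrow\mathbf{S}(K)^c$ to an equivalence. Any argument establishing essential surjectivity here would amount to proving the proposition by other means.
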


Combining these results we obtain the following consequence which will be useful later.

\begin{Corollary} \label{Idempotent completeness}
Let $A$ be a Noetherian $\mathbb{C}$-algebra and let $e \in A$ be an idempotent such that one of the following conditions hold:
\begin{itemize}
\item[(a)] $eAe$ is a finite dimensional algebra.
\item[(b)] $eAe$ is a two-dimensional quotient singularity over $\mathbb{C}$.
\end{itemize}
Then the relative singularity category $\Delta_{eAe}(A)$ is idempotent complete
\end{Corollary}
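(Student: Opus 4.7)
The plan is to deduce the corollary essentially as a direct assembly of the three preceding propositions. The first proposition reduces idempotent completeness of the relative singularity category $\Delta_{eAe}(A)$ to idempotent completeness of the ordinary singularity category $D_{sg}(eAe)$, so the strategy is simply to verify the latter under each of the two hypotheses.

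In case (a), $eAe$ is a finite dimensional $\mathbb{C}$-algebra, so Proposition~\ref{P:DsgFDidempcomplete} (Chen) applies directly and gives that $D_{sg}(eAe)$ is idempotent complete. In case (b), $eAe \cong \mathbb{C}[[x,y]]^G$ for some finite subgroup $G \subset \mathsf{GL}(2,\mathbb{C})$, and Orlov's result (the preceding proposition, a special case of \cite[Remark 3.6]{Orlovidempotent}) gives that $D_{sg}(eAe)$ is idempotent complete. In either case, invoking the equivalence ``$\Delta_{eAe}(A)$ idempotent complete $\Leftrightarrow$ $D_{sg}(eAe)$ idempotent complete'' yields the conclusion.

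There is no real obstacle here: the substance lies in the three propositions already cited, and the corollary is just their combination. The only thing one needs to check is that the hypothesis of the first proposition (that $A$ be a Noetherian $\mathbb{C}$-algebra with $e \in A$ an idempotent) is exactly the running hypothesis, which it is. Hence the proof can be written in two or three sentences.
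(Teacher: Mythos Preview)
Your proposal is correct and matches the paper's approach exactly: the paper does not even spell out a proof, simply stating that the corollary is obtained by combining the three preceding propositions, which is precisely what you do.
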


\section{Equivalence of (relative) singularity categories} \label{Relationship}
This section uses the geometric construction of Section \ref{Geometry of curves} to create a functor between the derived categories of the noncommutative resolutions $\Lambda_{r,a}$ and $A_{r,a}$ defined in Sections \ref{Sec:NCRes} and \ref{Subcategory}. This functor is shown to induce an equivalence between the relative singularity categories, which then induces an equivalence between the singularity categories of $R_{r,a}$ and the Kn\"{o}rrer invariant algebra $K_{r,a}$.

\subsection{Constructing a functor}
It is a natural question how the two categories $D^b(\Lambda_{r,a}) \cong D_{r,a} \subset D^b(X_{r,a})$ and $D^b(A_{r,a}) \cong D^b(V_{r,a})$ are related\footnote{Note that $D^b(X_{r,a})$ is Hom-finite whereas $D^b(V_{r,a})$ is Hom-infinite, so these categories won't be equivalent.}. We recall that there is a morphism $v: V_{r,a} \rightarrow X_{r,a}$ and a collection of rational curves $C=\cup_{i=1}^n C_i$ contained in both schemes.
\[
\begin{tikzpicture}   

\draw (-2.6,-0.3) to [bend left=45] node[pos=0.48, above] {} (-1.2,-0.3);
\draw (-1.8,-0.3) to [bend left=45] node[pos=0.48, above] {} (-0.4,-0.3);
\draw (-1,-0.3) to [bend left=25] node[pos=0.48, right] {} (-0.2,0);
\node (D) at (0,-0.2) {$\dots$};

\draw (2.6,-0.3) to [bend right=45] node[pos=0.48, above] {} (1.2,-0.3);
\draw (1.8,-0.3) to [bend right=45] node[pos=0.48, above] {} (0.4,-0.3);
\draw (1,-0.3) to [bend right=25] node[pos=0.48, right] {} (0.2,0);

\draw (4.4,-0.3) to [bend left=45] node[pos=0.48, above] {} (5.8,-0.3);
\draw (5.2,-0.3) to [bend left=45] node[pos=0.48, above] {} (6.6,-0.3);
\draw (6,-0.3) to [bend left=25] node[pos=0.48, right] {} (6.8,0);
\node (D) at (7,-0.2) {$\dots$};

\draw (9.6,-0.3) to [bend right=45] node[pos=0.48, above] {} (8.2,-0.3);
\draw (8.8,-0.3) to [bend right=45] node[pos=0.48, above] {} (7.4,-0.3);
\draw (8,-0.3) to [bend right=25] node[pos=0.48, right] {} (7.2,0);

\draw[black] (0,-0.2) ellipse (3.3 and 1);
\draw[->] (0,-1.3) -- (0,-1.7) node[pos=0.48, right] {$\pi$};
\draw[black] (0,-2) ellipse (0.9 and 0.4);
\filldraw [black] (0,-2) circle (1pt);
\draw [red, thick, dotted] (0,-2) circle (4pt);

\draw[red, thick, dotted] (0,-0.2) ellipse (2.9 and 0.6);

\draw[->] (7,-1) -- (7,-1.7) node[pos=0.48, right] {$p$};
\draw [red, thick, dotted] (7,-2) circle (4pt);

\draw[red, thick, dotted] (7,-0.2) ellipse (2.9 and 0.6);
\filldraw [black] (7,-2) circle (1pt);

\draw [left hook->] (6.8,-2) to node[above]  {\scriptsize{$u$}} (0.3,-2);
\draw [left hook->] (4,-0.2) to node[above]  {\scriptsize{$v$}} (3,-0.2);

\node (C1) at (-3,-2)  {$Y$};
\node (C2) at (9,-2)  {$U = \Spec \, R_{r,a}$};
\node (C3) at (-3,1)  {$X_{r,a}$};
\node (C4) at (8,1)  {$V_{r,a}$};

\end{tikzpicture}
\]
As $u$ is flat and affine so is $v$,  and hence $v^*$ and $v_*$ are exact and the counit $v^*v_* \rightarrow id$ is an equivalence. We then define the functor  
\begin{align*}
F:D(X_{r,a}) &\rightarrow D(V_{r,a}) \\
\mathcal{E} & \mapsto v^*(\mathcal{E}) \otimes_{V_{r,a}} \mathcal{O}_{V_{r,a}}(-D_n)
\end{align*}
where $D_n$ is a divisor specified by Lemma \ref{Line Bundle Proposition} such that $D_n \cdot C_j = \delta_{n,j}$.
In particular, this restricts a bounded functor $F:D_{r,a} \rightarrow D^b(V_{r,a})$ and, as $F$ is the composition of the exact pullback $v^*$ with the exact autoequivalence of tensoring by the line bundle $\mathcal{O}_{V_{r,a}}(-D_n)$, it also restricts to an exact functor $\Coh \, X \cap D_{r,a} \rightarrow \Coh \, V_{r,a}$.

We recall the equivalences of abelian categories $D_{r,a} \cap \, \Coh(X_{r,a}) \cong \Lambda_{r,a}$-$\mod$ and $\mathcal{A}_{r,a}={^{0}\Per}(V_{r,a}/ R_{r,a}) \cong A_{r,a}$-$\mod$, and we now now deduce some properties of this functor by calculating the images of the simple, standard, and projective $\Lambda_{r,a}$-modules. Recall from Proposition \ref{Line Bundle Proposition} the distinguished divisors $D_i$ on $V_{r,a}$ and that any line bundle on $V_{r,a}$ is isomorphic to one of the form $\mathcal{O}_{V_{r,a}}( \sum a_i D_i)$.

\begin{Lemma} \label{Functor Calculation Lemma} The functor $F$ restricts to a functor from the abelian category $D_{r,a} \cap\, \Coh(X_{r,a})$ to the abelian category $\mathcal{A}_{r,a}$ that preserves projective objects. We compute the images of the simple, standard, and projective objects:
\begin{enumerate}
\item
$F(\sigma_i) = \left\{ \begin{array}{ll} 
\mathcal{O}_{C_i}(-1) \cong s_i & \text{ if $1 \le i \le n$ } \\
 \mathcal{O}(-C_1-\dots -C_n -D_n)  & \text{ if $i=0$} 
\end{array} \right.$
\item
$F(\mathcal{L}_i) = \left\{ \begin{array}{ll} 
\mathcal{O}(\sum_{j=i+1}^{n} (\alpha_j-2)D_j + D_{i+1} - D_i) & \text{ if $1 \le i \le n$ } \vspace{2pt} \\
 \mathcal{O}(\sum_{j=1}^n(\alpha_j-2)D_j + D_1)  & \text{ if $i=0$} 
\end{array} \right.$
\item
$F(\Lambda_i) = \left\{ \begin{array}{ll} 
\mathcal{O}_V(-D_i) \oplus \mathcal{O}_V^{ \oplus \lambda_i-1}  \cong P_i \oplus \left( P_{0} \right)^{\oplus \lambda_i-1} & \text{ if $1 \le i \le n$}  \\
\mathcal{O}_V^{ \oplus \lambda_{0}} \cong P_{0}^{\oplus \lambda_{0}}  & \text{ if $i=0$} 
\end{array} \right.$
\end{enumerate}
where $\lambda_i$ is defined to be the rank of $\Lambda_i$.
\end{Lemma}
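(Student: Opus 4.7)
The plan is to verify the three equalities in turn, relying on the exactness of $F$ and the characterisation of line bundles on $V_{r,a}$ given in Proposition \ref{Line Bundle Proposition}(2), with the images of the projectives handled by descending induction via the universal extension sequences of Proposition \ref{Proposition HillePloog Abelian Category}(3).

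First I would note that $v$ is flat, so $v^*$ is exact on coherent sheaves, and tensoring with the line bundle $\mathcal{O}_{V_{r,a}}(-D_n)$ is an autoequivalence; hence $F$ is exact on $\Coh X_{r,a}$. To see $F$ restricts to a functor into $\mathcal{A}_{r,a}$ it suffices to check that simple objects go to $\mathcal{A}_{r,a}$, since the source has finite length and $\mathcal{A}_{r,a}$ is closed under extensions. Part (1) is then direct: the curves $C_i$ sit inside both surfaces, so pullback is trivial on $\mathcal{O}_{C_i}(-1)$ and on $\mathcal{O}_{C_n}$, and the twist by $\mathcal{O}(-D_n)$ alters the degree only on $C_n$ because $D_n \cdot C_j = \delta_{nj}$ by Proposition \ref{Line Bundle Proposition}. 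This turns $\sigma_n = \mathcal{O}_{C_n}$ into $s_n = \mathcal{O}_{C_n}(-1)$ and leaves the others unchanged, while the identification for $\sigma_0$ is immediate. For part (2), $F(\mathcal{L}_i)$ is again a line bundle, so by Proposition \ref{Line Bundle Proposition}(2) it is pinned down by its degrees on each $C_j$, which are computed from the intersection pairing $C_k \cdot C_j$ together with $D_n \cdot C_j = \delta_{nj}$; the stated formulas appear after rewriting the degree vector in the basis $\{D_j\}$.

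For part (3) I would argue by descending induction on $i$. The base case $\Lambda_n = \mathcal{L}_n = \mathcal{O}_{X_{r,a}}$ gives $F(\Lambda_n) = \mathcal{O}_{V_{r,a}}(-D_n) = P_n$, matching the formula since $\lambda_n = 1$. For the inductive step, apply the exact functor $F$ to
\[
0 \to \bigoplus_{j=i}^{n} \Lambda_j^{\oplus \alpha_j - 2} \oplus \Lambda_i \to \Lambda_{i-1} \to \mathcal{L}_{i-1} \to 0;
\]
by the inductive hypothesis the kernel is sent to a direct sum of the $P_j$'s and $P_0$, which is a projective object of $\mathcal{A}_{r,a}$, and the cokernel lies in $\mathcal{A}_{r,a}$ by part (2); closure of $\mathcal{A}_{r,a}$ under extensions then places $F(\Lambda_{i-1}) \in \mathcal{A}_{r,a}$. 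The rank is preserved by $v^*$ and by tensoring, so equals $\lambda_{i-1}$, and additivity of the first Chern class in the sequence, combined with the explicit formula for $F(\mathcal{L}_{i-1})$ from part (2), gives $c_1(F(\Lambda_{i-1})) = -D_{i-1}$ after a telescoping cancellation.

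The main obstacle is passing from this rank and Chern data to the claimed direct-sum decomposition, since Theorem \ref{Affine Abelian Theorem}(3) only characterises projective objects by these invariants, and one must first establish that $F(\Lambda_{i-1})$ is itself a projective object of $\mathcal{A}_{r,a}$. I would verify this by showing $\Ext^{>0}_{\mathcal{A}_{r,a}}(F(\Lambda_i), s_k) = 0$ for every simple $s_k$. Using $s_k = F(\sigma_k)$ from part (1), the projectivity of $\Lambda_i$ in $D_{r,a} \cap \Coh X_{r,a}$, and the adjunction $\Ext^*_{V_{r,a}}(v^*(-), -) \cong \Ext^*_{X_{r,a}}(-, v_*(-))$ coming from $v$ being flat and affine, the required vanishing transfers back to a computation in $D^b(X_{r,a})$ controlled by the twist $\otimes \mathcal{O}(-D_n)$. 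Once projectivity is established, Theorem \ref{Affine Abelian Theorem}(3) identifies $F(\Lambda_i)$ uniquely as $P_i \oplus P_0^{\oplus \lambda_i - 1}$ (or $P_0^{\oplus \lambda_0}$ when $i = 0$), which both completes part (3) and shows that $F$ preserves projective objects.
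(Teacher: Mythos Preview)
Your overall strategy matches the paper's closely, but there is a genuine gap in the projectivity argument. You write that you will show $\Ext^{>0}_{\mathcal{A}_{r,a}}(F(\Lambda_i), s_k) = 0$ for every simple $s_k$ by using $s_k = F(\sigma_k)$ from part~(1) together with adjunction. This identification is correct for $1 \le k \le n$, but fails for $k = 0$: the simple $s_0$ in $\mathcal{A}_{r,a}$ is $\omega_C[1]$ (Theorem~\ref{Affine Abelian Theorem}(1)), whereas part~(1) gives $F(\sigma_0) = \mathcal{O}(-C_1 - \cdots - C_n - D_n)$, a line bundle and certainly not $\omega_C[1]$. So the adjunction transfer back to $D^b(X_{r,a})$ does not apply to $s_0$, and the case $k=0$ must be handled separately. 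The paper does exactly this: it computes
\[
\Ext^1_{A_{r,a}}(F(\Omega), s_0) = \Hom_{D(V_{r,a})}(F(\Omega), \omega_C[2]) = \Ext^2_{V_{r,a}}(F(\Omega), \omega_C),
\]
and then invokes a vanishing result for $\Ext^2$ between a vector bundle and a coherent sheaf on a surface fibred with one-dimensional fibres over an affine base. Without this separate argument your projectivity claim is incomplete, and hence so is the identification of $F(\Lambda_i)$ via Theorem~\ref{Affine Abelian Theorem}(3).

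There is also a smaller omission earlier: when you say the identification of $F(\sigma_0)$ is immediate, you compute the formula but do not verify that this line bundle actually lies in $\mathcal{A}_{r,a}$, i.e.\ that $\mathbf{R}^1 p_* F(\sigma_0) = 0$. The paper establishes this by applying $\mathbf{R}p_*$ to the images of the short exact sequences $0 \to \mathcal{L}_{i-1} \to \mathcal{L}_i \to \sigma_i \to 0$ and descending inductively from $F(\mathcal{L}_n) = P_n \in \mathcal{A}_{r,a}$. Your finite-length extension argument is fine once all simples are known to land in $\mathcal{A}_{r,a}$, but that last check for $\sigma_0$ still needs to be supplied.
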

\begin{proof}

We begin by checking that $F$ maps simple $\Lambda_{r,a}$-modules to $\mathcal{A}_{r,a}$.  It is clear from the definition of the functor that the $n$ simple $\Lambda_{r,a}$-modules $\sigma_1=\mathcal{O}_{C_1}(-1), \dots, \sigma_n=\mathcal{O}_{C_n}$ are mapped to the $n$ simple objects $s_1=\mathcal{O}_{C_1}(-1), \dots, s_n=\mathcal{O}_{C_n}(-1)$ in $\mathcal{A}_{r,a}$. This only leaves the simple module $\sigma_{0}=\mathcal{L}_{0} =\mathcal{O}_{X_{r,a}}(-C_1 - \dots - C_n)$, and to check that $F(\mathcal{L}_{0})$ is in $\mathcal{A}_{r,a}$ we verify the conditions of Definition \ref{Perverse Definition}. As $F(\mathcal{L}_0)$ is a sheaf we need only show that $\mathbf{R}^1 p_* F(\mathcal{L}_{0}) =0$. To do this we recall the short exact sequences
\[
0 \rightarrow \mathcal{L}_{i-1} \rightarrow \mathcal{L}_{i} \rightarrow \sigma_i \rightarrow 0
\]
appearing in Proposition \ref{Proposition HillePloog Abelian Category} and as $F$ is exact there is a short exact sequence in $\Coh \, V_{r,a}$
\[
0 \rightarrow F(\mathcal{L}_{i-1}) \rightarrow F(\mathcal{L}_{i}) \rightarrow \mathcal{O}_{C_i}(-1) \rightarrow 0
\]
for $1 \le i \le n$. We recall that $\mathbf{R}p_* \mathcal{O}_{C_i}(-1)=0$ for all $i$ and so by considering the long exact sequence obtained by applying $\mathbf{R}p_*$ we recover the short exact sequence
\[
0 \rightarrow \mathbf{R}^{1} p_*  F(\mathcal{L}_{i-1}) \rightarrow  \mathbf{R}^{1} p_*F(\mathcal{L}_{i})  \rightarrow  0.
\]
Then as $F(\mathcal{L}_{n}) \cong \mathcal{O}_{V_{r,a}}(-D_n)=P_n \in {^0\Per({V_{r,a}}/R_{r,a})}$ by Theorem \ref{Affine Abelian Theorem} it follows that $\mathbf{R}^1 p_* F(\mathcal{L}_{n})=0$ and these sequences imply that $\mathbf{R}^1 p_* F(\mathcal{L}_{i}) \cong 0$ for all $i$. 

As $\Lambda_{r,a}$ is finite dimensional all finitely generated $\Lambda_{r,a}$-modules are finite dimensional. Having shown that simple $\Lambda_{r,a}$-modules are mapped to $\mathcal{A}_{r,a}$ it follows by induction on the dimension of a module that that all finitely generated $\Lambda_{r,a}$-modules are sent to $\mathcal{A}_{r,a}$. 

Now we show that the functor $F$ must send projective objects to projective objects. Suppose that $\Omega$ is a projective $\Lambda_{r,a}$-module. Then $\Ext_{\Lambda_{r,a}}^i(\Omega,M)$ vanishes for all $\Lambda_{r,a}$-modules $M$ and $i \ge 1$, and we know that $F(M) \in \mathcal{A}_{r,a} \cap \Coh V_{r,a}$. We now show that $\Ext^1_{A_{r,a}}(F(\Omega), s) $ vanishes for all simple objects $s$ in $\mathcal{A}_{r,a}\cong A_{r,a}$-$\mod$:
\begin{align*}
\Ext^1_{A_{r,a}}(F(\Omega), s_i) 
&\cong \Hom_{D(V_{r,a})}(v^*\Omega \otimes \mathcal{O}_{V_{r,a}}(-D_n) , \mathcal{O}_{C_i}(-1)[1])\\
&\cong \Hom_{D(V_{r,a})}(v^*\Omega , \mathcal{O}_{C_i}(-1)[1] \otimes \mathcal{O}_{V_{r,a}}(D_n) )\\
&\cong \Hom_{D(X_{r,a})}(\Omega, v_*(\mathcal{O}_{C_i}(-1) \otimes \mathcal{O}_{V_{r,a}}(D_n))[1]) \tag{$v_*$ and $v^*$ adjoint}\\
&\cong \Hom_{D(\Lambda_{r,a})}(\Omega, \sigma_i[1]) \tag{$\mathcal{O}_{C_i}(-1) \otimes \mathcal{O}_{V_{r,a}}(D_n)) \cong \mathcal{O}_{C_i}(D_n \cdot C_i-1)$} \\
&\cong \Ext^1_{\Lambda_{r,a}}(\Omega, \sigma_i)=0
\end{align*}
for $1 \le i \le n$ and
\begin{align*}
\Ext^1_{A_{r,a}}(F(\Omega), s_{0}) &= \Hom_{D(A_{r,a})}(F(\Omega), s_{0}[1]) \\
&= \Hom_{D(V_{r,a})}(F(\Omega), \omega_C[2]) \\
&= \Ext^2_{V_{r,a}}(F(\Omega), \omega_C)
\end{align*}
where $\Ext^2_{V_{r,a}}(F(\Omega), \omega_C)=0$ by \cite[Proposition 5.2.34]{AGandarithmeticcurves} as $\omega_C$ is a coherent sheaf on a projective over affine surface with fibres of dimension $\le 1$ and $F(\Omega)$ is a vector bundle due to  $\Omega$ being a vector bundle by Proposition \ref{Proposition HillePloog Abelian Category}(3). Hence $\Ext_{A_{r,a}}^1(F(\Omega),s_i)=0$ for all the simple $A_{r,a}$-modules $s_i$. Then take a projective cover $P$ of $F(\Omega)$ as an $A_{r,a}$-module (i.e. no summand of $P$ is in the kernel of $P \rightarrow F(\Omega)$), and take the kernel to produce a short exact sequence of $A_{r,a}$-modules
\[
0 \rightarrow K \rightarrow P \rightarrow F(\Omega) \rightarrow 0.
\]
For any simple $A_{r,a}$-module $s$ applying $\Hom_{A_{r,a}}(-,s)$ produces a long exact sequence
\[
0 \rightarrow \Hom_{A_{r,a}}( F(\Omega), s) \rightarrow \Hom_{A_{r,a}}(P,s) \rightarrow \Hom_{A_{r,a}}(K,s) \rightarrow \Ext_{A_{r,a}}^1( F(\Omega),s)=0.
\]
As $P$ is a projective cover it follows that the first map is an isomorphism and hence $\Hom_{A_{r,a}}(K,s)=0$ for any simple module $s$. Hence $K \cong 0$, as any $A_{r,a}$ module must have a map to a simple module, and so $F(\Omega)$ is itself projective.

We now show part (2). As $F(\mathcal{L}_i)=\mathcal{O}_{V_{r,a}}(-C_{i+1} - \dots - C_{n} -D_n)$ is a line bundle by  Proposition \ref{Line Bundle Proposition} we need only calculate the degree of its restriction to the curves to write it in the required form. We make the calculation
\begin{equation*}
\deg F(\mathcal{L}_i) |_{C_j}= -(C_{i+1} + \dots C_{n}+D_n) \cdot C_j = \left\{ \begin{array}{cc} 
\alpha_j-2 & \text{ if $i+1  <  j \le n$} \\
\alpha_{i+1}-1 & \text{ if $j=i+1$} \\
-1& \text{ if $j = i$} \\
0& \text{ otherwise} 
\end{array}\right.
\end{equation*}
The data of $F(\mathcal{L}_i)$ written in the form $\mathcal{O}_{V_{r,a}}(\sum a_j D_j)$ is then summarized by putting the coefficients $a_j$ into the following $(n + 1) \times n$ matrix
\begin{equation*}
\begin{array}{l | c c c c c c c c}
 & D_1 & D_2 & \cdots & D_{i-1} & D_i &\cdots & D_{n-1} &D_n\\
\hline
F(\mathcal{L}_{0})& \alpha_1-1 & \alpha_2-2& \cdots & \alpha_{i-1}-2 & \alpha_i-2 & \cdots & \alpha_{n-1}-2 &\alpha_n-2 \\
F(\mathcal{L}_1) & -1 & \alpha_2-1& \cdots & \alpha_{i-1}-2 & \alpha_i-2 & \cdots &\alpha_{n-1}-2 & \alpha_n-2 \\
\vdots& \ddots&\ddots&\ddots&\ddots&\ddots&\ddots&\ddots& \\
F(\mathcal{L}_{i-1})  &0&0& \cdots &-1 & \alpha_i-1 & \cdots & \alpha_{n-1}-2 &\alpha_n-2 \\
F(\mathcal{L}_i) & 0 &0& \cdots &0& -1 & \cdots & \alpha_{n-1}-2 &\alpha_n-2\\
\vdots& \ddots&\ddots&\ddots&\ddots&\ddots&\ddots&\ddots&\\
F(\mathcal{L}_{n-1}) &0& 0& \cdots & 0 &0& \cdots &-1 & \alpha_n-1 \\
F(\mathcal{L}_{n})& 0 & 0& \cdots & 0 & 0 & \cdots& 0 & -1 \\
\end{array}
\end{equation*}
which calculates (2). Recall the short exact sequences 
\begin{equation*}
 0  \rightarrow  \bigoplus_{j=i}^n \Lambda_j^{\oplus \alpha_j-2} \oplus \Lambda_i  \rightarrow \Lambda_{i-1}  \rightarrow  \mathcal{L}_{i-1}  \rightarrow  0.
\end{equation*}
of Proposition \ref{Proposition HillePloog Abelian Category}. Combining these short exact sequences with part (2) we can inductively calculate $F(\Lambda_i)$ to deduce (3). We first note that $F(\Lambda_i)$ is a projective object in $\mathcal{A}_{r,a}$ and as such $F(\Lambda_i)$ is determined by the rank and first Chern class of $F(\Lambda_i)$ due to Theorem \ref{Affine Abelian Theorem}. In particular, $\Lambda_n=\mathcal{O}_{X_{r,a}}$ so $F(\Lambda_n)= \mathcal{O}(-D_n)$. As first Chern characters are additive in short exact sequences it follows that
\begin{equation*}
\Chern( F(\Lambda_{i-1}) ) =  \sum_{j=i}^n (\alpha_j-2)\Chern( F(\Lambda_j)) + \Chern(F(\Lambda_{i})) + \Chern(F(\mathcal{L}_{i-1})).
\end{equation*}
so by induction on $j$ it follows that $\Chern( F(\Lambda_j ))= \mathcal{O}(-D_j)$. Then the result follows by the identification of projectives by rank and first Chern class in Theorem \ref{Affine Abelian Theorem}.
\end{proof}

\subsection{Equivalence of relative singularity categories}
These calculations give us a hint that, in some sense, $D^b(\Lambda_{r,a}) \cong D_{r,a}$ and $D^b(A_{r,a}) \cong D^b(V_{r,a})$ are very similar away from the distinguished modules corresponding to $i=0$.  We will make this idea precise by showing that the functor $F$ descends to an equivalence of relative singularity categories
\[
 \frac{D_{r,a}}{\langle \Lambda_0 \rangle} 
\quad \text{ and } \quad
 \frac{D^b(V_{r,a})}{\langle \mathcal{O}_{V_{r,a}}  \rangle}  .
\]
We first show that $F$ induces an equivalence between objects supported on the curves.

\begin{Proposition} \label{Equivalence of objects supported on curve Proposition}
The functor $F$ induces an equivalence of triangulated categories
\[
F:D_C( X_{r,a}) \rightarrow D_C(V_{r,a}),
\]
where $D_C(-)$ denotes the full subcategory of unbounded derived category of quasicoherent sheaves whose cohomology sheaves are supported on $C= \cup C_i$.
\end{Proposition}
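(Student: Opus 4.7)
The plan is to first reduce to showing that the pullback $v^*$ alone induces an equivalence $D_C(X_{r,a}) \to D_C(V_{r,a})$, since the autoequivalence $(-)\otimes \mathcal{O}_{V_{r,a}}(-D_n)$ of $D(V_{r,a})$ preserves the support condition and hence restricts to an autoequivalence of $D_C(V_{r,a})$.

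The key geometric input is that $v\colon V_{r,a}\to X_{r,a}$ is the base change along $\pi\colon X_{r,a}\to Y_{r,a}$ of the completion morphism $u\colon U_{r,a}\to Y_{r,a}$ at the singular point of $Y_{r,a}$. Since $u$ identifies formal neighbourhoods of this singular point, base change shows that $v$ identifies formal neighbourhoods of the exceptional curve $C$; concretely, for every $n\geq 1$ the restriction of $v$ to the $n$-th infinitesimal thickening, $v^{-1}(nC) \to nC$, is an isomorphism of schemes. This is the local model of the classical fact that completion is an equivalence on modules supported on the completed subscheme.

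Exploiting the fully faithfulness of $v_*$ (which follows from the counit isomorphism $v^*v_*\to\mathrm{id}$ recorded before the proposition), the essential image of $v_*$ consists of those $\mathcal{F}$ for which the unit $\mathcal{F}\to v_*v^*\mathcal{F}$ is an isomorphism. I would then argue in three stages that $D_C(X_{r,a})$ lies in this essential image: (i) for a coherent sheaf $\mathcal{F}$ annihilated by some power $\mathcal{I}_C^n$, the unit is an isomorphism because $v$ restricts to an isomorphism on $nC$; (ii) since both $v^*$ and $v_*$ commute with filtered colimits ($v^*$ as a left adjoint and $v_*$ since $v$ is affine), this extends to all quasi-coherent sheaves supported on $C$; (iii) the unbounded derived version follows by truncation and hypercohomology arguments using the standard $t$-structure, exploiting that every object of $D_C(X_{r,a})$ has cohomology sheaves supported on $C$. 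Essential surjectivity of $v^*\colon D_C(X_{r,a})\to D_C(V_{r,a})$ then reduces, via $\mathcal{G}\cong v^*v_*\mathcal{G}$ for $\mathcal{G}\in D_C(V_{r,a})$, to showing that $v_*$ preserves support on $C$, which follows from the same thickening and colimit analysis. The main obstacle is step (iii): carefully lifting the coherent-level thickening identification to the unbounded derived category of quasi-coherent sheaves requires managing filtered colimits and a Postnikov-tower or spectral sequence argument over the standard $t$-structure.
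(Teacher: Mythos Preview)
Your proposal is correct and follows the same underlying idea as the paper: $v$ is an isomorphism on formal neighbourhoods of $C$, so it induces an equivalence on objects supported on $C$. The difference is one of packaging. The paper first notes that both $F$ and its right adjoint $G(\mathcal{E})=v_*(\mathcal{E}\otimes\mathcal{O}_{V_{r,a}}(D_n))$ are exact on quasicoherent sheaves and preserve support on $C$, then invokes two results of Orlov \cite{Orlovidempotent}: (a) the formal-neighbourhood isomorphism yields an equivalence $\QCoh_C X_{r,a}\cong\QCoh_C V_{r,a}$ (his Proposition 2.8), and (b) $D_C(-)\cong D(\QCoh_C(-))$ (his Section 2). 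Your three-stage argument---coherent on thickenings, then colimits, then derived---is effectively a direct reproof of these facts. In particular, your worry about step (iii) is exactly what Orlov's identification $D_C\cong D(\QCoh_C)$ dissolves: once the abelian equivalence is established, the derived equivalence is automatic and no Postnikov or spectral-sequence gymnastics are needed. So your route is more self-contained, while the paper's is shorter by outsourcing the technical passage to Orlov.
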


\begin{proof}
On the unbounded level the functor  $F:D(X_{r,a}) \rightarrow D(V_{r,a})$ has a right adjoint defined by
\begin{align*}
G: D(V_{r,a}) &\rightarrow D(X_{r,a}) \\
\mathcal{E} &\mapsto v_*(\mathcal{E} \otimes \mathcal{O}_V(D_n)).
\end{align*}
The counit $FG \rightarrow id $ is a natural isomorphism on the whole of $D(V_{r,a})$ as the counit $v^*v_* \rightarrow id$ is a natural isomorphism and $\mathcal{O}_{V_{r,a}}(D_n) \otimes \mathcal{O}_{V_{r,a}}(-D_n) \cong \mathcal{O}_{V_{r,a}}$. 

Further,  as $v$ is flat and affine and $\mathcal{O}_{V_{r,a}}(-D_n)$ and $\mathcal{O}_{V_{r,a}}(D_n)$ are line bundles the functors $F= v^*(-) \otimes \mathcal{O}_{V_{r,a}}(-D_n)$ and $G=v_*((-) \otimes \mathcal{O}_{V_{r,a}}(D_n))$ commute with taking cohomology sheaves,  restrict to exact functors between $\QCoh \, X_{r,a}$ and $\QCoh \, V_{r,a}$, and preserve the property of a sheaf being supported on $C$. It follows that $F$ and $G$ restrict to functors between $D_C(X_{r,a})$ and $D_C(V_{r,a})$. 

 Moreover, if $i:C \rightarrow V_{r,a}$ denotes the closed immersion of the curve in $V_{r,a}$ then $(v \circ i):C \rightarrow X_{r,a}$ is the closed immersion of the curve in $X_{r,a}$. Indeed, by construction $V_{r,a}$ contains the formal neighbourhood of $C$ and hence the morphism $v$ induces an isomorphism between the completions of $X_{r,a}$ and $V_{r,a}$ along $C$, and in particular produces an equivalence between the categories of quasicoherent sheaves on the corresponding formal schemes. It then follows that $F$ and $G$ induce an equivalence
 \[
 \textnormal{QCoh}_C X_{r,a} \cong \textnormal{QCoh}_C V_{r,a}
 \]
between the categories of quasicoherent sheaves supported on $C$, see \cite[Proposition 2.8]{Orlovidempotent}. We recall that $D_C(X_{r,a}) \cong D( \textnormal{QCoh}_C X_{r,a})$ and $D_C(V_{r,a}) \cong D( \textnormal{QCoh}_C V_{r,a})$, see \cite[Section 2]{Orlovidempotent}, and hence it follows that the functors $F$ and $G$ are an equivalence between $D_C(V_{r,a})$ and $D_C(X_{r,a})$.
\end{proof}

\begin{Theorem} \label{Equivalence of Geometric Relative Singularity Categories Theorem}
 The functor $F$ restricts to  induce an equivalence of quotient categories
\[
\bar{F}: \frac{D^b(\Lambda_{r,a})}{\langle \Lambda_0 \rangle} \rightarrow \frac{D^b(V_{r,a})}{\langle\mathcal{O}_{V_{r,a}} \rangle}.
\]
\end{Theorem}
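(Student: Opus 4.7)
The functor $\bar F$ is well-defined because $F(\Lambda_0) \cong \mathcal{O}_{V_{r,a}}^{\oplus \lambda_0}$ lies in $\langle \mathcal{O}_{V_{r,a}}\rangle$ by Lemma~\ref{Functor Calculation Lemma}(3). Consequently $F$ sends the thick subcategory $\langle \Lambda_0\rangle \subseteq D^b(\Lambda_{r,a})$ into $\langle \mathcal{O}_{V_{r,a}}\rangle \subseteq D^b(V_{r,a})$, and a functor between the Verdier quotients is induced by the universal property.

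\textbf{Reduction to $D_C$.} My plan to show $\bar F$ is an equivalence is to reduce to the unbounded equivalence $F\colon D_C(X_{r,a}) \xrightarrow{\sim} D_C(V_{r,a})$ of Proposition~\ref{Equivalence of objects supported on curve Proposition} via the recollement framework of Section~\ref{Recollemonts generated by idempotents}. Apply Proposition~\ref{Second Isomorphism Theorem Proposition} to the pair $(A_{r,a}, e_0)$, where $A_{r,a} e_0$ is the projective summand corresponding to $\mathcal{O}_{V_{r,a}}$ and $e_0 A_{r,a} e_0 \cong R_{r,a}$, and to $(\Lambda_{r,a}, \tilde e_0)$, where $\tilde e_0 \Lambda_{r,a} \tilde e_0 \cong \End_{\Lambda_{r,a}}(\Lambda_0)$ is finite dimensional. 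This yields
\[
\left(\frac{D^b(V_{r,a})}{\langle \mathcal{O}_{V_{r,a}}\rangle}\right)^{\!\omega} \!\!\cong \bigl(D_{A_{r,a}/A_{r,a}e_0 A_{r,a}}(A_{r,a})\bigr)^{C}, \quad \left(\frac{D^b(\Lambda_{r,a})}{\langle \Lambda_0\rangle}\right)^{\!\omega} \!\!\cong \bigl(D_{\Lambda_{r,a}/\Lambda_{r,a}\tilde e_0 \Lambda_{r,a}}(\Lambda_{r,a})\bigr)^{C}.
\]
Corollary~\ref{Idempotent completeness} shows that both relative singularity categories are already idempotent complete, so the $(-)^\omega$ may be dropped.

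\textbf{Identifying the kernel categories.} The last step is to identify these kernel categories with $D_C(V_{r,a})$ and $D_C(X_{r,a})$ via the (possibly partial) derived equivalences. For $A_{r,a}$ the simples of the quotient algebra are $s_1,\dots,s_n$, corresponding under the tilting equivalence $D(A_{r,a}) \cong D(V_{r,a})$ to $\mathcal{O}_{C_i}(-1)$; these generate $D_C(V_{r,a})$ as a localising subcategory by filtering an arbitrary coherent sheaf supported on $C$ through its composition factors. An entirely parallel argument on the $\Lambda_{r,a}$-side uses Proposition~\ref{Proposition HillePloog Abelian Category}: the simples $\sigma_1,\dots,\sigma_n$ of the quotient algebra are all supported on $C$, while $\sigma_0$ is a line bundle on all of $X_{r,a}$, so any object of the heart $D_{r,a} \cap \Coh \, X_{r,a}$ supported on $C$ has composition factors only among $\sigma_1,\dots,\sigma_n$. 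This identifies $D_{\Lambda_{r,a}/\Lambda_{r,a}\tilde e_0 \Lambda_{r,a}}(\Lambda_{r,a})$ with $D_C(X_{r,a}) \subseteq \langle \Lambda\rangle^\oplus \subseteq D(X_{r,a})$. These identifications are natural with respect to $F$ by its construction (tensoring with a line bundle commutes with supports), so combining them with Proposition~\ref{Equivalence of objects supported on curve Proposition} and restricting to compact objects produces the desired equivalence $\bar F$.

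\textbf{Main obstacle.} I expect the main technical difficulty to lie in the $\Lambda_{r,a}$-side identification $D_{\Lambda_{r,a}/\Lambda_{r,a}\tilde e_0 \Lambda_{r,a}}(\Lambda_{r,a}) \cong D_C(X_{r,a})$, because $D(\Lambda_{r,a})$ embeds only as the proper subcategory $\langle \Lambda\rangle^\oplus$ of $D(X_{r,a})$. One must check both the inclusion $D_C(X_{r,a}) \subseteq \langle \Lambda\rangle^\oplus$ and the fact that $\sigma_1,\dots,\sigma_n$ generate $D_C(X_{r,a})$ as a localising subcategory of the full $D(X_{r,a})$, each of which reduces to the filtration statement above but requires some care at the level of unbounded complexes and arbitrary sheaves supported on $C$.
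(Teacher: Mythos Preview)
Your overall architecture (pass to the unbounded level, use the recollement identifications of Proposition~\ref{Second Isomorphism Theorem Proposition}, invoke the equivalence $F\colon D_C(X_{r,a})\xrightarrow{\sim}D_C(V_{r,a})$, then restrict to compacts and drop idempotent completions) is exactly the paper's strategy. The gap is in the step ``Identifying the kernel categories'': the identifications
\[
D_{A_{r,a}/A_{r,a}e_0 A_{r,a}}(A_{r,a})\;\stackrel{?}{=}\;D_C(V_{r,a})
\qquad\text{and}\qquad
D_{\Lambda_{r,a}/\Lambda_{r,a}\tilde e_0 \Lambda_{r,a}}(\Lambda_{r,a})\;\stackrel{?}{=}\;D_C(X_{r,a})
\]
are both \emph{false}. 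On the $A$-side, under the tilting equivalence the kernel category becomes $\{\,M\in D(V_{r,a}) : \mathbf{R}\Hom(\mathcal{O}_V,M)=0\,\}=\ker\mathbf{R}p_*$. Every $s_i=\mathcal{O}_{C_i}(-1)$ lies in $\ker\mathbf{R}p_*$, so the localising subcategory they generate is contained in $\ker\mathbf{R}p_*$; but $D_C(V_{r,a})$ is not: a skyscraper sheaf $\mathcal{O}_x$ at $x\in C$, or $\mathcal{O}_C$ itself, is supported on $C$ yet has nonzero pushforward. Your filtration argument does not repair this: composition factors in $\Coh V_{r,a}$ of a sheaf supported on $C$ are skyscrapers, not $\mathcal{O}_{C_i}(-1)$, and in the perverse heart $\mathcal{A}_{r,a}$ such a sheaf can acquire the simple $s_0=\omega_C[1]$ as a composition factor. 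The $\Lambda$-side fails for the dual reason (and, as you note yourself, $D_C(X_{r,a})\subseteq\langle\Lambda\rangle^{\oplus}$ is not guaranteed).

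The paper avoids this by using only the \emph{inclusion} $D_{\Lambda/\Lambda \tilde e_0\Lambda}(\Lambda)\subset D_C(X_{r,a})$, which holds since $\sigma_1,\ldots,\sigma_n$ are supported on $C$. From Proposition~\ref{Equivalence of objects supported on curve Proposition} one then gets that $F$ is fully faithful on this subcategory; the extra point is to check that its image lands in $D_{A/Ae_0A}(A)$, which is done by a direct adjunction computation
\[
\Hom_{D(A)}\bigl(P_0^{\oplus\lambda_0},F(M)\bigr)
=\Hom_{D(A)}\bigl(F(\Lambda_0),F(M)\bigr)
=\Hom_{D(\Lambda)}\bigl(\Lambda_0,GF(M)\bigr)
=\Hom_{D(\Lambda)}(\Lambda_0,M)=0
\]
for $M\in D_{\Lambda/\Lambda\tilde e_0\Lambda}(\Lambda)$, using $GF(M)\cong M$ on $D_C(X_{r,a})$. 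This yields a fully faithful $F_{\mathrm{res}}$ between the kernel categories; a short diagram chase with the $\theta$-equivalences of Proposition~\ref{Second Isomorphism Theorem Proposition} then shows $\bar F$ is fully faithful, and essential surjectivity is immediate from $\bar F(\Lambda_i)\cong P_i$. So your proposal needs only this one modification: replace the (false) equality with the (true) inclusion plus the adjunction computation above.
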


\begin{proof}
To simplify notation we omit the $r,a$ subscripts from $V_{r,a}$, $\Lambda_{r,a}$, and $A_{r,a}$ for the remainder of this proof.

We work with the unbounded derived functor $F:D(\Lambda) \rightarrow D(V)$. This can be composed with the Verdier quotient $D(V) \rightarrow D(V)/\langle \mathcal{O}_{V} \rangle^{\oplus}$. Then $F$ maps the projective module $\Lambda_0$  into $\langle \mathcal{O}_{V}\rangle$ by Lemma \ref{Functor Calculation Lemma}, and hence it follows from  the universal property of Verdier localisation that the functor $F$ restricts to a functor $\bar{F}:D(\Lambda)/\langle \Lambda_0\rangle^{\oplus} \rightarrow D(V)/\langle \mathcal{O}_{V} \rangle^{\oplus} $ as $F$ commutes with arbitrary direct sums.

We now show that the induced functor $\bar{F}$ is fully faithful. Recall from Proposition \ref{Second Isomorphism Theorem Proposition} that \[
\frac{D(\Lambda)}{\langle \Lambda_0 \rangle^{\oplus} } \cong  D_{\Lambda/\Lambda e \Lambda}(\Lambda)\, \text{ and } \, \frac{D(V)}{ \langle \mathcal{O}_{V} \rangle^{\oplus}} \cong \frac{D(A)}{ \langle P_0 \rangle^{\oplus}} \cong  D_{A/AeA}(A) 
\] 
where the equivalence $\frac{D(V)}{ \langle \mathcal{O}_{V} \rangle^{\oplus}} \cong \frac{D(A)}{ \langle P_0 \rangle^{\oplus}}$ is the tilting equivalence of Theorem \ref{Affine Abelian Theorem}, and to ease notation we will also use $F$ and $\bar{F}$ to refer to the compositions of $F$ and $\bar{F}$ with this equivalence.

The functor $F:D_C(X) \rightarrow D_C(V)$ is fully faithful by Proposition \ref{Equivalence of objects supported on curve Proposition}, and hence the induced functor $F:D_C(\Lambda) \rightarrow D(V)$ is fully faithful. As the simples $\sigma_i$ of $\Lambda$ are supported on $C$ for $1 \le i \le n$ it follows that $D_{\Lambda/\Lambda e \Lambda}(\Lambda)\subset D_C(X)$, and if $M \in D_{\Lambda/\Lambda e \Lambda}(\Lambda)$, then
\begin{align*}
\Hom_{D(A)}(P_0^{\oplus \lambda_0},F(M)) 
&= \Hom_{D(A)}(F(\Lambda_0),F(M)) \tag{$P_0^{\oplus \lambda_0}\cong F(\Lambda_0)$} \\
&= \Hom_{D(\Lambda)}(\Lambda_0,GF(M)) \tag{$(F,G)$ adjoint pair}\\
&= \Hom_{D(\Lambda)}(\Lambda_0,M) \tag{$GF(M) \cong M$}
\end{align*}
where $GF(M) \cong M$ by Proposition \ref{Equivalence of objects supported on curve Proposition} as $M \in D_{\Lambda/\Lambda e \Lambda}(\Lambda)\subset D_C(X)$. It follows that $F:D_C(\Lambda) \rightarrow D(V)$ maps the kernel $D_{\Lambda/\Lambda e \Lambda}(\Lambda)$ of the functor $\Hom_{D(\Lambda)}(\Lambda_0,-)$ to the kernel $\mathcal{K} \cong D_{A/AeA}(A)$ of the functor $\Hom_{D(V)}(\mathcal{O}_V,-)$ so restricts to a fully faithful functor
$F_{\textnormal{res}}:D_{\Lambda/\Lambda e \Lambda}(\Lambda) \rightarrow D_{A/AeA}(A)$.

That is, there is a diagram
\[
\begin{tikzpicture} 
\node (A1) at (0,0)  {$D_C(\Lambda)$};
\node (A2) at (1.5,-2)  {$D(\Lambda)/\langle \Lambda_0 \rangle^{\oplus}$};
\node (A3) at (0,-4) {$D_{\Lambda/\Lambda e \Lambda}(\Lambda)$};
\node (B1) at (9.6,0) {$D(V) $};
\node (B1') at (10.3,0) {$\cong $};
\node (B1'') at (11,0) {$D(A)$};
\node (B2) at (6.7,-2) {$D(V)/\langle \mathcal{O}_{V} \rangle^{\oplus}$};
\node (B2') at (8.2,-2) {$\cong$};
\node (B2'') at (9.5,-2) {$D(A)/\langle P_0 \rangle^{\oplus}$};
\node (B3) at (9.4,-4) {$\mathcal{K}$};
\node (B3') at (9.8,-4) {$\cong$};
\node (B3'') at (11,-4) {$D_{A/A e A}(A)$};
\draw [->] (A1) to node[above right] {$\scriptstyle{p_{\Lambda}}$} (A2);
\draw [->] (A2) to node[below right] {$\scriptstyle{\theta_{\Lambda}}$} (A3);
\draw [->] (A3) to node[left] {$\scriptstyle{i_{\Lambda}}$} (A1);
\draw [->](B1'') to node[above left] {$\scriptstyle{p_{A}}$} (B2'');
\draw [->] (B2'') to node[below left] {$\scriptstyle{\theta_{A}}$} (B3'');
\draw [->] (B3'') to node[right] {$\scriptstyle{i_A}$} (B1'');
\draw [->] (A1) to node[above] {$\scriptstyle{F}$} (B1);
\draw [->] (A2) to node[above] {$\scriptstyle{\bar{F}}$} (B2);
\draw [->] (A3) to node[above] {$\scriptstyle{F_{\textnormal{res}}}$} (B3); (B1);
\end{tikzpicture}
\]
where the functors $i_{\Lambda}$ and $i_{A}$ are the fully faithful inclusions and the functors $\theta_{\Lambda}$ and $\theta_{A}$ are the equivalences of Proposition \ref{Second Isomorphism Theorem Proposition}. The top and outer squares commute as
\[
\bar{F} \circ p_{\Lambda} \cong p_{A} \circ F \, \text{ and } \, i_{A} \circ F_{\textnormal{res}}  \cong F \circ i_{\Lambda}
\]
by the definitions of $\bar{F}$ and $F_{\textnormal{res}}$. Then $\theta_\Lambda \circ p_\Lambda \circ i_\Lambda \cong id$ and $\theta_A \circ p_A \circ i_A \cong id$ by the definitions of $\theta_\Lambda$ and $\theta_A$ in Proposition \ref{Second Isomorphism Theorem Proposition}; that is, in the recollement situation of Section \ref{Recollemonts generated by idempotents}, the $\theta$ equivalences are induced from the $i^*$ functors, the $i$ functors are the $i_*$ functors, and $i^* \circ i_* \cong id$. This allows us to show the bottom square also commutes:
\begin{align*}
\theta_{A} \circ \bar{F} \circ \theta_{\Lambda}^{-1} & \cong \theta_{A} \circ \bar{F} \circ p_{\Lambda} \circ i_{\Lambda} \tag{$\theta_\Lambda^{-1} \cong p_{\Lambda} \circ i_{\Lambda}$} \\
& \cong \theta_{A} \circ p_A \circ F \circ i_{\Lambda}  \tag{$ \bar{F} \circ p_{\Lambda} \cong p_A \circ F$} \\
& \cong \theta_{A} \circ p_A \circ i_A \circ F_{\textnormal{res}}  \tag{$F \circ i_{\Lambda} \cong i_A \circ F_{\textnormal{res}}$}\\
& \cong F_{\textnormal{res}}. \tag{$\theta_A \circ p_A \circ i_A \cong id$}
\end{align*}
Hence if $N,M \in D(\Lambda)/\langle \Lambda_0 \rangle^{\oplus}$, then 
\begin{align*}
\Hom_{D(A)}(\bar{F}(N),\bar{F}(M)) & \cong  \Hom_{D(A)}(i_A \theta_A \bar{F}(N),i_A \theta_A \bar{F}(M))  \tag{$i_A \circ \theta_A$ fully faithful} \\
& \cong  \Hom_{D(A)}(F i_\Lambda \theta_\Lambda (N), F i_\Lambda \theta_\Lambda (M)) \tag{diagram commutes} \\
& \cong  \Hom_{D(\Lambda)}(N, M)  \tag{$i_A \circ \theta_A \circ F$ fully faithful} 
\end{align*}
so $\bar{F}$ is fully faithful. 

The functor $\bar{F}$ is essentially surjective as $P_0 \cong \mathcal{O}_V$ is identified with 0 in the quotient category so $\bar{F}(\Lambda_i) \cong P_i$ for $1 \le i \le n$ by Lemma \ref{Functor Calculation Lemma}. Hence $\bar{F}$ maps the generator $\bigoplus_{i=1}^n \Lambda_i$ to the generator $\bigoplus_{i=1}^n P_i$.

So the functor $\bar{F}$ is fully faithful and essentially surjective, hence it is an equivalence. Restricting to the subcategories of compact objects we get an equivalence $\bar{F}:\left( \frac{D^b(\Lambda)}{\langle \Lambda_0 \rangle} \right)^{\omega} \rightarrow \left( \frac{D^b(V)}{\langle\mathcal{O}_{V} \rangle}\right)^{\omega}$, by Neeman's \cite[Theorem 2.1]{NeemanThomasonTrobaughYao}, where $(-)^\omega$ denotes the idempotent completion. Both quotient categories $\frac{D^b(\Lambda)}{\langle \Lambda_0 \rangle}$ and $\frac{D^b(V)}{\langle\mathcal{O}_V \rangle}$ are already idempotent complete by Corollary \ref{Idempotent completeness}. This completes the proof. 
\end{proof}

This induces an equivalence of relative singularity categories.
\begin{Theorem} \label{Equivalence of Relative Singularity Categories Theorem}
The functor $\bar{F}$ induces an equivalence 
\[
F_{A}\colon \Delta_{K_{r,a}}(\Lambda_{r,a}) \rightarrow \Delta_{R_{r,a}}(A_{r,a})
\]
with $F_{A}(\sigma_i)=s_i$ and $F_{A}(\Lambda_i) \cong P_i$ for $1 \le i \le n$.
\end{Theorem}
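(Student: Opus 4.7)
The plan is to deduce this equivalence as an essentially formal consequence of Theorem \ref{Equivalence of Geometric Relative Singularity Categories Theorem}, together with the tilting equivalence of Theorem \ref{Affine Abelian Theorem} and the standard identification $\langle Ae\rangle = \Perf(eAe)$ built into Definition \ref{D:Singularity Cat}. The main work has already been done; what remains is to rewrite both sides of the equivalence produced by $\bar F$ in the language of relative singularity categories.

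First, I would compose $\bar{F}$ with the tilting equivalence $\mathbf{R}\Hom_{V_{r,a}}(T,-)\colon D^b(V_{r,a}) \xrightarrow{\cong} D^b(A_{r,a})$ of Theorem \ref{Affine Abelian Theorem}. Under this equivalence the structure sheaf $\mathcal{O}_{V_{r,a}} = P_0$, so $\langle \mathcal{O}_{V_{r,a}} \rangle$ is identified with $\langle P_0 \rangle \subset D^b(A_{r,a})$, and there is an induced triangle equivalence
\[
\frac{D^b(\Lambda_{r,a})}{\langle \Lambda_0 \rangle} \xrightarrow{\cong} \frac{D^b(A_{r,a})}{\langle P_0 \rangle}.
\]

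Next, I would identify both quotients as relative singularity categories. The algebra $R_{r,a}$ is the corner $e' A_{r,a} e'$ where $e'$ projects onto the $P_0$-summand of the tilting module $T$, so by Definition \ref{D:Singularity Cat},
\[
\langle P_0 \rangle = \langle A_{r,a} e'\rangle = \Perf(e' A_{r,a} e') = \Perf(R_{r,a}),
\]
and thus the right hand side becomes $\Delta_{R_{r,a}}(A_{r,a})$. Similarly, by the very definition of $K_{r,a}$ as $e \Lambda_{r,a} e$ for the primitive idempotent $e$ picking out the summand $\Lambda_0$, we have $\Lambda_0 \cong \Lambda_{r,a} e$ and hence
\[
\langle \Lambda_0 \rangle = \langle \Lambda_{r,a} e\rangle = \Perf(e \Lambda_{r,a} e) = \Perf(K_{r,a}),
\]
so the left hand side is $\Delta_{K_{r,a}}(\Lambda_{r,a})$. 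Combining these identifications with the equivalence above yields the desired functor $F_A$.

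For the images of the distinguished objects I would appeal to Lemma \ref{Functor Calculation Lemma}: it shows $F(\sigma_i) = s_i$ for $1 \le i \le n$, so $F_A(\sigma_i) = s_i$ on the nose, while $F(\Lambda_i) \cong P_i \oplus P_0^{\oplus \lambda_i - 1}$, whose second summand is identified with zero in the quotient by $\langle P_0 \rangle$, giving $F_A(\Lambda_i) \cong P_i$ in $\Delta_{R_{r,a}}(A_{r,a})$. Idempotent completeness of both sides, needed to pass from the unbounded equivalence to the bounded one, follows from Corollary \ref{Idempotent completeness} applied to $K_{r,a}$ (finite dimensional) and $R_{r,a}$ (a two dimensional quotient singularity), and this has already been carried out in the proof of Theorem \ref{Equivalence of Geometric Relative Singularity Categories Theorem}. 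Hence no genuine obstacle arises here; the only point worth writing carefully is the bookkeeping identifying $\langle \Lambda_0\rangle$ and $\langle P_0\rangle$ with the respective perfect subcategories, and this is immediate from the relationship between projective summands and corner algebras.
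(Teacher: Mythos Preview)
Your proposal is correct and follows essentially the same approach as the paper: compose $\bar F$ with the tilting equivalence $D^b(V_{r,a})\cong D^b(A_{r,a})$, identify $\langle \Lambda_0\rangle$ and $\langle P_0\rangle$ with $\Perf(K_{r,a})$ and $\Perf(R_{r,a})$ via Definition~\ref{D:Singularity Cat}, and read off the images of $\sigma_i$ and $\Lambda_i$ from Lemma~\ref{Functor Calculation Lemma}. Your additional remarks on the $\Perf$ identifications and idempotent completeness are accurate but not needed here, since the latter was already absorbed into the proof of Theorem~\ref{Equivalence of Geometric Relative Singularity Categories Theorem}.
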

\begin{proof}
We recall that there is a derived equivalence $D^b(A_{r,a}) \cong D^b(V_{r,a}) $ that exchanges the object $P_0 \in D^b(A_{r,a})$ with the structure sheaf $\mathcal{O}_{V_{r,a}} \in D^b(V_{r,a})$. As such, this derived equivalence restricts to the corresponding quotient categories and can be composed with the functor $\bar{F}$, which is an equivalence by  Theorem \ref{Equivalence of Geometric Relative Singularity Categories Theorem}, to produce an equivalence of the relative singularity categories of the algebras:
\[
\begin{tikzpicture} 
\node (A1) at (0,1.4)  {$\Delta_{K_{r,a}}(\Lambda_{r,a})$};
\node (B2) at (7,1.4)  {$ D^b(A_{r,a})/\langle P_0 \rangle$};
\node (C2) at (5.4,1.4) {$:=$};
\node (A2) at (4,1.4)  {$\Delta_{R_{r,a}}(A_{r,a})$};
\node (A3) at (7,0)  {$ D^b(V_{r,a})/\langle \mathcal{O}_{V_{r,a}} \rangle$};
\node (A4) at (0,0)  {$ D^b(\Lambda_{r,a})/\langle \Lambda_{0} \rangle$};
\node (C1) at (0,0.7) [above,rotate=270] {$:=$};
\draw [->] (A1) to node[above right] {$\scriptstyle{F_{A}}$} (A2);
\node (C2) at (7,0.7) [above,rotate=270] {$\cong$};
\draw [->] (A4) to node[above right] {$\scriptstyle{\bar{F}}$} (A3);
\end{tikzpicture}.
\]

By Lemma \ref{Functor Calculation Lemma} $\bar{F}(\sigma_i)=s_i$, and in the quotient category $P_0 \cong \mathcal{O}_V$ is identified with $0$ so by Lemma \ref{Functor Calculation Lemma} $\bar{F}(\Lambda_i) \cong P_i$ for $1 \le i \le n$. 
\end{proof}

We obtain the following generalisation to partial resolutions of singularities.

\begin{Corollary}\label{C:GeneralizedEquivofRelSingCats}
Let $e \in \Lambda_{r, a}$ be an idempotent such that the indecomposable projective $P_0$ is a direct summand of  $\Lambda_{r, a} e$ and let $e \in A_{r, a}$ the corresponding idempotent. Then there is an equivalence of  triangulated categories
\[
F_{A, e} \colon \Delta_{e\Lambda_{r, a} e}(\Lambda_{r,a}) \rightarrow \Delta_{e A_{r, a} e}(A_{r,a})
\]
with $F_{A, e}(\sigma_i)=s_i$ and $F_{A, e}(\Lambda_i) \cong P_i$ for all $1 \le i \le n$ such that $P_i$ is not a direct summand of $\Lambda_{r, a} e$.
\end{Corollary}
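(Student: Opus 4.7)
The plan is to deduce this Corollary from Theorem \ref{Equivalence of Relative Singularity Categories Theorem} by iterating the Verdier quotient construction. Since every finitely generated projective $\Lambda_{r,a}$-module decomposes as a direct sum of the indecomposable projectives $\Lambda_0, \Lambda_1, \ldots, \Lambda_n$, the hypothesis that $\Lambda_0$ (the summand corresponding to $P_0$) appears in $\Lambda_{r,a} e$ gives a decomposition
\[
\Lambda_{r,a} e \cong \Lambda_0^{\oplus m_0} \oplus \bigoplus_{i \in J} \Lambda_i^{\oplus m_i}
\]
for some subset $J \subset \{1, \ldots, n\}$ and multiplicities $m_0, m_i \geq 1$. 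Under the natural bijection of primitive idempotents between $\Lambda_{r,a}$ and $A_{r,a}$ coming from the matching $\Lambda_i \leftrightarrow P_i$, the ``corresponding'' idempotent $e \in A_{r,a}$ satisfies the analogous decomposition $A_{r,a} e \cong P_0^{\oplus m_0} \oplus \bigoplus_{i \in J} P_i^{\oplus m_i}$.

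Because a thick subcategory generated by a finite direct sum equals the thick subcategory generated by its summands, I have $\langle \Lambda_{r,a} e\rangle = \langle \Lambda_0, \{\Lambda_i\}_{i \in J}\rangle$ inside $D^b(\Lambda_{r,a})$, and analogously for $A_{r,a}$. The Verdier quotient is transitive in the sense that for thick subcategories $\mathcal{S}, \mathcal{T}$ of a triangulated category $\mathcal{D}$, there is a canonical equivalence $\mathcal{D}/\langle \mathcal{S} \cup \mathcal{T}\rangle \cong (\mathcal{D}/\mathcal{S})/\langle \bar{\mathcal{T}}\rangle$, where $\bar{\mathcal{T}}$ denotes the image of $\mathcal{T}$ in $\mathcal{D}/\mathcal{S}$. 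Applying this with $\mathcal{S} = \langle \Lambda_0\rangle$ and $\mathcal{T} = \langle \{\Lambda_i\}_{i \in J}\rangle$ yields
\[
\Delta_{e\Lambda_{r,a} e}(\Lambda_{r,a}) \cong \Delta_{K_{r,a}}(\Lambda_{r,a}) \big/ \langle \{\Lambda_i\}_{i \in J}\rangle,
\]
and likewise $\Delta_{eA_{r,a} e}(A_{r,a}) \cong \Delta_{R_{r,a}}(A_{r,a})/\langle \{P_i\}_{i \in J}\rangle$.

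Finally, since Theorem \ref{Equivalence of Relative Singularity Categories Theorem} supplies an equivalence $F_A$ with $F_A(\Lambda_i) \cong P_i$ for each $1 \le i \le n$, the functor $F_A$ sends the thick subcategory $\langle \{\Lambda_i\}_{i \in J}\rangle$ bijectively onto $\langle \{P_i\}_{i \in J}\rangle$, and hence descends to a triangle equivalence $F_{A,e}$ between the quotient categories identified above. The asserted values $F_{A,e}(\sigma_i) = s_i$ and $F_{A,e}(\Lambda_i) \cong P_i$ (for those $i \notin J \cup \{0\}$ where $\Lambda_i$ and $P_i$ survive the further quotient) are inherited directly from the corresponding properties of $F_A$. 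I do not anticipate a serious obstacle: the work reduces to bookkeeping of Verdier quotients and verifying the idempotent correspondence under $\Lambda_i \leftrightarrow P_i$, once Theorem \ref{Equivalence of Relative Singularity Categories Theorem} is in hand.
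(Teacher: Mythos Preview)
Your proposal is correct and follows essentially the same route as the paper: both arguments split off the $\Lambda_0$-summand from $\Lambda_{r,a}e$, use transitivity of Verdier quotients to write $\Delta_{e\Lambda_{r,a}e}(\Lambda_{r,a})$ as a further quotient of $\Delta_{K_{r,a}}(\Lambda_{r,a})$ (and similarly on the $A_{r,a}$-side), and then apply Theorem~\ref{Equivalence of Relative Singularity Categories Theorem} together with $F_A(\Lambda_i)\cong P_i$ to descend the equivalence. Your write-up is slightly more explicit about the bookkeeping, but the idea is identical.
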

\begin{proof}
By definition, $\Delta_{e\Lambda_{r, a} e}(\Lambda_{r,a})=\frac{D^b(\Lambda_{r, a})}{\langle \Lambda_{r, a} e \rangle}$. Our assumption $\Lambda_{r, a} e= P_0 \oplus \Lambda_{r, a} e'$ shows \[\Delta_{e\Lambda_{r, a} e}(\Lambda_{r,a})=\frac{D^b(\Lambda_{r, a})}{\langle \Lambda_{r, a} e \rangle} \cong \frac{D^b(\Lambda_{r, a})/\langle P_0 \rangle}{\langle \Lambda_{r, a} e' \rangle} \cong \frac{\Delta_{K_{r,a}}(\Lambda_{r,a})}{\langle \Lambda_{r, a} e' \rangle}.\] We have corresponding statements for $A_{r, a}$. Applying Theorem \ref{Equivalence of Relative Singularity Categories Theorem} completes the proof.
\end{proof}

\subsection{Equivalence of singularity categories}
The equivalences of relative singularity categories can be used to deduce an equivalence of singularity categories. We now define the Kn\"{o}rrer invariant algebra. (A presentation of $K_{r,a}$ is calculated later in Lemma \ref{KI Presentation Lemma}.)
\begin{Definition} \label{KIdefinition}
The \emph{Kn\"{o}rrer invariant algebra} is defined to be $K_{r,a}:= \End_{X_{r,a}}(\Lambda_0)$. If $e_0 \in \Lambda_{r,a}$ is the idempotent corresponding to the simple $\sigma_0$ then  $K_{r,a} \cong  e_{0} \Lambda_{r,a} e_{0}$.
\end{Definition}

The following  Theorem is the main result of this  article.

\begin{Theorem} \label{SC for KI} There are  equivalences of singularity categories
\[
D_{sg}(K_{r,a}) \cong D_{sg}(R_{r,a}).
\]
\end{Theorem}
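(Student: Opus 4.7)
The plan is to deduce this equivalence from the equivalence of relative singularity categories given by Theorem \ref{Equivalence of Relative Singularity Categories Theorem} by applying the general mechanism of Lemma \ref{Quotient from relative to singularity category Lemma}, which recovers an absolute singularity category as a Verdier quotient of a relative one. Recall that $K_{r,a} = e_0 \Lambda_{r,a} e_0$ by Definition \ref{KIdefinition}, and that the reconstruction algebra satisfies $R_{r,a} = e_0 A_{r,a} e_0$ where $e_0$ is the idempotent corresponding to $P_0 = \mathcal{O}_{V_{r,a}}$; this is precisely the setup of Definition \ref{D:Singularity Cat}.

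Concretely, I would proceed as follows. First, identify the thick subcategory $S_\Lambda \subset \Delta_{K_{r,a}}(\Lambda_{r,a})$ generated by the simple $\Lambda_{r,a}/\Lambda_{r,a} e_0 \Lambda_{r,a}$-modules. Since $\Lambda_{r,a}$ is finite dimensional with simples $\sigma_0,\ldots,\sigma_n$, and $e_0$ corresponds to $\sigma_0$, the simples of the quotient algebra are exactly $\sigma_1,\ldots,\sigma_n$, so $S_\Lambda = \langle \sigma_1,\ldots,\sigma_n\rangle$. Likewise, I would identify the thick subcategory $S_A \subset \Delta_{R_{r,a}}(A_{r,a})$, which by the analogous argument equals $\langle s_1,\ldots,s_n\rangle$ (using that $A_{r,a}/A_{r,a} e_0 A_{r,a}$ is finite dimensional, since in the complete local setting it is supported only at the exceptional locus). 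By Lemma \ref{Quotient from relative to singularity category Lemma}, Verdier quotienting by these thick subcategories then gives
\[
\frac{\Delta_{K_{r,a}}(\Lambda_{r,a})}{S_\Lambda} \cong D_{sg}(K_{r,a}) \qquad \text{and} \qquad \frac{\Delta_{R_{r,a}}(A_{r,a})}{S_A} \cong D_{sg}(R_{r,a}).
\]

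The final step is to show that the equivalence $F_A\colon \Delta_{K_{r,a}}(\Lambda_{r,a}) \to \Delta_{R_{r,a}}(A_{r,a})$ of Theorem \ref{Equivalence of Relative Singularity Categories Theorem} sends $S_\Lambda$ bijectively onto $S_A$, so that it descends to an equivalence of Verdier quotients. But this is immediate from the fact, recorded in that Theorem, that $F_A(\sigma_i) = s_i$ for all $1 \le i \le n$; since $F_A$ is a triangle equivalence it sends the thick subcategory generated by $\{\sigma_i\}_{i=1}^n$ isomorphically onto the thick subcategory generated by $\{s_i\}_{i=1}^n$. Composing with the two identifications above yields the desired equivalence $D_{sg}(K_{r,a}) \cong D_{sg}(R_{r,a})$.

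The main technical point to verify carefully is the applicability of Lemma \ref{Quotient from relative to singularity category Lemma} on the reconstruction side, namely that $A_{r,a}/A_{r,a}e_0 A_{r,a}$ is finite dimensional and that its simple modules are exactly $s_1,\ldots,s_n$; both hold since $V_{r,a} \to \Spec R_{r,a}$ is a contraction of the projective curves $C_1,\ldots,C_n$ and the equivalence of Theorem \ref{Affine Abelian Theorem} identifies modules annihilated by $e_0$ with objects supported on these curves. Everything else in the argument is a formal diagram chase using the compatibility of Verdier quotients with triangle equivalences, together with the idempotent completeness guaranteed by Corollary \ref{Idempotent completeness} and Proposition \ref{P:DsgFDidempcomplete}.
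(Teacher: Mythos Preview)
Your proof is correct and follows essentially the same approach as the paper: the paper proves the more general Theorem \ref{SC for KI generalized} and specializes to $e=e_0$, but the argument in that specialization is exactly yours---apply Lemma \ref{Quotient from relative to singularity category Lemma} on both sides, identify the simples to quotient by as $\sigma_1,\ldots,\sigma_n$ respectively $s_1,\ldots,s_n$, and use that $F_A(\sigma_i)=s_i$ from Theorem \ref{Equivalence of Relative Singularity Categories Theorem} to descend the equivalence. The one small difference is that for finite-dimensionality of $A_{r,a}/A_{r,a}e_0A_{r,a}$ the paper invokes Auslander's result \cite{AuslanderIsolatedSingularitiesandExistenceofalmostsplitsequences} via $R_{r,a}$ being an isolated singularity, rather than your geometric support argument.
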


Below we prove the more general result Theorem \ref{SC for KI generalized}, from which Theorem \ref{SC for KI} follows by setting $e=e_0$. Before we do this we must first introduce some notation. 

We recall the notation $\Lambda_{r,a}$ for the algebra defined in section \ref{Subcategory}, but we will often instead adjoin the subscript $\Lambda_{[\alpha_1, \dots, \alpha_n]}$ where $r/a=[ \alpha_1, \dots, \alpha_n]$ to emphasize that the algebra $\Lambda_{r,a}$ corresponds to geometry of the curves $C_i$ for $1 \le i \le n$ with self intersection numbers $C_i \cdot C_i=-\alpha_i$ or $\Lambda_{[]}=\mathbb{C}$ for the empty collection of curves; for example, to simplify the notation for arguments that induct on the number of curves.  We'll do the same for $R_{r, a}$, $V_{r, a}$ and so on.

The following Lemma follows from the  construction  of the algebra $\Lambda_{r, a}$.

\begin{Lemma}\label{restrict} \label{LessCurvesLemma}
Let $1 \leq j \leq n+1$ and set $e=\sum_{i=j-1}^n e_{i} \in \Lambda_{r, a}$. Then there are isomorphisms of algebras
\[
e \Lambda_{r, a} e \cong \Lambda_{[\alpha_j, \dots, \alpha_n]} 
\]
and
\[
\frac{\Lambda_{r,a}}{\Lambda_{r,a}  e \Lambda_{r,a}} \cong \Lambda_{[\alpha_{1}, \dots, \alpha_{j-2}]}.
\]
\end{Lemma}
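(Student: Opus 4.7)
The plan is to deduce both isomorphisms from the iterative universal extension description of $\Lambda_{r,a}$ in Proposition~\ref{Proposition HillePloog Abelian Category}(3), combined with the standard facts that $e\Lambda_{r,a}e\cong\End_{\Lambda_{r,a}}(\Lambda_{r,a}e)$ and that $\Lambda_{r,a}/\Lambda_{r,a}e\Lambda_{r,a}$ is the basic algebra whose module category is the Serre subcategory of $\Lambda_{r,a}$-$\mod$ consisting of modules with composition factors only among $\{\sigma_0,\ldots,\sigma_{j-2}\}$.

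For the first isomorphism I would first observe that $e\Lambda_{r,a}e\cong\End_{X_{r,a}}\bigl(\bigoplus_{i=j-1}^{n}\Lambda_{i}\bigr)$. The recursive short exact sequences in Proposition~\ref{Proposition HillePloog Abelian Category}(3) defining $\Lambda_i$ for $i\geq j-1$ involve only the line bundles $\mathcal{L}_i,\ldots,\mathcal{L}_n$ and the self-intersection data $\alpha_i,\ldots,\alpha_n$, and the $\Ext$-computations used to build these iterated universal extensions are local around the chain $C_j\cup\cdots\cup C_n$. By tautness of cyclic quotient surface singularities (Remark~\ref{R:taut}), the formal neighbourhood of $C_j\cup\cdots\cup C_n$ in $X_{r,a}$ is canonically isomorphic to that of the analogous chain inside $X_{[\alpha_j,\ldots,\alpha_n]}$, so the endomorphism algebra of $\bigoplus_{i=j-1}^{n}\Lambda_i$ computed in $X_{r,a}$ agrees with $\Lambda_{[\alpha_j,\ldots,\alpha_n]}$.

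For the second isomorphism (assuming $j\geq 2$, since otherwise $\Lambda_{[\alpha_1,\ldots,\alpha_{j-2}]}$ is not directly defined), I would use the standard description of $\Lambda_{r,a}/\Lambda_{r,a}e\Lambda_{r,a}$ as the basic algebra whose indecomposable projectives are the projective covers, in this quotient, of the simples $\sigma_0,\ldots,\sigma_{j-2}$. These covers are obtained from $\Lambda_0,\ldots,\Lambda_{j-2}$ by killing all morphisms factoring through $\bigoplus_{i\geq j-1}\Lambda_i$; by part~(3) of Proposition~\ref{Proposition HillePloog Abelian Category}, this precisely removes the contributions coming from the tail curves $C_{j-1},\ldots,C_n$ in the iterative universal extensions, leaving exactly the iterative construction of $\Lambda_{[\alpha_1,\ldots,\alpha_{j-2}]}$ built from $\mathcal{L}_0,\ldots,\mathcal{L}_{j-2}$ and the data $\alpha_1,\ldots,\alpha_{j-2}$.

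The main technical obstacle is the bookkeeping required to identify extension classes and $\Ext$-groups in $X_{r,a}$ with those in $X_{[\alpha_j,\ldots,\alpha_n]}$ (respectively $X_{[\alpha_1,\ldots,\alpha_{j-2}]}$). A cleaner alternative would be to defer the lemma until the explicit quiver-with-relations presentation of $\Lambda_{r,a}$ in Proposition~\ref{Lambda Presentation Proposition} is available, at which point both isomorphisms reduce to restricting the quiver to an interval of vertices and reading off the induced relations, which is essentially a combinatorial check.
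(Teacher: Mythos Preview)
Your proposal is correct and follows essentially the same route as the paper: both arguments rest on the iterated universal extension description in Proposition~\ref{Proposition HillePloog Abelian Category}(3), interpreting $e\Lambda_{r,a}e$ as $\End_{X_{r,a}}\bigl(\bigoplus_{i\geq j-1}\Lambda_i\bigr)$ for the first isomorphism and interpreting the quotient as killing all maps factoring through $\bigoplus_{i\geq j-1}\Lambda_i$ for the second.

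One simplification the paper makes, which you miss: for the first isomorphism you invoke tautness and formal neighbourhoods to compare $X_{r,a}$ with some other surface $X_{[\alpha_j,\ldots,\alpha_n]}$, but this is unnecessary. The definition of $\Lambda_{[\alpha_j,\ldots,\alpha_n]}$ only requires \emph{some} smooth projective surface (with $H^i(\mathcal{O})=0$ for $i>0$) containing a contractible chain with the given self-intersections, and $X_{r,a}$ itself with the subchain $C_j\cup\cdots\cup C_n$ is already such a surface. So the objects $\Lambda_{j-1},\ldots,\Lambda_n$ built in $X_{r,a}$ are literally the projective generators used to define $\Lambda_{[\alpha_j,\ldots,\alpha_n]}$, and the endomorphism algebras coincide on the nose rather than via a formal comparison. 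Your tautness argument would be needed if the algebra were defined relative to a \emph{fixed} ambient surface, but it is not. Your remark about the alternative via Proposition~\ref{Lambda Presentation Proposition} is apt, but note the paper uses the present Lemma in the proof of that Proposition, so the order cannot be reversed.
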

\begin{proof}
By definition (see \ref{Proposition HillePloog Abelian Category}) of  $\Lambda_{r, a}$ and $e$, we see that  
$e \Lambda_{r, a} e$ is isomorphic to the endomorphism algebra of the direct sum of the projective objects $\Lambda_{j-1}, \ldots, \Lambda_n$ in $D_{r, a} \cap \Coh \, X_{r, a}$. By construction the $\Lambda_i$ are universal extensions of the line bundles  \[\mathcal{O}_X, \mathcal{O}_X(-C_n), \ldots, \mathcal{O}_X(-C_j-\cdots-C_n)\] defined using only the curves $C_j, \ldots, C_n$. By definition $\Lambda_{[\alpha_j, \ldots, \alpha_n]}$ is  the endomorphism algebra of the very same direct sum of projective objects. 

The second part corresponds to killing all maps in  $\Lambda_{r,a} \cong \End(\oplus_{i=0}^{n} \Lambda_{i})$ that factor through $\Lambda_{j-1}, \dots, \Lambda_n$. It can be seen from the construction in \ref{Proposition HillePloog Abelian Category}(3) that killing these maps has the same effect as forgetting curves $C_{j-1}, \dots, C_{n}$ in $X$ to produce an algebra $\Lambda_{[\alpha_{1}, \dots, \alpha_{j-2}]} $.  
\end{proof}

\begin{Remark}
The analogous statement for $A_{r, a}$ does not hold.  Indeed, consider for example 
$A_{3, 2}$ 
and the idempotent $e=e_1 + e_2$. Then $e A_{3, 2} e$ has infinite global dimension, see e.g. \cite{KIWY}. In particular, it is not of the form $A_{r, a}$ which always has finite global dimension. Also for any non-zero idempotent $e$ the quotient $A_{r, r-1}/A_{r, r-1}e A_{r, r-1}$ is a preprojective algebra of Dynkin type and therefore finite dimensional, whereas $A_{r, a}$ is always infinite dimensional.
\end{Remark}

The following result is a more general version of Theorem \ref{SC for KI}.

\begin{Theorem} \label{SC for KI generalized}
Let $e \in \Lambda_{r, a}$ be an idempotent such that the indecomposable projective $P_0$ is a direct summand of  $\Lambda_{r, a} e$ and let $e \in A_{r, a}$ be the corresponding idempotent.  Then there is a triangle equivalence
\[
D_{sg}(e\Lambda_{r, a}e) \cong D_{sg}(e A_{r, a} e).
\]
\end{Theorem}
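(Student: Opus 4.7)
The plan is to pass from the equivalence of relative singularity categories produced in Corollary \ref{C:GeneralizedEquivofRelSingCats} to the claimed equivalence of absolute singularity categories via a further Verdier quotient. The key categorical input is Lemma \ref{Quotient from relative to singularity category Lemma}, which under a finite-dimensionality assumption identifies $D_{sg}(fBf)$ with the quotient of $\Delta_{fBf}(B)$ by the thick subcategory generated by the simple $B/BfB$-modules. So once I verify the finite-dimensionality hypothesis on both sides and check that the relative equivalence respects these thick subcategories, the desired equivalence will follow formally.

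Concretely, I would first invoke Corollary \ref{C:GeneralizedEquivofRelSingCats} to obtain
\[
F_{A,e}\colon \Delta_{e\Lambda_{r,a}e}(\Lambda_{r,a}) \xrightarrow{\ \cong\ } \Delta_{eA_{r,a}e}(A_{r,a}),
\]
with $F_{A,e}(\sigma_i)\cong s_i$ whenever $P_i$ is not a direct summand of $\Lambda_{r,a}e$. Writing $e=\sum_{i\in I}e_i$ with $0\in I$ (by the standing hypothesis that $P_0$ is a summand of $\Lambda_{r,a}e$), the simples of both $\Lambda_{r,a}/\Lambda_{r,a}e\Lambda_{r,a}$ and $A_{r,a}/A_{r,a}eA_{r,a}$ are indexed by the same set $\{1,\dots,n\}\setminus I$. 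Consequently $F_{A,e}$ sends the generators of the thick subcategory $S_\Lambda\subset\Delta_{e\Lambda_{r,a}e}(\Lambda_{r,a})$ bijectively to those of $S_A\subset\Delta_{eA_{r,a}e}(A_{r,a})$, so it restricts to an equivalence $S_\Lambda\simeq S_A$ and descends to an equivalence $\Delta_{e\Lambda_{r,a}e}(\Lambda_{r,a})/S_\Lambda\simeq\Delta_{eA_{r,a}e}(A_{r,a})/S_A$.

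It remains to verify the finite-dimensionality hypothesis of Lemma \ref{Quotient from relative to singularity category Lemma} so that each quotient identifies with the corresponding singularity category. For $\Lambda_{r,a}$ this is automatic since $\Lambda_{r,a}$ itself is finite dimensional by Proposition \ref{Proposition HillePloog Abelian Category}. For $A_{r,a}$, the containment $A_{r,a}e_0A_{r,a}\subseteq A_{r,a}eA_{r,a}$ (which follows from $0\in I$) reduces the claim to showing $A_{r,a}/A_{r,a}e_0A_{r,a}$ is finite dimensional; this can be extracted from the explicit quiver-with-relations presentation of the reconstruction algebra recalled in Section \ref{Reconstruction algebras}, since only finitely many paths in the quiver of $A_{r,a}$ avoid the distinguished vertex $0$. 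This last verification is essentially the only non-formal step and is the main obstacle; the remainder is a routine combination of Corollary \ref{C:GeneralizedEquivofRelSingCats} and Lemma \ref{Quotient from relative to singularity category Lemma}, and Theorem \ref{SC for KI} then follows by specialising to $e=e_0$.
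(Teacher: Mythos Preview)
Your proposal is correct and follows the same strategy as the paper: invoke Corollary \ref{C:GeneralizedEquivofRelSingCats}, check that the equivalence matches the thick subcategories generated by the relevant simples, and apply Lemma \ref{Quotient from relative to singularity category Lemma} on each side.

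The one point of divergence is your verification that $A_{r,a}/A_{r,a}eA_{r,a}$ is finite dimensional. The paper does not use the quiver presentation here; instead it cites Auslander's theorem \cite{AuslanderIsolatedSingularitiesandExistenceofalmostsplitsequences} that finite dimensionality of $A/AeA$ follows from $R_{r,a}$ being an isolated singularity (together with $e_0$ appearing in $e$). Your combinatorial route works too, but your phrasing is inaccurate as stated: there are infinitely many paths in the quiver of $A_{r,a}$ that avoid vertex $0$ (for instance $(c_2a_2)^k$ for all $k$). What you need is that only finitely many such paths are nonzero modulo the relations of Proposition \ref{Reconstruction presentation Proposition}, which does hold but requires using the relations and not just the quiver. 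The paper's appeal to Auslander is cleaner and sidesteps this combinatorics entirely.
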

\begin{proof}
We can write $e=e_0 + e_{i_1} + \ldots + e_{i_k}$ with $0 < i_1 < \ldots < i_k \leq n$.
Set $A=A_{r, a}$. Since $e_0$ appears in $e$ and $R_{r, a}$ is an isolated singularity, it follows that $A/A e A$ is finite dimensional \cite{AuslanderIsolatedSingularitiesandExistenceofalmostsplitsequences} and therefore the thick subcategory $ \langle \mod-A/A e A \rangle$ is generated by the simple $A/A e A$ modules. These are precisely the simple $A$-modules $s_i$ with  $1 \leq i \leq n$ and $i \not= i_j$ for all $j$. Now Lemma \ref{Quotient from relative to singularity category Lemma}  shows
\[
\frac{\Delta_{eAe}(A)}{\langle s_i \mid i \not= i_j \rangle }  \xrightarrow{\cong} D_{sg}(eAe).
\]
and similarly
\[
\frac{\Delta_{e\Lambda e}(\Lambda)}{\langle \sigma_i \mid i \not= i_j \rangle } \xrightarrow{\cong}  D_{sg}(e\Lambda e).
\]

By Corollary \ref{C:GeneralizedEquivofRelSingCats}, the equivalence \[F_{A, e}\colon  \Delta_{e\Lambda e}(\Lambda) \to \Delta_{eAe}(A)  \] identifies the subcategories $\langle \sigma_i \mid i \not= i_j \rangle$ and $\langle s_i \mid i \not= i_j \rangle$, so induces an equivalence of quotient categories
\[
D_{sg}(e\Lambda e) \cong \frac{\Delta_{e\Lambda e}(\Lambda)}{\langle \sigma_i \mid i \not= i_j \rangle } \cong\frac{\Delta_{eAe}(A)}{\langle s_i \mid i \not= i_j \rangle } \cong D_{sg}(eAe).
\]
\end{proof}

\begin{Remark} These singularity categories can be described in more detail using geometry and a result of Orlov (cf. \cite{KIWY}). Indeed, in the notation of the proof of Theorem \ref{SC for KI generalized}, let $V^{e}_{r, a}$ be obtained from the minimal resolution of $\Spec(R_{r, a})$ by contracting all exceptional curves $E_i$ for $i \in \{1, \dots, n\} \setminus \{ i_1, \dots, i_k \}$. Here $i_1<i_2< \dots  <i_k$ are defined by $e=e_0 + \sum_{j=1}^k e_{i_j}$. In other words, we contract chains of curves \[
E_{i_j+1}, E_{i_j+2}, \dots E_{i_{j+1}-2}, E_{i_{j+1}-1}
\] for $0 \le j \le k$ where we let $i_0=0$ and $i_{k+1}=n+1$ for the sake of indexing. 

In particular, $V^e_{r, a}$ has an isolated singularity with completion the cyclic quotient singularity
$R_{\widehat{\bm{\alpha}}_j}$
where
\[
\widehat{\bm{\alpha}}_j:=[\alpha_{i_j+1}, \alpha_{i_1+2}, \ldots, \alpha_{i_{j+1}-1}].
\]
(We note that if $i_j+1=i_{j+1}$ then $\widehat{\bm{\alpha}}_j=[]$ corresponds to contracting no curves and $ R_{[]}=\mathbb{C}[[x,y]]$ is smooth.)

Summarising, there is a chain of triangle equivalences
\begin{equation} \label{E:Chain}
D_{sg}(e\Lambda_{r,a} e) \cong D_{sg}(e A_{r, a} e) \cong  D_{sg}(V^e_{r, a}) \cong
 \bigoplus_{j=0}^{k}  D_{sg}(R_{\widehat{\bm{\alpha}}_j}) .
\end{equation}
The first equivalence is Theorem \ref{SC for KI generalized}, the second equivalence follows from \cite[Theorem 4.6]{KIWY} and the last equivalence is due to Orlov \cite{Orlovidempotent} using that $D_{sg}(e\Lambda_{r,a} e)$ is idempotent complete by  Proposition \ref{P:DsgFDidempcomplete}.
\end{Remark}

\begin{Remark}
Theorem \ref{SC for KI generalized} can be used to describe $D_{sg}(e \Lambda_{r, a}e)$ for  an arbitrary idempotent $e$.
To see this write $e=e_{i_1} + \ldots + e_{i_k}$ with $0 \leq i_1 < \ldots < i_k \leq n$.
 By Lemma \ref{restrict}, we have an algebra isomorphism \[
(e_{i_1} + \ldots + e_n) \Lambda_{r,a} (e_{i_1} + \ldots + e_n)  \cong \Lambda_{[\alpha_{i_1+1}, \alpha_{i_1+2}, \ldots, \alpha_n]}.
\]  By definition of the idempotent $e$, this  gives an  algebra  isomorphism
\[
e \Lambda_{r,a}e \cong e'\Lambda_{[\alpha_{i_1+1}, \alpha_{i_1+2}, \ldots, \alpha_n]} e'.
\]
where $e'=e_0 + e_{i_2 - i_1} + \ldots + e_{i_k - i_1}$ as an element in $\Lambda_{[\alpha_{i_1+1}, \alpha_{i_1+2}, \ldots, \alpha_n]}$. Now we can apply Theorem \ref{SC for KI generalized} to $e' \Lambda_{[\alpha_{i_1+1}, \alpha_{i_1+2}, \ldots, \alpha_n]} e'$.
\end{Remark}

Theorem \ref{SC for KI generalized} yields many non-trivial equivalences between singularity categories of finite dimensional algebras -- already the Gorenstein case (answering a question of Michael Wemyss) seems to be new. We give explicit examples below.

\begin{Corollary} \label{cor:idempotent singularity equivalences}
Let $\bm{\alpha}:=[\alpha_1, \ldots, \alpha_n]$ and  $\bm{\alpha'}:=[\alpha_1', \ldots, \alpha_m']$ be sequences of integers $\geq 2$. Let $0 < i_1 < \ldots < i_k \leq n$ and $0 < j_1 < \ldots < j_l \leq m$  be integers. Remove the elements $\alpha_{i_1}, \alpha_{i_2} \ldots, \alpha_{i_k}$ from $\bm{\alpha}$ to obtain a new sequence $\bm{\gamma}$ and produce $\bm{\gamma}'$ similarly from $\bm{\alpha'}$ using the sequence $(j_a)_{a=1}^l$.

If $\bm{\gamma}=\bm{\gamma}'$, then there is a triangle equivalence
\[D_{sg}(e_{\bm{i}} \Lambda_{\bm{\alpha}} e_{\bm{i}}) \cong D_{sg}(e_{\bm{j}} \Lambda_{\bm{\alpha}'} e_{\bm{j}}) \]
where $e_{\bm{i}}:=e_0+e_{i_1} + e_{i_2} + \ldots +e_{i_k}$ and $e_{\bm{j}}:=e_0+e_{j_1} + e_{j_2} + \ldots +e_{j_l}$.
\end{Corollary}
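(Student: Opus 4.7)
The plan is to reduce directly to the decomposition recorded in the Remark immediately preceding the Corollary. That Remark, itself a consequence of Theorem~\ref{SC for KI generalized} together with Orlov's splitting of the singularity category of a variety with isolated singularities~\cite{Orlovidempotent}, expresses each side of the desired equivalence as a direct sum of local singularity categories indexed by the contracted chains of curves in the partial resolution.

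Concretely, setting $i_0 = 0$, $i_{k+1} = n+1$, $j_0 = 0$, $j_{l+1} = m+1$, and writing $\widehat{\bm{\alpha}}_p := [\alpha_{i_p+1}, \ldots, \alpha_{i_{p+1}-1}]$ for $0 \le p \le k$ and $\widehat{\bm{\alpha}}'_q := [\alpha'_{j_q+1}, \ldots, \alpha'_{j_{q+1}-1}]$ for $0 \le q \le l$, the Remark gives
\[
D_{sg}(e_{\bm{i}}\Lambda_{\bm{\alpha}}e_{\bm{i}}) \;\cong\; \bigoplus_{p=0}^{k} D_{sg}(R_{\widehat{\bm{\alpha}}_p}), \qquad D_{sg}(e_{\bm{j}}\Lambda_{\bm{\alpha'}}e_{\bm{j}}) \;\cong\; \bigoplus_{q=0}^{l} D_{sg}(R_{\widehat{\bm{\alpha}}'_q}),
\]
with empty chains contributing the zero category (these arise exactly when two kept indices are consecutive or appear at the boundary).

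Each $\widehat{\bm{\alpha}}_p$ is a maximal block of consecutive entries of $\bm{\alpha}$ whose indices lie outside $\bm{i}$; the ordered tuple $(\widehat{\bm{\alpha}}_0, \ldots, \widehat{\bm{\alpha}}_k)$ therefore realises $\bm{\gamma}$ together with the partition data induced by the removed indices, and the same statement holds for $\bm{\alpha'}$. The assumption $\bm{\gamma} = \bm{\gamma}'$, understood as an identification of these partitioned sequences, then forces the multisets $\{\widehat{\bm{\alpha}}_p\}_{p=0}^{k}$ and $\{\widehat{\bm{\alpha}}'_q\}_{q=0}^{l}$ of nonempty contracted chains to coincide. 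Hence the two direct sums are triangle equivalent summand-by-summand, which gives the desired equivalence.

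The only delicate point, and the expected main obstacle, is interpretative: to recover the individual summands $D_{sg}(R_{\widehat{\bm{\alpha}}_p})$ one must retain the partition structure imposed by the removed indices, not merely the raw concatenation of integers making up $\bm{\gamma}$. With the natural reading of $\bm{\gamma}=\bm{\gamma}'$ as an identification of partitioned sequences, no further argument is required beyond invoking the Remark.
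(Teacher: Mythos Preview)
Your proof is essentially the paper's: both apply the chain of equivalences \eqref{E:Chain} from the preceding Remark twice to reduce each side to the same direct sum of local singularity categories $D_{sg}(R_{\widehat{\bm{\alpha}}_p})$. Your observation that the hypothesis $\bm{\gamma}=\bm{\gamma}'$ must be read as an identification of partitioned sequences (not merely of the concatenated integer lists) is well placed---the paper's one-line proof tacitly relies on the same reading, and without it the summands on the two sides need not match.
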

\begin{proof}
Use the chain of equivalences \eqref{E:Chain} twice.
\end{proof}

\begin{Example}
Let $\bm{\alpha}:=[2, 2]$ and $\bm{\alpha'}:=[2]$.
Let $e_{\bm{i}}:=e_{0} + e_{1}$ and $e_{\bm{j}}:=e_{0}$. In particular,  $\bm{\gamma}=[2]=\bm{\gamma'}$, showing \[D_{sg}(e_{\bm{i}} \Lambda_{[2, 2]} e_{\bm{i}}) \cong D_{sg}(e_{\bm{j}} \Lambda_{[2]} e_{\bm{j}}), \]
where (using Theorem \ref{Endomorphism Algebra Isomorphism Theorem}) 
\begin{align*}
e_{\bm{i}} \Lambda_{[2, 2]} e_{\bm{i}} \cong \End_{\mathbb{C}[x]/(x^3)} (\mathbb{C}[x]/(x^3) \oplus \mathbb{C}[x]/(x^2)) \cong 
\frac{\mathbb{C}\left[\begin{tikzpicture}[baseline={([yshift=-.5ex]current bounding box.center)}]
\node (Cn) at (1.5,0) {$1$};
\node (Cn-1) at (3,0)  {$2$};
\draw [->,bend left=15] (Cn) to node[above] {$\scriptstyle{p}$} (Cn-1);
\draw [->,bend left=15] (Cn-1) to node[below] {$\scriptstyle{i}$} (Cn);
\end{tikzpicture}\right]}{(ipip)}
\end{align*}
and  
\[
e_{\bm{j}} \Lambda_{[2]} e_{\bm{j}} \cong \mathbb{C}[x]/(x^2).
\]
Alternatively, taking $e_{\bm{i}}:=e_{0} + e_{2}$ and keeping $e_{\bm{j}}$ also yields $\bm{\gamma}=[2]=\bm{\gamma'}$ and therefore we get a triangle equivalence
\[D_{sg}(e_{\bm{i}} \Lambda_{[2, 2]} e_{\bm{i}}) \cong D_{sg}(e_{\bm{j}} \Lambda_{[2]} e_{\bm{j}}), \] where now 
\[
e_{\bm{i}} \Lambda_{[2, 2]} e_{\bm{i}} \cong \End_{\mathbb{C}[x]/(x^3)} (\mathbb{C}[x]/(x^3) \oplus \mathbb{C}[x]/(x)) \cong \frac{\mathbb{C}\left[\begin{tikzpicture}[baseline={([yshift=-.5ex]current bounding box.center)}]
\node (Cn) at (1.5,0) {$1$};
\node (Cn-1) at (3,0)  {$2$};
\draw [->,bend left=15] (Cn) to node[above] {$\scriptstyle{\beta}$} (Cn-1);
\draw [->,bend left=15] (Cn-1) to node[below] {$\scriptstyle{\gamma}$} (Cn);
 \draw[->] ($(Cn) +
    (-1.3mm, -1.2 mm)$) arc (-30:-330:2.5mm);
   \node at ($(Cn) + (-8 mm, 0mm)$) {$\scriptstyle{\alpha}$};

\end{tikzpicture}\right]}{(\alpha^2-\beta\gamma, \gamma \alpha, \alpha \beta, \beta \gamma)}.
\]

\end{Example}

\section{Obstructions to generalisations for non-abelian quotient singularities} \label{Sec: Obstructions}
It is a natural question whether a result analogous to Theorem \ref{SC for KI} holds for non-abelian quotient surface singularities. The analysis of the Grothendieck group in this section provides an obstruction in many cases. We learned the following result from a discussion with Michael Wemyss and Xiao-Wu Chen.
\begin{Theorem} \label{GrothendieckClassgroup}
Let $(R, \mathfrak{m})$ be an integrally closed complete local domain of Krull dimension two with an algebraically closed residue field $R/\mathfrak{m}$. Then there is an isomorphism of groups
\[
K_0(D_{sg}(R)) \cong Cl(R)
\]
where $Cl(R)$ denotes the ideal class group of $R$.
\end{Theorem}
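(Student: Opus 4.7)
The plan is to reduce the computation of $K_0(D_{sg}(R))$ to an analysis of the topological filtration on the Grothendieck group $G_0(R):=K_0(D^b(R\text{-}\mod))$ and then identify the relevant subquotient with $Cl(R)$ via a first Chern class map.

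First, applying $K_0$ to the Verdier quotient $D_{sg}(R)=D^b(R\text{-}\mod)/\Perf R$ yields the exact sequence $K_0(\Perf R)\to G_0(R)\to K_0(D_{sg}(R))\to 0$. Since $R$ is local Noetherian, every finitely generated projective is free, so $K_0(\Perf R)\cong\mathbb{Z}\cdot[R]$, giving $K_0(D_{sg}(R))\cong G_0(R)/\mathbb{Z}[R]$. The generic-rank homomorphism $G_0(R)\twoheadrightarrow\mathbb{Z}$ is surjective, split by $[R]\mapsto 1$, with kernel $F^1G_0(R)$ generated by classes of torsion modules. Hence $K_0(D_{sg}(R))\cong F^1G_0(R)$.

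Second, since $R$ is normal of Krull dimension $2$, for every height-one prime $\mathfrak{p}$ the localization $R_\mathfrak{p}$ is a DVR, and for a torsion $R$-module $M$ the localization $M_\mathfrak{p}$ has finite length over $R_\mathfrak{p}$. I define the first Chern class
\[
c_1\colon F^1G_0(R)\longrightarrow Cl(R),\qquad [M]\longmapsto\sum_{\mathfrak{p}\text{ ht }1}\ell_{R_\mathfrak{p}}(M_\mathfrak{p})\,[\mathfrak{p}].
\]
Additivity of length and exactness of localization make $c_1$ a well-defined group homomorphism, surjective because $c_1([R/\mathfrak{p}])=[\mathfrak{p}]$ and $Cl(R)$ is generated by such classes. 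To identify the kernel: if $c_1([M])=\textnormal{div}(f)$ for some nonzero $f\in R$, then a prime filtration shows the class of any torsion module $N$ equals $\sum_\mathfrak{p}\ell_{R_\mathfrak{p}}(N_\mathfrak{p})[R/\mathfrak{p}]$ modulo $F^2G_0(R)$, where $F^2G_0(R)$ is the subgroup of finite-length classes. Comparing $M$ with $R/fR$, which have matching lengths $v_\mathfrak{p}(f)$ at each height-one $\mathfrak{p}$, gives $[M]\equiv[R/fR]\pmod{F^2G_0(R)}$; and $[R/fR]=0$ in $G_0(R)$ via $0\to R\xrightarrow{f}R\to R/fR\to 0$. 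This establishes $F^1G_0(R)/F^2G_0(R)\cong Cl(R)$.

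Finally, and this is the main obstacle, I must show $F^2G_0(R)=0$, i.e., $[k]=0$ in $G_0(R)$. Since $R$ is $2$-dimensional normal it is Cohen–Macaulay by Serre's criterion, and the hypothesis that $R$ is complete local with algebraically closed residue field permits a Bertini-style argument (for instance, passing to a minimal resolution $V\to\Spec R$, choosing a smooth curve on $V$ meeting the exceptional divisor transversally at a smooth point, and pushing it down) to produce a height-one prime $\mathfrak{p}$ such that the $1$-dimensional complete local domain $R/\mathfrak{p}$ is normal, hence a DVR. The short exact sequence
\[
0\longrightarrow\mathfrak{m}/\mathfrak{p}\longrightarrow R/\mathfrak{p}\longrightarrow k\longrightarrow 0
\]
then gives $[k]=0$ in $G_0(R)$, since $\mathfrak{m}/\mathfrak{p}$ is the maximal ideal of the DVR $R/\mathfrak{p}$ and is therefore free of rank one over $R/\mathfrak{p}$, hence isomorphic to $R/\mathfrak{p}$ as an $R$-module. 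Combining all steps yields $K_0(D_{sg}(R))\cong F^1G_0(R)\cong Cl(R)$. The delicate point throughout is this Bertini-style construction, which genuinely uses both the completeness of $R$ and the algebraic closure of its residue field.
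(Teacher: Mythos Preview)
Your first two reductions are correct and essentially unwind the two citations in the paper's one-line proof: the identification $K_0(D_{sg}(R)) \cong G_0(R)/\mathbb{Z}[R] \cong F^1 G_0(R)$ is what the Beligiannis reference provides, and the isomorphism $F^1G_0(R)/F^2G_0(R) \cong Cl(R)$ via the first Chern class is standard. The paper then invokes Auslander--Reiten \cite{AuslanderReitenGrothendieckgroupsofalgebras} for the remaining identification, which amounts precisely to the vanishing $F^2 G_0(R)=0$, i.e.\ $[k]=0$ in $G_0(R)$.

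Your argument for this last step, however, has a genuine gap. The claim that one can always find a height-one prime $\mathfrak{p}$ with $R/\mathfrak{p}$ a DVR is false. Take $R = \mathbb{C}[[x,y,z]]/(x^2+y^3+z^5)$, the $E_8$ singularity. A direct order-of-vanishing check shows that no formal arc $t \mapsto (a(t),b(t),c(t))$ with at least one coordinate of order $1$ can satisfy $a^2+b^3+c^5=0$, so no smooth curve passes through the singular point. Geometrically, for a rational singularity one has $\mathfrak{m}\,\mathcal{O}_V = \mathcal{O}_V(-Z)$ for the fundamental cycle $Z$, so if $C$ meets the exceptional component $E_i$ transversally then $v_C(f) \ge (\text{coefficient of }E_i\text{ in }Z)$ for every $f \in \mathfrak{m}$; for $E_8$ all these coefficients are $\ge 2$. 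Thus your pushed-down curve is never smooth at the origin, and the displayed short exact sequence does not yield $[k]=0$.

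The conclusion $[k]=0$ is nonetheless true and is the actual content of \cite[Proposition 3.2.4]{AuslanderReitenGrothendieckgroupsofalgebras}. One route that repairs your argument: for any system of parameters $(x,y)$ the short exact sequences $0 \to R \xrightarrow{x} R \to R/xR \to 0$ and $0 \to R/xR \xrightarrow{y} R/xR \to R/(x,y) \to 0$ give $\ell\bigl(R/(x,y)\bigr)\cdot [k]=0$; one must then show, using the hypotheses on $R$, that the greatest common divisor of these lengths over all parameter systems is $1$ (for $E_8$ one already gets $5[k]=3[k]=0$ from $(x,y)$ and $(x,z)$). Establishing this gcd statement in general is where the real work lies and is what the cited reference handles.
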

\begin{proof}
Combine \cite[Corollary 3.9 (2)]{Beligiannishomologicaltheoryofhomologicallyfinitesubcategories} with  \cite[Proposition 3.2.4]{AuslanderReitenGrothendieckgroupsofalgebras}.
\end{proof}

The following result is well-known to experts.
\begin{Proposition}\label{P:GrothendieckLocal}
Let $A$ be a finite dimensional local $k$-algebra over an algebraically closed field. Then there is an isomorphism of groups
\[
K_0(D_{sg}(A)) \cong \mathbb{Z}/(\dim_k A) \mathbb{Z}.
\]
\end{Proposition}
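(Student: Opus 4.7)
The plan is to compute the two Grothendieck groups $K_0(D^b(A))$ and $K_0(\Perf(A))$ separately and then invoke the localisation sequence
\[
K_0(\Perf(A)) \longrightarrow K_0(D^b(A)) \longrightarrow K_0(D_{sg}(A)) \longrightarrow 0
\]
associated to the Verdier quotient defining the singularity category.

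First I would identify $K_0(D^b(A))$. Since $A$ is finite dimensional (in particular Noetherian), the canonical map $K_0(A\text{-}\mod) \to K_0(D^b(A))$ is an isomorphism, the inverse being given by the alternating sum of cohomologies. Because $A$ is local with residue field $k$ (which is algebraically closed), up to isomorphism there is a unique simple $A$-module, namely $S := A/\mathfrak{m} \cong k$. Since $A\text{-}\mod$ is a length category (all modules are finite dimensional), $K_0(A\text{-}\mod)$ is the free abelian group on the isomorphism classes of simples, so $K_0(D^b(A)) \cong \mathbb{Z}\cdot[S]$.

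Next I would identify $K_0(\Perf(A))$ and the map to $K_0(D^b(A))$. Again because $A$ is local, the only indecomposable finitely generated projective is $A$ itself, so $\Perf(A) = \thick(A)$ and $K_0(\Perf(A)) \cong \mathbb{Z}\cdot[A]$. To compute the image of $[A]$ in $K_0(D^b(A)) \cong \mathbb{Z}\cdot[S]$, I take any composition series of $A$ as a left $A$-module; all composition factors are isomorphic to $S$, and by the Jordan–Hölder theorem the number of factors equals $\dim_k A$. Hence $[A] = (\dim_k A)\,[S]$ in $K_0(D^b(A))$.

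Finally I would assemble the pieces. Substituting into the localisation sequence gives
\[
K_0(D_{sg}(A)) \;\cong\; \frac{\mathbb{Z}\cdot[S]}{(\dim_k A)\,\mathbb{Z}\cdot[S]} \;\cong\; \mathbb{Z}/(\dim_k A)\mathbb{Z},
\]
which is the desired isomorphism. There is no real obstacle here: all three ingredients (the localisation sequence, the presentation of $K_0$ via simples for an artinian algebra, and the Jordan–Hölder computation of $[A]$) are completely standard; the only thing that needs to be said carefully is why the maps $K_0(A\text{-}\mod) \to K_0(D^b(A))$ and $K_0(\add A) \to K_0(\Perf(A))$ are isomorphisms, which is a direct application of the fact that every object in $D^b(A)$ (resp.\ $\Perf(A)$) has a bounded cohomology (resp.\ projective resolution) by a finite complex whose terms lie in $A\text{-}\mod$ (resp.\ $\add A$).
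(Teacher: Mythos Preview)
Your proof is correct and follows essentially the same approach as the paper: both identify $K_0(D^b(A)) \cong \mathbb{Z}\cdot[S]$ using that $A$ is local with a unique one-dimensional simple, and then pass to the quotient by the image of $K_0(\Perf(A))$, where $[A]=(\dim_k A)[S]$. The only difference is cosmetic: the paper cites \cite[Corollary 3.9 (2)]{Beligiannishomologicaltheoryofhomologicallyfinitesubcategories} as a black box for the passage from $K_0(D^b(A))$ to $K_0(D_{sg}(A))$, whereas you spell out the localisation sequence explicitly.
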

\begin{proof}
Since $A$ is local, $K_0(D^b(A)) \cong \mathbb{Z}$ where the class $[S]$ of the simple (which is one dimensional since $k$ is algebraically closed) is sent to $1$.  Now apply \cite[Corollary 3.9 (2)]{Beligiannishomologicaltheoryofhomologicallyfinitesubcategories} to complete the proof.
\end{proof}

As a consequence, we obtain obstructions to (naive) generalisations of our main result.

\begin{Corollary}
Let $G \subseteq \GL(2, \mathbb{C})$ be a finite subgroup and set $R=\mathbb{C}\llbracket x, y \rrbracket^G$. If $R$ satisfies one of the following conditions
\begin{itemize}
\item[(a)] $R$ is Gorenstein with dual graph of Dynkin type $D$ or $E$. 
\item[(b)] $R$ has dual graph
\begin{equation*}\begin{tikzpicture}[description/.style={fill=white,inner sep=2pt}]
    \matrix (n) [matrix of math nodes, row sep=3em,
                 column sep=2.5em, text height=1.5ex, text depth=0.25ex,
                 inner sep=0pt, nodes={inner xsep=0.3333em, inner
ysep=0.3333em}]
    {  & \bullet \\
       \bullet & \bullet & \bullet & \cdots & \bullet & \bullet \\
    };
\path[-] (n-1-2) edge (n-2-2);
\path[-] (n-2-1) edge (n-2-2);
\path[-] (n-2-2) edge (n-2-3);
\path[-] (n-2-3) edge (n-2-4);
\path[-] (n-2-4) edge (n-2-5);
\path[-] (n-2-5) edge (n-2-6);

 \node at ($(n-2-1.south) + (1mm, -1.5mm)$) {$-2$};
 \node at ($(n-2-2.south) + (1mm, -1.5mm)$) {$-\alpha_1$};
 \node at ($(n-2-3.south) + (1mm, -1.5mm)$) {$-\alpha_2$};
 \node at ($(n-1-2.south) + (3mm, -1.5mm)$) {$-2$};
 \node at ($(n-2-5.south) + (1mm, -1.5mm)$) {$-\alpha_{N-1}$};
 \node at ($(n-2-6.south) + (1mm, -1.5mm)$) {$-\alpha_{N}$};
\end{tikzpicture}\end{equation*}
where the Hirzebruch-Jung continued fraction $[\alpha_1, \ldots, \alpha_N]$ satisfies 
\[[\alpha_1, \ldots, \alpha_N]=\frac{n}{2m}\] for coprime integers $1<2m <n$; (for example $N=2$, $\alpha_1=a$, $\alpha_2=2b$ with $a \ge 2, b \ge 1$). 
\end{itemize}
then there exists no finite dimensional\footnote{not necessarily commutative} local $\mathbb{C}$-algebra $S$ with 
\[
D_{sg}(R) \cong D_{sg}(S).
\]
\end{Corollary}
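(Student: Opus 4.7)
The plan is to use $K_0$ of the singularity category as an obstruction. Triangle equivalences preserve Grothendieck groups, so if $D_{sg}(R)\cong D_{sg}(S)$ with $S$ a finite dimensional local $\mathbb{C}$-algebra, then combining Theorem~\ref{GrothendieckClassgroup} (which applies since $R$ is a complete local, integrally closed $\mathbb{C}$-domain of Krull dimension two with algebraically closed residue field) with Proposition~\ref{P:GrothendieckLocal} yields
\[
Cl(R)\;\cong\;K_0(D_{sg}(R))\;\cong\;K_0(D_{sg}(S))\;\cong\;\mathbb{Z}/(\dim_{\mathbb{C}}S)\mathbb{Z}.
\]
In particular $Cl(R)$ must be cyclic; moreover, if $Cl(R)=0$ then $\dim_{\mathbb{C}}S=1$, forcing $S=\mathbb{C}$ and $D_{sg}(S)=0$, which is impossible since $R$ is singular and hence $D_{sg}(R)\neq 0$. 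The strategy therefore reduces to showing, in each listed case, that $Cl(R)$ is either not cyclic or zero.

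For case~(a), $R\cong\mathbb{C}\llbracket x,y\rrbracket^G$ for some finite $G\subset\SL_2(\mathbb{C})$, and I would invoke the standard identification $Cl(R)\cong G^{\mathrm{ab}}$, equivalently $Cl(R)$ is the discriminant group of the root lattice of the Dynkin diagram (i.e.\ the cokernel of the intersection form on the exceptional divisor). This yields $(\mathbb{Z}/2)^2$ for $D_n$ with $n$ even (non-cyclic, so ruled out), $\mathbb{Z}/4$ for $D_n$ with $n$ odd, $\mathbb{Z}/3$ for $E_6$, $\mathbb{Z}/2$ for $E_7$, and the trivial group for $E_8$ (ruled out by the initial observation). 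For the remaining types -- $D_n$ with $n$ odd, $E_6$, and $E_7$ -- the class group is cyclic and non-trivial, so closing the argument from $K_0$ alone is insufficient; a refinement (for example, comparing indecomposable counts or AR-theoretic data in $D_{sg}(R)$ against the possible $D_{sg}(S)$ of the corresponding dimension) would have to be invoked separately.

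For case~(b), $R$ is rational but not Gorenstein, so I would compute $Cl(R)$ directly as the cokernel of the (negative) intersection matrix of the dual graph. The two length-one arms with self-intersection $-2$ attached at the branch vertex contribute two generators of order dividing $2$ in the cokernel; the numerical hypothesis that $[\alpha_1,\dots,\alpha_N]=n/(2m)$ with $1<2m<n$ coprime (so the ``chain part'' $-\alpha_1,\dots,-\alpha_N$ has even determinant) ensures that these two generators remain $\mathbb{Z}$-linearly independent in the quotient, producing an embedding $(\mathbb{Z}/2)^2\hookrightarrow Cl(R)$. Hence $Cl(R)$ is not cyclic, contradicting the initial reduction.

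The main obstacle is the linear-algebra computation in case~(b): one must verify that the two $(-2)$-arms genuinely yield two $\mathbb{Z}$-independent order-two classes in the cokernel rather than being absorbed by relations coming from the chain $-\alpha_1,\dots,-\alpha_N$, and it is precisely at this step that the parity condition on the continued fraction $n/(2m)$ is used. A secondary obstacle is that in case~(a) the pure $K_0$-obstruction does not settle the cyclic non-trivial types, so a finer invariant of the singularity category would need to be brought to bear to fully close those cases.
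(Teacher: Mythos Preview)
Your overall strategy---comparing $K_0(D_{sg}(R))\cong Cl(R)$ with $K_0(D_{sg}(S))\cong\mathbb{Z}/(\dim_{\mathbb{C}}S)\mathbb{Z}$ and invoking $Cl(R)\cong G^{\mathrm{ab}}$---is exactly the paper's. For case~(b) your intersection-form sketch is correct in principle; the paper sidesteps the linear algebra you flag as the main obstacle by working directly with the group: in Riemenschneider's notation $G=D_{n,2m}$, and one computes $G^{\mathrm{ab}}\cong\mathbb{Z}/2\mathbb{Z}\times\mathbb{Z}/2(n-2m)\mathbb{Z}$, visibly non-cyclic.

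For case~(a), however, your proposal is explicitly incomplete: you correctly note that cyclicity alone does not dispose of $D_n$ with $n$ odd, $E_6$, or $E_7$, but you do not carry out the refinement you suggest. The paper fills this gap by an exhaustive classification. The $K_0$ comparison forces $\dim_{\mathbb{C}}S=|Cl(R)|$, which is $4$ for every $D_n$ and $9-n$ for $E_n$. One then lists all local $\mathbb{C}$-algebras of the required dimension and compares each candidate $D_{sg}(S)$ against the known structure of $D_{sg}(R)$ (Hom-finite with exactly $n$, resp.\ $6,7,8$, indecomposables). For type~$E$ the candidates of dimension $\leq 3$ are $\mathbb{C}[x]/(x^i)$ with $i\leq 3$ (too few indecomposables) and $\mathbb{C}[x,y]/(x,y)^2$ (Hom-infinite). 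For type~$D$ one treats $4$-dimensional local algebras by socle dimension: socle of dimension~$1$ forces $S$ self-injective, and either $S\cong\mathbb{C}[x]/(x^4)$ (three indecomposables) or $S/\mathrm{soc}(S)\cong\mathbb{C}\langle x,y\rangle/(x,y)^2$ has infinite representation type; socle of dimension~$2$ gives Hom-infinite $D_{sg}(S)$ via an explicit syzygy computation showing the number of summands of $\Omega^n(T)$ is unbounded; socle of dimension~$3$ forces $S\cong\mathbb{C}[x,y,z]/(x,y,z)^2$, again Hom-infinite. This case-by-case check is the missing ingredient in your argument.
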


\begin{proof}
Theorem \ref{GrothendieckClassgroup} and Proposition \ref{P:GrothendieckLocal} show that a finite dimensional local algebra $S$ with $D_{sg}(S) \cong D_{sg}(R)$ satisfies $\dim_{\mathbb{C}} S = |Cl(R)|$. It is known for quotient singularities $R=\mathbb{C}\llbracket x, y \rrbracket^G$ that $Cl(R) \cong G/[G, G]$, see e.g. \cite{BrieskornInventiones1968}.

Now we consider case (a).  The singularity categories $E_8, E_7, E_6, D_n$ $(n\ge 4)$ have 8,7,6, and $n$ indecomposable objects respectively and have finite dimensional $\Hom$ spaces.  One can compute that $Cl(R)$ has order $4$ for  singularities of type $D_n$ and order $9 - n$ for singularities of type $E_n$ (with $n=6, 7, 8$), cf. \cite[Satz 2.11]{BrieskornInventiones1968}. 

To prove the claim in type $E$ we list the local algebras with dimension $\le 3$ and show these algebras do not have singularity categories of the correct form.

A finite dimensional, local, associative $\mathbb{C}$-algebra $S$ of dimension $\le 3$ is of the form
\[
S \cong \left\{ \begin{array}{c c} 
\mathbb{C}[x]/(x^i) &\text{ for } 1 \le i \le 3, \text{ or } \\
\mathbb{C}[x,y]/(x,y)^2 &
\end{array} \right.
\]
The singularity category of $\mathbb{C}[x]/(x^i)$ contains $i-1$ indecomposables. So for $i \leq 3$ it cannot equal a singularity category of type $E$. The singularity category of $\mathbb{C}[x,y]/(x,y)^2$ is Hom-infinite so cannot equal a type $E$ singularity category. This proves the type $E$ claim.

To prove the type $D$ claim we consider all 4 dimensional, local, associative $\mathbb{C}$-algebras, which we classify by the dimension of their socle - since $S$ is local $\dim \mathrm{soc} \, S \leq 3$.

If $\dim \mathrm{soc}\, S=1$, then $S$ is weakly symmetric and since it is local it is selfinjective by \cite[Corollary IV.6.5]{SkowronskiYamagataFrobeniusAlgebras1}. In particular, $D_{sg}(S) \cong S-\underline{\mod}$ by Buchweitz Theorem \cite[Theorem 4.4.1]{BuchweitzMCM}. Now there are two cases: 
\begin{itemize}
\item  if $S \cong \mathbb{C}[x]/(x ^4)$, then $S-\underline{\mod}$ has precisely $3$ indecomposable objects.
\item otherwise $S/\mathrm{soc}(S) \cong \mathbb{C}\langle x, y \rangle/(x, y)^2$, and this algebra has infinite representation type and therefore  $S-\mod \supseteq S/\mathrm{soc}(S)-\mod$ has infinite representation type. It follows that $S-\underline{\mod}$ has infinite representation type. 
\end{itemize}
 This completes the case of a one-dimensional socle.
 
We next consider the case $\dim \mathrm{soc}\, S = 2$. In this case, $S \cong \mathbb{C}\langle x, y\rangle/I$. We claim that  $D_{sg}(S)$ is Hom-infinite. Let  $T=S/\mathrm{rad} S$ be the simple $S$-module, which is one dimensional since $\mathbb{C}$ is algebraically closed. It is sufficient to show that the number of indecomposable direct summands of $\Omega^n(T)$ is unbounded for growing $n$ and that all these summands have infinite projective dimension as $S$-modules. Since it well-known that $\Omega^n(T) \cong T[-n]$ in $D_{sg}(S)$ (see e.g. \cite[Lemma 2.2]{XWChenSingCatofRadicalsquare0alg}) this shows that $D_{sg}(S)$ is Hom-infinite. We compute the syzygies of $T$. As $\dim \mathrm{soc}\, S=2$ and $\dim S=4$ there are $\lambda, \mu \in \mathbb{C}$ such that $z:=\lambda x + \mu y \in \mathrm{soc}\, S$. Using that $S$ is local one can check that $\Omega(T) \cong \mathrm{rad} \, S \cong Sz \oplus U \cong T \oplus U$ where $U$ is $2$-dimensional and indecomposable. The syzygy of  an arbitrary two dimensional indecomposable $S$-module $V$ is two dimensional. There are two cases
\begin{itemize}
\item $\Omega(V)$ is indecomposable and hence two dimensional.
\item $\Omega(V)$ is decomposable. Then  $\Omega(V) \cong T \oplus T$ since $S$ is local.
\end{itemize}
It follows that $T$ and any indecomposable $2$-dimensional $S$-module have infinite projective dimension. Moreover, these are the only possible direct summands of $\Omega^n(T)$. Finally, $\Omega^n(T)$ has at least $n+1$ indecomposable direct summands. This completes the proof of the two-dimensional socle case.

If $\dim \mathrm{soc} \, S = 3$, then $S \cong \mathbb{C}[x, y, z]/(x, y, z)^2$. This has a singularity category which is not Hom-finite (in fact it is the Kn\"{o}rrer invariant algebra $K_{4, 1}$ from above), see e.g. \cite[Theorem C]{XWChenSingCatofRadicalsquare0alg}.

None of these cases contain $n \ge 4 $ indecomposables and are Hom-finite, so this proves the part (a) claim for type D.

For part (b), we note that the corresponding group is $G=D_{n, 2m}$ in the notation of \cite{Riemenschneider77}. One can check that the abelianisation of $G$ is isomorphic to $\mathbb{Z}/2\mathbb{Z} \times \mathbb{Z}/2(n-2m)\mathbb{Z}$. In particular, it is not cyclic and therefore (using Theorem \ref{GrothendieckClassgroup} and Proposition \ref{P:GrothendieckLocal} again)  there cannot be a finite dimensional local $\mathbb{C}$-algebra  $S$ with equivalent  singularity category.
\end{proof}

\section{Descriptions of the algebras} \label{Algebras}
The previous sections have recalled the algebras $\Lambda_{r,a}$, $A_{r,a}$, and $R_{r,a}$,  introduced the algebras $K_{r,a}$, and proved the main result relating the singularity categories of $R_{r,a}$ and $K_{r,a}$. In this section we give explicit presentations of these algebras in terms of generators and relations, and we show that $\Lambda_{r,a}$ can be constructed from the representation theory of $K_{r,a}$; namely as a noncommutative resolution in the sense of Dao, Iyama, Takahashi, and Vial \cite{DITV}.

\subsection{Description of the reconstruction algebra $A_{r,a}^{\op}$} \label{Reconstruction algebras}
We recall a presentation given in \cite{RCAA} of the algebra $A^{\op}:=A^{\op}_{r,a}=A^{\op}_{[\alpha_1, \dots, \alpha_n]}$ as a quiver with relations. The quiver has $n+1$ vertices in correspondence with the simples $t_0, \dots, t_{n}$. Below we recall the dimension of $\Ext_{A^{\op}}^i(t_i,t_j)$ as calculated in {\cite[Theorem 3.2]{WemyssGL2}}. Then the presentation of $A^{\op}_{r,a}$ as a quiver with relations satisfies that that the number of arrows to vertex $j$ from vertex $i$ equals the dimension of $\Ext_{A^{\op}}^1(t_i, t_j)$, and the number of generators for the relations between paths to vertex $j$ from $i$ equals the dimension of $\Ext_{A^{\op}}^2(t_i, t_j)$. As is noted in \cite[Corollary 3.3]{RCAA} this is true as we are in the complete local setting, see for example \cite[Proposition 3.4]{BuanIyamaReitenSmith}.

\begin{Lemma}[{\cite[Theorem 3.2]{WemyssGL2}}] \label{lem:recon exts} The dimension of $\Ext^k_{A^{\op}}(t_i,t_j)$ is $0$ for $k>3$ and otherwise is given in the following list.
\begin{align*}
&\dim \Ext_{A^{\op}}^1(t_i,t_j) = \left\{ \begin{array}{ll}
 1 & \text{\scriptsize{ if $n>1$, $1 \le i \le n$, and $|i-j|=1$}} \\
\alpha_n-1 & \text{\scriptsize{ if $n>1$, $i=0$, and $j=n$ or $1$}} \\
\alpha_i-2 & \text{\scriptsize{ if $n>1$, $i=0$, and $1<j<n$}} \\
 2 & \text{\scriptsize{ if $n=1$, $i =1$, and $j=0$}} \\
\alpha_n & \text{\scriptsize{ if $n=1$, $i=0$, and $j=1$}} \\
0 & \text{\scriptsize{ otherwise}}
\end{array}
\right.
\\
&\dim \Ext_{A^{\op}}^2(t_i,t_j) = \left\{ \begin{array}{ll}
 \alpha_i-1 & \text{\scriptsize{ if $1 \le i \le n$ and $i=j$}} \\
\sum_{k=1}^n (\alpha_k-2)+1 & \text{\scriptsize{$i=j=0$}} \\
0 & \text{\scriptsize{ otherwise}}
\end{array}
\right.
\\
&\dim \Ext_{A^{\op}}^3(t_i,t_j) = \left\{ \begin{array}{ll}
 \alpha_j-2 & \text{\scriptsize{ if $j=0$ and $1 \le i \le n$}} \\
0 & \text{\scriptsize{ otherwise}}
\end{array}
\right.
\end{align*}
\end{Lemma}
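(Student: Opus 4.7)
The plan is to transfer the computation to geometry via the tilting equivalence $D^b(A^{\op}_{r,a}) \cong D^b(V_{r,a})$ coming from the dual tilting bundle $T^{\vee}$ on the minimal resolution (a variant of Theorem \ref{Affine Abelian Theorem}), and then exploit the explicit intersection data of the chain of rational curves $C_1, \dots, C_n$. Under this equivalence, the simples $t_i$ of $A^{\op}_{r,a}$ correspond to the simples of the dual perverse heart ${}^{-1}\Per(V_{r,a}/R_{r,a})$: essentially $t_i \cong \mathcal{O}_{C_i}(-1)$ for $1 \le i \le n$ (up to a uniform shift), together with one extra simple $t_0$ supported at the singular point, which one identifies from the projective cover $P_0 \cong \mathcal{O}_{V_{r,a}}$ of the corresponding $A^{\op}_{r,a}$-module.

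First I would compute the ``curve--curve'' part $\Ext^{*}(\mathcal{O}_{C_i}(-1), \mathcal{O}_{C_j}(-1))$ for $1 \le i, j \le n$. A Koszul-type locally free resolution $0 \to \mathcal{O}_{V_{r,a}}(-C_i) \to \mathcal{O}_{V_{r,a}} \to \mathcal{O}_{C_i} \to 0$ (twisted by $-1$ on $C_i$), combined with the intersection data $C_i \cdot C_i = -\alpha_i$, $C_i \cdot C_{i\pm 1}=1$, and vanishing otherwise, reduces each group to cohomology of line bundles on $\mathbb{P}^1$. This gives $\dim \Ext^1 = 1$ for adjacent curves, vanishing for non-adjacent curves, and $\dim \Ext^2 = \alpha_i - 1$ on the diagonal via relative Serre duality together with the adjunction identity $K_{V_{r,a}} \cdot C_i = \alpha_i - 2$.

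Next I would handle the Ext groups involving $t_0$. Using the universal extension structure of $T^{\vee}$ one builds an explicit exact triangle presenting $t_0$ from $\mathcal{O}_{V_{r,a}}$ and the $\mathcal{O}_{C_j}(-1)$ with multiplicities determined by the fundamental cycle. The long exact sequences in $\Ext$ obtained from this triangle, fed by the previous step, reduce $\Ext^{*}(t_0, t_j)$ and $\Ext^{*}(t_i, t_0)$ to $H^{*}$ of line bundles on the chain of $\mathbb{P}^1$'s, whose dimensions are controlled by the integers $\alpha_k - 2$ and $\alpha_n - 1$ dictated by the divisors $D_i$ of Proposition \ref{Line Bundle Proposition}. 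The quantity $\sum (\alpha_k-2)+1$ appearing in $\Ext^2(t_0, t_0)$ then arises as the total degree defect of the fundamental cycle.

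The main obstacle is accounting for the non-Gorenstein character of $R_{r,a}$. Whenever $a < r-1$ the algebra $A^{\op}_{r,a}$ has global dimension $3$ and not $2$, so non-zero $\Ext^3$ groups occur, and these are only visible through a careful use of the relative dualising complex $\omega_{V_{r,a}/R_{r,a}}$. The numerics $\alpha_j - 2$ in the $\Ext^3$ formula is exactly $K_{V_{r,a}} \cdot C_j$, so the delicate point is to verify that the chosen triangle for $t_0$ interacts with Serre duality on $V_{r,a}/R_{r,a}$ in such a way that this canonical twist is picked up with the correct cohomological degree and sign; in practice one would either construct length-$3$ projective resolutions of the $t_i$ in $A^{\op}_{r,a}$-$\mod$ and count composition factors directly, or work throughout with the relative dualising complex and propagate its Gorenstein-defect shift through the spectral sequence.
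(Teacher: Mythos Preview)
The paper does not give its own proof of this lemma: it is stated as a direct citation of \cite[Theorem 3.2]{WemyssGL2} and is used only as input for the later computations (notably in the proof of Lemma \ref{Ext Quiver Calculation Lemma}). So there is no proof in the paper to compare your proposal against.

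That said, your outline is broadly the right strategy and is in the spirit of how Wemyss obtains the result: transfer to $D^b(V_{r,a})$ via tilting, identify the simples geometrically, and compute $\Ext$ groups using the short exact sequence $0 \to \mathcal{O}(-C_i) \to \mathcal{O} \to \mathcal{O}_{C_i} \to 0$ together with the intersection data $C_i \cdot C_j$ and cohomology of line bundles on $\mathbb{P}^1$. A couple of points to tighten. First, your description of $t_0$ is vague; in the dual heart ${}^{-1}\Per(V_{r,a}/R_{r,a})$ the remaining simple is $\mathcal{O}_C$ (not a shift of $\omega_C$), and getting this identification right is what makes the $\Ext^1(t_0,t_j)$ and $\Ext^2(t_0,t_0)$ computations come out with the correct multiplicities $\alpha_j-2$, $\alpha_n-1$, and $\sum(\alpha_k-2)+1$. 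Second, invoking ``the fundamental cycle'' and ``Gorenstein-defect shift through the spectral sequence'' is more machinery than is needed here; once the simples are correctly identified, the $\Ext^3$ groups fall out of the same long exact sequences, and the appearance of $\alpha_j-2 = -K_{V_{r,a}} \cdot C_j$ is a direct consequence of adjunction rather than a subtle duality argument. Your alternative suggestion of building explicit projective resolutions in $A^{\op}_{r,a}$-$\mod$ is in fact closer to how this is often carried out in practice, and is entirely self-contained once one has Wemyss's presentation of the reconstruction algebra.
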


This describes the shape of the quiver defining the reconstruction algebra. 

\begin{Definition} \label{def:ReconQuiver}
Define a quiver $Q^{\textnormal{Recon}}_{[\alpha_1, \dots, \alpha_n]}$ with vertices \[
Q_0^{\textnormal{Recon}}:=\{ 0,1,2, \dots, n \}
\]
and arrows 
\[
Q_1^{\textnormal{Recon}}:=\{ a_0, a_1, \dots, a_n, c_0, c_1, \dots, c_n, k_2, \dots, k_{v_{n}} \}
\]
whose heads and tails defined by
\begin{align*}
h(c_i)&=i & t(c_i)&=i-1\\
h(a_i)&=i-1 & t(a_i)&=i \\ 
h(k_i) &=0 & t(k_j)& = i  \text{ for $u_i+1  <  j \le v_i$.}
\end{align*}
where we work modulo $n+1$ in the vertex labelling, let $u_i=  \sum_{k=1}^{i-1} (\alpha_{k}-2)$ and $v_i= \sum_{k =1}^i (\alpha_k-2)+1$, and notate $k_1:= a_{1}$, $k_{v_n+1}:=c_{0}$, $a_0=a_{n+1}$, $c_0=c_{n+1}$, $A_{0}^{t}:=a_0a_n \dots a_{t+1}$ and $C_{0}^{t}:=c_1 \dots c_t$. We also use the notation $t(j):=t(k_j)$ for the tail vertex of the arrow $k_j$.
\end{Definition}

A presentation of the relations was also explicitly determined in \cite{RCAA}.

\begin{Definition}
For $1 \le i \le n$ consider the following elements of $\mathbb{C}Q_{[\alpha_1, \dots, \alpha_n]}^{\textnormal{Recon}}$:
\begin{align*}
&\text{if $\alpha_i>2$:} \qquad \left\{ \begin{array}{l}
a_{i} c_{ i} -k_{u_i+2}A_{0}^{i} \\
 k_j C_{0}^{i} -k_{j+1} A_{0}^{ i}  \qquad \text{for } u_i +1 < j < v_i \\
 k_{v_i}C_{0}^{ i} - c_{i+1}a_{i+1} \\
 \end{array} \right.  \\
&\text{if $\alpha_i=2$:} \qquad \quad a_{i} c_{i}  -c_{ i+1} a_{i+1}
\end{align*}
and for $i=0$ consider the elements
\begin{align*}
 \quad &  A_{0 }^{t({j+1}) }k_{j+1}- C_{0}^{ t(j)} k_j  \qquad \text{for } 1 \le j \le \sum (a_i-2)+2.
\end{align*} 
Define $I_{[\alpha_1, \dots, \alpha_n]}^{\textnormal{Recon}}$ to be the two-sided ideal of $\mathbb{C}Q_{[\alpha_1, \dots, \alpha_n]}^{\textnormal{Recon}}$ generated by these elements.
\end{Definition}

\begin{Proposition}[{\cite[Definition 2.3]{RCAA}}] \label{Reconstruction presentation Proposition}
The reconstruction algebra $A^{\op}_{r,a}=A^{\op}_{[\alpha_1, \dots, \alpha_n]}$ can be presented as the path algebra of a quiver with relations
\[
A^{\op}_{r,a}:= \frac{\mathbb{C}Q_{[\alpha_1, \dots, \alpha_n]}^{\textnormal{Recon}}}{I_{[\alpha_1, \dots, \alpha_n]}^{\textnormal{Recon}}}.
\]
\end{Proposition}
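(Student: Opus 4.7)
The plan is to verify this presentation by the standard strategy for endomorphism algebras in a complete local setting: match the quiver to the $\Ext^1$-dimensions, exhibit the proposed relations geometrically, and then compare the number of relation generators to the $\Ext^2$-dimensions to conclude there are no missing relations. Throughout we work with $A^{\op}_{r,a} \cong \End_{V_{r,a}}(T^\vee)$ where $T = \bigoplus_{i=0}^n P_i$ is the Van den Bergh tilting bundle of Theorem \ref{Affine Abelian Theorem}, and we use that the complete local hypothesis (as noted immediately before Lemma \ref{lem:recon exts}) guarantees that $\dim \Ext^1_{A^{\op}}(t_i,t_j)$ counts arrows $i\to j$ in the Gabriel quiver and $\dim \Ext^2_{A^{\op}}(t_i,t_j)$ counts minimal relations $i\to j$.

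First I would check that $Q^{\textnormal{Recon}}_{[\alpha_1,\dots,\alpha_n]}$ has exactly the right shape. Between adjacent vertices $1 \le i \le n$ and $i\pm 1$, Definition \ref{def:ReconQuiver} provides one arrow each way ($a_i$ and $c_i$), matching $\dim \Ext^1(t_i,t_j)=1$ of Lemma \ref{lem:recon exts}. For the vertex $0$, the arrows $k_j$ with $u_i+1 < j \le v_i$ together with $a_1,c_0$ give precisely $\alpha_n-1$ arrows between $0$ and the end vertices and $\alpha_i-2$ arrows between $0$ and any intermediate vertex, which again agrees with Lemma \ref{lem:recon exts}. So the Gabriel quivers coincide and we obtain a surjection $\mathbb{C}Q^{\textnormal{Recon}}_{[\alpha_1,\dots,\alpha_n]} \twoheadrightarrow A^{\op}_{r,a}$.

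Next I would realise the generators geometrically and check the relations. The arrows $a_i, c_i$ correspond to the canonical maps between the projective line bundles $P_{i-1}=\mathcal{O}(-D_{i-1})$ and $P_i=\mathcal{O}(-D_i)$ obtained from Proposition \ref{Line Bundle Proposition}, while the $k_j$ come from a basis of $\Hom(P_i,\mathcal{O})$ as $i$ runs through the vertices. The relations of the two types (for $1 \le i \le n$ and for $i=0$) then become identities between compositions of such maps and reduce to linear equivalences among divisors of the form $D_i, C_i$; in particular, the commutation relations $A_0^{t(j+1)}k_{j+1}-C_0^{t(j)}k_j$ express that two paths realising the same section of $\mathcal{O}$ via $\mathcal{O}(-D_i)$ agree, while the relations $a_ic_i - k_{u_i+2}A_0^i$ etc.\ record the unique non-trivial linear dependence in $\Hom(P_i,P_i)$ arising from $C_i\cdot C_i = -\alpha_i$. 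Each such relation can be verified by a direct computation with transition functions, or more efficiently by using Wunram's reflexive modules \cite{WunramReflexiveModules} to reduce to monomial identities in $R_{r,a}$.

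The main obstacle is precisely this last step: checking that the proposed relations really do hold on the nose (and not just up to scalars that could hide further syzygies) requires careful bookkeeping with the Hirzebruch--Jung data. Once this is done, completion proceeds by counting. For each vertex $i \ge 1$, the relations listed contribute exactly $\alpha_i - 1$ generators with head and tail $i$ (one for each line $a_ic_i - \cdots$, and $\alpha_i-2$ commutation relations at $0$ whose tail is $i$), matching $\dim \Ext^2(t_i,t_i)=\alpha_i-1$; at vertex $0$ the total number of listed commutation relations is $\sum_k(\alpha_k-2)+1 = \dim \Ext^2(t_0,t_0)$. Hence the proposed ideal $I^{\textnormal{Recon}}_{[\alpha_1,\dots,\alpha_n]}$ already accounts for a minimal set of generators of the relation ideal, so the surjection above is an isomorphism.
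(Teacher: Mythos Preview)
The paper does not supply its own proof of this proposition; it is simply a citation of Wemyss's result \cite[Definition~2.3]{RCAA}, so there is nothing in the paper to compare your argument against beyond the statement itself and the surrounding remarks on $\Ext$-counting.

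Your outline follows the standard route and is the right strategy, but two points keep it from being a proof. First, the step you flag as ``the main obstacle'' --- actually verifying that each listed element lies in the kernel of $\mathbb{C}Q^{\textnormal{Recon}} \twoheadrightarrow A^{\op}_{r,a}$ --- is not carried out; the appeal to ``transition functions or Wunram's reflexive modules'' is a pointer, not an argument, and this is precisely the content of Wemyss's paper. Second, matching the \emph{number} of relation generators to $\dim \Ext^2$ only shows minimality once you know the listed relations are linearly independent modulo $\mathfrak{m} I + I \mathfrak{m}$; you use the count as if it automatically implies the surjection $\mathbb{C}Q^{\textnormal{Recon}}/I^{\textnormal{Recon}} \to A^{\op}_{r,a}$ is injective, but without independence the listed generators could be redundant and further relations would still be needed. (Your parenthetical breakdown of the $\alpha_i-1$ count is also garbled: the relations indexed by $1 \le i \le n$ are all closed paths at vertex $i$, not a mixture of one at $i$ and $\alpha_i-2$ ``at $0$''.)
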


\begin{Example}[$r=17, \, a=5$] The algebra $A^{\op}_{[4,2,3]}=A^{\op}_{17,5}$ can be presented as the path algebra of the following quiver with relations.
\begin{align*}
\begin{aligned}
\begin{tikzpicture} [bend angle=45, looseness=1.2]
\node (C1) at (-3,-3)  {$1$};
\node (C2) at (-3,0)  {$2$};
\node (C3) at (0,0) {$3$};
\node (C*) at (0,-3) {$0$};
\draw [->,bend right=15] (C*) to node[gap] {$\scriptstyle{c_{1}}$} (C1);
\draw [->,bend right=15] (C1) to node[gap] {$\scriptstyle{a_{1}}$} (C*);
\draw [->,bend right=15] (C1) to node[gap] {$\scriptstyle{c_{2}}$} (C2);
\draw [->,bend right=15] (C2) to node[gap] {$\scriptstyle{a_{2}}$} (C1);
\draw [->,bend right=45] (C1) to node[gap] {$\scriptstyle{k_{3}}$} (C*);
\draw [->,bend right=30] (C1) to node[gap] {$\scriptstyle{k_{2}}$} (C*);
\draw [->,bend right=15] (C2) to node[gap] {$\scriptstyle{c_{3}}$} (C3);
\draw [->,bend right=15] (C3) to node[gap] {$\scriptstyle{a_{3}}$} (C2);
\draw [->,bend right=15] (C3) to node[gap] {$\scriptstyle{c_{0}}$} (C*);
\draw [->,bend right=15] (C*) to node[gap] {$\scriptstyle{a_{0}}$} (C3);
\draw [->,bend right=45] (C3) to node[gap] {$\scriptstyle{k_{4}}$} (C*);
\end{tikzpicture}
\end{aligned}
& \qquad 
\begin{aligned}
&\scriptstyle{\text{Vertex 3:}}\\
&\scriptstyle{a_3c_3}\scriptstyle{=k_4 a_0} \\
&\scriptstyle{k_4 c_1c_2c_3}\scriptstyle{=c_0a_0} \\
\\
&\scriptstyle{\text{Vertex 1:}}\\
&\scriptstyle{a_1c_1 }\scriptstyle{= k_2a_0a_3a_2 }\\
&\scriptstyle{k_2  c_1} \scriptstyle{= k_3 a_0a_3a_2}\\
&\scriptstyle{k_3  c_1} \scriptstyle{= c_2a_a}
\end{aligned}
 \quad
\begin{aligned}
&\scriptstyle{\text{Vertex 2:}}\\
 &\scriptstyle{a_2c_2} \scriptstyle{=c_3a_3}  \\
\\
&\scriptstyle{\text{Vertex 0:}}\\
&\scriptstyle{a_0 c_0}\scriptstyle{= c_1c_2c_3 k_4}  \\
&\scriptstyle{ a_0 k_4 }\scriptstyle{=c_1 k_3} \\
&\scriptstyle{a_0 a_3 a_2  k_3} \scriptstyle{= c_1 k_2}\\
&\scriptstyle{a_0 a_3 a_2  k_2 } \scriptstyle{= c_1 a_1}
\end{aligned}
\end{align*}
\end{Example}

\subsection{Description of the invariant algebra $R_{r,a}$} \label{Sec:R presentation}
This section recalls a presentation of the invariant algebra $R_{r,a}=\mathbb{C}[[x,y]]^{\frac{1}{r}(1,a)}$ due to Riemenschneider.

\begin{Theorem}[{\cite[Sections 1 and 2]{RiemenschneiderNach}}] The ring $R_{r,a}:=\mathbb{C}[[x,y]]^{\frac{1}{r}(1,a)}$ is isomorphic to the quotient of $\mathbb{C}[[z_0, \dots, z_{l+1}]]$ by the ideal generated by the elements
\begin{equation*}
z_{j+1} z_i  - z_{i+1} \left( \prod_{k=i+1}^{j}  z_k^{\beta_k-2} \right) z_{j}
\end{equation*}
for $0 \le i <j \le l+1$, where the $\beta_k$ and $l$ are defined by the Hirzebruch-Jung continued fraction $r/(r-a)= [ \beta_1, \dots, \beta_l]$ dual to $r/a=[ \alpha_1, \dots, \alpha_n]$. The embedding dimension of $R_{r,a}$ is $l+2$.
\end{Theorem}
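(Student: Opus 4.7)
The plan is to identify $R_{r,a}$ with a semigroup algebra and then unwind the combinatorics of the dual Hirzebruch--Jung continued fraction. First I would observe that since $G=\frac{1}{r}(1,a)$ acts diagonally on $\mathbb{C}[[x,y]]$ with $g\cdot(x^py^q)=\varepsilon_r^{p+aq}x^py^q$, the invariant ring is the complete semigroup algebra $\mathbb{C}[[S]]$, where
\[
S=\bigl\{(p,q)\in\mathbb{N}^2 : p+aq\equiv 0\pmod r\bigr\}
\]
is a saturated subsemigroup of $\mathbb{N}^2$ of rank $2$. The minimal set of semigroup generators is then the set of lattice points lying on the ``lower'' boundary of the convex hull of $S\setminus\{0\}$; call these $(p_0,q_0),\dots,(p_{l+1},q_{l+1})$, with $(p_0,q_0)=(r,0)$ and $(p_{l+1},q_{l+1})=(0,r/\gcd(r,a))$.

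Second, I would prove that the sequence $(p_i,q_i)$ is governed by $r/(r-a)=[\beta_1,\dots,\beta_l]$, in the sense that one has the three-term recursions
\[
p_{i-1}+p_{i+1}=\beta_i p_i,\qquad q_{i-1}+q_{i+1}=\beta_i q_i\qquad(1\le i\le l).
\]
This is a standard reformulation of the Euclidean algorithm producing the Hirzebruch--Jung continued fraction of $r/(r-a)$: starting from $(r,0)$ and iterating the rule ``next lattice point on the boundary'' is equivalent to running the subtractive Euclidean algorithm that computes $[\beta_1,\dots,\beta_l]$; see the toric description of cyclic quotient singularities in e.g.~\cite{Hirzebruch1953,RiemenschneiderNach}. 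So there are exactly $l+2$ minimal generators, which gives the embedding dimension $l+2$ at once.

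Setting $z_i=x^{p_i}y^{q_i}$, the stated relations are simply checked on exponents: telescoping $p_{k-1}+p_{k+1}=\beta_k p_k$ in the form $(p_{k+1}-p_k)-(p_k-p_{k-1})=(\beta_k-2)p_k$ and summing from $i+1$ to $j$ yields
\[
p_{j+1}+p_i=p_{i+1}+p_j+\sum_{k=i+1}^{j}(\beta_k-2)p_k,
\]
and likewise for the $q_i$. This is exactly the equality of exponents on the two sides of $z_{j+1}z_i=z_{i+1}\bigl(\prod_{k=i+1}^{j}z_k^{\beta_k-2}\bigr)z_j$, so the map
\[
\varphi\colon \mathbb{C}[[z_0,\dots,z_{l+1}]]/I \longrightarrow R_{r,a},\qquad z_i\mapsto x^{p_i}y^{q_i}
\]
is well defined and, by construction, surjective.

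The final and hardest step is injectivity: showing that $I$ is the full ideal of relations. Here I would argue via a normal form. Using the given relations, any monomial $z_0^{c_0}\cdots z_{l+1}^{c_{l+1}}$ can be rewritten so that its support $\{i:c_i>0\}$ is a set of at most two consecutive indices; indeed, whenever indices $i<j$ with $j\ge i+2$ both appear, the relation for that pair replaces $z_iz_{j+1}$ by a monomial whose support is shifted strictly inward, and this terminates. The resulting ``staircase'' normal forms $z_i^a z_{i+1}^b$ (together with pure powers) are pairwise mapped to distinct elements of $S$ by $\varphi$, because the lattice points $a(p_i,q_i)+b(p_{i+1},q_{i+1})$ fill out the cone over the edge $[(p_i,q_i),(p_{i+1},q_{i+1})]$ of the boundary polygon and every element of $S$ is uniquely of this form. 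Hence $\varphi$ is a bijection on normal forms, so injective, completing the proof. The subtle point, and the one I would spend the most care on, is verifying the termination and unique-normal-form properties of the rewriting system; this is equivalent to showing that the Riemenschneider relations form a Gr\"obner basis with respect to a suitable term order on $\mathbb{N}^{l+2}$.
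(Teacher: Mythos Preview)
The paper does not prove this statement; it is simply quoted from Riemenschneider \cite{RiemenschneiderNach} with no argument given. So there is no ``paper's own proof'' to compare against, and your sketch is a perfectly reasonable reconstruction of the classical toric/semigroup proof, which is in spirit Riemenschneider's original approach.

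A couple of small points worth tightening. First, in your rewriting step you write ``replaces $z_i z_{j+1}$'' when you mean $z_i z_j$; the relevant relation (in the paper's indexing, taking $j'=j-1$) gives $z_j z_i = z_{i+1}\bigl(\prod_{k=i+1}^{j-1} z_k^{\beta_k-2}\bigr) z_{j-1}$, whose right-hand side is supported in $[i+1,j-1]$. Second, your uniqueness claim ``every element of $S$ is uniquely of this form'' needs the observation that consecutive boundary vectors $(p_i,q_i),(p_{i+1},q_{i+1})$ form a $\mathbb{Z}$-basis of the sublattice $M=\{(p,q):p+aq\equiv 0 \pmod r\}$, not of $\mathbb{Z}^2$. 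This follows from the three-term recursion: the $2\times 2$ determinant $\det\bigl((p_i,q_i),(p_{i+1},q_{i+1})\bigr)$ is independent of $i$, and at $i=0$ equals $r\cdot q_1=r$ (since $(p_1,q_1)=(r-a,1)$), which is exactly the index $[\,\mathbb{Z}^2:M\,]$. With that in hand, each simplicial subcone over a boundary edge meets $M$ in a free monoid on the two edge vectors, so the normal forms $z_i^a z_{i+1}^b$ inject into $S$ and your bijection argument goes through. The termination you flag can be made precise by, e.g., the strictly convex weight $w(c)=\sum_i c_i\, i(l+1-i)$, which strictly decreases under each rewriting step.
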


We now recap some properties of Hirzebruch-Jung continued fractions.
\begin{Definition} \label{def:HJFraction}
For $0 < a< r$ coprime integers, recall that the Hirzebruch-Jung continued fraction expansion is defined by 
\[
\frac{r}{a} = \alpha_1 - \cfrac{1}{\alpha_{2}
          -\cfrac{1}{\dots - \cfrac{1}{\alpha_n} } } =\left[ \alpha_1, \dots, \alpha_n \right].
\]
\end{Definition}

Associated to a fraction $r/a=[ \alpha_1, \dots, \alpha_n]$ is its point diagram (see Example \ref{Ex:Point Diagram}):  
\begin{enumerate}
\item On row 1 draw $\alpha_1-1$ consecutive points.
\item On row $j$ draw $\alpha_j-1$  consecutive points starting immediately below the last point on row $j-1$.
\end{enumerate}
Riemenschneider duality relates a fraction expansion $r/a=[ \alpha_1, \dots, \alpha_n]$ to its dual fraction expansion $r/(r-a)=[\beta_1, \dots, \beta_l]$.

\begin{Theorem}[Riemenschneider duality, {\cite[Section 3]{RiemenschneiderNach}}] \label{Riemenschneider Duality Theorem}
The number of points in row $i$ of the point diagram is $\alpha_i-1$ and the number of points in column $j$  of the point diagram is $\beta_j-1$.

In particular the following relations hold: 
\begin{enumerate}
\item $\sum_{i=1}^n(\alpha_i-1)=\sum_{i=1}^l(\beta_i-1)$,
\item $\sum_{i=1}^n(\alpha_i-2)+1=l$, and
\item $\sum_{i=1}^l(\beta_i-2)+1=n$.
\end{enumerate}
\end{Theorem}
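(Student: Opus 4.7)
The plan is to first establish the row/column duality for the point diagram, from which the three numerical identities follow by double counting. The row count is built directly into the construction: row $i$ contains exactly $\alpha_i-1$ points by definition. So the substantive content is the column count: column $j$ contains exactly $\beta_j-1$ points. Granting this, identity (1) is just the total number of points counted row-by-row versus column-by-column. For identity (2), the staircase rule ensures that each new row (after the first) begins in the same column where the previous row ended, so the union of rows covers exactly $\sum_i(\alpha_i-1)-(n-1)=\sum_i(\alpha_i-2)+1$ consecutive columns, and this horizontal extent equals $l$. Identity (3) is the symmetric statement, obtained by swapping the roles of the two fractions.

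For the column-count claim I would argue by induction on $n$. The base case $n=1$ forces $r=\alpha_1$ and $a=1$, so the dual fraction is $\alpha_1/(\alpha_1-1)=[\underbrace{2,\dots,2}_{\alpha_1-1}]$ (a short induction on $\alpha_1$); the point diagram is a single row of $\alpha_1-1$ singletons, and indeed each column holds exactly $1=\beta_j-1$ point.

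For the inductive step the strategy is to match a "reduction move" on the point diagram to a corresponding move on each of the two continued fractions. Concretely, if $\alpha_1>2$, removing the leftmost point of row $1$ produces the diagram of $[\alpha_1-1,\alpha_2,\dots,\alpha_n]$; and using the recursion $r/a=\alpha_1-1/[\alpha_2,\dots,\alpha_n]$ one checks by direct computation that the HJ dual of this reduced fraction is $[\beta_1-1,\beta_2,\dots,\beta_l]$ (interpreted as $[\beta_2,\dots,\beta_l]$ when $\beta_1=2$). If instead $\alpha_1=2$, then row $1$ consists of a single point in column $1$, and removing this row yields the diagram of $[\alpha_2,\dots,\alpha_n]$, whose dual is obtained from $[\beta_1,\dots,\beta_l]$ by removing the first column of the diagram — an algebraic check using $r=2r'-a'$, $a=r'$ with $r'/a'=[\alpha_2,\dots,\alpha_n]$. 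In either case the number of points in column $j$ of the reduced diagram agrees with the prescribed $\beta_j-1$ (or $\beta_{j-1}-1$, after re-indexing), and the original diagram's column $j$ is recovered by reversing the move, giving the desired count.

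The main obstacle is the case analysis that matches the "decrement $\alpha_1$" or "drop $\alpha_1=2$" moves on the primal side with the analogous "decrement $\beta_1$" or "drop $\beta_1=2$" moves on the dual side. This is a purely algebraic manipulation of the recursion $p/q=\lceil p/q\rceil-1/(q/(\lceil p/q\rceil q-p))$, but the book-keeping is tedious because the boundary cases ($\alpha_1=2$, $\beta_1=2$, $n=1$, $l=1$) must be treated carefully to avoid off-by-one errors. Once the matching is confirmed, the inductive step goes through mechanically.
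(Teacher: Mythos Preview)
The paper does not supply its own proof of this theorem; it is stated with attribution to Riemenschneider and no argument is given in the text. So there is nothing to compare your proposal against here.

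Your outline is the standard one and is correct, with one book-keeping correction: you say you argue ``by induction on $n$'', but your first reduction move (decrement $\alpha_1$ when $\alpha_1>2$) does not decrease $n$. Both of your moves do, however, remove exactly one point from the diagram, so the induction is really on the total number of points $\sum_i(\alpha_i-1)$; once you phrase it that way the argument is well-founded. Your observation that $\alpha_1>2$ forces $\beta_1=2$ (since column~$1$ then contains only the single point of row~$1$) is exactly what makes the ``interpreted as $[\beta_2,\dots,\beta_l]$ when $\beta_1=2$'' clause the only case that actually occurs, and dually $\alpha_1=2$ forces $\beta_1>2$. With that in hand the two reduction moves are mirror images of each other under the row/column swap, and the algebraic checks you describe go through. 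Your derivations of (1)--(3) from the row/column counts are fine as stated.
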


\begin{Example} \label{Ex:Point Diagram} The following point diagram is for the fraction $18/5=[4,3,2]$ with dual $18/13=[2,2,3,3]$.
\begin{center}
\begin{tikzpicture}
\filldraw (0,0) circle (3pt);
\filldraw (0.5,0) circle (3pt);
\filldraw (1,0) circle (3pt);
\filldraw (1,-0.5) circle (3pt);
\filldraw (1.5,-0.5) circle (3pt);
\filldraw (1.5,-1) circle (3pt);
\end{tikzpicture}
\end{center}
\end{Example}
We will later need to induct on the length of fraction expansions.
\begin{Corollary} \label{HJfractionCorollary}
Suppose that $0<a<r$ are coprime integers with Hirzebruch-Jung continued fraction expansions $r/a=[\alpha_1, \dots, \alpha_n]$ and  $r/(r-a)=[\beta_1, \dots, \beta_l]$. Then:
\begin{enumerate}
\item
$[\alpha_1, \dots, \alpha_{n+1}]$ is dual to $[\beta_1, \dots \beta_l+1,2, \dots, 2]$ where there are $\alpha_{n+1}-2$ values of $2$ added at the end of the fraction expansion 
\item
$[\alpha_0, \dots, \alpha_n]$ is dual to $[2, \dots, 2, \beta_1+1, \dots \beta_l]$ where there are $\alpha_{0}-2$ additional  values of $2$ added at the start of the fraction expansion. 
\item
For $i>0$ $[\alpha_{i+1}, \dots, \alpha_n]$ is dual to $[\beta_j-c,\beta_{j+1}, \dots \beta_l]$ where $\sum_{k=j}^l (\beta_k-2) -c+1  =n-i $ and $0 <  c <\beta_j-2$.
\item
For $i<n$ $[\alpha_{1}, \dots, \alpha_i]$ is dual to $[\beta_1, \dots, \beta_j-c]$ where $\sum_{k=1}^j (\beta_k-2)-c+1   =i $ and $0 <  c <\beta_j-2$.
\item
Recall the value $t(j)$ from Definition \ref{def:ReconQuiver}. Then $t(j)=\sum_{i=1}^{j-1} (\beta_i-2)+1$ for $1 \le j \le l$.
\end{enumerate}
\end{Corollary}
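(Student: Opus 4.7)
The plan is to deduce all five parts from the point diagram interpretation of Riemenschneider duality in Theorem \ref{Riemenschneider Duality Theorem}. Recall that the point diagram of $r/a = [\alpha_1, \ldots, \alpha_n]$ places $\alpha_i - 1$ points on row $i$, the first of which sits immediately below the last point of row $i-1$; dually, $\beta_j - 1$ equals the number of points in column $j$. Every claim reduces to a simple operation on this diagram.

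For (1), appending $\alpha_{n+1}$ adds a new row of $\alpha_{n+1} - 1$ points whose first point lies directly below the last point of row $n$, i.e.\ in column $l$. Thus column $l$ gains exactly one extra point, so its length becomes $\beta_l$, yielding $\beta_l + 1$ in the dual; the remaining $\alpha_{n+1} - 2$ points create $\alpha_{n+1} - 2$ new columns each of length $1$, contributing $\beta = 2$ entries. Part (2) is the mirror image obtained by prepending $\alpha_0$: the argument is symmetric, modifying $\beta_1$ to $\beta_1+1$ and adding $\alpha_0 - 2$ new initial columns of length one.

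For (3) and (4), I would restrict the diagram of $r/a$ to rows $i+1, \ldots, n$ (respectively rows $1, \ldots, i$), which is precisely the point diagram of the truncated fraction. The dual is then read off column by column. Following the original dual indexing, the $i$-th row ends in some column, say column $j$; columns $1, \ldots, j-1$ lie strictly above row $i+1$ and are discarded, while column $j$ is partially removed, with exactly $c$ of its $\beta_j - 1$ points lying in rows $\leq i$. The constraint $\sum_{k=j}^{l}(\beta_k - 2) - c + 1 = n - i$ is then simply the statement that the truncated diagram still has $n - i$ columns (equivalently, that its dual has $n - i$ entries, matching Theorem \ref{Riemenschneider Duality Theorem}(3) applied to the truncated fraction). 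The bound $0 \leq c < \beta_j - 2$ ensures that column $j$ retains at least two points after truncation, so the corresponding dual entry $\beta_j - c$ is $\geq 2$, as required for a valid Hirzebruch--Jung expansion. Part (4) is the analogous bookkeeping for removing the lower rows.

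Finally, (5) is a direct unwinding: in Definition \ref{def:ReconQuiver}, the arrow $k_j$ has tail $t(j) = u_i + 1$ for the unique $i$ with $u_i + 1 < j \leq v_i$, where $u_i = \sum_{k=1}^{i-1}(\alpha_k - 2)$. Translating this indexing through Riemenschneider duality (using Theorem \ref{Riemenschneider Duality Theorem}(2)--(3)) gives $t(j) = \sum_{k=1}^{j-1}(\beta_k - 2) + 1$. The main obstacle I anticipate is bookkeeping in parts (3) and (4): identifying precisely how the integer $c$ is determined by the row count and ensuring all edge cases (when the cut falls at a column boundary, so $c = 0$) are handled consistently — but these are combinatorial rather than conceptual difficulties, and the point diagram makes each case transparent.
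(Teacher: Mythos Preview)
Your approach to parts (1)--(4) is essentially the paper's own: the paper simply states that these follow immediately from Riemenschneider duality (Theorem~\ref{Riemenschneider Duality Theorem}), and your point-diagram arguments are exactly the intended unpacking of that sentence. One small slip: in your discussion of (3) you say the truncated diagram has $n-i$ columns, but it has $n-i$ \emph{rows}; the identity $\sum_{k=j}^{l}(\beta_k-2)-c+1=n-i$ is Theorem~\ref{Riemenschneider Duality Theorem}(3) applied to the truncated fraction, counting its $\alpha$-entries, not its $\beta$-entries.

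For (5) there is a genuine error and a difference of method. You write ``the arrow $k_j$ has tail $t(j)=u_i+1$'', but by Definition~\ref{def:ReconQuiver} the tail is the \emph{vertex} $i$, not $u_i+1$; so what you must show is that the unique $i$ with $u_i+1<j\le v_i$ equals $\sum_{k=1}^{j-1}(\beta_k-2)+1$. Your ``direct unwinding'' does not actually carry this out. The paper instead argues by induction on $l$: the base case $l=1$ gives $t(1)=1$ trivially, and for the step one passes from $[\beta_1,\ldots,\beta_l]$ to $[\beta_1,\ldots,\beta_l,\beta_{l+1}]$, which by part (1) (with the roles of $\alpha$ and $\beta$ swapped) is dual to $[\alpha_1,\ldots,\alpha_n+1,2,\ldots,2]$; this forces $t(l+1)=n=\sum_{k=1}^{l}(\beta_k-2)+1$ by Theorem~\ref{Riemenschneider Duality Theorem}(3). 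A direct point-diagram argument is also possible (identify $t(j)$ with the row containing the last point of column $j$), but you would need to make that identification explicit rather than assert it.
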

\begin{proof}
Items (1),(2),(3) and (4) follow immediately from Riemenschneider duality, Theorem \ref{Riemenschneider Duality Theorem}.

We assume we have a fraction $[\alpha_1, \dots, \alpha_n]$ with dual $[\beta_1, \dots, \beta_l]$, and we prove (5) by induction on $l$. As the base case, suppose $l=1$. Then $t(k_1)=t(a_1)=1$ by definition. Now suppose that $t(j)=\sum_{i=1}^{j-1} (\beta_i-2)+1$ for $1 \le j \le l$ and consider the fraction $[\beta_1, \dots, \beta_l, \beta_{l+1}]$ which is dual to $[\alpha_1, \dots, \alpha_n+1, 2, \dots, 2]$ by part (1). It follows that $t(l+1)=n$, and by Theorem \ref{Riemenschneider Duality Theorem}, $\sum_{i=1}^{l} (\beta_i-2)+1=n$. Hence the result follows by induction.
\end{proof}

\subsection{Description of the algebra $\Lambda_{r,a}$} \label{HPalgebra}
We now give a presentation of the algebra $\Lambda:=\Lambda_{r,a}$ analogous to that of $A^{\op}_{r,a}$. Below we calculate the groups $\Ext_{\Lambda}^i(\sigma_i, \sigma_j)$. Then there is a presentation of $\Lambda_{r,a}$ as the path algebra of a quiver with relations such that  the vertices are in correspondence with the simples $\sigma_0, \dots, \sigma_{n}$, the number of arrows to vertex $j$ from vertex $i$ equals the dimension of $\Ext_{\Lambda}^1(\sigma_i, \sigma_j)$, and the number of generators for the relations between paths to vertex $j$ from $i$ equals the dimension of $\Ext_{\Lambda}^2(\sigma_i, \sigma_j)$.

\begin{Lemma} \label{Ext Quiver Calculation Lemma} The dimension of $\Ext^k_\Lambda(\sigma_i,\sigma_j)$ is $0$ for $k > 2$ and otherwise is given in the following list.

\begin{align*}
 &\dim \Ext_{\Lambda}^1(\sigma_i,\sigma_j) = \left\{ \begin{array}{ll}
 1 & \text{\scriptsize{ if $i \neq 0$ and $|i-j|=1$,}} \\
\alpha_1-1 & \text{\scriptsize{ if $i=0$ and $j=1$,}} \\
\alpha_j-2 & \text{\scriptsize{ if $i=0$ and $1<j \le n$, or}} \\
0 & \text{\scriptsize{ otherwise.}}
\end{array}
\right.
\\
&\dim \Ext_{\Lambda}^2(\sigma_i,\sigma_j) = \left\{ \begin{array}{ll}
 \alpha_i-1 & \text{\scriptsize{ if $i=j \neq 0$, or}} \\
0 & \text{\scriptsize{ otherwise.}}
\end{array}
\right.
\end{align*}
\end{Lemma}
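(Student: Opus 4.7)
The plan is to use the triangle equivalence $D^b(\Lambda_{r,a}) \cong D_{r,a} \subseteq D^b(X_{r,a})$ from Proposition \ref{Proposition HillePloog Abelian Category} to reduce the computation to $\Ext^k_{D^b(X)}(\sigma_i, \sigma_j)$ on the surface $X = X_{r,a}$, where $\sigma_0 = \mathcal{O}_X(-C_1 - \cdots - C_n)$ is a line bundle, $\sigma_i = \mathcal{O}_{C_i}(-1)$ for $1 \leq i \leq n-1$, and $\sigma_n = \mathcal{O}_{C_n}$. I would split into four cases depending on whether $i$ or $j$ is zero.

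For $i, j \geq 1$, both simples are torsion sheaves on the rational curves $C_i$, so I would apply the local-to-global spectral sequence $H^p(X, \mathcal{E}xt^q(\sigma_i, \sigma_j)) \Rightarrow \Ext^{p+q}$. When $|i-j| \geq 2$ the supports are disjoint, hence every $\Ext^k = 0$. When $i = j$, the Koszul resolution $0 \to \mathcal{O}_X(-C_i) \to \mathcal{O}_X \to \mathcal{O}_{C_i} \to 0$ yields $\mathcal{E}xt^0 = \mathcal{O}_{C_i}$ and $\mathcal{E}xt^1 = \mathcal{O}_{C_i}(-\alpha_i)$ (using $C_i \cdot C_i = -\alpha_i$), and combining with $\mathbb{P}^1$-cohomology gives $\Ext^0 = \mathbb{C}$, $\Ext^1 = 0$, and $\Ext^2 = \alpha_i - 1$. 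When $|i-j| = 1$, a local Koszul computation at the unique transverse intersection point $p = C_i \cap C_j$ inside the completed local ring $\mathbb{C}[[x,y]]$ shows that $\mathcal{E}xt^1(\sigma_i, \sigma_j)$ is a length-one skyscraper at $p$ while the other $\mathcal{E}xt^q$ vanish, producing $\Ext^1 = 1$ with all others zero.

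For $i = 0$, since $\sigma_0$ is a line bundle on $X$, I have $\Ext^k(\sigma_0, \sigma_j) = H^k(X, \sigma_0^{-1} \otimes \sigma_j)$. For $j = 0$ this is $H^k(X, \mathcal{O}_X)$, which under the cohomology hypotheses on $X$ reduces to $\Ext^0 = \mathbb{C}$ and higher Ext's zero. For $j \geq 1$ the sheaf is supported on $C_j \cong \mathbb{P}^1$ of degree $d = (C_1 + \cdots + C_n) \cdot C_j + \deg \sigma_j|_{C_j}$; a direct intersection calculation using $C_k \cdot C_{k \pm 1} = 1$ and $C_k^2 = -\alpha_k$ gives $d = -\alpha_1$ when $j = 1$ and $d = 1 - \alpha_j$ otherwise, whence $\Ext^1 = \alpha_1 - 1$ or $\alpha_j - 2$ respectively, and $\Ext^2 = 0$.

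For the final case $i \geq 1, j = 0$, I would apply Serre duality on the smooth projective surface $X$: $\Ext^k(\sigma_i, \sigma_0) \cong \Ext^{2-k}(\sigma_0, \sigma_i \otimes \omega_X)^*$. Adjunction gives $\omega_X|_{C_i} = \mathcal{O}_{C_i}(\alpha_i - 2)$, so the previous paragraph's intersection calculation applied with this extra twist shows the only nonvanishing contribution is $\Ext^1(\sigma_1, \sigma_0) = 1$, matching the $|i - j| = 1$ clause. The main source of subtlety — and hence the bookkeeping obstacle — is the asymmetric roles of the endpoints: $\sigma_0$ is a line bundle rather than a torsion sheaf, and $\sigma_n$ is untwisted (unlike $\sigma_i = \mathcal{O}_{C_i}(-1)$ for $i < n$); each intersection calculation must be verified separately at the boundaries $j \in \{1, n\}$ and in the degenerate case $n = 1$, though in each instance it recovers the stated uniform formula.
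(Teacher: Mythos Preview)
Your approach is correct and genuinely different from the paper's. You work entirely on the smooth surface $X$ and compute all $\Ext$ groups directly: the local-to-global spectral sequence with Koszul resolutions for $i,j\geq 1$, line-bundle cohomology for $i=0$, and Serre duality with adjunction for $j=0$. The vanishing for $k>2$ is automatic since $X$ is a smooth surface.

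The paper takes a more indirect route. For $i,j\geq 1$ it does not compute on $X$ at all: it invokes the equivalence $F\colon D_C(X_{r,a})\to D_C(V_{r,a})$ of Proposition~\ref{Equivalence of objects supported on curve Proposition} to identify $\Ext^k_{\Lambda}(\sigma_i,\sigma_j)$ with $\Ext^k_{A^{\op}}(t_j,t_i)$, and then cites Wemyss's calculation for the reconstruction algebra (Lemma~\ref{lem:recon exts}). For the cases involving $0$, rather than Serre duality, it computes $\Ext^k_{\Lambda}(\mathcal{L}_i,\sigma_j)$ first (using adjunction to $\mathbb{P}^1$, as you do) and then feeds the short exact sequences $0\to\mathcal{L}_{i-1}\to\mathcal{L}_i\to\sigma_i\to 0$ into long exact sequences to extract $\Ext^k(\sigma_i,\sigma_0)$.

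Your argument is more self-contained: it needs neither the functor $F$ nor the reconstruction-algebra literature, only standard surface sheaf theory. The paper's route buys consistency with its overall narrative, since the comparison with $A_{r,a}$ is the point of the paper; your route buys independence from that machinery. One small point: your $j=0$ case uses $H^1(\mathcal{O}_X)=0$, which is implicit in the setup (the $\mathcal{L}_i$ are exceptional, in particular $\mathcal{L}_n=\mathcal{O}_X$) but is stated in the paper's geometric hypotheses only as $H^i(\mathcal{O}_X)=0$ for $i>1$; you may want to note that it follows from $\mathcal{O}_X$ being exceptional in $D_{r,a}$ rather than from the literal hypothesis.
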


\begin{proof}

We note that the equivalence functor $F$ from Proposition $\ref{Equivalence of objects supported on curve Proposition}$ allows us to use using Lemma \ref{lem:recon exts} to calculate $\Ext^k(\sigma_i,\sigma_j)$ for all $i,j$ not equal to $0$:
\[
\dim \Ext^k_{\Lambda_{r,a}}(\sigma_i,\sigma_j)=\dim \Ext^k_{A_{r,a}}(s_i,s_j)= \dim \Ext^k_{A_{r,a}^{\op}}(t_j,t_i)
\]
for $1 \le i,j \le n$.

Hence we are only left to calculate $\Ext^k_{\Lambda_{r,a}}(\sigma_i,\sigma_j)$ when either $i$ or $j$ is equal to 0. Recall the simple modules $\sigma_i$, the standard modules $\mathcal{L}_i$ and the projective modules $\Lambda_i$ for the algebra $\Lambda_{r,a}$ defined in Section \ref{Subcategory}. We first calculate $\dim \Ext^k_{\Lambda}(\mathcal{L}_i,\sigma_j)$ and then use the short exact sequences relating the standard modules to the simple modules to calculate $\dim \Ext_{\Lambda}^k(\sigma_i,\sigma_j)$. Firstly, as $\sigma_{0}=\mathcal{L}_{0}$
\begin{align*}
\dim \Ext^k_{\Lambda}(\mathcal{L}_i, \sigma_{0})= \left\{ \begin{array}{cc} 
1 & \text{\scriptsize{ if $i=0$ and $k=0$}} \\
0 & \text{\scriptsize{otherwise.}}
\end{array}
\right.
\end{align*}
To calculate $\dim \Ext^k_{\Lambda}(\mathcal{L}_i,\sigma_j)$ when $1 \le j \le n$ we can reduce to a cohomology calculation on each rational curve. Suppose $\iota_j:C_j \rightarrow X$ is the closed immersion including the curve $C_j$ into $X$ for $1 \le j \le n$. Then $\mathcal{O}_{C_j} := \iota_{j*}\mathcal{O}_{\mathbb{P}^1}$ and for any divisor $D$ on $X$  it is the case that $\iota_j^* \mathcal{O}_X(-D) = \mathcal{O}_{C_j}(-C_j \cdot D)$. Then by adjunction we can deduce
\begin{equation*}
\Ext^k_{\Lambda}(\mathcal{O}(-D),\sigma_j)= \left\{ \begin{array}{c} \Ext^k_{\mathbb{P}^1}(\mathcal{O}_{\mathbb{P}^1},\mathcal{O}_{\mathbb{P}^1}(-1 +D\cdot C_j) \text{ if } 1<j<n \\ 
\Ext^k_{\mathbb{P}^1}(\mathcal{O}_{\mathbb{P}^1},\mathcal{O}_{\mathbb{P}^1}(D\cdot C_j) \text{ if } j=n.
\end{array} \right.
\end{equation*}
This reduces the calculation to the combinatorics of the intersection theory and the cohomology of line bundles on $\mathbb{P}^1$. Recall that 
\begin{equation*}
\dim \Ext^k_{\mathbb{P}^1}(\mathcal{O}_{\mathbb{P}^1},\mathcal{O}_{\mathbb{P}^1}(j)) = \left\{ \begin{array}{cc} j+1 & \text{ if $j \ge 0$ and $k=0$} \\ j+1 & \text{ if $j<-1$ and $k=1$} \\ 0 & \text{otherwise.} \end{array} \right.
\end{equation*}
In particular, as $\mathcal{L}_i = \mathcal{O}(-C_{i+1} \dots -C_{n})$ it follows that $\Ext_{\Lambda}^k(\mathcal{L}_i,\sigma_j)=0$ for $k\ge 2$ and
\begin{align*}
\dim \Hom_{\Lambda}(\mathcal{L}_i,\sigma_j)= \left\{ \begin{array}{cc} 
1 & \text{\scriptsize{ if $i=j$}} \\
0 & \text{\scriptsize{ otherwise} }
\end{array}
\right.
&
\qquad
\dim \Ext_{\Lambda}^1(\mathcal{L}_i,\sigma_j)= \left\{ \begin{array}{cc} 
0 & \text{\scriptsize{ if $j \le i$}} \\
\alpha_j-1 & \text{\scriptsize{if $j=i+1$}} \\ 
\alpha_j-2 & \text{\scriptsize{if $j \ge i+2 $}. }\\ 
\end{array}
\right.
\end{align*}

As $\mathcal{L}_{0}=\sigma_{0}$ we have now calculated $\dim \Ext^k_{\Lambda}(\sigma_0, \sigma_j)$, and are only left to calculate $\dim \Ext^k_{\Lambda}(\sigma_i, \sigma_0)$. To do this we consider the short exact sequences 
\begin{equation*}
0\rightarrow \mathcal{L}_{i-1} \rightarrow \mathcal{L}_i \rightarrow \sigma_i \rightarrow 0
\end{equation*}
for $1 \le i \le n$. Applying $\Hom_{\Lambda}(-,\sigma_0)$ produces the long exact sequences
\[
 \dots \rightarrow \Ext_{\Lambda}^i(\sigma_i,\sigma_0) \rightarrow \Ext_{\Lambda}^i(\mathcal{L}_i,\sigma_0) \rightarrow \Ext_{\Lambda}^i(\mathcal{L}_{i-1},\sigma_0) \rightarrow \Ext_{\Lambda}^{i+1}(\sigma_i,\sigma_0) \rightarrow \dots
\]
As $\Ext_{\Lambda}^k(\mathcal{L}_i,\sigma_0)=0$ for $k \ge 2$, we can deduce that $\Ext_{\Lambda}^k(\sigma_i,\sigma_0)$ vanishes for $k>2$. Looking at the starting terms in the short exact sequence and using the previous calculations for $\Ext_{\Lambda}^k(\mathcal{L}_i,\sigma_0)$ yields the following exact sequences:

\[
0 \rightarrow \mathbb{C} \rightarrow \Ext^1_{\Lambda}(\sigma_1,\sigma_{0}) \rightarrow 0 \rightarrow 0 \rightarrow \Ext_{\Lambda}^2(\sigma_1,\sigma_{0}) \rightarrow 0. 
\]
for $i=1$ and
\[
 0 \rightarrow  \Ext^1_{\Lambda}(\sigma_i,\sigma_0) \rightarrow 0 \rightarrow 0 \rightarrow \Ext^2_{\Lambda}(\sigma_i,\sigma_0) \rightarrow 0 \\
\]
for $i>1$. This calculates $\dim \Ext^k_{\Lambda}(\sigma_i,\sigma_0)$ for all $i$, and hence we have found the values of $\Ext^k_{\Lambda}(\sigma_i,\sigma_j)$ for all $i,j,k$.
\end{proof}

In order to help explicitly determine the relations we calculate the ranks of the sheaves $\Lambda_i$.
\begin{Lemma} \label{Rank Calculation Lemma}
Let $\lambda_i$ denote the rank of $\Lambda_i$ and $\lambda_{n+1}=0$. Then $\lambda_n=1$ and 
\[
\lambda_{i-1}=\sum_{k = i}^n (\alpha_k-2)\lambda_k+\lambda_{i}+1 = \alpha_i \lambda_i - \lambda_{i+1} \text{ for $1 \le i \le n$}.
\]
Moreover,
\[
\lambda_{i-1}/\lambda_i = [\alpha_{i}, \dots, \alpha_{n}] \text{ for $1 \le i \le n$},
\] the Hirzebruch-Jung continued fraction corresponding to the negative of the self intersection numbers of the first $i$ curves. 
\end{Lemma}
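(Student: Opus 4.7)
The plan is to extract all three assertions directly from the iterated universal extension sequences in Proposition \ref{Proposition HillePloog Abelian Category}(3), using only additivity of rank on short exact sequences together with the fact that each standard module $\mathcal{L}_j$ is a line bundle of rank $1$ (Proposition \ref{Line Bundle Proposition} combined with the definition of $\mathcal{L}_j$ in Section \ref{Subcategory}).

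First I would read off $\lambda_n = 1$ from the top sequence $0 \to 0 \to \Lambda_n \to \mathcal{L}_n \to 0$, and then take ranks in the general sequence
\[
0 \rightarrow \bigoplus_{j=i}^n \Lambda_j^{\oplus \alpha_j-2} \oplus \Lambda_{i} \rightarrow \Lambda_{i-1} \rightarrow \mathcal{L}_{i-1} \rightarrow 0
\]
to obtain the first recursion
\[
\lambda_{i-1} \;=\; \sum_{k=i}^n (\alpha_k-2)\lambda_k + \lambda_i + 1
\]
for each $1 \le i \le n$ (with the convention $\lambda_{n+1}=0$ making the $i=n$ case read $\lambda_{n-1}=(\alpha_n-2)\cdot 1+1+1=\alpha_n$).

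Next I would derive the three-term recursion by subtracting the same identity for $i+1$ (valid for $i \le n-1$) from the identity for $i$. Writing $\lambda_{i-1}-\lambda_i$ collapses the telescoping sums to $(\alpha_i-2)\lambda_i + \lambda_i - \lambda_{i+1}$, which rearranges to
\[
\lambda_{i-1} \;=\; \alpha_i \lambda_i - \lambda_{i+1}.
\]
The boundary case $i=n$ is handled separately: the first formula gives $\lambda_{n-1}=\alpha_n$, agreeing with $\alpha_n \lambda_n - \lambda_{n+1} = \alpha_n\cdot 1 - 0$ under the convention $\lambda_{n+1}=0$.

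Finally I would establish the continued fraction identity $\lambda_{i-1}/\lambda_i = [\alpha_i,\dots,\alpha_n]$ by downward induction on $i$, using $\lambda_{i-1} = \alpha_i\lambda_i-\lambda_{i+1}$. The base case $i=n$ reads $\lambda_{n-1}/\lambda_n = \alpha_n = [\alpha_n]$. For the inductive step, dividing the three-term recursion by $\lambda_i$ yields
\[
\frac{\lambda_{i-1}}{\lambda_i} \;=\; \alpha_i - \frac{\lambda_{i+1}}{\lambda_i} \;=\; \alpha_i - \frac{1}{\lambda_i/\lambda_{i+1}} \;=\; \alpha_i - \frac{1}{[\alpha_{i+1},\dots,\alpha_n]} \;=\; [\alpha_i,\dots,\alpha_n],
\]
by the inductive hypothesis and Definition \ref{def:HJFraction}. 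There is no real obstacle here; the only point that needs care is to keep the boundary conventions $\lambda_{n+1}=0$ and $\lambda_n=1$ consistent with both recursions, and to check that the inductive hypothesis applies (which it does, since $\lambda_i/\lambda_{i+1} > 1$ for each $i$, so the reciprocal is well-defined and matches the Hirzebruch--Jung expansion).
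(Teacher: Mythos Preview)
Your proposal is correct and follows essentially the same approach as the paper: read off $\lambda_n=1$ and the first recursion from additivity of rank in the universal extension sequences of Proposition~\ref{Proposition HillePloog Abelian Category}(3), subtract consecutive instances to get the three-term recursion, and then run the downward induction for the continued fraction identity. One small quibble: your citation of Proposition~\ref{Line Bundle Proposition} for the rank of $\mathcal{L}_j$ is misplaced (that proposition concerns $V_{r,a}$, not $X_{r,a}$), but the fact that $\mathcal{L}_j=\mathcal{O}_X(-C_{j+1}-\cdots-C_n)$ has rank $1$ is immediate from its definition as an invertible sheaf.
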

\begin{proof}
Firstly, as $\Lambda_n=\mathcal{L}_n=\mathcal{O}_X$ is a line bundle  $\lambda_n = \rk \, \mathcal{L}_n=1$. Then the relation 
\[
\lambda_{i-1}=\sum_{k = i}^n (\alpha_k-2)\lambda_k^{}+\lambda_{i}+1
\]
follows from the short exact sequence
\[
 0  \rightarrow  \bigoplus_{k = i}^n \Lambda_k^{\oplus \alpha_j-2} \oplus \Lambda_i   \rightarrow \Lambda_{i-1}  \rightarrow   \mathcal{L}_{i-1}  \rightarrow  0 
\]
as rank is additive and $\mathcal{L}_{i-1}$ is a line bundle. By this result $\lambda_{i-1}-\lambda_{i}=\sum_{k = i}^n (\alpha_k-2)\lambda_k+1$, hence
\begin{align*}
\lambda_{i-1}&=\sum_{k = i}^n (\alpha_k-2)\lambda_k+\lambda_{i}+1 \\
&= (\alpha_i-1)\lambda_i + \sum_{k =i+1}^n (\alpha_k-2)\lambda_k+1 = \alpha_i\lambda_i -\lambda_{i+1}.
\end{align*}

It follows that $ \lambda_{i-1}/\lambda_i = [\alpha_{i}, \dots, \alpha_{n}]$ by induction on $i$. Firstly, as $\Lambda_n=\mathcal{L}_n$ it follows that $\lambda_n=1$ and $\lambda_{n-1}= (\alpha_n-2)\lambda_n+\lambda_n+1= \alpha_n$. This establishes the base case $[\alpha_n]=\lambda_{n-1}/\lambda_n$. Now we assume as the induction hypothesis that $\lambda_{j-1}/\lambda_j = [\alpha_{j}, \dots, \alpha_{n}]$ for $j<i$. By the definition of the Hirzebruch-Jung continued fraction
\begin{equation*}
[\alpha_i, \dots, \alpha_n] = \alpha_i- \frac{1}{[\alpha_{i+1}, \dots, \alpha_{n}]}
\end{equation*}
and by the induction hypothesis this equals $\alpha_i - \lambda_{i+1}/\lambda_{i}$. Using the earlier relation we deduce
\begin{align*}
[\alpha_i, \dots, \alpha_1] &= \alpha_i - \lambda_{i+1}/\lambda_{i} \\
&= \frac{\alpha_i \lambda_{i}-\lambda_{i+1}}{\lambda_{i}}= \frac{\lambda_{i-1}}{\lambda_i}.
\end{align*}
We also note that as  $\lambda_{i-1} = \alpha_i \lambda_i - \lambda_{i+1}$ and  $\lambda_n=1$ any consecutive pair of $\lambda_i$ are necessarily coprime. 
\end{proof}
This allows us to calculate the dimension vectors of the simple, standard, and projective modules. To ease notation we set $\hom_{\Lambda}(N,M)= \dim \Hom_{\Lambda}(N,M)$.

\begin{Lemma} \label{Module Dimensions Lemma}
We can calculate the dimensions of the simple, standard, and projective modules.
\begin{enumerate}
\item
$
\hom_{\Lambda}(\Lambda_i, \sigma_j)= \left\{ \begin{array}{cc}
1 & \text{ if $j = i$} \\
0 & \text{ otherwise.}
\end{array} \right.
$

\item
$
\hom_{\Lambda}(\Lambda_i, \mathcal{L}_j)= \left\{ \begin{array}{cc}
1 & \text{ if $j \ge i$} \\
0 & \text{ otherwise.}
\end{array} \right.
$

\item
$
\hom_{\Lambda}(\Lambda_i, \Lambda_j)= \left\{ \begin{array}{ll}
\lambda_j & \text{if $j \ge i$} \\
\sum_{k=j+1}^{n} (\alpha_k-2)\hom_{\Lambda}(\Lambda_i, \Lambda_k) + \hom_{\Lambda}(\Lambda_i, \Lambda_{j+1})  & \text{if $j<i$.}
\end{array} \right.
$
\end{enumerate}

\end{Lemma}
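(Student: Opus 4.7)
The plan is to prove the three parts in order, with each subsequent part relying on the preceding one, and at every stage the key tool is to apply the exact functor $\Hom_{\Lambda}(\Lambda_i,-)$ (exact because $\Lambda_i$ is projective by Proposition \ref{Proposition HillePloog Abelian Category}) to the short exact sequences already recorded in that proposition.

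For part (1), I would simply invoke that $\Lambda_{r,a}$ is a basic finite dimensional algebra with indecomposable projectives $\Lambda_i=\Lambda_{r,a} e_i$, where $e_i$ is the primitive idempotent corresponding to the simple $\sigma_i$. Hence $\Hom_{\Lambda}(\Lambda_i,\sigma_j)=e_i\sigma_j$ is one-dimensional when $i=j$ and zero otherwise.

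For part (2), I would apply $\Hom_{\Lambda}(\Lambda_i,-)$ to the defining short exact sequences of the standard modules, obtaining
\[
0\to\hom_{\Lambda}(\Lambda_i,\mathcal{L}_{j-1})\to\hom_{\Lambda}(\Lambda_i,\mathcal{L}_j)\to\hom_{\Lambda}(\Lambda_i,\sigma_j)\to 0
\]
for each $1\le j\le n$, together with the identification $\mathcal{L}_0=\sigma_0$ as the base case. Substituting the values of $\hom_{\Lambda}(\Lambda_i,\sigma_j)$ from part (1) and inducting upward on $j$ then distinguishes the cases $j<i$ (constantly zero after the base) and $j\ge i>0$ (jumps from $0$ to $1$ at $j=i$ and remains constant thereafter), with the remaining case $i=0$ handled by $\hom_{\Lambda}(\Lambda_0,\sigma_0)=1$ directly.

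For part (3), I would apply $\Hom_{\Lambda}(\Lambda_i,-)$ to the projective presentations of the standard modules in Proposition \ref{Proposition HillePloog Abelian Category}(3), producing the identity
\[
\hom_{\Lambda}(\Lambda_i,\Lambda_{j})=\sum_{k=j+1}^n(\alpha_k-2)\hom_{\Lambda}(\Lambda_i,\Lambda_k)+\hom_{\Lambda}(\Lambda_i,\Lambda_{j+1})+\hom_{\Lambda}(\Lambda_i,\mathcal{L}_{j}).
\]
When $j<i$, part (2) forces $\hom_{\Lambda}(\Lambda_i,\mathcal{L}_j)=0$, giving the recursion in the statement directly. When $j\ge i$, part (2) forces $\hom_{\Lambda}(\Lambda_i,\mathcal{L}_j)=1$, and I would induct downward on $j$ starting from the base case $j=n$: here $\Lambda_n=\mathcal{L}_n$ by Proposition \ref{Proposition HillePloog Abelian Category}(3), so $\hom_{\Lambda}(\Lambda_i,\Lambda_n)=1=\lambda_n$. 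The inductive step then reduces exactly to the recursion $\lambda_{j-1}=\sum_{k=j}^{n}(\alpha_k-2)\lambda_k+\lambda_j+1$ established in Lemma \ref{Rank Calculation Lemma}, closing the induction.

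There is no substantive obstacle here: the computation is a bookkeeping exercise once the exact functor has been applied to the correct short exact sequences. The only point requiring mild care is coordinating the two different indices (the target index $j$ of the $\Lambda_j$ being computed versus the index labelling which short exact sequence from Proposition \ref{Proposition HillePloog Abelian Category}(3) one applies $\Hom_{\Lambda}(\Lambda_i,-)$ to), together with matching the resulting recursion with the rank recursion of Lemma \ref{Rank Calculation Lemma}.
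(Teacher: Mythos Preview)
Your proposal is correct and follows essentially the same approach as the paper: invoke the projective--simple pairing for (1), apply $\Hom_{\Lambda}(\Lambda_i,-)$ to the standard-module filtrations for (2), and apply it to the projective presentations of Proposition \ref{Proposition HillePloog Abelian Category}(3) for (3), matching the resulting recursion with Lemma \ref{Rank Calculation Lemma}. Your handling of the case split in (3) is in fact cleaner than the paper's, which contains a small typo in the value of $\hom_{\Lambda}(\Lambda_i,\mathcal{L}_{j-1})$ for the $j<i$ case.
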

\begin{proof}

Firstly, (1) is clear as the $\Lambda_i$ are the indecomposable projective modules and the $\sigma_j$ are the simple modules in $\Lambda$-$\mod$.

To prove (2) we first note that $\mathcal{L}_{0}=\sigma_{0}$, and recall that $ \hom_{\Lambda}(\Lambda_i, \sigma_j)=\delta_{i,j}$. As $\Lambda_i$ is projective applying $\Hom_{\Lambda}(\Lambda_i,-)$ to the short exact sequence 
\[
0 \rightarrow \mathcal{L}_{j-1} \rightarrow \mathcal{L}_{j} \rightarrow \sigma_{j} \rightarrow 0
\]
produces the short exact sequence 
\[
0 \rightarrow \Hom_{\Lambda}(\Lambda_i,\mathcal{L}_{j-1}) \rightarrow \Hom_{\Lambda}(\Lambda_i,\mathcal{L}_{j}) \rightarrow \Hom_{\Lambda}(\Lambda_i,\sigma_{j}) \rightarrow 0
\]
and hence we deduce that 
\[
\hom_{\Lambda}(\Lambda_i, \mathcal{L}_{j-1})+\delta_{i,j}= \hom_{\Lambda}(\Lambda_i, \mathcal{L}_{j})
\]
where $\delta_{i,j}$ is the Kronecker delta. Then the result follows from this recursive relation in $j$ using the base case $j=1$ as a starting point where $\hom_{\Lambda}(\Lambda_{i}, \mathcal{L}_{0})=\hom_{\Lambda}(\Lambda_{i}, \sigma_{0})=\delta_{i, 0}$. In particular $\hom_{\Lambda}(\Lambda_i, \mathcal{L}_j)=\sum_{k \le j} \delta_{i,k}$.

To prove (3) we note that as $\Lambda_n = \mathcal{L}_n$ by part (1) the result is true for $j=n$ as $\lambda_n=1$. Then applying $\Hom_{\Lambda}(\Lambda_i, -)$ to the short exact sequence 
\[
 0  \rightarrow  \bigoplus_{k = j}^n \Lambda_k^{ \oplus \alpha_k-2} \oplus \Lambda_j  \rightarrow \Lambda_{j-1}  \rightarrow  \mathcal{L}_{j-1} \rightarrow  0
\]
produces the short exact sequence
\[
 0  \rightarrow  \bigoplus_{k = j}^n \Hom_{\Lambda}(\Lambda_i,\Lambda_k)^{\oplus \alpha_k-2} \oplus \Hom_{\Lambda}(\Lambda_i, \Lambda_j)  \rightarrow \Hom_{\Lambda}(\Lambda_i, \Lambda_{j-1})  \rightarrow  \Hom_{\Lambda}(\Lambda_i, \mathcal{L}_{j-1}) \rightarrow  0
\]
and by induction on $j$ we deduce that 
\[
\hom_{\Lambda}(\Lambda_i, \Lambda_{j-1}) = \sum_{k = j}^n ( \alpha_k-2)\hom_{\Lambda}(\Lambda_i,\Lambda_k) +  \hom_{\Lambda}(\Lambda_i,\Lambda_j)+ \hom_{\Lambda}(\Lambda_i, \mathcal{L}_{j-1}).
\]
If $j<i$ then by (2)  $\hom_{\Lambda}(\Lambda_i, \mathcal{L}_{j-1})=1$ and the result follows. If $j-1 \ge i$ it follows from (2) that $ \hom_{\Lambda}(\Lambda_i, \mathcal{L}_{j-1})=1$ and from this it follows  by induction that $\hom_{\Lambda}(\Lambda_i, \Lambda_{j-1})= \sum_{k = j}^n (\alpha_k-2)\lambda_k + \lambda_j +1= \lambda_{j-1}$. 
\end{proof}

We now define a quiver and relations, and then we prove that the path algebra of this quiver with relations is a presentation of $\Lambda_{r,a}$.

\begin{Definition}
We define a quiver $Q_{[\alpha_1, \dots, \alpha_n]}$ associated to a sequence of positive integers $\alpha_i \ge 2$, which in practice will correspond to the negative of the self intersection number of curves in a surface. We again recall the additional notation that $u_i=  \sum_{k<i} (\alpha_{k}-2)$ and $v_i= \sum_{k \le i} (\alpha_k-2)+1$. Then $Q_{[\alpha_1, \dots, \alpha_n]}$ is defined to be the quiver with $n+1$ vertices $Q_0= \{ 0,1,2, \dots, n\}$ and arrows $Q_1=\{ a_1, \dots, a_n, c_1, \dots, c_n, k_2, \dots, k_{v_{n}} \}$ with heads and tails defined as follows;
\begin{align*}
h(c_i)&=i & t(c_i)&=i-1\\
h(a_i)&=i-1 & t(a_i)&=i \\ 
h(k_i) &=0 & t(k_j)& = i  \text{ for $u_i +1<  j \le v_i$ }.
\end{align*}
We will also notate the arrow $a_1$ by $k_{1}$, and $C_i^j:=c_{i+1} \dots c_j$ for $i<j$.
\end{Definition}

\begin{Remark}
This is the same quiver as the reconstruction algebra with two arrows removed $c_{0}$ and $a_{0}$ removed. 
\end{Remark}

\begin{Definition}
For $1 \le i \le n$ consider the following elements of $\mathbb{C}Q_{[\alpha_1, \dots, \alpha_n]}$:
\begin{align*}
&\text{if $\alpha_i>2$:}
\qquad \left\{ \begin{array}{l}
a_{i} c_{ i} \\
k_j C_{0}^{i}   \qquad \text{for } u_i +1 < j < v_i \\
 k_{v_i}C_{0}^{ i} - c_{i+1}a_{i+1} \\
\end{array} \right. \\
&\text{if $\alpha_i=2$:} \qquad \quad  a_{i} c_{i}  -c_{ i+1} a_{i+1}.
\end{align*}
Define $I_{[\alpha_1, \dots, \alpha_n]}$ to be the two-sided ideal of $\mathbb{C}Q_{[\alpha_1, \dots, \alpha_n]}$ generated by these elements.
\end{Definition}
\begin{Remark}
Apart from at vertex 0 these are the same relations as those of the quotient of the reconstruction algebra by the arrows $c_0,a_0$.
\end{Remark}

We are now able to show that the algebra $\Lambda_{r,a}$ can be presented with these relations.

\begin{Proposition} \label{Lambda Presentation Proposition}
The algebra $\Lambda_{[\alpha_1, \dots, \alpha_n]}$ can be presented as $\frac{\mathbb{C} Q_{[\alpha_1, \dots, \alpha_n]}}{I_{[\alpha_1, \dots, \alpha_n]}}$.
\end{Proposition}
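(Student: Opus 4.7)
The plan is to establish the isomorphism in three stages: identify the Gabriel quiver, construct an explicit surjective algebra homomorphism, and match the number of relations. Since $\Lambda_{[\alpha_1, \dots, \alpha_n]}$ is basic, finite-dimensional, and defined over the algebraically closed field $\mathbb{C}$, it admits a presentation $\Lambda_{[\alpha_1, \dots, \alpha_n]} \cong \mathbb{C} Q_\Lambda / J$ for some unique admissible ideal $J$, with the arrows of $Q_\Lambda$ between two vertices counted by $\dim \Ext^1_\Lambda$ of the corresponding simples and the minimal generators of $J$ counted by $\dim \Ext^2_\Lambda$. Lemma \ref{Ext Quiver Calculation Lemma} together with an arrow-by-arrow comparison with the definition of $Q_{[\alpha_1, \dots, \alpha_n]}$ verifies that $Q_\Lambda \cong Q_{[\alpha_1, \dots, \alpha_n]}$. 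Moreover, the total number of minimal generators of $J$ is $\sum_{i=1}^n (\alpha_i - 1)$, which matches the number of generators of the explicit ideal $I_{[\alpha_1, \dots, \alpha_n]}$: a direct count shows there are precisely $\alpha_i - 1$ generators landing as loops at each vertex $i \neq 0$.

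To show $I_{[\alpha_1, \dots, \alpha_n]} \subseteq J$, and hence equality by the matching counts, I would construct an explicit surjection $\phi\colon \mathbb{C} Q_{[\alpha_1, \dots, \alpha_n]} \twoheadrightarrow \Lambda_{[\alpha_1, \dots, \alpha_n]}$ by specifying representatives for each arrow among morphisms between the projectives $\Lambda_k$. The arrows $c_i\colon \Lambda_i \to \Lambda_{i-1}$ and the $k_j$ come directly from the defining short exact sequence
\begin{equation*}
0 \to \bigoplus_{k=i}^n \Lambda_k^{\oplus \alpha_k - 2} \oplus \Lambda_i \to \Lambda_{i-1} \to \mathcal{L}_{i-1} \to 0
\end{equation*}
of Proposition \ref{Proposition HillePloog Abelian Category}(3): the copy of $\Lambda_i$ provides $c_i$, while the remaining $\Lambda_k$ summands contribute iteratively (by further composition through $\Lambda_{i-2}, \dots, \Lambda_0$) to the $k_j$. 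The arrows $a_i\colon \Lambda_{i-1} \to \Lambda_i$ are chosen as canonical generators of the one-dimensional arrow-space in the Gabriel quiver; concretely, $a_i$ can be taken to lift the composite $\Lambda_{i-1} \twoheadrightarrow \mathcal{L}_{i-1} \hookrightarrow \mathcal{L}_i$ through the projection $\Lambda_i \twoheadrightarrow \mathcal{L}_i$ using projectivity of $\Lambda_{i-1}$.

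The principal obstacle is verifying that each generator of $I_{[\alpha_1, \dots, \alpha_n]}$ lies in $\ker \phi$. The loop relations $a_i c_i = 0$ (for $\alpha_i > 2$) and $a_i c_i = c_{i+1} a_{i+1}$ (for $\alpha_i = 2$) are handled by chasing the factorisation $\Lambda_i \xrightarrow{c_i} \Lambda_{i-1} \xrightarrow{a_i} \Lambda_i$ through the line-bundle quotients $\mathcal{L}_j$: when $\alpha_i > 2$ the image of $c_i$ lands in the $\Lambda_i$-summand of $\Lambda_{i-1}$ which is annihilated by the chosen $a_i$, while when $\alpha_i = 2$ the defining extension collapses and the two candidate loops must agree. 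The relations involving $k_j C_0^i$ reduce to identifying which summand of the defining sequence each $k_j$ picks out: those arising from $\Lambda_k$ with $k > i+1$ vanish after further composition with $C_0^i$, while $k_{v_i}$ matches $c_{i+1} a_{i+1}$. An induction on $n$ via Lemma \ref{LessCurvesLemma}, using the isomorphism $e \Lambda_{r,a} e \cong \Lambda_{[\alpha_j, \dots, \alpha_n]}$ for the idempotent $e = \sum_{i=j-1}^n e_i$, transfers the relations at later vertices from shorter sequences, and a final dimension count via Lemma \ref{Module Dimensions Lemma}(3) confirms that $\phi$ descends to the required isomorphism.
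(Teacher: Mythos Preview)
Your plan shares the right skeleton with the paper---identify the quiver via Lemma \ref{Ext Quiver Calculation Lemma} and induct via Lemma \ref{LessCurvesLemma}---but the step ``$I \subseteq J$, hence equality by the matching counts'' is a genuine gap. Two admissible ideals with the same number of generators and $I\subseteq J$ need not coincide (e.g.\ $(x^3)\subsetneq(x^2)$ in the path algebra of one loop). You would need your generators to be linearly independent in $J/(\mathrm{rad}\cdot J+J\cdot\mathrm{rad})$, which is not automatic from $I\subseteq J$. Your fallback dimension count via Lemma \ref{Module Dimensions Lemma}(3) computes only $\dim\Lambda$; you give no independent calculation of $\dim\mathbb{C}Q/I$, so the surjection-plus-dimension argument is incomplete. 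Separately, the verification that your explicit morphisms satisfy the relations is heuristic: nothing in your construction of $a_i$ as a lift of $\Lambda_{i-1}\to\mathcal L_i$ through $\Lambda_i\twoheadrightarrow\mathcal L_i$ forces it to annihilate the $\Lambda_i$-summand in which $c_i$ lands, and the $k_jC_0^i$ relations are asserted rather than checked.

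The paper avoids both problems by never constructing an explicit $\phi$. It takes the abstract presentation $\Lambda\cong\mathbb{C}Q/J$ guaranteed by the Ext-computation, uses the induction hypothesis applied to $f\Lambda f\cong\Lambda_{[\alpha_2,\dots,\alpha_n]}$ (with $f=e_1+\cdots+e_n$) to put the relations at vertices $2,\dots,n$ into the claimed form, and uses $\Lambda/\Lambda e_n\Lambda\cong\Lambda_{[\alpha_1,\dots,\alpha_{n-1}]}$ to see that the vertex-$1$ relations already hold modulo correction terms $p_j$ lying in $\Lambda e_n\Lambda$. The substantive work is then a sequence of explicit base changes $k_j\mapsto k_j-q$ and $a_i\mapsto(1+\text{loop})a_i$ that absorb these $p_j$, with invertibility guaranteed by nilpotence of loops in the finite-dimensional algebra. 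Since one is massaging an already-known presentation rather than proving a candidate surjection is injective, no dimension comparison with $\mathbb{C}Q/I$ is ever needed.
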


\begin{proof}
By Lemma \ref{Ext Quiver Calculation Lemma} we know that $\Lambda_{[\alpha_1,\dots, \alpha_n]}$ can be presented as the quiver $Q_{[\alpha_1, \dots, \alpha_n]}$ with the relations specified by $\alpha_i-1$ generating relations at vertex $i$. We now deduce that these relations can be presented by the claimed ideal, and we will prove this result by induction on $n$ where $\Lambda_{[\alpha_1, \dots, \alpha_n]}$ has $n+1$ vertices.

When $n=1$ then the space of all closed paths at vertex 1 has dimension 1 as $ \dim \Hom_{\Lambda_{[\alpha_1]}}(\Lambda_1, \Lambda_1)=\lambda_1=1$, and hence the space of closed paths is spanned by $e_1$. Then $\alpha_1-1$ generating relations are required. There are $\alpha_1-1$ closed paths $a_1c_1, k_1c_1, \dots, k_{\alpha_1-2}c_1$ generating all closed paths at vertex 1 and as $\lambda_1=1$ these must all equal a multiple of $e_1$ in $\Lambda_{[\alpha_1]}$. However,  as this presentation is of a basic algebra the relations are contained in paths of length $\ge 2$ so it must be the case that all these paths are equal to 0 in $\Lambda_{[\alpha_1]}$. This verifies that the relations can be presented as claimed when $n=1$.

We then consider $\Lambda:=\Lambda_{[\alpha_1, \dots, \alpha_n]}$ for $n>1$. By the induction hypothesis we know that $\Lambda_{[\alpha_2, \dots, \alpha_n]}$ and $\Lambda_{[\alpha_1, \dots, \alpha_{n-1}]}$ can be presented as claimed. There are related to $\Lambda$ by Lemma \ref{LessCurvesLemma}: if we consider the idempotents $f:=e_1+\dots +e_n$ and $e_n$ then $f\Lambda f \cong \Lambda_{[\alpha_2, \dots, \alpha_n]}$ and $\frac{\Lambda}{\Lambda e_n \Lambda} \cong \Lambda_{[\alpha_1, \dots, \alpha_{n-1}]}$. In particular, the presentation of $f \Lambda f$ shows us that we can assume the required relations are satisfied at vertices 2 to $n$ of $\Lambda$. Hence we are left only to show that the relations
\begin{align*}
\quad & k_j c_1=0 \qquad \text{for } 1 \le  j \le \alpha_1-2 \\
\quad & k_{\alpha_1-1}c_1 - c_{2}a_{2}=0.
\end{align*}
 hold at vertex 1, where we write $a_1=k_1$.
From the presentation of $\frac{\Lambda}{\Lambda e_n \Lambda}$ we can deduce that the required relations for $\Lambda$ at vertex one are satisfied up to elements in the kernel of the quotient $\Lambda \rightarrow \frac{\Lambda}{\Lambda e_n \Lambda}$. Hence we can deduce the following relations hold
\begin{align*}
\quad & k_j c_1=p_j \qquad \text{for } 1 \le  j \le \alpha_1-2 \\
\quad & k_{\alpha_1-1}c_1 - c_{2}a_{2}=p_{\alpha_1-1}.
\end{align*}
for elements $p_i \in f \Lambda f$ and in the kernel of the quotient $\Lambda \rightarrow \frac{\Lambda}{\Lambda e_n \Lambda}$. We now aim to show that after a change of basis each $p_i$ can be assumed to be zero. 

We first note that any map of the form $a \mapsto a +pa=(1+p)a$  is invertible for any $a \in Q_1$ and $p$ a path in the quiver. This is as the algebra is nilpotent; for any element of the path algebra $p$ there is some $n$ such that $p^i=0$ for $i>n$. This can be seen as the algebra is finite dimensional, and as the relations are homogeneous (where $c_i$ are degree 0 and $k_i, a_i$ of degree 1) it has a homogeneous basis. Hence there exists a maximal degree at which there exist nonzero elements. Then any loop in the path algebra has positive degree so vanishes at some power.

We now consider the possibilities for a path $p_i$. We begin by noting that any term in $p_i$ that ends in $c_1$ can be removed by change of basis. Such a term is of the form $qc_1$ for some $q$ and the change of basis $k'_i:=k_i-q$ rewrites the equation without the $qc_1$ term and without altering any other relations at any other vertex. Then we set $k_i:=k_i'$. Hence we may assume that each $p_i$ contains no terms that can be expressed to end with $c_1$.

By considering the presentation of the algebra $f \Lambda f \cong \Lambda_{[\alpha_2,\dots, \alpha_n]}$  and the fact that $p$ is in the kernel of $\Lambda \rightarrow \frac{\Lambda}{\Lambda e_n \Lambda}$we can assume that each term in $p$ can be written starting with $c_2 \dots c_n$. Further, as we assume each path does not end with $c_1$ we assume it ends with $a_2$. Hence we assume each $p_i$ is of the form $(c_2 \dots c_n)q_i (a_2)$. Now consider the relation 
\[
k_{\alpha_1-1}c_1-c_2a_2-(c_2 \dots c_n) q (a_2)= k_{\alpha_1-1}c_1-c_2(1 + c_3 \dots c_n q ) a_2
\]
for $q=q_{\alpha_i-1} \in f\Lambda f$. Then we can perform the change of basis $a_2'=(1 + c_3 \dots c_n q )a_2$.  This then affects one other relation at vertex 2. If $\alpha_2 \neq 2$ then the relation $a_2c_2=0$ holds and so we verify that $a'_2c_2=(1 + c_3 \dots c_n q )a_2c_2=0$. Hence we set $a_2:=a'_2$ as a change of basis, and now can assume $p_{\alpha_1-1}=0$. If $\alpha_2=2$ then $a_2c_2-c_3a_3=0$ and we must ensure $a'_2c_2-c_3a_3=0$ still holds. However, 
\begin{align*}
a'_2c_2&=(1 + c_3 \dots c_n q )a_2c_2\\
&=(1 + c_3 \dots c_n q )c_3a_3\\
&=c_3(1 + c_4 \dots c_n q c_3)a_3
\end{align*}
so we must make the base change $a_3'=(1 + c_4 \dots c_n q c_3)a_3$ analogous to the base change for $a_2$. Then continuing this process we will make successive base changes $a_i'$ such that the relations are satisfied. This process will always conclude; eventually we reach either a vertex $j$ with $\alpha_j \neq 2$ or the final vertex $n$. Then we relabel $a_i:=a_i'$ where necessary, and having done this we can assume still that all relations hold at vertices $2, \dots, n$ and also that the relation $k_{\alpha_1-1}c_1-c_2a_2=0$ holds at vertex 1.

Finally we consider the relations $ k_j c_1  -p_j  $ at vertex 1. From the relations in $f\Lambda f$ we can write any term in $p$ such that it ends with $(c_2a_2)$. But now thanks to the relation we have already verified we can assume that $c_2a_2=k_{\alpha_1-1}c_1$. Hence, using the fact that we can remove any term ending with $c_1$ by base change, we are done and can conclude that after all the appropriate base changes $p_i=0$ for all $i$ and the required relations are satisfied. As these are the correct number of generating relations then this is a presentation of the algebra.
\end{proof}

\begin{Example}[$r=17, \, a=5$]  The algebra $\Lambda_{[4,2,3]}=\Lambda_{17,5}$ can be presented as the path algebra of the following quiver with relations.
\begin{align*}
\begin{aligned}
\begin{tikzpicture} [bend angle=45, looseness=1.2]
\node (C1) at (-3,-3)  {$1$};
\node (C2) at (-3,0)  {$2$};
\node (C3) at (0,0) {$3$};
\node (C*) at (0,-3) {$0$};
\draw [->,bend right=15] (C*) to node[gap] {$\scriptstyle{c_{1}}$} (C1);
\draw [->,bend right=15] (C1) to node[gap] {$\scriptstyle{a_{1}}$} (C*);
\draw [->,bend right=15] (C1) to node[gap] {$\scriptstyle{c_{2}}$} (C2);
\draw [->,bend right=15] (C2) to node[gap] {$\scriptstyle{a_{2}}$} (C1);
\draw [->,bend right=45] (C1) to node[gap] {$\scriptstyle{k_{3}}$} (C*);
\draw [->,bend right=30] (C1) to node[gap] {$\scriptstyle{k_{2}}$} (C*);
\draw [->,bend right=15] (C2) to node[gap] {$\scriptstyle{c_{3}}$} (C3);
\draw [->,bend right=15] (C3) to node[gap] {$\scriptstyle{a_{3}}$} (C2);
\draw [->,bend right=0] (C3) to node[gap] {$\scriptstyle{k_{4}}$} (C*);
\end{tikzpicture}
\end{aligned}
& \qquad \qquad
\begin{aligned}
&\scriptstyle{\text{Vertex 3:}}\\
&\scriptstyle{a_3c_3}\scriptstyle{=0} \\
&\scriptstyle{k_4 c_1c_2c_3}\scriptstyle{=0} \\
\\
\end{aligned}
\quad
\begin{aligned}
&\scriptstyle{\text{Vertex 2:}}\\
 &\scriptstyle{a_2c_2} \scriptstyle{=c_3a_3}  \\
\\
\\
\end{aligned}
 \quad
\begin{aligned}
&\scriptstyle{\text{Vertex 1:}}\\
&\scriptstyle{a_1c_1 }\scriptstyle{= 0 }\\
&\scriptstyle{k_2  c_1} \scriptstyle{= 0}\\
&\scriptstyle{k_3  c_1} \scriptstyle{= c_2a_a}
\end{aligned}
\end{align*}
\end{Example}

\subsection{Description of the Kn\"{o}rrer invariant algebra $K_{r,a}$} \label{The KI algebra}
Recall the Kn\"{o}rrer invariant algebra from Definition \ref{KIdefinition}: $K_{r,a}:= e_{0} \Lambda_{r,a} e_{0}$. In this section we explicitly describe this algebra in terms of generators and relations. To do this we introduce a noncommutative singularity $\kappa_{r,a}$ in terms of generators and relations, and by considering the indecomposable ideals $I_i$ of $\kappa_{r,a}$ we produce a noncommutative resolution $\End_{\kappa_{r,a}}(\bigoplus I_i)$. We then prove that $\End_{\kappa_{r,a}}(\bigoplus I_i) \cong \Lambda_{r,a}$, which simultaneously shows that  $\kappa_{r,a} \cong K_{r,a}$ and so provides an explicit presentation of $K_{r,a}$.

\begin{Definition} \label{kappa Definition}
Define $\kappa_{r,a}$ to be the quotient of the free algebra in $l$ generators $\mathbb{C} \langle z_1 \dots, z_l \rangle$ by the relations 
\[ z_i z_j=0 \text{ if $i<j$}
\]
and 
\[
z_i \left(  z_i^{\beta_i-2} \right) \left(  z_{i+1}^{\beta_{i+1}-2} \right)  \dots{ \left(  z_{j-1}^{\beta_{j-1}-2} \right)} \left(  z_j^{\beta_j-2} \right) z_j=0 \text{ for $j \le i$}
\]
where $l$ and the $\beta_i$ are defined by the Hirzebruch-Jung continued fraction expansion $r/(r-a)=[\beta_1,\dots, \beta_l]$.
\end{Definition}
We introduce the additional notation that $\kappa^{[\beta_1, \dots, \beta_l]}=\kappa_{r,a}=\kappa_{[\alpha_1, \dots, \alpha_n]}$ if $r/(r-a)=[\beta_1, \dots, \beta_l]$ and $r/a=[\alpha_1, \dots, \alpha_n]$.

\begin{Lemma} \label{Kappa Properties Lemma}
The following are properties of $\kappa_{r,a}=\kappa^{[\beta_1, \dotsm \beta_l]}$.
\begin{enumerate}
\item Any nonzero monomial in $\kappa^{[\beta_1, \dots, \beta_l]}$ can be expressed uniquely in the form 
\[
z_l^{b_l}z_{l-1}^{b_{l-1}} \dots z_1^{b_1}
\]
for $0 \le b_i < \beta_i$ such that there is no $j<i$ with $b_i=\beta_i-1$, $b_j=\beta_j-1$, and $b_k=\beta_k-2$ for all $j<k<i$.
\item The highest degree of a nonzero monomial in the generators is $\sum_{i=1}^l (\beta_i-2)+1$.

\item 
The dimension of the algebra $\kappa_{r,a}$ is $r$.
\end{enumerate}
\end{Lemma}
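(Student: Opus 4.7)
The strategy is to establish the normal form of (1) by a standard rewriting argument and then derive (2) and (3) by combinatorial bookkeeping.

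For (1), I will apply Bergman's diamond lemma. Totally order the generators by $z_1 < z_2 < \cdots < z_l$ and extend to length-lexicographic order on words, so that the leading monomials of the defining relations are $z_i z_j$ for $i<j$ together with $z_i^{\beta_i-1} z_{i-1}^{\beta_{i-1}-2} \cdots z_{j+1}^{\beta_{j+1}-2} z_j^{\beta_j-1}$ for $j \le i$. The monomials irreducible under the induced rewriting rules are precisely those described in the lemma. The crux is confluence: overlaps among the first family cannot occur (no $z_iz_j$ with $i<j$ shares a suffix with another $z_{i'}z_{j'}$), overlaps involving at least one first-family relation reduce to zero immediately, and overlaps between two forbidden monomials resolve to zero on both sides because any enlargement of a forbidden pattern either forms a larger forbidden pattern or contains a factor $z_iz_j$ with $i<j$. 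Bergman's lemma then yields (1).

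For (2), the basis element $z_l^{\beta_l-2} z_{l-1}^{\beta_{l-1}-2} \cdots z_2^{\beta_2-2} z_1^{\beta_1-1}$ lies in the normal form of (1), since only $b_1$ attains $\beta_1-1$, and has degree $\sum_{i=1}^l(\beta_i-2)+1$. Any basis element of strictly greater degree must have at least two distinct indices $i_1 < i_2$ with $b_{i_1} = \beta_{i_1}-1$ and $b_{i_2} = \beta_{i_2}-1$; to avoid the forbidden pattern for the pair $(i_1, i_2)$, some intermediate $b_k$ must strictly drop below $\beta_k - 2$, and this loss exactly cancels the additional contribution.

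For (3), set $d_l := \dim \kappa^{[\beta_1,\ldots,\beta_l]}$ and induct on $l$ with base cases $d_0 = 1$ and $d_1 = \beta_1$. The target is the recursion $d_l = \beta_l d_{l-1} - d_{l-2}$; this matches the numerator recursion for convergents of the Hirzebruch--Jung continued fraction $[\beta_1,\ldots,\beta_l] = r/(r-a)$ and therefore gives $d_l = r$. Partition the basis by $b_l$: the range $0 \le b_l \le \beta_l - 2$ contributes $(\beta_l-1) d_{l-1}$, while $b_l = \beta_l - 1$ contributes exactly those basis elements of $\kappa^{[\beta_1,\ldots,\beta_{l-1}]}$ for which no $j \le l-1$ satisfies $b_j = \beta_j - 1$ together with $b_k = \beta_k - 2$ for all $j < k \le l - 1$. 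Stratifying the complementary set by the unique largest such $j^*$ realizing the pattern and iterating the same split at each intermediate algebra produces a telescoping identity which shows that the complementary set has size $d_{l-2}$. Hence the $b_l = \beta_l - 1$ case contributes $d_{l-1} - d_{l-2}$ and the recursion follows.

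The main obstacle is the confluence verification in (1): overlaps among the forbidden monomials interact in intricate ways governed by the continued-fraction combinatorics, so a careful case analysis of how one forbidden pattern can extend into another is required. The counting bijection underlying (3) is comparatively straightforward once the correct stratification by $j^*$ is identified.
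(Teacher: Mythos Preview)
Your overall strategy matches the paper's: normal form for the monomial algebra, an explicit longest word plus a counting bound, and a dimension recursion that reproduces the Hirzebruch--Jung convergents. Two points deserve correction, and one is a genuine (if minor) difference.

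\textbf{Part (1).} You have inverted the difficulty. Since every defining relation of $\kappa_{r,a}$ is a single monomial set equal to zero, every rewriting rule sends its left-hand side to $0$; hence every overlap ambiguity resolves trivially to $0=0$ and confluence is automatic. There is no case analysis to perform, and this is not ``the main obstacle'' --- the paper dispatches (1) in one sentence by observing that in a monomial algebra a word is nonzero if and only if it contains no relation as a subword. Your invocation of the diamond lemma is correct but entirely unnecessary.

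\textbf{Part (2).} As written your argument is incomplete: you exhibit one pair $i_1<i_2$ with $b_{i_1}=\beta_{i_1}-1$ and $b_{i_2}=\beta_{i_2}-1$ and one intermediate deficit, but a single deficit does not offset three or more maxima. The fix (which is what the paper does) is to take \emph{consecutive} maxima: if $S=\{i:b_i=\beta_i-1\}$ has $s$ elements, then between each of the $s-1$ consecutive pairs in $S$ there must be a distinct index $k$ with $b_k\le\beta_k-3$, so $\sum b_i\le\sum(\beta_i-2)+s-(s-1)=\sum(\beta_i-2)+1$.

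\textbf{Part (3).} Here you genuinely diverge. The paper partitions the basis by $b_1$ and obtains
\[
\dim\kappa^{[\beta_1,\ldots,\beta_l]}=\beta_1\dim\kappa^{[\beta_2,\ldots,\beta_l]}-\dim\kappa^{[\beta_3,\ldots,\beta_l]},
\]
proving the correction term equals $\dim\kappa^{[\beta_3,\ldots,\beta_l]}$ via an explicit bijection (multiply a word with $b_1=b_2=0$ by the maximal nonvanishing power of $z_2$). You instead partition by $b_l$ and obtain $d_l=\beta_l d_{l-1}-d_{l-2}$; your telescoping is correct --- stratifying the forbidden set by the largest index $j^*$ gives contributions $A_{j^*-1}:=d_{j^*-1}-d_{j^*-2}$, and these telescope to $d_{l-2}$. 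Both recursions reproduce the numerator recursion for $[\beta_1,\ldots,\beta_l]=r/(r-a)$, so both yield $\dim\kappa_{r,a}=r$. Your direction is arguably cleaner since it matches the standard convergent recursion directly; the paper's direction connects more naturally to the ideal structure used later (Proposition~\ref{Monomial ideals Proposition}), where truncating from the left corresponds to passing to $\kappa_{[\alpha_{i+1},\ldots,\alpha_n]}$.
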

\begin{proof}
As $\kappa_{r,a}$ has monomial relations a monomial is zero if and only if it contains a relation as a submonomial. This implies nonzero monomials are exactly those written in the form specified in (1).

We now consider (2). Let $m$ be a highest degree nonzero monomial. By part (1) we can write it in the form
\[
z_l^{b_l}z_{l-1}^{b_{l-1}} \dots z_1^{b_1}
\]
for $0 \le b_i < \beta_i$ and such that there is no $i<j$ with $b_i=\beta_i-1$, $b_j=\beta_j-1$, and $b_k=\beta_k-2$ for all $j<k<i$. We now also show that $|m| \le \sum(\beta_i-2)+1$.  Suppose $b_i<\beta_i-3$ for some $i$, then as there are no relations occurring that involve powers of $z_i$ lower than $\beta_i-2$ we can increase $b_i$ by $1$ and find a higher degree monomial. Hence we can assume $\beta_i-3 \le b_i \le \beta_i-1$ for all $i$. Hence any $b_i$ has value $\beta_i-1, \beta_i-2$, or $\beta_i-3$. We let $r_1,r_2$, and $r_3$ denote the respective numbers of such $b_i$, and the degree of $m$ is $\sum \beta_i -r_1-2r_2-3r_3=\sum(\beta_i-2)+r_1-r_3$. The monomial $m$ is zero if it contains a relation as a submonomial, and this will occur whenever there is some $i<j$ such that the path has $b_i=\beta_i-1$, $b_j=\beta_j-1$ and $b_{k}=\beta_k-2$ for all $i<k<j$. Hence, to be nonzero, $r_3 \ge r_1-1$ so that some value of $\beta_k-3$ can occur between any two $\beta_i-1$ and $\beta_j-1$ with $i<j$. Hence the degree of $m$ is $\le \sum(\beta_i-2)+1$. Finally, $z_l^{b_l-2}z_{l-1}^{b_{l-1}-2} \dots z_{2}^{b_{2}-2}z_1^{b_1-1}$ is a nonzero monomial of degree $\sum(\beta_i-2)+1$. 

To prove (3) recall that as $\kappa_{r,a}$ is presented with monomial relations it has a vector space basis given by monomials.  Hence the number of nonzero monomials is the dimension and the nonzero monomials in $\kappa_{r,a}$ are exactly those listed in part (1). We now count such nonzero monomials. 

The number of such monomials with $b_1=0$ is $\dim \kappa^{[\beta_{2},\dots, \beta_n]}$. Then the nonzero monomials of $\kappa_{r,a}$ are spanned by monomials with $b_1=0$ multiplied by $z_1^j$ for some $0 \le j<b_1$, however some of these are zero. From the relations, the only power of $z_1$ that could annihilate a monomial with $b_1=0$  is $z_1^{\beta_1-1}$, and we now count how many elements with $b_1=0$ are annihilated on multiplication by $z_1^{\beta_1-1}$. Monomials with $b_1=0$ annihilated by $z_1^{\beta_{1}-1}$ are in bijection with nonzero monomials with $b_1=b_{2}=0$: given such a nonzero monomial multiple by the highest power of $z_{2}$ that doesn't produce 0. These are exactly the monomials with $b_1=0$ that vanish on multiplication by $z_1^{\beta_1-1}$. There are $\dim \kappa^{[\beta_{2},\dots, \beta_n]}$  monomials with $b_1=0$ and $\dim \kappa^{[\beta_{3},\dots, \beta_1]}$ monomials with $b_{1}=b_2=0$. Hence we can deduce 
\[
\dim \kappa^{[\beta_1, \dots, \beta_n]}=\beta_1 \dim \kappa^{[\beta_{2},\dots, \beta_n]}- \dim \kappa^{[\beta_{3}, \dots, \beta_1]}.
\]

Then by induction
\[
\frac{\dim \kappa^{[\beta_{i}, \dots, \beta_l]}}{\dim \kappa^{[\beta_{i+1}, \dots, \beta_l]}}=[\beta_i, \dots, \beta_l]
\]
as
\begin{align*}
\frac{\dim \kappa^{[\beta_{i}, \dots, \beta_l]}}{\dim \kappa^{[\beta_{i+1}, \dots, \beta_l]}}& = \frac{\beta_i \dim \kappa^{[\beta_{i+1},\dots, \beta_l]}- \dim \kappa^{[\beta_{i+2}, \dots, \beta_l]}}{\dim \kappa^{[\beta_{i+1}, \dots, \beta_l]}} \\
& = \beta_i - \frac{\dim \kappa^{[\beta_{i+2}, \dots, \beta_l]}}{\dim \kappa^{[\beta_{i+1}, \dots, \beta_l]}} \\
&=\beta_i -\frac{1}{[\beta_{i+1}, \dots , \beta_l]}=[\beta_i, \dots, \beta_l]
\end{align*}
where the base case is $\dim \kappa^{[]}= \dim \mathbb{C}=1$ and $\dim \kappa^{[\beta_1]} = \dim \mathbb{C}[z]/(z^{\beta_1})=\beta_1$.

We also note that as  $\dim \kappa^{[\beta_i, \dots, \beta_l]}=\beta_i \dim \kappa^{[\beta_{{i+1}},\dots, \beta_l]}- \dim \kappa^{[\beta_{i+2}, \dots, \beta_l]}.$ and  $\dim \kappa^{[]}$=1 any consecutive pair of $\dim \kappa^{[\beta_i, \dots, \beta_l]},  \dim \kappa^{[\beta_{{i+1}},\dots, \beta_l]}$ are necessarily coprime. As such the dimensions are the partial terms in the calculation of the Hirzebruch-Jung continued fraction expansion of $r/(r-a)$ and $\dim \kappa_{r,a}=\dim \kappa^{[\beta_1, \dots, \beta_n]}=r$.
\end{proof}

We now note a distinguished set of $\kappa_{r,a}$ modules, the indecomposable monomial ideals, and we show that their endomorphism ring recovers the algebra $\Lambda_{r,a}$.  We recall that in a monomial algebra all indecomposable monomial ideals are defined by a single monomial. Recall the dual fraction $[\beta_1, \dots, \beta_l]$.  We define the monomial
\[
m_n:=z_l^{\beta_l-2}z_{l-1}^{\beta_{l-1}-2} \dots z_{2}^{\beta_{2}-2}z_1^{\beta_1-1} \in \kappa_{r,a},
\]
and the submonomials
\[
m_i:=z_j^{c}z_{j-1}^{\beta_{j-1}-2} \dots z_{2}^{\beta_{2}-2}z_1^{\beta_1-1}
\]
where $\sum_{k=1}^{j-1} (\beta_k-2)+c+1 =i$. In particular $0 \le i \le n = \sum_{i=0}^l (\beta_i-2)+1$.

\begin{Proposition} \label{Prop:Number of Ideals}
Any indecomposable monomial ideal of $\kappa_{r,a}$ is isomorphic as a left $\kappa_{r,a}$-module to a monomial ideal generated by $m_i$ for some $i$.
In particular, up to isomorphism as left $\kappa_{r,a}$-modules there are $n+1$  indecomposable monomial ideals.
\end{Proposition}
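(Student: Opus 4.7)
The plan is to prove the statement in three parts: (a) show that every indecomposable monomial left ideal of $\kappa_{r,a}$ is principal, (b) show that every principal monomial ideal $\kappa_{r,a} p$ is isomorphic to exactly one of the ideals $\kappa_{r,a} m_i$, and (c) verify that the $n+1$ candidate ideals $\kappa_{r,a} m_i$ are pairwise non-isomorphic.

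For (a), since $\kappa_{r,a}$ is a monomial algebra (Definition \ref{kappa Definition}), every monomial left ideal admits a canonical minimal set of monomial generators, and I will argue, using the explicit monomial basis of Lemma \ref{Kappa Properties Lemma}(1), that the principal monomial ideals generated by distinct minimal generators span disjoint subsets of the basis. Hence such an ideal splits as a direct sum of principal monomial ideals, so indecomposability forces a single generator $\kappa_{r,a} p$ with $p$ a nonzero monomial.

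For (b), I will attach to each nonzero monomial $p$ the combinatorial invariant
\[
\ell(p) := \max\{\deg w \mid w \text{ a monomial of } \kappa_{r,a} \text{ with } wp \neq 0\}
\]
and set $i(p) := n - \ell(p)$. A direct computation using Lemma \ref{Kappa Properties Lemma} shows $\ell(p) \in \{0, 1, \ldots, n\}$ and $\ell(m_i) = n - i$, so the normalisation is consistent with $i(m_i) = i$. Because $\kappa_{r,a} p \cong \kappa_{r,a}/\mathrm{Ann}_{\kappa_{r,a}}(p)$ is cyclic, the isomorphism $\kappa_{r,a} p \cong \kappa_{r,a} m_{i(p)}$ reduces to the equality of left annihilators $\mathrm{Ann}_{\kappa_{r,a}}(p) = \mathrm{Ann}_{\kappa_{r,a}}(m_{i(p)})$. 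I will establish this equality by identifying the minimal monomial generators of both annihilators: a monomial $w$ belongs to $\mathrm{Ann}_{\kappa_{r,a}}(p)$ precisely when $wp$ contains one of the relations of Definition \ref{kappa Definition} as a subword, and a case analysis on the exponent pattern of $p$ shows that these minimal generators depend on $p$ only through the invariant $\ell(p)$.

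For (c), a recursion on the Hirzebruch-Jung expansion $r/a = [\alpha_1, \ldots, \alpha_n]$ entirely analogous to Lemma \ref{Rank Calculation Lemma} yields $\dim_\mathbb{C}(\kappa_{r,a} m_i) = \lambda_i$ with $\lambda_i/\lambda_{i+1} = [\alpha_{i+1}, \ldots, \alpha_n]$, and since each $\alpha_j \geq 2$ these dimensions are strictly decreasing; the isomorphism classes are therefore genuinely distinct. The main obstacle is the annihilator comparison in (b): it requires careful bookkeeping of exactly which (C)-relations are triggered by left multiplication against $p$, and the argument branches on whether the rightmost letters of $p$ already realise the canonical tail pattern $z_{j-1}^{\beta_{j-1}-2}\cdots z_1^{\beta_1-1}$ that appears in every $m_i$ with $i \geq \beta_1 - 1$.
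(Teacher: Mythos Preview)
Your approach is correct and follows the same core strategy as the paper's proof: both observe that a principal monomial ideal $(p)$ is determined up to isomorphism by the left annihilator of $p$, and then argue that this annihilator depends only on a single discrete invariant taking $n+1$ values. The paper's invariant is the pair consisting of the largest index $L$ with $b_L\neq 0$ in the normal form of $p$ together with the minimal $c$ such that $z_L^{c}\,p=0$, from which it reads off the index $j$; your invariant $\ell(p)=\max\{\deg w: wp\neq 0\}$ is a more direct repackaging that immediately yields $i(p)=n-\ell(p)$. You are more thorough than the paper on two points: you actually justify why an indecomposable monomial ideal must be principal (the paper simply asserts this), and you include step (c), the pairwise non-isomorphism of the $I_i$ via their dimensions, which the paper tacitly defers to the subsequent Proposition~\ref{Monomial ideals Proposition} where $\dim I_i=\lambda_i$ is computed. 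Both the paper and your proposal leave the heart of (b)---the explicit comparison of annihilators---at the level of a sketch, so the promised ``case analysis'' is exactly the work that remains.
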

\begin{proof}
Firstly, any indecomposable monomial ideal is of the form $(m)$ for some monomial $m$, and as a left module such an ideal defined up to isomorphism by the set of monomials $q$ such that $qm=0$. By Lemma \ref{Kappa Properties Lemma} we may assume that $m=z_l^{b_l}z_{l-1}^{b_{l-1}} \dots z_1^{b_1}$ for some $0 \le b_i < \beta_i$ such that there is no $i<j$ with $b_i=\beta_i-1$, $b_j=\beta_j-1$, and $b_k=\beta_k-2$ for all $j<k<i$. Then either $(m)=(1)$ or there is a maximal $i$ such that $b_i$ is nonzero, $l$ say. Then there exists a minimal $c$ such that $z_i^c m=0$. Then $(m) \cong (m_j)$ where $j=\sum (\beta_k-2)+c-1$.
\end{proof}

\begin{Definition}
Define the monomial ideals
\[
I_i:=(m_i).
\]
\end{Definition} 
In particular $I_0 \cong \kappa_{r,a}$ as a left $\kappa_{r,a}$-module.

\begin{Proposition} \label{Monomial ideals Proposition}
The monomial ideal 
\[
I_i=(m_i)=(z_j^{c}z_{j-1}^{\beta_{j-1}-2} \dots z_{2}^{\beta_{2}-2}z_1^{\beta_1-1})
\]
is isomorphic as a left $\kappa_{r,a}$-module to the quotient module 
\[
M_i:= \frac{\kappa_{r,a}}{J_i}\cong \kappa_{[\alpha_{i+1}, \dots, \alpha_n]}
\]
where $J_i:=\left\langle z_1, \dots, z_{j-1},  z_k^{\beta_k-1}  z_{k-1}^{\beta_{k-1}-2}  \dots z_{j+1}^{\beta_{j+1}-2}   z_{j}^{\beta_j-c-2} \text{ for $j \le k \le l$} \right\rangle$. In particular, $\dim I_i = \dim M_i = \lambda_i$.
\end{Proposition}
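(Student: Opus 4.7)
The plan is to define the surjective left $\kappa_{r,a}$-module homomorphism
\[
\phi: \kappa_{r,a} \twoheadrightarrow I_i, \qquad q \mapsto q \cdot m_i,
\]
and to identify its kernel with the left ideal $J_i$, thereby inducing an isomorphism $\bar\phi: M_i \xrightarrow{\sim} I_i$. The dimension equality $\dim I_i = \lambda_i$ will then follow by identifying $M_i$ with $\kappa_{[\alpha_{i+1}, \dots, \alpha_n]}$ and combining Lemma \ref{Kappa Properties Lemma}(3) with Lemma \ref{Rank Calculation Lemma}.

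First I would verify the containment $J_i \subseteq \ker\phi$ by a direct check on generators. For $z_s$ with $s < j$, the product $z_s \cdot m_i$ begins with $z_s z_j^c$ and so vanishes by the relation $z_s z_j = 0$. For the longer generator $z_k^{\beta_k-1} z_{k-1}^{\beta_{k-1}-2} \cdots z_{j+1}^{\beta_{j+1}-2} z_j^{\beta_j-c-2}$ with $j \le k \le l$, right multiplication by $m_i$ combines the adjacent powers of $z_j$ to produce the monomial
\[
z_k^{\beta_k-1}\, z_{k-1}^{\beta_{k-1}-2} \cdots z_{j+1}^{\beta_{j+1}-2}\, z_j^{\beta_j-2}\, z_{j-1}^{\beta_{j-1}-2} \cdots z_1^{\beta_1-1},
\]
which contains the forbidden pattern of Lemma \ref{Kappa Properties Lemma}(1) (namely $b_1=\beta_1-1$, $b_k=\beta_k-1$, with all intermediate exponents equal to $\beta_s-2$) and is therefore zero in $\kappa_{r,a}$. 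This provides the induced surjection $\bar\phi: M_i \twoheadrightarrow I_i$.

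Next I would read off an explicit presentation of $M_i=\kappa_{r,a}/J_i$. Since $J_i$ kills $z_1,\dots,z_{j-1}$, the algebra $M_i$ is generated by the images of $z_j,z_{j+1},\dots,z_l$. Under the relabelling $w_s := z_{j+s-1}$ for $1\le s\le l-j+1$, the relations of $\kappa_{r,a}$ restrict to the analogous relations on the $w_s$, while the remaining generators of $J_i$ have the effect of replacing the defining power of $w_1=z_j$: the exponent bound becomes $\beta_j-c$ rather than $\beta_j$. Thus $M_i$ is precisely $\kappa^{[\beta_j-c,\,\beta_{j+1},\dots,\beta_l]}$, and by Corollary \ref{HJfractionCorollary}(3) this equals $\kappa_{[\alpha_{i+1},\dots,\alpha_n]}$ because $[\beta_j-c,\beta_{j+1},\dots,\beta_l]$ is the Hirzebruch--Jung dual of $[\alpha_{i+1},\dots,\alpha_n]$. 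Applying Lemma \ref{Kappa Properties Lemma}(3) to this algebra gives $\dim M_i$ as the numerator of the fraction $[\alpha_{i+1},\dots,\alpha_n]$ in lowest terms, and Lemma \ref{Rank Calculation Lemma} identifies this numerator with $\lambda_i$.

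To finish, it suffices to show $\dim I_i\ge\lambda_i$, since $\bar\phi$ is a surjection onto $I_i$ from an $\lambda_i$-dimensional space. I would do this by exhibiting an explicit monomial basis of $I_i$ matched bijectively under $\bar\phi$ with the monomial basis of $M_i$: a nonzero monomial $q\in\kappa_{r,a}$ with $q\cdot m_i\ne 0$ must have $b_s=0$ for $s<j$ and must satisfy exponent constraints at $z_j,\dots,z_l$ that coincide with those defining nonzero monomials of $\kappa^{[\beta_j-c,\beta_{j+1},\dots,\beta_l]}$. The main obstacle is this last bookkeeping step: correctly tracking how the truncation of the leading generator to exponent bound $\beta_j-c$ interacts with the long pattern relations of $\kappa_{r,a}$, so that the monomials of $M_i$ match, in bijection and under $\bar\phi$, the nonzero elements $q\cdot m_i$ of $I_i$. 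The continued-fraction identity of Corollary \ref{HJfractionCorollary}(3) is the conceptual tool that makes this matching automatic, converting what would otherwise be a delicate combinatorial analysis of forbidden monomial patterns into an algebraic identification with a smaller algebra of the same family.
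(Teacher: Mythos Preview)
Your proposal is correct and follows essentially the same route as the paper: both arguments use the surjection $\kappa_{r,a}\twoheadrightarrow I_i$ given by right multiplication by $m_i$, identify its kernel with $J_i$, recognise the quotient as $\kappa^{[\beta_j-c,\beta_{j+1},\dots,\beta_l]}\cong\kappa_{[\alpha_{i+1},\dots,\alpha_n]}$ via Corollary~\ref{HJfractionCorollary}, and read off the dimension from Lemmas~\ref{Kappa Properties Lemma} and~\ref{Rank Calculation Lemma}. The only presentational difference is that the paper asserts directly that the annihilator of $m_i$ is generated by the listed monomials (i.e.\ $\ker\phi=J_i$), whereas you verify one containment and close the argument by the dimension inequality; these are equivalent and your bookkeeping worry in the last paragraph is exactly the computation the paper leaves implicit.
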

\begin{proof}
The monomial ideal $I_i=(m_i)=(z_j^{c}z_{j-1}^{\beta_{j-1}-2} \dots z_{2}^{\beta_{2}-2}z_1^{\beta_1-1})$ is defined as a left $\kappa_{r,a}$ module by the monomials $q \in \kappa_{r,a}$ such that $q m_i=0$. From the relations it is clear that the generating set of such monomials is exactly 
\[
 z_1, \dots, z_{j-1}, z_{j}^{\beta_j-c-1}, \text{ and } z_k^{\beta_k-1}  z_{k-1}^{\beta_{k-1}-2}  \dots z_{j+1}^{\beta_{j+1}-2}   z_{j}^{\beta_j-c-2} \text{ for $j+1 \le k \le l$.}
\]
Hence
\[
(m_i) \cong \frac{\kappa_{r,a}}{J_i}
\]
and it is clear that 
\[
\kappa_{r,a}/J_i \cong \kappa^{[\beta_j-c,\beta_{j+1}, \dots, \beta_l]}
\]
as an algebra. By Lemma \ref{HJfractionCorollary} the Hirzebruch-Jung continued fraction expansion $[\beta_j-c,\beta_{j+1}, \dots, \beta_l]$ is dual to the fraction expansion $[\alpha_{i+1}, \dots, \alpha_n]$, and $\kappa_{[\alpha_{i+1}, \dots, \alpha_n]} \cong \kappa^{[\beta_j-c,\beta_{j+1}, \dots, \beta_l]}$ by Lemma \ref{KI Presentation Lemma}. Hence
\[
\kappa_{r,a}/J_i \cong \kappa_{[\alpha_{i+1}, \dots, \alpha_n]},
\]
so $\dim I_i = \dim  \kappa_{[\alpha_{i+1}, \dots, \alpha_n]} = \lambda_i$ by Lemma \ref{Rank Calculation Lemma}.
\end{proof}

The following lemma aids us in proving the main result of this section.

\begin{Lemma} \label{Inclusions Lemma}
For $j>k$ there are inclusions
\[
\Hom_{\kappa_{r,a}}(M_j,M_j) \subset \Hom_{\kappa_{r,a}}(M_k,M_j) \subset M_j
\]
and 
\[
\Hom_{\kappa_{r,a}}(M_j,M_j) \subset \Hom_{\kappa_{r,a}}(M_j,M_k) \subset \Hom_{\kappa_{r,a}}(M_k,M_k) \subset M_k.
\]
\end{Lemma}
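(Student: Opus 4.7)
Using that each $M_i = \kappa_{r,a}/J_i$ is cyclic, for any left $\kappa_{r,a}$-module $N$ there is a natural identification
\[
\Hom_{\kappa_{r,a}}(M_i, N) \;\cong\; \{n \in N : J_i \cdot n = 0\} \;\subseteq\; N
\]
via $f \mapsto f(\bar{1}_i)$. Under this identification the first chain in the lemma becomes $\{n \in M_j : J_j n = 0\} \subseteq \{n \in M_j : J_k n = 0\} \subseteq M_j$, and the second becomes $\{n \in M_j : J_j n = 0\} \subseteq \{n \in M_k : J_j n = 0\} \subseteq \{n \in M_k : J_k n = 0\} \subseteq M_k$. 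Both chains therefore reduce to two ideal-theoretic inputs: (1) $J_k \subseteq J_j$ whenever $j > k$, so enlarging the annihilating ideal shrinks its annihilator; and (2) $M_j$ embeds as a submodule of $M_k$ for $j > k$, via the identifications $M_i \cong I_i$ of Proposition \ref{Monomial ideals Proposition}.

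Input (2) amounts to $I_j \subseteq I_k$ as left ideals, which is immediate from the construction of the $m_i$: consecutive monomials are related by $m_{k+1} = z_{s_k}\cdot m_k$ for some $s_k \in \{1,\ldots,l\}$, so iterating gives $m_j \in \kappa_{r,a}\cdot m_k = I_k$ and hence $I_j = \kappa_{r,a}\cdot m_j \subseteq I_k$. For input (1) it suffices by induction on $j-k$ to prove $J_k \subseteq J_{k+1}$, and since $J_i = \{q \in \kappa_{r,a} : q\cdot m_i = 0\}$ this reduces to verifying $g\cdot m_{k+1} = 0$ in $\kappa_{r,a}$ for each generator $g$ of $J_k$ from Proposition \ref{Monomial ideals Proposition}. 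Writing $m_k = z_{j_k}^{c_k} z_{j_k-1}^{\beta_{j_k-1}-2}\cdots z_1^{\beta_1-1}$ in the notation of that proposition: for $g = z_t$ with $t < j_k$ the product $g\cdot m_{k+1}$ begins with $z_t z_{s_k}$, where $t < j_k \leq s_k$, and vanishes by the relation $z_iz_j = 0$ for $i<j$ from Definition \ref{kappa Definition}; for each ``long'' generator $g = z_r^{\beta_r-1} z_{r-1}^{\beta_{r-1}-2}\cdots z_{j_k+1}^{\beta_{j_k+1}-2} z_{j_k}^{\beta_{j_k}-c_k-2}$ (with the $r=j_k$ entry being $z_{j_k}^{\beta_{j_k}-c_k-1}$), concatenating with $m_{k+1}$ produces a monomial whose exponents align with exactly one of the ``long'' defining relations of $\kappa_{r,a}$ embedded as a subword, and hence equals zero.

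The main obstacle is this explicit case analysis for input (1): the transition $m_k \to m_{k+1}$ splits into the two flavours $s_k = j_k$ (bumping the exponent of $z_{j_k}$, so that the concatenated word contains either $z_{j_k}^{\beta_{j_k}}$ or the relation $z_{j_k}^{\beta_{j_k}-1} z_{j_k-1}^{\beta_{j_k-1}-2}\cdots z_1^{\beta_1-1}$ as a subword) and $s_k = j_k + 1$ (advancing to the next variable after $c_k$ reaches its maximum, with the concatenated word then containing either $z_{j_k}z_{j_k+1}$ or the relation $z_{j_k+1}^{\beta_{j_k+1}-1} z_{j_k}^{\beta_{j_k}-2}\cdots z_1^{\beta_1-1}$ as a subword). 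In each case the check is a direct pattern-match of a defining relation of $\kappa_{r,a}$, and once input (1) is available the Hom-inclusions follow purely formally.
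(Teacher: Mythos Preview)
Your argument is correct and lands on the same two inputs the paper uses, namely the inclusion $I_j \subseteq I_k$ and the surjection $M_k \twoheadrightarrow M_j$ (equivalently $J_k \subseteq J_j$), but the packaging differs. The paper argues purely functorially: applying $\Hom_{\kappa_{r,a}}(-,N)$ to the surjections $\kappa_{r,a} \twoheadrightarrow M_i$ and $M_k \twoheadrightarrow M_j$, and applying $\Hom_{\kappa_{r,a}}(I_j,-)$ to the inclusion $I_j \hookrightarrow I_k$, immediately yields all the required injections. In particular the paper treats the surjection $M_k \twoheadrightarrow M_j$ as evident from the explicit description of the annihilator ideals in Proposition~\ref{Monomial ideals Proposition} and does not spell it out.

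You instead unwind everything through the identification $\Hom_{\kappa_{r,a}}(M_i,N) \cong \{n \in N : J_i n = 0\}$ and then supply a proof of $J_k \subseteq J_{k+1}$ by explicit monomial case analysis. This is more work than the paper does, but it genuinely fills a step the paper leaves implicit. One caution: your case split $s_k \in \{j_k, j_k+1\}$ is the right dichotomy, and your pattern-matching against the defining relations is sound, but the edge case $j_k = 1$ (where the exponent of $z_1$ in $m_n$ is $\beta_1 - 1$ rather than $\beta_1 - 2$) shifts the relevant exponents by one relative to the generic formula you wrote down; the argument still goes through, but the bookkeeping needs a line of care there. The shorter route, once you have $I_j \subseteq I_k$, is simply to compare the explicit generator lists for $J_k$ and $J_{k+1}$ from Proposition~\ref{Monomial ideals Proposition} and observe that each generator of $J_k$ is a left multiple of a generator of $J_{k+1}$, which avoids the word-by-word relation check.
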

\begin{proof}
In general, the surjection $\kappa_{r,a} \twoheadrightarrow M_i $ defines an inclusion of sets $\Hom_{\kappa_{r,a}}(M_i,M_j) \subset \Hom_{\kappa_{r,a}}(\kappa_{r,a},M_j) \cong M_j$.

For $j>k$ applying $\Hom_{\kappa_{r,a}}(-,M_k)$ to the surjection $M_k \twoheadrightarrow M_j$ induces an inclusion of sets $\Hom_{\kappa_{r,a}}(M_j,M_k) \hookrightarrow \Hom_{\kappa_{r,a}}(M_k,M_k)$, applying $\Hom_{\kappa_{r,a}}(-,M_j)$ to the surjection $M_k \twoheadrightarrow M_j$ induces an inclusion of sets $\Hom_{\kappa_{r,a}}(M_j,M_j) \hookrightarrow \Hom_{\kappa}(M_k,M_j)$, and applying $\Hom_{\kappa_{r,a}}(I_j,-)$ to the inclusion of modules $I_j \hookrightarrow I_k$ induces an inclusion of sets $\Hom_{\kappa_{r,a}}(I_j,I_j) \subset \Hom_{\kappa_{r,a}}(I_j,I_k)$. Putting this together proves the result.
\end{proof}

\begin{Theorem} \label{Endomorphism Algebra Isomorphism Theorem}
There is an isomorphism of $\mathbb{C}$-algebras $\Lambda_{r,a} \cong \End_{\kappa_{r,a}}(\bigoplus_{i=0}^n I_i)$.
\end{Theorem}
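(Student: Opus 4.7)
The plan is to construct an explicit $\mathbb{C}$-algebra homomorphism
\[
\Phi\colon \Lambda_{r,a} \to \End_{\kappa_{r,a}}\Big(\bigoplus_{i=0}^n I_i\Big)
\]
using the quiver-with-relations presentation of $\Lambda_{r,a}$ from Proposition \ref{Lambda Presentation Proposition}, and to prove it is an isomorphism by combining surjectivity with a dimension count. First I would send each vertex idempotent $e_i$ to the projection onto $I_i$, and each arrow of $Q_{[\alpha_1,\ldots,\alpha_n]}$ to a left-multiplication map between the relevant ideals: the arrow $a_i\colon i \to i-1$ becomes the inclusion $I_i \hookrightarrow I_{i-1}$, the arrow $c_i\colon i-1 \to i$ becomes left-multiplication by the unique generator $z_k$ satisfying $m_i = z_k \cdot m_{i-1}$, and each arrow $k_j\colon 0 \to t(j)$ is sent to left-multiplication by an explicit monomial $n_j \in \kappa_{r,a}$ supplying the extra generators of $\Hom_{\kappa_{r,a}}(I_0, I_{t(j)})$ not already realised by composites of the $c_i$'s, as predicted by the rank count in Lemma \ref{Ext Quiver Calculation Lemma}.

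The next step is to verify that the relations of $I_{[\alpha_1,\ldots,\alpha_n]}$ hold after applying $\Phi$. Each such relation equates two paths in $Q_{[\alpha_1,\ldots,\alpha_n]}$ (possibly one being zero), and this becomes an equality of left-multiplication maps between ideals, which by Lemma \ref{Inclusions Lemma} reduces to an equality of explicit monomials in $\kappa_{r,a}$ and is checked directly from the defining relations of Definition \ref{kappa Definition}. For example, at vertex $i$ with $\alpha_i>2$ the relation $a_i c_i = 0$ asserts that left-multiplication by $z_k$ sends $m_i$ to zero, which follows from a maxing-out relation, while $k_{v_i} C_0^i = c_{i+1} a_{i+1}$ asserts that two paths through vertices $0$ and $i+1$ give the same monomial endomorphism of $I_i$. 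Surjectivity of $\Phi$ then follows because $\Hom_{\kappa_{r,a}}(I_i, I_j)$ is spanned by left-multiplications by monomials in the $z_k$ and every such monomial is reached as a composite of arrows in the image of $\Phi$.

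Finally, the dimension match: by Lemma \ref{Module Dimensions Lemma}, $\dim \Lambda_{r,a} = \sum_{i,j} \hom_\Lambda(\Lambda_i, \Lambda_j)$ is given by a recursion in the ranks $\lambda_i$, and by Proposition \ref{Monomial ideals Proposition} together with Lemma \ref{Inclusions Lemma} a parallel recursion computes $\sum_{i,j} \dim \Hom_{\kappa_{r,a}}(I_i, I_j)$; the two totals agree, and combined with surjectivity this forces $\Phi$ to be an isomorphism. The main obstacle will be the explicit identification of the monomials $n_j$ attached to the arrows $k_j$ from vertex $0$ and the verification of the relations involving the long composites $C_0^i$ through intermediate vertices, where the indexing is governed by the Hirzebruch--Jung combinatorics of Corollary \ref{HJfractionCorollary}.
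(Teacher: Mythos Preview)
Your overall strategy---define an algebra map from the quiver presentation of $\Lambda_{r,a}$ to $\End_{\kappa_{r,a}}(\bigoplus I_i)$, verify the relations, then argue it is an isomorphism---is exactly the paper's approach. However, there is a direction error and a genuine gap in the bijectivity argument.

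First, the arrows $k_j$ have head at vertex $0$ and tail at $t(j)$, so they go \emph{from} $t(j)$ \emph{to} $0$, not the other way. Under $\Phi$ they must land in $\Hom_{\kappa_{r,a}}(I_{t(j)}, I_0)$, supplementing the inclusion maps coming from composites of the $a_i$ (not the $c_i$). Concretely, the paper sends $k_j$ to multiplication by $z_j \colon M_{t(j)} \to M_0$. Your description of them as extra generators of $\Hom_{\kappa_{r,a}}(I_0, I_{t(j)})$ is backwards, and the appeal to Lemma~\ref{Ext Quiver Calculation Lemma} is correspondingly misplaced: the count $\dim \Ext^1_\Lambda(\sigma_0,\sigma_i)$ records arrows \emph{into} vertex $0$.

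Second, and more seriously, your bijectivity argument has a gap at the dimension step. You assert that Proposition~\ref{Monomial ideals Proposition} and Lemma~\ref{Inclusions Lemma} yield a recursion for $\sum_{i,j}\dim\Hom_{\kappa_{r,a}}(I_i,I_j)$ matching that of Lemma~\ref{Module Dimensions Lemma}, but Lemma~\ref{Inclusions Lemma} only provides \emph{inclusions} of Hom spaces, not their dimensions. For $i\le j$ one does get $\dim\Hom_\kappa(M_i,M_j)=\dim M_j=\lambda_j$ directly, but for $i>j$ there is no a~priori computation of $\dim\Hom_\kappa(M_i,M_j)$ available from the cited results. The paper sidesteps this by proving bijectivity of each component $\phi_{i,j}\colon e_i\Lambda e_j\to\Hom_\kappa(M_i,M_j)$ via downward induction on $k$ (for $i,j\ge k$): once $\phi_{j,j}$ is known to be bijective, one has $\dim\Hom_\kappa(M_j,M_j)=\lambda_j=\dim M_j$, which forces the chain $\Hom_\kappa(M_j,M_j)\subset\Hom_\kappa(M_k,M_j)\subset M_j$ from Lemma~\ref{Inclusions Lemma} to collapse to equalities; combined with $\dim e_k\Lambda e_j=\lambda_j$ from Lemma~\ref{Module Dimensions Lemma} this gives bijectivity of $\phi_{k,j}$, and similar squeezes handle $\phi_{k,k}$ and $\phi_{j,k}$. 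In other words, the Hom dimensions on the $\kappa$-side are established \emph{as a consequence} of the inductive bijectivity, not as independent input to a global dimension count.
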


\begin{proof}

We will define a morphism $\phi$ from $\Lambda_{r,a}$ to $\End_{\kappa_{r,a}}(\bigoplus I_i)$ and then show that it is bijective to conclude that it is an isomorphism. Recall that there are isomorphisms $I_i \cong M_i \cong \kappa_{r,a}/J_i$, and we will make use of the equivalence  $\End_{\kappa_{r,a}}(\bigoplus_{i=0}^n M_i) \cong \End_{\kappa_{r,a}}(\bigoplus_{i=0}^n I_i)$ to describe the morphism.

We define the map $\phi:\Lambda_{r,a} \rightarrow \End_{\kappa_{r,a}}(\bigoplus_{i=0}^n M_i)$ by sending the vertex idempotents $e_i$ to the identity maps $id_{M_i}:M_i \rightarrow M_i$, the arrows $c_i:i-1 \rightarrow i$ to the quotient maps $M_{i-1} \twoheadrightarrow M_{i}$, the arrows $a_i:i \rightarrow {i-1}$ to the multiplication maps $z_{v_i}: M_{i} \rightarrow M_{i-1}$, and the arrows $k_j: i \rightarrow 0$ to the multiplication maps $z_j:M_i \rightarrow M_{0}$.We recall that  $u_i:=  \sum_{k<i} (\alpha_{k}-2)$, $v_i:= \sum_{k \le i} (\alpha_k-2)+1$ and $t(k_j):=t(j)$ is defined to be the unique $i$ such that $u_i+1 < j \le v_i$.

Under the alternative presentation $\End_{\kappa_{r,a}}(\bigoplus_{i=0}^n I_i)$ the map $\phi$ sends the vertex idempotents $e_i$ to the identity maps $id_{I_i}:I_i \rightarrow I_i$, the arrows $c_i:i-1 \rightarrow i$ to the multiplication maps $z_{v_i}:I_{i-1} \twoheadrightarrow I_{i}$, the arrows $a_i:i \rightarrow {i-1}$ to the inclusion maps $I_{i} \rightarrow I_{i-1}$, and the arrows $k_j: i \rightarrow 0$ to multiplication by the element $z_j/z_{v_i}:M_i \rightarrow M_{0}$.

The generating relations among the paths in $\Lambda_{r,a}$ are satisfied by the corresponding maps in $\End_{\kappa_{r,a}}(\bigoplus_{i=0}^n M_i)$, and hence $\phi$ is an algebra homomorphism. We will show that homomorphism $\phi$ is bijective and hence an isomorphism. 

Let $\phi_{i,j}$ denote the restriction of the algebra homomorphism to a linear map $\phi_{i,j}: e_i \Lambda e_j \rightarrow \Hom_{\kappa_{r,a}}(M_i,M_j)$.  We now show that $\phi_{i,j}$ is a bijection for $i,j \ge k$ by induction on $n-k$. The base case $k=n$ is clear as $\phi_{n,n}:e_n \Lambda e_n \cong \mathbb{C} \rightarrow \Hom_{\kappa_{r,a}}(M_n,M_n)\cong \mathbb{C}$ is nonzero and hence a bijection. 

We now consider the case $k<n$. The morphism $\phi$ restricts to the linear map $\bigoplus_{i,j \ge k} \phi_{i,j}$ 
\[
\left( \begin{array}{c|ccc} 
\phi_{k,k} & \phi_{k,k+1} & \dots & \phi_{k,n} \\
\hline \\
\phi_{k+1,k} & \phi_{k+1,k+1} & \dots & \phi_{k+1,n}\\
\vdots & \vdots & \ddots & \vdots \\
\phi_{n,k} & \phi_{n,k+1} & \dots & \phi_{n,n}
\end{array} \right)
\]
and by the induction hypothesis we can assume that $\phi_{i,j}$ is a bijection if $i,j>k$. We will show that $\phi_{k,j}$, $\phi_{k,k}$ and $\phi_{j,k}$ for $j > k $ are bijective in turn.

We first consider $\phi_{k,j}$. As $k<j$ we have the inclusions
\[
\Hom_{\kappa_{r,a}}(M_j,M_j) \subset \Hom_{\kappa_{r,a}}(M_k,M_j) \subset M_j
\]
and by the induction hypothesis $\Hom_{\kappa_{r,a}}(M_j,M_j)=e_j\Lambda e_j$ so $\dim \Hom_{\kappa_{r,a}}(M_j,M_j) = \lambda_j$ by Lemma \ref{Module Dimensions Lemma}. As $\dim M_j = \lambda_j$ by Lemma \ref{Monomial ideals Proposition}, it follows that the series of inclusions are a series of bijections $\Hom_{\kappa_{r,a}}(M_j,M_j) \cong \Hom_{\kappa_{r,a}}(M_k,M_j) \cong M_j \cong e_j\Lambda e_j$. This implies that every map in $\Hom_{\kappa_{r,a}}(M_k,M_j)$ factors through $\Hom(M_j,M_j)$. Then for any $f \in \Hom_{\kappa_{r,a}}(M_k,M_j)$ there is a corresponding $p$ in $e_j \Lambda e_j$ such that $\phi_{j,j}(p)=f$ and hence $\phi_{k,j}(C_k^jp)=\phi_{k,j}(C_k^j)\phi_{j,j}(p)=f \in \Hom_{\kappa_{r,a}}(M_k,M_j)$. Hence $\phi_{k,j}$ is a surjection, and hence as $\dim e_k \Lambda e_j = \dim \Hom_{\kappa_{r,a}}(M_k,M_j)= \lambda_j$ it is a bijection.

We then consider $\phi_{k,k}$ and will show first it is surjective and then it is bijective for dimension reasons. We consider the inclusion of sets \[
M_{k+1} \cong \Hom_{\kappa_{r,a}}(M_{k+1},M_{k+1}) \subset \Hom_{\kappa_{r,a}}(M_k,M_k) \subset M_k.
\] Consider an element $f \in \Hom_{\kappa_{r,a}}(M_k,M_k)$. If $f = id$ then $f = \phi_{k,k}(e_k)$. Otherwise $f$ corresponds to multiplication by a non-identity element $f(1)=m \in M_k$. Then $m=m' z_j$ for some $m'$ with nonzero image in $M_{k+1}$ and $j \ge v_{k}$. Then there is some path $p \in e_{k+1} \Lambda e_{k+1}$ such that $\phi_{k+1,k+1}(p)=m'$ and some path $q \in e_{k+1}\Lambda e_{k}$ such that $\phi_{k+1,k}(q)=z_j$ so $\phi_{k,k}(c_{k}pq)=m'z_j=m$. Hence $\phi_{k,k} \circ \iota$ is surjective. As $\lambda_k=\dim e_k \Lambda e_k \le \dim M_k = \lambda_k$ by Lemma \ref{Module Dimensions Lemma} and Proposition \ref{Monomial ideals Proposition} the map $\phi_{k,k}$ is in fact a bijection.

We are left to show that $\phi_{j,k}$ is a bijection for $j>k$. As $j>k$ there are a series of inclusions
\[
\Hom_{\kappa_{r,a}}(M_j,M_j) \subset \Hom_{\kappa_{r,a}}(M_j,M_k) \subset \Hom_{\kappa_{r,a}}(M_k,M_k)
\]
by Lemma \ref{Inclusions Lemma}.

Any element $f \in \Hom_{\kappa_{r,a}}(I_j,I_k) \cong \Hom_{\kappa_{r,a}}(M_j,M_k)$ is uniquely defined by an element $inc(f) \in \Hom_{\kappa_{r,a}}(I_k,I_k) \cong \Hom_{\kappa_{r,a}}(M_k,M_k) $. There is a unique loop $p' \in e_k \Lambda e_k$ such that $\phi_{k,k}(p')=inc(f) \in \Hom_{\kappa_{r,a}}(I_k,I_k)$. Then $A_j^kp' \in e_j \Lambda e_k$ is the unique element that maps to $f \in \Hom_{\kappa_{r,a}}(I_j,I_k)$; $\phi_{j,k}(A_j^kp')= \phi_{j_k}(A_j^k)\phi(p')=inc(f)$.

Hence $\phi_{i,j}$ is a bijection for all $i,j \ge k$, so by induction $\phi_{i,j}$ is bijective for all $i,j \ge 0$ and so $\phi$ is an isomorphism.
\end{proof}

Considering the idempotent $e_0$ produces an isomorphism $e_0 \Lambda_{r,a} e_0 \cong e_0 \Hom_{\kappa_{r,a}}(\bigoplus I_i) e_0$ that yields the following immediate corollary.
\begin{Corollary} \label{KI Presentation Lemma}
There is a $\mathbb{C}$-algebra isomorphism $K_{r,a} \cong \kappa_{r,a}$.
\end{Corollary}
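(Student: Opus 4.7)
The plan is to read off this isomorphism as a direct consequence of Theorem \ref{Endomorphism Algebra Isomorphism Theorem}, since all the substantive work has already been done there. The only thing left is the observation that a corner algebra at a vertex idempotent corresponds, under the standard Morita-type dictionary, to the endomorphism ring of the corresponding direct summand.

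First, I would invoke the algebra isomorphism $\Lambda_{r,a} \cong \End_{\kappa_{r,a}}\bigl(\bigoplus_{i=0}^{n} I_i\bigr)$ supplied by Theorem \ref{Endomorphism Algebra Isomorphism Theorem}. Tracking through the construction of $\phi$ in that proof, the vertex idempotent $e_0 \in \Lambda_{r,a}$ is sent to $\mathrm{id}_{I_0}$, i.e.\ the idempotent endomorphism projecting onto the $I_0$-summand of $\bigoplus_{i=0}^{n} I_i$.

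Next, I would apply the standard identity
\[
e_{0}\,\End_{\kappa_{r,a}}\!\Bigl(\bigoplus_{i=0}^{n} I_i\Bigr)\,e_{0} \;\cong\; \End_{\kappa_{r,a}}(I_0),
\]
which simply records that cutting the endomorphism ring of a direct sum by the projection onto one summand yields the endomorphism ring of that summand. Combined with Definition \ref{KIdefinition}, this gives $K_{r,a} \cong \End_{\kappa_{r,a}}(I_0)$.

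Finally, Proposition \ref{Monomial ideals Proposition} (together with the sentence following its statement) identifies $I_0$ with $\kappa_{r,a}$ as a left $\kappa_{r,a}$-module. Under the composition convention adopted in the paper's notational preliminaries (namely, $fg$ means $g$ applied after $f$), the evaluation map $\End_{\kappa_{r,a}}(\kappa_{r,a}) \to \kappa_{r,a}$, $f \mapsto f(1)$, is a $\mathbb{C}$-algebra isomorphism: for $f,g \in \End_{\kappa_{r,a}}(\kappa_{r,a})$ one has $(fg)(1) = g(f(1)) = g(1\cdot f(1)) = f(1)\,g(1)$, so multiplication is preserved in the correct order. Chaining the three isomorphisms produces the desired $\mathbb{C}$-algebra isomorphism $K_{r,a} \cong \kappa_{r,a}$. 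There is no real obstacle here; the entire content sits in Theorem \ref{Endomorphism Algebra Isomorphism Theorem}, and the sole care needed is the convention check on the final step so as not to introduce a spurious opposite algebra.
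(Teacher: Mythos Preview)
Your proof is correct and follows essentially the same approach as the paper: cut the isomorphism of Theorem \ref{Endomorphism Algebra Isomorphism Theorem} by the idempotent $e_0$, then identify $\End_{\kappa_{r,a}}(I_0)\cong\End_{\kappa_{r,a}}(\kappa_{r,a})\cong\kappa_{r,a}$. The paper states this in one line, while you have spelled out the corner-algebra identification and the composition-convention check more carefully than the original; the only minor quibble is that the fact $I_0\cong\kappa_{r,a}$ appears just after the definition of the $I_i$ rather than in Proposition \ref{Monomial ideals Proposition} itself.
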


The following is an immediate corollary of Lemmas \ref{Kappa Properties Lemma} and \ref{KI Presentation Lemma} and relates invariants of $K_{r,a} \cong \kappa_{r,a}$ to the combinatorics of the continued fractions $r/a=[\alpha_1, \dots, \alpha_n]$ and $r/(r-a)=[\beta_1, \dots, \beta_l]$.

\begin{Proposition}\label{Properties of KI Algebras Proposition}
The \emph{Kn\"{o}rrer invariant algebra} $K_{r,a}$ has the following properties:
\begin{enumerate}
\item It has dimension $r$, the order of the group defining the corresponding cyclic surface quotient singularity.
\item  The proper monomial left ideal of $K_{r,a}$ of largest $\mathbb{C}$-dimension  has dimension $a$.
\item It has monomial of highest degree $n$, the number of exceptional curves in the minimal resolution of the corresponding cyclic surface quotient singularity.
\item It is generated by $l$ elements, where $l$ is 2 less than the embedding dimension of the corresponding cyclic surface quotient singularity.
\end{enumerate}
\end{Proposition}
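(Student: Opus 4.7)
The strategy will be to transfer everything across Corollary \ref{KI Presentation Lemma}, which identifies $K_{r,a}$ with the concrete monomial algebra $\kappa_{r,a}$; once the four properties are checked for $\kappa_{r,a}$, the proposition follows immediately.

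Parts (1), (3), and (4) should be essentially applications of earlier results. Part (1) is literally Lemma \ref{Kappa Properties Lemma}(3). For (3), Lemma \ref{Kappa Properties Lemma}(2) gives the maximum monomial degree as $\sum_{i=1}^l(\beta_i - 2) + 1$, and Riemenschneider duality (Theorem \ref{Riemenschneider Duality Theorem}(3)) identifies this quantity with $n$. For (4), Definition \ref{kappa Definition} exhibits $\kappa_{r,a}$ with $l$ generators $z_1, \ldots, z_l$; since every defining relation lies in words of length $\geq 2$, the classes of $z_1, \ldots, z_l$ are linearly independent in $\mathfrak{m}/\mathfrak{m}^2$, so Nakayama's lemma identifies $l$ as the minimal number of generators. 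Riemenschneider's presentation recalled in Section \ref{Sec:R presentation} writes $R_{r,a}$ as a quotient of a power series ring in $l + 2$ variables, giving embedding dimension $l + 2$.

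Part (2) is the one assertion requiring real structural input. By Proposition \ref{Prop:Number of Ideals}, the principal (equivalently indecomposable) monomial left ideals of $\kappa_{r,a}$ are, up to isomorphism, the ideals $I_0 = \kappa_{r,a}, I_1, \ldots, I_n$, and by Proposition \ref{Monomial ideals Proposition} each satisfies $\dim I_i = \lambda_i$. Lemma \ref{Rank Calculation Lemma} provides the recursion $\lambda_{i-1} = \alpha_i \lambda_i - \lambda_{i+1}$ and records both that consecutive $\lambda_i$ are coprime and that $\lambda_{i-1}/\lambda_i = [\alpha_i, \ldots, \alpha_n]$; setting $i = 1$ forces $(\lambda_0, \lambda_1) = (r, a)$. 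The same recursion together with $\alpha_i \geq 2$ shows the sequence is strictly decreasing, so the largest proper monomial left ideal arising in this way is $I_1$, of dimension exactly $a$.

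The only conceptual obstacle is matching the interpretation in (2): the Jacobson radical of $\kappa_{r,a}$ is itself a monomial ideal of dimension $r - 1$, so "monomial left ideal" in the statement must be read as principal (equivalently, by Proposition \ref{Prop:Number of Ideals}, indecomposable), after which the chain $a = \lambda_1 = \dim I_1$ closes the argument. Everything else reduces to direct substitution into results already in the excerpt.
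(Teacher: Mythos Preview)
Your proof is correct and follows essentially the same route as the paper: transfer to $\kappa_{r,a}$ via Corollary~\ref{KI Presentation Lemma}, then read off (1), (3), (4) from Lemma~\ref{Kappa Properties Lemma} and Riemenschneider duality, and deduce (2) from Proposition~\ref{Monomial ideals Proposition}. You supply more detail than the paper does---in particular, the paper simply cites Proposition~\ref{Monomial ideals Proposition} for (2) without spelling out that $\lambda_1=a$ via Lemma~\ref{Rank Calculation Lemma} and the monotonicity of the $\lambda_i$, which you make explicit---and your observation that ``monomial left ideal'' must be read as \emph{indecomposable} (equivalently principal) monomial ideal is a genuine clarification the paper leaves implicit.
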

\begin{proof}
After recalling $K_{r,a} \cong \kappa_{r,a}$ by Lemma \ref{KI Presentation Lemma}, part (4) is immediate as  $\kappa_{r,a}$ has $l$ generators, parts (1) and (3) follow from Proposition Lemma \ref{Kappa Properties Lemma} where $1+\sum_{i=1}^l(\beta_i-2)=n$  due to Lemma \ref{Riemenschneider Duality Theorem}, and part (2) is a consequence of Proposition \ref{Monomial ideals Proposition}.
\end{proof}

\begin{Example}[r=17, a=5] Below we include the presentation of $K_{[4,2,3]}=K_{17,5}$ as the algebra realised from the closed paths at vertex $0$ of $\Lambda_{17,5}$.
In particular the Hirzebruch-Jung continued fractions are $[\alpha_1, \alpha_2, \alpha_3]=[4,2,3]=\frac{17}{5}$ and $[\beta_1, \dots, \beta_4]=[2,2,4,2]=\frac{17}{12}$. 
\begin{align*}
\begin{aligned}
\begin{tikzpicture} [bend angle=45, looseness=1.2]
\node (C1) at (-3,-3)  {$1$};
\node (C2) at (-3,0)  {$2$};
\node (C3) at (0,0) {$3$};
\node (C*) at (0,-3) {$0$};
\draw [->,bend right=15] (C*) to node[gap] {$\scriptstyle{1}$} (C1);
\draw [->,bend right=15] (C1) to node[gap] {$\scriptstyle{z_{1}}$} (C*);
\draw [->,bend right=15] (C1) to node[gap] {$\scriptstyle{1}$} (C2);
\draw [->,bend right=15] (C2) to node[gap] {$\scriptstyle{z_3}$} (C1);
\draw [->,bend right=45] (C1) to node[gap] {$\scriptstyle{z_3}$} (C*);
\draw [->,bend right=30] (C1) to node[gap] {$\scriptstyle{z_2}$} (C*);
\draw [->,bend right=15] (C2) to node[gap] {$\scriptstyle{1}$} (C3);
\draw [->,bend right=15] (C3) to node[gap] {$\scriptstyle{z_3}$} (C2);
\draw [->,bend right=0] (C3) to node[gap] {$\scriptstyle{z_2}$} (C*);
\end{tikzpicture}
\end{aligned}
& \quad 
\begin{aligned}
K_{17,5}= \frac{\mathbb{C}\langle z_1, z_2, z_3, z_4 \rangle }{\left\langle 
\begin{array}{c}
 z_iz_j \text{ for $i<j$} \\
z_1^2, z_2^2, z_3^4,z_4^2 \\
z_2z_1, z_3^3z_1, z_4z_3^2z_1 \\
z_3^3z_2, z_4z_3^2z_2, z_4z_3^3
\end{array} 
\right\rangle}
\end{aligned}
\end{align*}
\end{Example}

\subsection{Example} \label{Algebra:Examples}

As the Kn\"{o}rrer invariant algebra is presented with monomial relations, it is uniquely determined by its nonzero monomials. We present the algebra as a left module over itself by a diagram where the root of the diagram represents the identity element $1$, the labelled arrows $i$ represent multiplication on the left by $z_i$, and the nodes are in one to one correspondence with the nonzero monomials. This uniquely determines the algebra. We will call this the \emph{monomial diagram}.

\begin{Example}[r=17, a=5]
The monomial diagram for $K_{17,5}$ is
\begin{center}
\begin{tikzpicture}

\fill[color=black] (0,0) circle (0.05cm);
\draw[thick,red] (0,0) circle (4pt);
\fill[color=black] (0.25,2.5) circle (0.05cm);
\draw[thick,red] (0.25,2.5) circle (4pt);
\fill[color=black] (0.5,0.75) circle (0.05cm);
\draw[thick,red] (0.5,0.75) circle (4pt);
\fill[color=black] (0.75,3.25) circle (0.05cm);
\draw[thick,red] (0.75,3.25) circle (4pt);
\fill[color=black] (1,1.5) circle (0.05cm);
\draw[thick,red] (1,1.5) circle (4pt);
\fill[color=black] (1.25,4) circle (0.05cm);
\draw[thick,red] (1.25,4) circle (4pt);
\fill[color=black] (1.5,2.25) circle (0.05cm);
\draw[thick,red] (1.5,2.25) circle (4pt);
\fill[color=black] (1.75,0.5) circle (0.05cm);
\draw[thick,red] (1.75,0.5) circle (4pt);
\fill[color=black] (2,3) circle (0.05cm);
\draw[thick,red] (2,3) circle (4pt);
\fill[color=black] (2.25,1.25) circle (0.05cm);
\draw[thick,red] (2.25,1.25) circle (4pt);
\fill[color=black] (2.5,3.75) circle (0.05cm);
\draw[thick,red] (2.5,3.75) circle (4pt);
\fill[color=black] (2.75,2) circle (0.05cm);
\draw[thick,red] (2.75,2) circle (4pt);
\fill[color=black] (3,0.25) circle (0.05cm);
\draw[thick,red] (3,0.25) circle (4pt);
\fill[color=black] (3.25,2.75) circle (0.05cm);
\draw[thick,red] (3.25,2.75) circle (4pt);
\fill[color=black] (3.5,1) circle (0.05cm);
\draw[thick,red] (3.5,1) circle (4pt);
\fill[color=black] (3.75,3.5) circle (0.05cm);
\draw[thick,red] (3.75,3.5) circle (4pt);
\fill[color=black] (4,1.75) circle (0.05cm);
\draw[thick,red] (4,1.75) circle (4pt);

\draw[thick,->] (0,0) -- (3,0.25)  node[midway, above left=-4pt]{$\scriptstyle{1}$};
\draw[thick,->] (3,0.25) -- (3.5,1)  node[midway, above left=-5pt]{$\scriptstyle{3}$};
\draw[thick,->] (3.5,1) -- (3.75,3.5)  node[midway, above left=-3pt]{$\scriptstyle{4}$};
\draw[thick,->] (3,0.25) -- (3.25,2.75)  node[midway, above left=-3pt]{$\scriptstyle{4}$};
\draw[thick,->] (3.5,1) -- (4,1.75)  node[midway, above left=-5pt]{$\scriptstyle{3}$};

\draw[thick,->] (0,0) -- (1.75,0.5)  node[midway, above left=-5pt]{$\scriptstyle{2}$};
\draw[thick,->] (1.75,0.5) -- (2,3)  node[midway, above left=-3pt]{$\scriptstyle{4}$};
\draw[thick,->] (1.75,0.5) -- (2.25,1.25)  node[midway, above left=-5pt]{$\scriptstyle{3}$};
\draw[thick,->] (2.25,1.25) -- (2.5,3.75)  node[midway, above left=-3pt]{$\scriptstyle{4}$};
\draw[thick,->] (2.25,1.25) -- (2.75,2)  node[midway, above left=-5pt]{$\scriptstyle{3}$};

\draw[thick,->] (0,0) -- (0.5,0.75)  node[midway, above left=-5pt]{$\scriptstyle{3}$};
\draw[thick,->] (0.5,0.75) -- (1,1.5)  node[midway, above left=-5pt]{$\scriptstyle{3}$};
\draw[thick,->] (0.5,0.75) -- (0.75,3.25)  node[midway, above left=-3pt]{$\scriptstyle{4}$};
\draw[thick,->] (1,1.5) -- (1.5,2.25)  node[midway, above left=-5pt]{$\scriptstyle{3}$};
\draw[thick,->] (1,1.5) -- (1.25,4)  node[midway, above left=-3pt]{$\scriptstyle{4}$};

\draw[thick,->] (0,0) -- (0.25,2.5)  node[midway, above left=-3pt]{$\scriptstyle{4}$};

\end{tikzpicture}
\end{center}
where we use $i$ in place of the generators $z_i$.

We note that it is straightforward to read off the monomial ideals from this diagram.
\begin{center}
\begin{tikzpicture}

\fill[color=black] (0,0) circle (0.05cm);
\draw[thick,red] (0,0) circle (4pt);
\fill[color=black] (0.25,2.5) circle (0.05cm);
\draw[thick,red] (0.25,2.5) circle (4pt);
\fill[color=black] (0.5,0.75) circle (0.05cm);
\draw[thick,red] (0.5,0.75) circle (4pt);
\fill[color=black] (0.75,3.25) circle (0.05cm);
\draw[thick,red] (0.75,3.25) circle (4pt);
\fill[color=black] (1,1.5) circle (0.05cm);
\draw[thick,red] (1,1.5) circle (4pt);
\fill[color=black] (1.25,4) circle (0.05cm);
\draw[thick,red] (1.25,4) circle (4pt);
\fill[color=black] (1.5,2.25) circle (0.05cm);
\draw[thick,red] (1.5,2.25) circle (4pt);
\fill[color=black] (1.75,0.5) circle (0.05cm);
\draw[thick,red] (1.75,0.5) circle (4pt);
\fill[color=black] (2,3) circle (0.05cm);
\draw[thick,red] (2,3) circle (4pt);
\fill[color=black] (2.25,1.25) circle (0.05cm);
\draw[thick,red] (2.25,1.25) circle (4pt);
\fill[color=black] (2.5,3.75) circle (0.05cm);
\draw[thick,red] (2.5,3.75) circle (4pt);
\fill[color=black] (2.75,2) circle (0.05cm);
\draw[thick,red] (2.75,2) circle (4pt);
\fill[color=black] (3,0.25) circle (0.05cm);
\draw[thick,red] (3,0.25) circle (4pt);
\fill[color=black] (3.25,2.75) circle (0.05cm);
\draw[thick,red] (3.25,2.75) circle (4pt);
\fill[color=black] (3.5,1) circle (0.05cm);
\draw[thick,red] (3.5,1) circle (4pt);
\fill[color=black] (3.75,3.5) circle (0.05cm);
\draw[thick,red] (3.75,3.5) circle (4pt);
\fill[color=black] (4,1.75) circle (0.05cm);
\draw[thick,red] (4,1.75) circle (4pt);

\draw[thick,->] (0,0) -- (3,0.25)  node[midway, above left=-4pt]{$\scriptstyle{1}$};
\draw[thick,->] (3,0.25) -- (3.5,1)  node[midway, above left=-5pt]{$\scriptstyle{3}$};
\draw[thick,->] (3.5,1) -- (3.75,3.5)  node[midway, above left=-3pt]{$\scriptstyle{4}$};
\draw[thick,->] (3,0.25) -- (3.25,2.75)  node[midway, above left=-3pt]{$\scriptstyle{4}$};
\draw[thick,->] (3.5,1) -- (4,1.75)  node[midway, above left=-5pt]{$\scriptstyle{3}$};

\draw[thick,->] (0,0) -- (1.75,0.5)  node[midway, above left=-5pt]{$\scriptstyle{2}$};
\draw[thick,->] (1.75,0.5) -- (2,3)  node[midway, above left=-3pt]{$\scriptstyle{4}$};
\draw[thick,->] (1.75,0.5) -- (2.25,1.25)  node[midway, above left=-5pt]{$\scriptstyle{3}$};
\draw[thick,->] (2.25,1.25) -- (2.5,3.75)  node[midway, above left=-3pt]{$\scriptstyle{4}$};
\draw[thick,->] (2.25,1.25) -- (2.75,2)  node[midway, above left=-5pt]{$\scriptstyle{3}$};

\draw[thick,->] (0,0) -- (0.5,0.75)  node[midway, above left=-5pt]{$\scriptstyle{3}$};
\draw[thick,->] (0.5,0.75) -- (1,1.5)  node[midway, above left=-5pt]{$\scriptstyle{3}$};
\draw[thick,->] (0.5,0.75) -- (0.75,3.25)  node[midway, above left=-3pt]{$\scriptstyle{4}$};
\draw[thick,->] (1,1.5) -- (1.5,2.25)  node[midway, above left=-5pt]{$\scriptstyle{3}$};
\draw[thick,->] (1,1.5) -- (1.25,4)  node[midway, above left=-3pt]{$\scriptstyle{2}$};

\draw[thick,->] (0,0) -- (0.25,2.5)  node[midway, above left=-3pt]{$\scriptstyle{4}$};

\end{tikzpicture}
,
 \begin{tikzpicture}
\fill[color=black] (3,0.25) circle (0.05cm);
\draw[thick,red] (3,0.25) circle (4pt);
\fill[color=black] (3.25,2.75) circle (0.05cm);
\draw[thick,red] (3.25,2.75) circle (4pt);
\fill[color=black] (3.5,1) circle (0.05cm);
\draw[thick,red] (3.5,1) circle (4pt);
\fill[color=black] (3.75,3.5) circle (0.05cm);
\draw[thick,red] (3.75,3.5) circle (4pt);
\fill[color=black] (4,1.75) circle (0.05cm);
\draw[thick,red] (4,1.75) circle (4pt);

\draw[thick,->] (3,0.25) -- (3.5,1)  node[midway, above left=-5pt]{$\scriptstyle{3}$};
\draw[thick,->] (3.5,1) -- (3.75,3.5)  node[midway, above left=-3pt]{$\scriptstyle{4}$};
\draw[thick,->] (3,0.25) -- (3.25,2.75)  node[midway, above left=-3pt]{$\scriptstyle{4}$};
\draw[thick,->] (3.5,1) -- (4,1.75)  node[midway, above left=-5pt]{$\scriptstyle{3}$};
\end{tikzpicture}
,
 \begin{tikzpicture}
\fill[color=black] (3.5,1) circle (0.05cm);
\draw[thick,red] (3.5,1) circle (4pt);
\fill[color=black] (3.75,3.5) circle (0.05cm);
\draw[thick,red] (3.75,3.5) circle (4pt);
\fill[color=black] (4,1.75) circle (0.05cm);
\draw[thick,red] (4,1.75) circle (4pt);

\draw[thick,->] (3.5,1) -- (3.75,3.5)  node[midway, above left=-3pt]{$\scriptstyle{4}$};
\draw[thick,->] (3.5,1) -- (4,1.75)  node[midway, above left=-5pt]{$\scriptstyle{3}$};
\end{tikzpicture}
,
 \begin{tikzpicture}
\fill[color=black] (3.5,1) circle (0.05cm);
\draw[thick,red] (3.5,1) circle (4pt);
\end{tikzpicture}
.
\end{center}
\end{Example}

\bibliographystyle{alpha}

\end{document}